\documentclass[12pt]{amsart}
\usepackage[T2A]{fontenc}
\usepackage[russian, english, shorthands=off]{babel}
\textwidth=18cm
\textheight=26cm
\hoffset=-2.5cm
\voffset=-3cm
\usepackage{amsfonts}
\usepackage{amssymb}
\usepackage{quiver}
\usepackage{epigraph}
\usepackage{euscript}
\usepackage{tikz-cd}
\tikzcdset{scale cd/.style={every label/.append style={scale=#1},
    cells={nodes={scale=#1}}}}
\usepackage{mathabx,epsfig}
\usepackage{amsmath}
\usepackage{amsthm}
\usepackage{comment}
\usepackage{color}
\definecolor{darkspringgreen}{rgb}{0.09, 0.45, 0.27}

\renewcommand{\leq}{\leqslant}

\newcommand{\BC}{{\mathbb{C}}} \newcommand{\CN}{{\mathcal{N}}}

\newcommand{\F}{{\mathcal F}}

\tikzcdset{scale cd/.style={every label/.append style={scale=#1},
    cells={nodes={scale=#1}}}}
\usepackage[pdftex,bookmarks=false,colorlinks=true,citecolor=darkspringgreen,debug=true,
  naturalnames=true,pdfnewwindow=true]{hyperref}
\newtheorem{thm}{Theorem}[subsection]

\newtheorem{cor}[thm]{Corollary}
\newtheorem{remark}[thm]{Remark}
\newtheorem{lemma}[thm]{Lemma}
\newtheorem{prop}[thm]{Proposition}

\newtheorem{warning}[thm]{Warning}

\newtheorem{thmx}{Theorem}

\DeclareMathAlphabet\mathbfcal{OMS}{cmsy}{b}{n}

\newcommand{\nc}{\newcommand}
\newcommand\iso{\,\vphantom{j^{X^2}}\smash{\overset{\sim}{\vphantom{\rule{0pt}{0.20em}}\smash{\longrightarrow}}}\,}
\nc{\CH}{\mathcal{H}}
\nc{\CB}{\mathcal{B}}
\nc{\on}{\operatorname}
\nc{\CF}{\mathcal{F}}
\nc{\BZ}{\mathbb{Z}}
\nc{\La}{\Lambda}
\nc{\J}{{\bf{J}}}
\nc{\K}{{\bf{K}}}
\nc{\He}{{\mathbfcal{H}}}
\nc{\R}{{\bf{R}}}
\nc{\CO}{\mathcal{O}}
\nc{\la}{\lambda}

\newtheorem{defn}[thm]{Definition}

\theoremstyle{proof}
\setcounter{MaxMatrixCols}{20}
\makeatletter
\renewcommand{\subsection}{\@startsection{subsection}{2}{0pt}{-3ex
plus -1ex minus -0.2ex}{-2mm plus -0pt minus
-2pt}{\normalfont\bfseries}} \makeatother

\numberwithin{equation}{subsection}
\allowdisplaybreaks
\usepackage{mathdots}

\begin{document}

\title{A geometric realization of the asymptotic affine Hecke algebra}

\author{Roman Bezrukavnikov}
\address{Department of Mathematics
Massachusetts Institute of Technology
\newline
77 Massachusetts Avenue,
Cambridge, MA 02139,
USA
}
\email{bezrukav@math.mit.edu}

\author{Ivan Karpov}
\address{Department of Mathematics
Massachusetts Institute of Technology
\newline
77 Massachusetts Avenue,
Cambridge, MA 02139,
USA
}
\email{karpov57@mit.edu}

\author{Vasily Krylov}
\address{Department of Mathematics
Harvard University and CMSA
\newline
1 Oxford Street,
Cambridge, MA 02138,
USA}
\email{vkrylov@math.harvard.edu, krylovasya@gmail.com}

\maketitle

\thispagestyle{empty}

\begin{abstract}
A key tool for the study of an affine Hecke algebra $\mathcal{H}$ is provided
by Springer theory of the Langlands dual group via the realization of
$\mathcal{H}$ as equivariant $K$-theory of the Steinberg variety.
We prove a similar geometric description for Lusztig's asymptotic affine
Hecke algebra $J$ identifying it with the sum of equivariant $K$-groups of the
squares of ${\mathbb C}^*$-fixed points in the Springer fibers,
as conjectured by Qiu and Xi (the same result was also obtained by Oron Popp using different methods).
As an application, we give a new geometric proof of Lusztig's parametrization of irreducible representations of $J$. We also reprove Braverman-Kazhdan's spectral description of $J$. As another application, we prove a description of the cocenters of $\mathcal{H}$ and $J$ conjectured by the first author with Braverman, Kazhdan and
Varshavsky.  
The proof is based on a new algebraic description of $J$, which
may be of independent interest.
\end{abstract}

\setcounter{page}{1}
\section{Introduction}


\subsection{Affine Hecke algebra and asymptotic affine Hecke algebra} Let $G$ be a reductive algebraic group over $\mathbb{C}$. Let $W_f$ be the Weyl group of $G$ and let $\Lambda$ be the coweight lattice of $G$. The (extended) affine Weyl group  $W$ of $G$ is defined as  $W:=W_f \ltimes \Lambda$. Let $\mathcal{H}=\mathcal{H}_G=\mathcal{H}(W)$ be the affine Hecke algebra of $G$, recall that $\mathcal{H}$ is a deformation of the group algebra of $W$ over $\mathbb{Z}[{\bf{v}},{\bf{v}}^{-1}]$, where ${\bf{v}}={\bf{q}}^{1/2}$ and ${\bf{v}}, {\bf{q}}$ are formal variables. Let $\He$ be the complexification of $\mathcal{H}$. For $q \in \BC^*$ we will denote by $\He_q := \He \otimes_{\BC[{\bf{v}},{\bf{v}}^{-1}]} \BC_q$ the corresponding specialization of $\He$.

Kazhdan-Lusztig and Ginzburg showed that $\mathcal{H}$ has a geometric realization: it can be identified with the convolution algebra $K_{G^\vee \times \mathbb{C}^*}(\widetilde{\mathcal N} \times_{\mathcal{N}} \widetilde{\mathcal N})$, where $\widetilde{\mathcal{N}} \rightarrow \mathcal{N}$ is the Springer resolution for the Langlands dual group $G^\vee$. Kazhdan and Lusztig used the relation between $\He$ and geometry of the Steinberg variety $\widetilde{\mathcal N} \times_{\mathcal{N}} \widetilde{\mathcal N}$ to
parametrize simple $\He_q$-modules for $q$ not a root of unity. It turns out that this parametrization extends to any $q \in \mathbb{C}^*$ that is not a root of the Poincar\'e polynomial $P_W$ of $W$ (see \cite{Xi3}), moreover, the parametrization {\emph{does not}} depend on $q$.

Lusztig introduced the (based) algebra $J$ called the asymptotic Hecke algebra, its basis is parametrized by $W$, and there is an injective homomorphism $\phi\colon {\mathcal{H}} \hookrightarrow J \otimes_{\mathbb{Z}} \mathbb{Z}[\bf{v},{\bf{v}}^{-1}]$. We set $\J:=J \otimes_{\mathbb{Z}} \mathbb{C}$.

Informally, one can think about the algebra $J$ as the ``limit'' of $\mathcal{H}$ as $q$ goes to $0$. 
As discovered by Lusztig, representations of $\J$ are closely related
to those of $\He_q$, more precisely 
pulling back an {\emph{irreducible}} $\J$-module $E$ under the homomorphism $\phi_q\colon \He_q \hookrightarrow \J$, $q \in \BC^*$ one gets a so-called {\emph{standard}} module $K_q$ over 
$\He_q$\footnote{see \cite{Lcells3}, \cite{Lcells4} for the case of generic $q$;
more generally,  from \cite[Theorem 3.2]{Xi3} and \cite[Corollary 2.6]{BK} it follows that this holds for $q$ such that $P_W(q) \neq 0$. We
reprove and strengthen this fact below.} This connection explains the observation above that the parametrization of modules over the Hecke algebra $\He_q$ remains independent of the parameter $q$, as long as $q$ is not a root of $P_W$.

The goal of this paper, achieved in Theorem \ref{thm_A}, is 
to prove a similar geometric description of the algebra $J$ conjectured in \cite{QX} and derive some applications (Theorems \ref{thm_B}, \ref{thm_C} etc.). We note that Theorem \ref{thm_A} was also proved by Oron Popp in his PhD thesis \cite{Pr} by a different method.

The coherent realization of $\mathcal H$ is a manifestation of local Langlands duality used in the proof of a special case
of local Langlands conjectures \cite{KL}. It would be very interesting
to find an interpretation and a generalization for Theorem \ref{thm_A} in that context. In particular, according to a recent insight of Braverman and Kazhdan \cite{BK}, $\J$ can be viewed as a ring of Iwahori bi-invariant distributions on the $p$-adic group intermediate 
between the algebra of compactly supported distributions (well-known
to be isomorphic to a specialization of $\He$) and
Harish-Chandra Schwartz space of tempered distributions.
In \cite{BK} one also finds a generalization of this definition to 
not necessarily Iwahori bi-invariant distribution. We hope that 
Theorem \ref{thm_A} admits an (at least conjectural) extension to that
generality.


\subsection{Filtrations on $\CH$ and two-sided cells} Let us now return to the identification $K_{G^\vee \times \mathbb{C}^*}(\widetilde{\CN} \times_{\CN} \widetilde{\CN}) \simeq \mathcal{H}$. The group $G^\vee$ acts on $\CN$ with a finite number of orbits ordered by the closure order. For every orbit $\mathbb{O}_e \subset \mathcal{N}$ consider its closure $\overline{\mathbb{O}}_e$ and take its preimage in $\widetilde{\CN} \times_{\CN} \widetilde{\CN}$. The $G^\vee \times \mathbb{C}^*$-equivariant $K$-theory of this preimage is a term in  the filtration of  $\mathcal{H}$ by the two-sided ideals $\mathcal{H}_{\leqslant e}$ indexed by nilpotent orbits ${\mathbb{O}}_e$. Each subquotient $\CH_{e}:=\mathcal{H}_{\leqslant e}/\mathcal{H}_{<e}$ is a bimodule over $\mathcal{H}$ that is clearly isomorphic to $K_{Z_e \times \BC^*}(\mathcal{B}_e \times \mathcal{B}_e)$, where $\mathcal{B}_e$ is the fiber of the Springer resolution over $e$ called the Springer fiber, $Z_e$ is the reductive part of the centralizer of $e$ in $G^\vee$ and the $\BC^*$-action on $\mathcal{B}_e$ is defined using the Jacobson-Morozov Theorem. Set $R_e:=K_{Z_e \times \BC^*}(\on{pt})$, it acts naturally on $\CH_{e}=K_{Z_e \times \BC^*}(\mathcal{B}_e \times \mathcal{B}_e)$.

\begin{remark}
Note that $\CH_e$ is also a ring (without a unit element). To see the ring structure geometrically, we need to identify $\CH_e$ with $K_{Z_e \times \BC^*}(\La_e \times \La_e;\mathcal{B}_e \times \mathcal{B}_e)$, where $\La_e$ is the Slodowy variety corresponding to $e$ and $K_{Z_e \times \BC^*}(\La_e \times \La_e;\mathcal{B}_e \times \mathcal{B}_e)$ is the $K$-group of the category of $Z_e \times \BC^*$-equivariant coherent sheaves on $\La_e \times \La_e$ supported on $\mathcal{B}_e \times \mathcal{B}_e \subset \La_e \times \La_e$, see \cite[Theorem B.2]{TX}.
\end{remark}

There is a way to describe the filtration $\CH_{\leqslant e} \subset \CH$ above algebraically. Let $\{C_w\,|\, w \in W\}$ be the {\emph{canonical}} basis of $\mathcal{H}$ introduced in \cite{KL0} and generalized by Lusztig to the case of extended Weyl groups in \cite{Lcells3}. 
Let $\CH_{\leqslant w}$ be the minimal based (i.e., spanned over $\mathbb{Z}$ by a subset of the canonical basis) 
two-sided ideal of $\mathcal{H}$ that contains $C_w$. In \cite{Lcells1} Lusztig introduced a notion of the two-sided cell in $W$ that can be characterized as follows: two elements $w,w' \in W$ lie in the same two-sided cell iff $\CH_{\leqslant w}=\CH_{\leqslant w'}$. We obtain a partition of $W$ into two-sided cells.  To a cell ${\bf{c}} \subset W$ there thus corresponds the two-sided ideal 
$\mathcal{H}_{\leqslant {\bf{c}}}$.
This is the so-called \textit{cell filtration}.  Let us denote by $\mathcal H_{\bf c}$ the corresponding subquotient.

We obtain a partial order on the set of two-sided cells defined as follows: ${\bf{c}}' \leqslant {\bf{c}}$ iff $\CH_{\leqslant {\bf{c}}'} {{\subset}} \CH_{\leqslant {\bf{c}}}$. By a result of Lusztig (see \cite{Lcells4}), the set of $G^\vee$-orbits in $\CN$ is in bijection with the set of two-sided cells in $W$. Let us denote this bijection by $L$. It then follows from 
results of  Xi \cite{Xi1} and of the first author
\cite[Theorem 55, \S 11.3]{B3}  that if ${\bf{c}}=L(e)$ then $\mathcal{H}_{\leqslant {\bf{c}}} = \mathcal{H}_{\leqslant e}$ 
and the $L$ bijection above is order-preserving. 
Thus the cell filtration will also be referred to as the \textit{geometric filtration}.

\subsubsection{The direct sum decomposition  for the ring $J$} The ring $J$ can be decomposed as the direct sum $J=\bigoplus_{{\bf{c}}} J_{\bf{c}}$, where ${\bf{c}} \subset W$ runs over the set of two-sided cells in $W$.  

For ${\bf{c}}=L(e)$ we have a natural homomorphism of algebras $K_{Z_e \times \BC^*}(\La_e \times \La_e;\mathcal{B}_e \times \mathcal{B}_e)=\mathcal{H}_{{\bf{c}}} \xrightarrow{\phi_{{\bf{c}}}} J_{{\bf{c}}} \otimes \mathbb{Z}[{\bf{v}},{\bf{v}}^{-1}]$. This homomorphism becomes an isomorphism after tensoring by $\BC({\bf{v}})$.
Let $\phi^{\bf{c}}\colon \CH \rightarrow J_{\bf{c}} \otimes \mathbb{Z}[{\bf{v}},{\bf{v}}^{-1}]$ be the composition of $\phi\colon \mathcal{H} \rightarrow J \otimes \mathbb{Z}[{\bf{v}},{\bf{v}}^{-1}]$ and the projection onto $J_{\bf{c}} \otimes \BZ[{\bf{v}},{\bf{v}}^{-1}]$.



\subsection{Main results}

\subsubsection{Geometric description of $J_e$}

One of the main results of this paper is a geometric description of $J_e$ conjectured by Qiu and Xi in \cite{QX}, see also Propp's thesis \cite[Theorem 1.5.2]{Pr} for another proof. 

\begin{thmx}\label{thm_A}
There exists an isomorphism of rings $J_{e} \otimes \mathbb{Z}[{\bf{v}},{\bf{v}}^{-1}] \simeq K_{Z_e \times \mathbb{C}^*}(\CB_e^{\mathbb{C}^*} \times \CB_e^{\mathbb{C}^*})$.
\end{thmx}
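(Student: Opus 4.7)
The plan is to construct an explicit ring homomorphism
\[
\Psi_e \colon K_{Z_e \times \mathbb{C}^*}(\CB_e^{\mathbb{C}^*} \times \CB_e^{\mathbb{C}^*}) \longrightarrow J_e \otimes \mathbb{Z}[{\bf v},{\bf v}^{-1}]
\]
and to show it is a bijection. After extending scalars to $\mathbb{C}({\bf v})$ the conclusion is essentially forced: Lusztig's map $\phi_e$ becomes an isomorphism $\CH_e \otimes \mathbb{C}({\bf v}) \iso J_e \otimes \mathbb{C}({\bf v})$, while the Thomason localization theorem applied to the Springer $\mathbb{C}^*$-action identifies $\CH_e \otimes \mathbb{C}({\bf v}) = K_{Z_e \times \mathbb{C}^*}(\CB_e \times \CB_e) \otimes \mathbb{C}({\bf v})$ with $K_{Z_e \times \mathbb{C}^*}(\CB_e^{\mathbb{C}^*} \times \CB_e^{\mathbb{C}^*}) \otimes \mathbb{C}({\bf v})$. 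The real content of Theorem~A is therefore the integrality of this localized identification together with compatibility of the ring structures.

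To build $\Psi_e$ integrally I would exploit the Bialynicki--Birula attracting variety $\CB_e^+ \subset \CB_e$ for the Springer $\mathbb{C}^*$; it is a disjoint union of affine bundles over the connected components of $\CB_e^{\mathbb{C}^*}$, and the resulting pull-push correspondence gives a natural map $K_{Z_e \times \mathbb{C}^*}(\CB_e^{\mathbb{C}^*}) \to K_{Z_e \times \mathbb{C}^*}(\CB_e)$. Taking the exterior square on both factors and restricting the Steinberg-type convolution diagram to the fixed locus produces a map on convolution algebras after inverting Euler classes of normal bundles; post-composing with $\phi_e$ gives a candidate $\Psi_e$ into $J_e \otimes F$, where $F$ is an appropriate localization of $\mathbb{Z}[{\bf v},{\bf v}^{-1}]$. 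The ring-homomorphism property would be checked by a base-change identity between the two convolutions, and one expects the leading-order denominators appearing in $\phi_e$ (controlled by Lusztig's $a$-function, equal on the cell $L(e)$ to the complex dimension of $\CB_e$) to precisely cancel the Euler-class denominators, so that $\Psi_e$ takes values in $J_e \otimes \mathbb{Z}[{\bf v},{\bf v}^{-1}]$.

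The key obstacle is showing that $\Psi_e$ is an isomorphism of $\mathbb{Z}[{\bf v},{\bf v}^{-1}]$-modules, rather than merely after inverting denominators. This is where the \emph{new algebraic description of $J$} promised in the abstract enters. I would characterize $J_e \otimes \mathbb{Z}[{\bf v},{\bf v}^{-1}]$ intrinsically as a specific $\mathbb{Z}[{\bf v},{\bf v}^{-1}]$-lattice inside $J_e \otimes \mathbb{C}({\bf v}) \simeq \CH_e \otimes \mathbb{C}({\bf v})$: for example, as a Rees-type construction associated with the cell filtration, or as the maximal lattice stable under the action on a well-chosen universal family of standard modules parametrized by central characters in $R_e$. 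One would then verify that the image of $K_{Z_e \times \mathbb{C}^*}(\CB_e^{\mathbb{C}^*} \times \CB_e^{\mathbb{C}^*})$ in the localized convolution algebra satisfies exactly the same characterization, and conclude by a rank count over $R_e$. The delicate step, and the expected main obstacle, is precisely this matching of integral structures: a direct element-by-element comparison of the Lusztig $t$-basis with classes of $\mathbb{C}^*$-fixed points appears intractable, so an axiomatic characterization of $J_e$ that can equally well be verified on the geometric side is essential, and finding the right axioms is the technical heart of the argument.
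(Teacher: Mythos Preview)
Your proposal has the right geometric ingredient (the Bialynicki--Birula attractors) but misses the structural mechanism that makes the argument go through. The construction you sketch---localize, invert Euler classes, post-compose with $\phi_e$, then hope the denominators cancel against the $a$-function powers---does not produce a ring homomorphism from $K_e$ to $J_e$ in any direct way, and the cancellation you anticipate is not how integrality is actually obtained. The guesses about a Rees-type or maximal-lattice characterization of $J_e$ are also off the mark.

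The paper's route is different and avoids all denominator bookkeeping. The ``new algebraic description'' is Theorem~B: $J_e \otimes \mathbb{Z}[{\bf v},{\bf v}^{-1}] \simeq \operatorname{End}_{\mathcal H}(\mathcal H_e)^{\mathrm{opp}}$. With this in hand, one does not build a map $K_e \to J_e$ directly; instead one introduces the $\mathcal H$--$K_e$ bimodule $\mathcal F_e := K_{Z_e \times \mathbb C^*}(\CB_e \times \CB_e^{\mathbb C^*})$. The Bialynicki--Birula filtration (together with an explicit splitting via the closures $\overline{\operatorname{Attr}}_F$) shows $\mathcal F_e \simeq_{\mathcal H} \mathcal H_e$ as left $\mathcal H$-modules, so the right $K_e$-action yields a ring homomorphism $\theta\colon K_e \to \operatorname{End}_{\mathcal H}(\mathcal F_e)^{\mathrm{opp}} = J_e \otimes \mathbb{Z}[{\bf v},{\bf v}^{-1}]$ with no localization whatsoever. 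Injectivity is immediate since $\mathcal F_e$ is a free rank-one right $K_e$-module; surjectivity follows because the image of $1 \in K_e$ in $J_e$ is right invertible, hence (by Dedekind-finiteness of the Noetherian ring $\J_e$) two-sided invertible, so every $\mathcal H$-endomorphism is determined by its value on that generator. The bimodule $\mathcal F_e$ is the missing idea in your outline.
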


\begin{remark}
Note that $J$ does not depend on $q$, this corresponds to the fact that $\BC^*$ acts trivially on $\CB_e^{\BC^*} \times \CB_e^{\BC^*}$. 
 Specializing at 
 ${\bf{v}}=1$ we obtain an identification $J_e \simeq K_{Z_e}(\CB_e^{\mathbb{C}^*} \times \CB_e^{\mathbb{C}^*})$; however,
 specializing at ${\bf{v}}=-1$ yields another isomorphism. 
\end{remark}

\begin{remark} In view of Theorem \ref{thm_A}, the category $\operatorname{Coh}_{Z_{G^\vee}(e) }(\CB_e^{\mathbb C^*} \times \CB_e^{\mathbb C^*})$ (or $\operatorname{Coh}_{Z_e}(\CB_e^{\mathbb C^*} \times \CB_e^{\mathbb C^*})$) can be viewed as a categorification of the ring
$J_e$. 
Other categorifications of $J_e$ were proposed in \cite{LJcat}, \cite{BO}, \cite{BL} (these three are equivalent), and partially in \cite{D}. We do not know how these different categorifications are related.
\end{remark}

Let us briefly describe the idea of the proof of Theorem  \ref{thm_A}. Recall the (injective) homomorphisms $\phi\colon \CH \hookrightarrow J \otimes \BZ[{\bf{v}},{\bf{v}}^{-1}]$, $\phi_{\bf{c}}\colon \CH_{\bf{c}} \hookrightarrow J_{\bf{c}} \otimes \BZ[{\bf{v}},{\bf{v}}^{-1}]$. They induce $\CH-J_{{\bf{c}}} \otimes \BZ[{\bf{v}},{\bf{v}}^{-1}]$-bimodule and  $\CH_{\bf{c}}-J_{\bf{c}}$-bimodule structures on $J_{{\bf{c}}} \otimes \BZ[{\bf{v}},{\bf{v}}^{-1}]$. Moreover, considered as a left $\CH$ (resp. $\CH_{\bf{c}}$)-module, it is isomorphic to $\CH_{{\bf{c}}}$ (see Corollary \ref{cor_H_mod_is_iso}).   
We then prove the following theorem.

\begin{thmx}\label{thm_B}
The right action of $J_{\bf{c}} \otimes \BZ[{\bf{v}},{\bf{v}}^{-1}]$ induces isomorphisms of algebras 
\begin{equation*}
J_{{\bf{c}}} \otimes \BZ[{\bf{v}}, {\bf{v}}^{-1}] \iso \operatorname{End}_{\mathcal{H}}(J_{\mathbf{c}} \otimes \BZ[{\bf{v}}, {\bf{v}}^{-1}])^{\mathrm{opp}} \simeq \operatorname{End}_{\mathcal{H}}(\CH_{\bf{c}})^{\mathrm{opp}} \simeq \operatorname{End}^{R_e}_{\mathcal{H}_{\bf{c}}}(\CH_{\bf{c}})^{\mathrm{opp}},
\end{equation*}
where $\operatorname{End}^{R_e}_{\mathcal{H}_{\bf{c}}}(\CH_{\bf{c}})$ stands for the set of endomorphisms, which are both $R_e$-and $\CH_{{\bf{c}}}$-linear. 

\end{thmx}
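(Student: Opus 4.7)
The middle isomorphism is immediate from Corollary~\ref{cor_H_mod_is_iso}: since $J_{{\bf{c}}} \otimes \BZ[{\bf{v}},{\bf{v}}^{-1}] \simeq \CH_{{\bf{c}}}$ as left $\CH$-modules, the two $\CH$-linear endomorphism algebras are canonically identified. The work is thus to establish the first and third isomorphisms.

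\textbf{First isomorphism.} Let $\mu$ denote right multiplication, $j \mapsto (x \mapsto xj)$. It takes values in $\operatorname{End}_{\CH}(J_{{\bf{c}}} \otimes \BZ[{\bf{v}},{\bf{v}}^{-1}])^{\mathrm{opp}}$ because the left $\CH$-action is through $\phi$ and right multiplication in $J$ commutes with left multiplication by associativity. For injectivity, I invoke Lusztig's two-sided unit $1_{{\bf{c}}} := \sum_{d \in \mathcal{D} \cap {\bf{c}}} t_d \in J_{{\bf{c}}}$ (sum over distinguished involutions in ${\bf{c}}$), which gives $\mu(j)(1_{{\bf{c}}}) = j$. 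For surjectivity, base-change to $\BC({\bf{v}})$: as noted in the excerpt $\phi_{{\bf{c}}}$ becomes an algebra isomorphism there, so the $\CH \otimes \BC({\bf{v}})$-module structure on $J_{{\bf{c}}} \otimes \BC({\bf{v}})$ coincides with the left regular action of the unital ring $J_{{\bf{c}}} \otimes \BC({\bf{v}})$ on itself, whose endomorphism algebra is the opposite ring via right multiplication. Given an integral $f \in \operatorname{End}_\CH(J_{{\bf{c}}} \otimes \BZ[{\bf{v}},{\bf{v}}^{-1}])$, set $j := f(1_{{\bf{c}}}) \in J_{{\bf{c}}} \otimes \BZ[{\bf{v}},{\bf{v}}^{-1}]$; then $f$ and $\mu(j)$ agree after base change to $\BC({\bf{v}})$, hence agree integrally by the $\BZ[{\bf{v}},{\bf{v}}^{-1}]$-freeness of $J_{{\bf{c}}} \otimes \BZ[{\bf{v}},{\bf{v}}^{-1}]$.

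\textbf{Third isomorphism.} This amounts to identifying the image of the left $\CH$-action in $\operatorname{End}_{\BZ}(\CH_{{\bf{c}}})$ with the subalgebra generated by left multiplication by $\CH_{{\bf{c}}}$ (in the Slodowy-slice realization $\CH_{{\bf{c}}} = K_{Z_e \times \BC^*}(\La_e \times \La_e; \CB_e \times \CB_e)$ of the Remark) together with the equivariant scalars $R_e$. More precisely, the left $\CH$-action should factor through a natural algebra map
\[
\CH \otimes_{R_{G^\vee \times \BC^*}} R_e \longrightarrow \operatorname{End}_{\BZ}(\CH_{{\bf{c}}}),
\]
with image exactly this subalgebra. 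Granting this, the inclusion $\operatorname{End}^{R_e}_{\CH_{{\bf{c}}}}(\CH_{{\bf{c}}}) \subseteq \operatorname{End}_{\CH}(\CH_{{\bf{c}}})$ is formal. For the reverse inclusion, each $h \in \CH_{{\bf{c}}}$ acts on $\CH_{{\bf{c}}}$ via a lift to the two-sided ideal $\CH_{\le e} \subset \CH$, so $\CH$-linearity of an endomorphism forces $\CH_{{\bf{c}}}$-linearity; and $R_e$-linearity follows from realizing $R_e$ through the Bernstein center $R_{G^\vee \times \BC^*} \subset Z(\CH)$ and the restriction morphism $R_{G^\vee \times \BC^*} \to R_e$. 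The principal obstacle is the verification of the image description above, which requires reconciling the Springer-resolution convolution defining $\CH$ with the Slodowy-slice convolution defining $\CH_{{\bf{c}}}$ via the base change along $R_{G^\vee \times \BC^*} \to R_e$.
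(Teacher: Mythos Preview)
Your argument for the first isomorphism is correct and in fact cleaner than the paper's. Both are localization arguments showing that an $\CH$-endomorphism killing $1_{\bf c}$ must vanish, but you localize in the ${\bf v}$-direction (using that $\phi^{\bf c}$ is surjective over $\BC({\bf v})$, which follows from $\phi_{\bf c}$ being an isomorphism there), whereas the paper localizes over $R_e$: it shows $\J_{\bf c}/\operatorname{Im}\phi_{\bf c}$ is $R_e'$-torsion while $\J_{\bf c}$ is $R_e$-torsion-free, the latter requiring the finite-set realization of \cite{BO} and, for disconnected $Z_e$, a further decomposition by idempotents of $\R_e$. Your route avoids all of this. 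One small imprecision: the $\CH$-module structure is via $\phi^{\bf c}$, not $\phi_{\bf c}$; what you actually need is that $\phi^{\bf c}$ is surjective over $\BC({\bf v})$, which does follow from $\phi_{\bf c}$ being an isomorphism since $\phi^{\bf c}|_{\CH_{\le {\bf c}}}$ factors through $\phi_{\bf c}$.

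Your argument for the third isomorphism, however, has a genuine gap. The step ``$R_e$-linearity follows from realizing $R_e$ through the Bernstein center'' does not work: the Bernstein center gives you $R_{G^\vee\times\BC^*}$-linearity, and the restriction map $R_{G^\vee\times\BC^*}\to R_e$ is not surjective, so this only yields linearity over the image $R_e'\subsetneq R_e$. Likewise the other inclusion, which you flag as an ``obstacle,'' is not resolved: there is no evident reason why the $\CH$-action on $\CH_{\bf c}$ should factor through operators built from $\CH_{\bf c}$ and $R_e$ alone. The paper does not attempt this direct comparison either. Instead it proves that the right-multiplication map $J_{\bf c}\otimes\BZ[{\bf v},{\bf v}^{-1}]\to\operatorname{End}^{R_e}_{\CH_{\bf c}}(\CH_{\bf c})^{\mathrm{opp}}$ is itself an isomorphism by the same method, whence both endomorphism rings equal $J_{\bf c}$ and are therefore equal to each other. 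Your own base-change argument adapts immediately to this: right multiplication is $R_e$-linear because $R_e$ sits centrally in $J_{\bf c}$ (Section~\ref{sec:low} and Corollary~\ref{center}), and over $\BC({\bf v})$ already $\CH_{\bf c}$-linearity forces $J_{\bf c}$-linearity since $\phi_{\bf c}$ is an isomorphism there, so $\operatorname{End}^{R_e}_{\CH_{\bf c}}\subseteq\operatorname{End}_{\CH_{\bf c}}=(J_{\bf c})^{\mathrm{opp}}$ generically and you conclude as before.
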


\begin{warning}
Note that $\operatorname{End}_{\mathcal{H}_{\bf{c}}}(\CH_{\bf{c}})^{\mathrm{opp}}$ is {\emph{not}} isomorphic to $\mathcal{H}_{\bf{c}}$ because $\mathcal{H}_{\bf{c}}$ is a ring without a unit. We only have the natural embedding $\CH_{\bf{c}} \hookrightarrow \operatorname{End}_{\mathcal{H}_{\bf{c}}}(\CH_{\bf{c}})^{\mathrm{opp}}$ induced by the right multiplication.
\end{warning}

Thus, in order to establish the identification between $K_{e}:=K_{Z_e \times \BC^*}(\CB_e^{\BC^*} \times \CB_e^{\BC^*})$ and $J_{e} \otimes \BZ[{\bf{v}},{\bf{v}}^{-1}]$ we need to construct a homomorphism  $K_e \rightarrow \operatorname{End}_{\mathcal{H}}(\CH_{\bf{c}})^{\mathrm{opp}}$ and prove that it is an isomorphism. To construct the aforementioned homomorphism, we need to find a $\CH-K_e$-bimodule that is isomorphic to $\CH_{e}=K_{Z_e \times \BC^*}(\CB_e \times \CB_e)$ as a $\CH$-module. The natural candidate is $\mathcal{F}_e:=K_{Z_e \times \BC^*}(\mathcal{B}_e \times \mathcal{B}_e^{\BC^*})$. We prove that $\CF_e$ is indeed isomorphic to $\CH_e$.  To see that, we use the Bialynicki-Birula type decomposition of $\CB_e$ by the attractors via the $\BC^*$-action
studied by De Concini, Lusztig and Procesi in \cite{DLP}. In more detail, recall that if $X$ is a smooth projective variety with a $\BC^*$-action then by the Bialynicki-Birula theorem attractors to the connected components of $X^{\BC^*}$ are affine fibrations. It follows that the decomposition of $X$ by the attractors induces a {\emph{filtration}} on the (complexified) $K$-theory of $X$ with associated graded being isomorphic to the $K$-theory of $X^{\BC^*}$ if $H_*(X^{\BC^*},\mathbb{C})$ is generated by algebraic cycles. Variety $\CB_e$ is projective but not smooth, so the Bialynicki-Birula theorem can not be applied to it directly. On the other hand by the results of \cite{DLP}, $\CB_e^{\BC^*}$ is smooth and attractors in $\CB_e$ of the components of $\CB_e^{\BC^*}$ are indeed affine fibrations. So, we obtain a filtration on $K_{Z_e \times \BC^*}(\CB_e \times \CB_e^{\BC^*})$ with associated graded being isomorphic to $K_{Z_e \times \BC^*}(\CB_e^{\BC^*} \times \CB_e^{\BC^*})$. We then produce a splitting of this filtration. To this end we consider closures of the subvarieties $\on{Attr}_{F}:=\{(x,y) \in \CB_e \times F\,|\,\underset{t \rightarrow 0}{\on{lim}}\, t \cdot x = y\}$, where $F \subset \CB_e^{\BC^*}$ are connected components of $\CB_e^{\BC^*}$. 
We then consider natural morphisms $\overline{\pi}_F\colon \overline{\on{Attr}}_F \rightarrow F$, $\overline{\iota}_F\colon \overline{\on{Attr}}_{F} \rightarrow \CB_e$ and the splitting is given by $\bigoplus_{F} (\on{Id}_{\CB_e} \times \overline{\iota}_F)_*(\on{Id}_{\CB_e} \times \overline{\pi}_{F})^*$. In other words, the splitting is given by the natural correspondences $\CB_e \times \CB_e^{\BC^*} \xleftarrow{\on{Id}_{\CB_e} \times \overline{\pi}_F} 
\CB_e \times \overline{\on{Attr}}_{F} \xrightarrow{\on{Id}_{\CB_e} \times \overline{\iota}_F} \CB_e \times \CB_e$.



So, we obtain a homomorphism $\theta\colon K_e \rightarrow \on{End}_{\CH}(\CF_e)^{\mathrm{opp}}=J_e \otimes \BZ[{\bf{v}},{\bf{v}}^{-1}]$, and it remains to check that it is an isomorphism. Injectivity is easy since we have an identification $F\colon K_e \iso \CF_e$ (of right $K_e$-modules), and our homomorphism is then given by the action of $K_e$ on itself via right multiplication. To check surjectivity, it is enough to show that any element of $\on{End}_{\CH}(\CF_e)$ is uniquely determined by its value on $F(1)$. To see that we use the identifications 
\begin{equation*}
K_e\simeq_{K_e} \CF_e \simeq_{\CH_e} \CH_e \simeq_{\CH_e} J_e \otimes \BZ[{\bf{v}},{\bf{v}}^{-1}]
\end{equation*}
and the only thing to check is that the image of $1 \in K_e$ in $J_e \otimes \mathbb{Z}[{\bf{v}},{\bf{v}}^{-1}]$ after these identifications is a (left) invertible element (we use Theorem \ref{thm_B}). It is clearly right invertible, but $\J_e$ is left-Noetherian, so right invertible elements are left invertible. 


\subsubsection{Representation theory of $\J$ from the geometric perspective}

Realization of $J_e$ as the convolution algebra $K_{Z_e}(\CB_e^{\BC^*} \times \CB_e^{\BC^*})$ allows one to apply geometric methods to  representation theory of $\J$  (cf. \cite{KL} and \cite[Sections 7, 8]{CG}).
Our geometric realization has particularly favorable properties since  $\CB_e^{\BC^*}$ is smooth and projective, and its homology is generated by algebraic cycles (\cite{DLP}). 

In \cite{Lcells3} Lusztig classified irreducible modules over the algebra $\J$. He proved that these modules are parametrized by the triples $(s,e,\rho)$ (up to conjugation), where $e \in \mathcal{N}$ is a nilpotent element, $s$ is a semisimple element of $Z_e$ and $\rho$ is an irreducible representation of $\Gamma^s_e:=Z_{G^\vee}(s,e)/Z_{G^\vee}(s,e)^{0}$. Irreducible $\J$-module corresponding to $(s,e,\rho)$ will be denoted by $E(s,e,\rho)$. For a variety $X$ equipped with the action of an algebraic group $H$ we set $\K_{H}(X):=K_H(X) \otimes_{\mathbb{Z}} \mathbb{C}$. We reprove Lusztig's description using our geometric approach by proving the following theorem. 

\begin{thmx}\label{thm_C}
(a) Irreducible modules over $\J_e=\K_{Z_e}(\CB_e^{\BC^*} \times \CB_e^{\BC^*})$ are all of the form $\K(\CB_e^{\BC^*,s})_{\rho}$.

(b) We have $E(s,e,\rho)=\K(\CB_e^{\BC^*,s})_{\rho}$.   
\end{thmx}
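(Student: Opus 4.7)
The plan is to transport the Kazhdan--Lusztig/Ginzburg classification argument \cite[Ch.~8]{CG} to the smooth projective setting provided by Theorem \ref{thm_A}. Write $M := \CB_e^{\BC^*}$, which by \cite{DLP} is smooth and projective with homology generated by algebraic cycles. Theorem \ref{thm_A} identifies $\J_e$ with the convolution algebra $\K_{Z_e}(M \times M)$ acting on $\K_{Z_e}(M)$, placing us in the Chriss--Ginzburg framework with the notable simplification that $M$ itself (not only the Steinberg-type variety $M \times M$) is already smooth and projective.

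For each semisimple $s \in Z_e$, equivariant localization provides a ring homomorphism
\[
\K_{Z_e}(M \times M) \otimes_{R(Z_e)} \BC_s \;\longrightarrow\; \K(M^s \times M^s)
\]
with a compatible action on $\K(M^s)$. The component group $\Gamma^s_e = Z_{G^\vee}(s,e)/Z_{G^\vee}(s,e)^0$ acts on $M^s$, yielding a decomposition $\K(M^s) = \bigoplus_\rho \K(M^s)_\rho \otimes \rho$ over its irreducible characters. Following \cite[\S 8]{CG} closely, I would establish (i) that each nonzero $\K(M^s)_\rho$ is irreducible for the specialized convolution algebra, using the nondegeneracy of the convolution pairing on $\K(M^s \times M^s)$ together with a standard double-commutant argument, and (ii) that every irreducible $\J_e$-module factors through such a specialization, since its central character under $R(Z_e) \subset \J_e$ is evaluation at some semisimple conjugacy class of $Z_e$. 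Together these yield part (a). For part (b) I would match our modules with Lusztig's $E(s,e,\rho)$ via Theorem \ref{thm_B}: the $\J_e$-module $\K_{Z_e}(M) \simeq \CH_{{\bf c}}$ specializes at $s$ to the standard $\He$-modules in terms of which Lusztig characterizes $E(s,e,\rho)$.

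The main technical obstacle will be justifying the Chriss--Ginzburg arguments at the specialized level, which requires the homology of the iterated fixed locus $\CB_e^{\BC^*, s}$ to still be generated by algebraic cycles. I expect this to follow by applying the Bialynicki-Birula machinery of \cite{DLP}, now with respect to a generic cocharacter of a maximal torus of $Z_e$ containing $s$, so that $M^s$ inherits a cell decomposition from $M$. A secondary point is that $\J_e$ should be shown to be module-finite over $R(Z_e)$ so that its central characters really do factor through evaluation at semisimple conjugacy classes in $Z_e$; this should be immediate from the geometric description in Theorem \ref{thm_A}, since $\K_{Z_e}(M \times M)$ is finite over $R(Z_e) = \K_{Z_e}(\on{pt})$ thanks to properness of $M$.
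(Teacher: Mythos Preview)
Your approach to part (a) is essentially the paper's: the classification of irreducibles over $\K_{Z_e}(M\times M)$ is carried out in Proposition~\ref{Thom} via exactly the localization/double-commutant/central-character argument you describe, and the cycle-class input for $\CB_e^{\BC^*,s}$ is indeed available from \cite{DLP} (see the discussion around (\ref{Kun})). So this part is fine.

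Part (b), however, has a genuine gap. The identification ``$\K_{Z_e}(M)\simeq \CH_{\bf c}$'' is not correct: $\CH_{\bf c}=\K_{Z_e\times\BC^*}(\CB_e\times\CB_e)$ has a different rank over $R_e$ than $\K_{Z_e}(\CB_e^{\BC^*})$, and neither side of your proposed isomorphism is even a module over the same ring in the way you suggest. What the paper actually uses (Remark~\ref{bimod}) is that the $\CH$--$\J_e$ bimodule realizing $\phi^e$ is $\CF_e=\K_{Z_e\times\BC^*}(\CB_e\times\CB_e^{\BC^*})$, so that $\phi^{e*}V\simeq \CF_e\otimes_{\K_e}V$. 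The crucial step you are missing is the computation (Lemma~\ref{pullb}) that $\CF_e\otimes_{\K_e}\K(\CB_e^{\BC^*,s})\simeq\K(\CB_e^{sq})$: this requires passing from $Z_e$-equivariance to $\langle s\rangle\times\BC^*$-equivariance and then localizing, and the point is that the two factors of $\CF_e$ localize \emph{differently}---the $\CB_e$ factor at $sq$ (giving $\CB_e^{sq}$) and the $\CB_e^{\BC^*}$ factor at $s$ (giving $\CB_e^{\BC^*,s}$, since $\BC^*$ already acts trivially). Without this mixed localization, your sketch does not connect the geometric irreducibles $\K(\CB_e^{\BC^*,s})_\rho$ to the standard modules $\K(\CB_e^{sq})_\rho$ that characterize Lusztig's $E(s,e,\rho)$; invoking Theorem~\ref{thm_B} alone does not supply this.
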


Let us briefly outline the argument. Part $(a)$ of the Theorem follows from  general considerations about modules over convolution algebras (see Section \ref{conv} for the details). To prove part $(b)$, we recall that Lusztig's parametrization works as follows (see \cite{Lcells4}):  $E(s,e,\rho)$ is the unique irreducible module over $\J$ such that $\phi_q^*E(s,e,\rho)$ is isomorphic to $K(s,e,\rho,q):=\K(\CB_e^{sq})$ for a generic $q$. So, in order to prove part $(b)$ we need to check that $\phi^*_q(\K(\CB_e^{\BC^*,s})_{\rho}) \simeq \K(\CB_e^{qs})_{\rho}$ for a generic $q$. We prove that this holds for {\emph{any}} $q$. This implies part $(b)$ of the Theorem and also shows that $\phi_q^*E(s,e,\rho)=K(s,e,\rho,q)$ for every $q \in \BC^*$ (this result is new for $q$ being a root of $P_W$).


\subsubsection{Proof of Braverman-Kazhdan's theorem}
In \cite{BK} the authors described the algebra $\J_e$ in spectral terms by formulating a version of the matrix Paley-Wiener theorem for $\J_e$ (see \cite[Theorem 1.8 (3)]{BK}). We reprove their theorem using our geometric approach.\footnote{We were not able to follow some steps in the argument
of  \cite{BK}  (see footnotes $2$, $3$ below), we fill in the details  not found in {\em loc. cit} using
our present geometric methods.  Stefan Dawydiak \cite{rigid} developed an alternative algebraic approach
to completing the proof of that theorem.}
Let us first recall the content of \cite[Theorem 1.8 (3)]{BK}.

Pick a semisimple element $s \in Z_e$. Let $M^\vee \subset G^\vee$ be a Levi containing $s$ and such that $e \in \on{Lie}M^\vee$. In \cite[Section 1.2]{BK} the authors consider a certain family of $\He_q$-modules over $Z_M:=Z(M^\vee)^{0}$ that we will denote by $\mathcal{V}(Z_M,s,\rho,q)$. Its fiber over $1 \in Z_M$ is $\on{Ind}_{\He_M}^{\He}K(s,e,\rho,q)$.

\begin{thm}[Braverman-Kazhdan]\label{BK_main_thm_intro}
 Let $\mathcal S_e$ be a subalgebra of $\prod_{M, s, \rho} \on{End}^{\mathrm{rat}.}_{\mathcal O(Z_M)} \mathcal V(Z_M, s,\rho,q)$ (where the product is taken over all compact $s$) given by the following conditions:
     \\
     a) any $\varphi \in \mathcal S_e$ does not  have poles  at the points of families $\mathcal V(Z_M,s,\rho,q)$ which correspond to (cf. \textit{loc. cit.}) non-strictly positive characters of Levi subgroups;
     \\
     b) the endomorphisms $\varphi$ are compatible in some precise sense (see Section \ref{BK_formulation_thm} for details).
     
     Then $\mathcal S_e \simeq J_e$.
\end{thm}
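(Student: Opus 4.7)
The plan is to use Theorem \ref{thm_A} to realize $J_e = \mathcal{K}_{Z_e}(\CB_e^{\BC^*} \times \CB_e^{\BC^*})$ and Theorem \ref{thm_C} (which identifies $\phi_q^*E(s,e,\rho)$ with $K(s,e,\rho,q)$ for \emph{every} $q \in \BC^*$) to understand the action of $J_e$ on the spectral families $\mathcal V(Z_M,s,\rho,q)$ geometrically. The overall strategy is standard: construct a homomorphism
\[
\Phi \colon J_e \longrightarrow \prod_{(M,s,\rho)} \operatorname{End}^{\mathrm{rat}}_{\mathcal O(Z_M)} \mathcal V(Z_M,s,\rho,q),
\]
verify that its image lands inside $\mathcal S_e$, and then prove $\Phi$ is a bijection onto $\mathcal S_e$.

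\textbf{Construction of $\Phi$.} Each $\mathcal V(Z_M,s,\rho,q)$ is (by its definition in \cite{BK}) obtained by parabolically inducing from $\mathcal H_M$ a family of $\mathcal H_{M,q}$-modules whose fiber at $z \in Z_M$ is a standard module of the form $K(zs,e,\rho,q)$ for the Levi $M^\vee$. By Theorem \ref{thm_C}(b), $J_e$ acts tautologically on $K(zs,e,\rho,q)$ via the embedding $\phi_q$ and the identification with $\mathcal{K}(\CB_e^{\BC^*, zs})_\rho$; since the fixed-point set $\CB_e^{\BC^*,zs}$ varies algebraically in $z$, this action assembles into a homomorphism from $J_e$ into the endomorphisms of the family, which is automatically $\mathcal O(Z_M)$-linear by central-character considerations and becomes rational after the relevant localizations.

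\textbf{Verification of the conditions defining $\mathcal S_e$.} Condition (a) requires that $\Phi(j)$ have no poles at fibers corresponding to non-strictly positive characters of Levi subgroups. Geometrically, the potential poles of $\Phi(j)$ along $Z_M$ come precisely from jumps in the dimension of $\CB_e^{\BC^*, zs}$ relative to the generic one; after a careful localization, what needs to be seen is that the map $J_e \to \operatorname{End}(K(zs,e,\rho,q))$ extends across walls exactly when the induced standard module remains irreducible, which is the strict positivity condition (see the discussion around standard modules in the introduction). The extension of Theorem \ref{thm_C} to \emph{all} $q$ is what makes this pole analysis uniform in $q$. Condition (b), compatibility under $M \subset M'$, reduces to transitivity of parabolic induction combined with the functoriality of $\phi$ and of the equivariant $K$-theory identifications; this is essentially formal once the actions are set up geometrically.

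\textbf{Bijectivity.} Injectivity of $\Phi$ follows from the fact that, together with their parabolic inductions, the modules $E(s,e,\rho)$ exhaust the simple $J_e$-modules by Theorem \ref{thm_C}(a), so the joint action on $\bigoplus \mathcal V(Z_M,s,\rho,q)$ is faithful. For surjectivity, both $J_e \otimes \BC({\bf v})$ and $\mathcal S_e \otimes \BC({\bf v})$ are semisimple algebras whose simple modules are naturally indexed by the same data $(s,\rho)$ up to conjugation; comparing their decompositions via Theorem \ref{thm_A} gives a generic isomorphism. An element of $\mathcal S_e$ then lifts to an element of $J_e \otimes \BC({\bf v})$, and the pole condition (a) together with the integrality built into the geometric realization $J_e = \mathcal{K}_{Z_e}(\CB_e^{\BC^*} \times \CB_e^{\BC^*})$ forces it to lie in $J_e$ itself.

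\textbf{Main obstacle.} The delicate point will be condition (a): one must match the Lie-theoretic ``non-strictly positive character'' condition with the geometric wall-crossing behavior of the attractor decomposition used in the proof of Theorem \ref{thm_A}. Concretely, as $z \in Z_M$ moves, the set of components of $\CB_e^{\BC^*}$ fixed by $zs$ changes, and the right $K_e$-module structure on $\CF_e$ degenerates in a controlled fashion; translating this degeneration into precisely the pole prescription of \cite{BK} is where the technical work concentrates, and it is the step in \cite{BK} whose details we have to supply via the geometric picture.
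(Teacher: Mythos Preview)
Your construction of $\Phi$ and the injectivity argument are broadly in line with the paper's approach, but there is a genuine gap in both your treatment of condition (a) and your surjectivity argument.

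For condition (a), you propose analyzing poles via jumps in $\dim \CB_e^{\BC^*,zs}$ and matching these to the non-strict positivity condition. The paper avoids any such pole analysis entirely. Instead, one shows (Proposition~\ref{prop_op_ex_rest_iso}, Corollary~\ref{cor_ident_fam_S_E}) that over an explicit open set $\mathcal U \subset Z_M$ containing all non-strictly positive characters, the induced family $\mathcal V(Z_M,s,\rho,q)$ is isomorphic to the geometric family $\mathcal K(Z_M,s,q)_\rho$. The $\J_e$-action on the latter is \emph{regular everywhere} (it is the convolution action of $\K_e$ on $\mathcal L$, transferred via Theorem~\ref{thm_ident_families_pull_back}); no poles ever arise on the geometric side. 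Thus condition (a) is automatic once the identification over $\mathcal U$ is made, and there is no wall-crossing to analyze.

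Your surjectivity argument does not work. You claim that $\J_e \otimes \BC({\bf v})$ and $\mathcal S_e \otimes \BC({\bf v})$ are semisimple and compare decompositions. But $\J_e$ is independent of ${\bf v}$, and more seriously, its fibers over $\on{Spec}\K_{Z_e}(\on{pt})$ are \emph{not} semisimple in general (see Section~\ref{ex_fiber_not_semisimple} and footnote on page~\pageref{footnote_paral_BK}). So a generic-point plus integrality argument cannot recover $\J_e$ from $\mathcal S_e$. The paper's method is to reduce (via the compatibility in (b) and conjugacy of characters to non-strictly positive ones) to the purely geometric Proposition~\ref{fin}, and then prove that the map $\alpha\colon \J_e \to \mathcal E$ becomes an isomorphism after \emph{completion at every semisimple conjugacy class} $s \in Z_e$. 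This uses the localization theorem in equivariant $K$-theory (Appendix~\ref{appK1}) to identify both $\widehat{\J_e}^s$ and $\widehat{\mathcal E}^s$ explicitly as $(\operatorname{End}\K(\CB_e^{\BC^*,s}) \otimes \widehat{\K_{Z_{Z_e}(s)^0}(\on{pt})})^{\pi_0(Z_{Z_e}(s))}$. The completion argument is exact and sidesteps the non-semisimplicity of fibers; your sketch supplies no substitute for it.
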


Before sketching our proof of this theorem, let us discuss certain  families of modules over $\He$, $\He_e$, $\J_e \otimes \BC[{\bf{v}},{\bf{v}}^{-1}]$ that should be considered as geometric counterparts of the Braverman-Kazhdan's families $\mathcal{V}(Z_M,s,\rho,q)$.

Here by a ``geometric''  family we mean the result of the following construction. Let $C \subset Z_{G^\vee}(s,e)$ be a torus and let $M^\vee:=Z_{G^\vee}(C)^0$ be the corresponding Levi. Let $\rho$ be an irreducible representation of $\Gamma_M:=Z_{M^\vee}(s,e)/Z_{M^\vee}(s,e)^0$. Set $H:=\langle C,s\rangle$ (the smallest closed diagonalizable subgroup of $G^\vee$ containing $s$ and $C$). 
We have two families over $C \times \BC^*$:
\begin{equation*}
\mathcal{K}(C,s):=\K_{H \times \BC^*}(\CB_e)|_{Cs \times \BC^*},\, 
\mathcal{L}(C,s):=\K_{H \times \BC^*}(\CB_e^{\BC^*})|_{Cs \times \BC^*}.
\end{equation*} 
The algebras $\He$, $\He_e$ act naturally on $\mathcal{K}(C,s)$ and the algebra $\J_e=\K_{Z_e \times \BC^*}(\CB_e^{\BC^*} \times \CB_e^{\BC^*})$ acts naturally on $\mathcal{L}(C,s)$. We also have the natural action of $\Gamma_M$ on the families $\mathcal{K}(C,s)$, $\mathcal{K}(L,s)$. Taking $\rho$-multiplicity spaces we obtain the families $\mathcal{K}(C,s,\rho):=\mathcal{K}(C,s)_{\rho}$, $\mathcal{L}(C,s,\rho):=\mathcal{L}(C,s)_{\rho}$. 
One can show that for $(\chi,q) \in C \times \BC^*$ we have $\Gamma_M$-equivariant identifications:
\begin{equation*}
\mathcal{K}(C,s)|_{(\chi,q)}=K(s\chi,e,q),\,\mathcal{L}(C,s)|_{\{\chi\} \times \BC^*}=E(s\chi,e) \otimes \BC[{\bf{v}},{\bf{v}}^{-1}],
\end{equation*}
so these families are nothing but the families of (directs sums of) standard or irreducible modules over our algebras. We prove the following theorem that should be considered as a version  ``in families'' of the  identification  $\phi_q^*E(s,e,\rho) \simeq K(s,e,\rho,q)$ above\footnote{Existence of an action of $\J$ on $\mathcal{K}(C,s)$ that is algebraic in $s$ is
claimed and used in \cite{BK}, see the proof of Theorem 2.4 $(3)$ in {\textit{loc. cit.}}, 
%
but not checked there in detail.  Our Theorem \ref{thm_ident_families_pull_back} fills in the details.\label{foot2}} 

\begin{thmx}\label{thm_ident_families_pull_back}
We have a natural $\Gamma_M$-equivariant isomorphism of the families of $\He$ and $\He_{\bf{c}}$-modules respectively:
\begin{equation*}
\phi^{{\bf{c}}*}\mathcal{L}(C,s) \simeq_{\He} \mathcal{K}(C,s),\, \phi^{*}_{{\bf{c}}}\mathcal{L}(C,s) \simeq_{\He_{\bf{c}}} \mathcal{K}(C,s).
\end{equation*}
\end{thmx}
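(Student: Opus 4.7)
The plan is to construct the claimed isomorphism of families directly from the attractor correspondence underlying the proof of Theorems A and B, carried out $H \times \BC^*$-equivariantly and then specialized to $Cs \times \BC^*$. Since $H \subset Z_e$ and $Z_e$ commutes with the Jacobson-Morozov $\BC^*$-action on $\CB_e$, the fixed locus $\CB_e^{\BC^*}$ and the attractor closures $\overline{\on{Attr}}_F \subset \CB_e \times F$ (for $F$ a connected component of $\CB_e^{\BC^*}$) are $H \times \BC^*$-stable. Let $\mathfrak{a} = \sum_F [\CO_{\overline{\on{Attr}}_F}] \in \K_{Z_e \times \BC^*}(\CB_e \times \CB_e^{\BC^*})$ be the attractor class from the proof of Theorem A, and write $\mathfrak{a}_H$ for its image under restriction of equivariance along $H \subset Z_e$. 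Define
\[
\Theta \colon \K_{H \times \BC^*}(\CB_e^{\BC^*}) \to \K_{H \times \BC^*}(\CB_e), \quad v \mapsto p_{1*}(\mathfrak{a}_H \otimes p_2^* v),
\]
with $p_1, p_2$ the projections from $\CB_e \times \CB_e^{\BC^*}$; restriction to $Cs \times \BC^*$ yields the family morphism $\Theta_{C,s} \colon \mathcal{L}(C,s) \to \mathcal{K}(C,s)$.

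That $\Theta_{C,s}$ is an isomorphism of coherent sheaves follows from the Bialynicki--Birula argument used in the proof of Theorem A: by \cite{DLP}, $\CB_e^{\BC^*}$ is smooth and each $\overline{\on{Attr}}_F \to F$ is an affine fibration, so convolution with $\mathfrak{a}$ splits the BB filtration on $\K_{Z_e \times \BC^*}(\CB_e)$ with associated graded $\K_{Z_e \times \BC^*}(\CB_e^{\BC^*})$; the argument is equivariant for any torus commuting with the Jacobson--Morozov $\BC^*$, so it descends to $H$-equivariance and further to the closed subscheme $Cs \times \BC^*$. The $\CH$-linearity of $\Theta_{C,s}$ (with $\CH$ acting on $\mathcal{L}(C,s)$ via $\phi^{\bf c}$ and Theorem A) is the central content. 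Using the $\CH_e$-$\J_{\bf c}$ bimodule structure on $\CF_e = \K_{Z_e \times \BC^*}(\CB_e \times \CB_e^{\BC^*})$ and the convolution compatibility $(\beta \cdot r) * v = \beta * (r \cdot v)$ for $\beta \in \CF_e$, $r \in \J_{\bf c}$, $v \in \K_{Z_e \times \BC^*}(\CB_e^{\BC^*})$, the claim reduces to the universal identity $\mathfrak{a} \cdot \phi^{\bf c}(h) = h \cdot \mathfrak{a}$ in $\CF_e$ for all $h \in \CH$. This identity is precisely the assertion, implicit in Theorem A, that multiplication by $\mathfrak{a}$ realizes the left-$\CH$-linear isomorphism $\CF_e \iso \CH_e$ (combined with Theorem B's identification $\on{End}_{\CH}(\CH_e)^{\rm opp} \simeq \J_{\bf c} \otimes \BC[{\bf v}, {\bf v}^{-1}]$ that converts right-$\J_{\bf c}$-linearity into $\CH$-intertwining); once established universally, it restricts to the $H \times \BC^*$-equivariant setting and further to $Cs \times \BC^*$.

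The $\Gamma_M$-equivariance of $\Theta_{C,s}$ follows since $\mathfrak{a}$ is $Z_e \times \BC^*$-equivariant and the $\Gamma_M$-action on the two families descends from the $Z_e$-equivariant structure on $K$-theory (on multiplicity spaces). Part (b) of the theorem is immediate from part (a): the $\CH$-action on $\mathcal{K}(C,s)$ automatically factors through $\CH_{\bf c}$ (the convolution lives inside $\CH_e \simeq \CH_{\bf c}$), and $\phi^{\bf c}$ factors as $\phi_{\bf c}$ composed with the natural quotient $\CH \to \CH_{\bf c}$, so the $\CH$-linear iso $\Theta_{C,s}$ is automatically $\CH_{\bf c}$-linear. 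The main obstacle is verifying the universal identity $\mathfrak{a} \cdot \phi^{\bf c}(h) = h \cdot \mathfrak{a}$ in $\CF_e$: this requires careful bookkeeping of the chain of isomorphisms $\CH_e \simeq \CF_e \simeq K_e \simeq \J_{\bf c} \otimes \BC[{\bf v}, {\bf v}^{-1}]$ from Theorems A and B, and of how the two bimodule structures match up with $\phi^{\bf c}$. Once that identity is nailed down universally, the family-valued version follows by functoriality of restriction of equivariance.
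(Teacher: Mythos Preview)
Your approach is correct and takes a somewhat different route from the paper. The paper works through the bimodule interpretation $\phi^{e*}M \simeq \CF_e \otimes_{K_e} M$ and then performs a sequence of tensor-product reductions: it changes equivariance from $Z_e$ to $Cs$ by passing to $\Gamma_M$-invariants, invokes a K\"unneth formula for $\K_{Cs\times\BC^*}(\CB_e^{\BC^*}\times\CB_e^{\BC^*})$, and checks the resulting map is an isomorphism fiber-by-fiber (reducing to the pointwise Lemma~\ref{pullb}) together with flatness of all modules over $\mathcal{O}(C\times\BC^*)$. Your argument instead writes down the explicit $\He$-map $\Theta_{C,s}(v)=\mathfrak{a}_H * v$ and establishes $\He$-linearity from the single universal identity $h\cdot\mathfrak{a}=\mathfrak{a}\cdot\phi^{\bf c}(h)$ in $\CF_e$; this identity is indeed exactly the content of the geometric description of $\phi^{\bf c}$ in formula~(\ref{descr_homom_phi_geom}), so it is not merely ``implicit'' but proved in the paper. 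What you gain is a concrete map and no need for the K\"unneth and tensor-product gymnastics; what the paper's route buys is that the isomorphism statement is reduced to the already-proved pointwise case plus freeness of $\K_{Cs\times\BC^*}(\CB_e)$ over the base (cited from \cite{L3}), whereas you must rerun the Bialynicki--Birula splitting argument (Proposition~\ref{split} and especially Lemma~\ref{diff}) with $H\times\BC^*$-equivariance in place of $Z_e\times\BC^*$. That adaptation is routine since the Drinfeld--Gaitsgory degeneration and specialization used in Lemma~\ref{diff} work for any group commuting with the given $\BC^*$, but you should say so explicitly rather than asserting that the argument ``descends''. Your treatment of $\Gamma_M$-equivariance is also a bit brisk; one must check that the group $Z_{M^\vee}(e,s)$ normalizes $H=\langle C,s\rangle$ and that the resulting action on $\K_{H\times\BC^*}$-groups intertwines convolution with the $Z_e$-equivariant class $\mathfrak{a}$, which is clear once stated.
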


Let us now return to the proof of Theorem \ref{BK_main_thm_intro}. We show that the families $\mathcal{K}(Z_M,s,\rho,q)$, $\mathcal{V}(Z_M,s,\rho,q)$ can be identified after restricting to some open dense subset $\mathcal{U} \subset Z_M$ containing all non-strictly positive characters. Using Theorem \ref{thm_ident_families_pull_back}, it then follows that we are reduced to proving the following proposition.

\begin{prop}\label{prop_reform_BK_thm_intro}
    Let $\mathcal E$ be the subalgebra of $\prod_{M, s} \on{End}_{{\mathcal{O}}(Z_M)} \mathcal L(Z_M, s,q)=:\tilde{E}$ consisting of elements $\phi = (\phi(M, s))_{M, s}$ satisfying the following property:
    
    for any conjugate pair $s\chi \sim t\chi'$ (here $\chi' \in Z_L$ for a Levi subgroup $L^\vee$, so that $t \in L^\vee$, and $e \in \on{Lie}L^\vee$), the following 
    equality holds:
     \begin{equation*}
     \phi(M, s)_\chi = \phi(L, t)_{\chi'}.
     \end{equation*}
     Then $\mathcal{E} \simeq J_e$ via the  action map $\alpha\colon J_e \to \mathcal E$.
\end{prop}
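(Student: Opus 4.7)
The plan is to reinterpret each factor $\on{End}_{\mathcal{O}(Z_M)}\mathcal{L}(Z_M,s,q)$ as an equivariant $K$-theory for a smaller diagonalizable subgroup $H\subset Z_e$ and then recognize the compatibility conditions defining $\mathcal{E}$ as precisely the $Z_e$-descent datum; combined with Theorem \ref{thm_A} this will yield $\mathcal{E}\simeq J_e$.

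For the geometric description of each factor, set $C:=Z_M$ and $H:=\langle C,s\rangle\subset Z_e$. A standard convolution argument applied to the family $\mathcal{L}(C,s,q)=\K_{H\times\BC^*}(\CB_e^{\BC^*})|_{Cs\times\BC^*}$ (using that $\CB_e^{\BC^*}$ is smooth projective with algebraic $K$-theory by \cite{DLP}) gives
\[
\on{End}_{\mathcal{O}(Z_M)}\mathcal{L}(Z_M,s,q)\;\cong\;\K_{H\times\BC^*}(\CB_e^{\BC^*}\times\CB_e^{\BC^*})\big|_{Cs\times\BC^*},
\]
and the action map $\alpha$ becomes the product of the natural restriction maps from $\K_{Z_e\times\BC^*}(\CB_e^{\BC^*}\times\CB_e^{\BC^*})=J_e\otimes\BC[{\bf v},{\bf v}^{-1}]$ (Theorem \ref{thm_A}) to these $H$-equivariant localizations. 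Injectivity of $\alpha$ then follows from the observation that, as $(M,s,\chi)$ varies---in particular taking $M$ so that $Z_M=:T$ is a maximal torus of $Z_e$---the points $s\chi\in Z_e$ cover a Zariski-dense subset of the semisimple locus; each irreducible $\J_e$-module $\K(\CB_e^{\BC^*,s\chi})_\rho$ (Theorem \ref{thm_C}) appears as a summand of some fiber $\mathcal{L}(Z_M,s,q)|_\chi\simeq E(s\chi,e)\otimes\BC[{\bf v},{\bf v}^{-1}]$, so the combined action of $J_e$ on these families is faithful.

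For surjectivity---the essential point---the compatibility condition is exactly the $Z_e$-descent datum. Indeed, $\phi(M,s)_\chi$ is an endomorphism of $\K(\CB_e^{\BC^*,s\chi})$, and whenever $s\chi=g(t\chi')g^{-1}$ for some $g\in Z_e$, the induced isomorphism $\CB_e^{\BC^*,t\chi'}\cong\CB_e^{\BC^*,s\chi}$ makes the equality $\phi(M,s)_\chi=\phi(L,t)_{\chi'}$ the statement that the total section is $Z_e$-invariant along conjugation. In particular the case $M=L$, $s=t$, $\chi'=w\chi$ with $w\in N_{Z_e}(H)/H$ recovers Weyl invariance within each family, and cross-terms between different Levi pairs supply the remaining $Z_e$-conjugation relations via Jordan decomposition and $Z_e$-conjugacy of maximal tori. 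By the concentration theorem, the resulting compatible-system subalgebra agrees with $\K_{Z_e\times\BC^*}(\CB_e^{\BC^*}\times\CB_e^{\BC^*})\simeq J_e\otimes\BC[{\bf v},{\bf v}^{-1}]$.

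The main obstacle I expect is in the final step: one must verify that the conjugations witnessed across different Levi pairs generate all of $Z_e$-conjugation on the semisimple locus, and that the descent is effective in $H$-equivariant $K$-theory, so that no nontrivial obstruction survives. The bookkeeping---especially the interaction between the Levi structure and the descent---is delicate but should reduce to the standard description of $Z_e$-orbits on its semisimple locus in terms of pairs (Levi, central element) up to $W$-action.
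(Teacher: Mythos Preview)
Your overall strategy---reinterpreting each factor geometrically and then viewing the compatibility conditions as a descent datum---is reasonable in spirit, and your treatment of injectivity is fine. However, the surjectivity step contains a genuine gap, and the paper takes a substantially different route precisely to avoid it.

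The problem is your final move: you assert that the compatibility conditions are ``exactly the $Z_e$-descent datum'' and then invoke ``the concentration theorem'' to conclude. No such concentration or descent theorem is available in the required generality. Concretely, for the argument to go through you would need an identification of the form
\[
\K_T(\CB_e^{\BC^*}\times\CB_e^{\BC^*})^{N_{Z_e}(T)/T}\;\simeq\;\K_{Z_e}(\CB_e^{\BC^*}\times\CB_e^{\BC^*}),
\]
with $T\subset Z_e$ a maximal torus, and its analogues across the non-identity components. This fails whenever $Z_e^0$ does not have simply connected derived subgroup (e.g.\ already for $G^\vee=\on{PGL}_2$, $e=0$), and when $Z_e$ is disconnected the bookkeeping is worse: as Remark~\ref{rem} spells out, for disconnected (or non--simply-connected) groups the fiber $\K_{Z_e}(X)_{\gamma_s}$ is \emph{not} $\K(X^s)^{\pi_0(Z_{Z_e}(s))}$, so the fiberwise invariance your compatibility conditions encode does not characterize the image of $\alpha$. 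Your closing paragraph flags this as ``delicate bookkeeping,'' but it is the entire content of the proposition: what you are deferring is exactly the statement to be proved.

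The paper's proof avoids a global descent argument altogether. Instead it checks that $\alpha$ becomes an isomorphism after completion at every semisimple conjugacy class $s\in Z_e/\!\!/Z_e$. Completion is exact, so the inclusion $\mathcal{E}\subset\tilde{E}$ survives; a vanishing result (Proposition~\ref{appendix}) kills all but one factor in $\widehat{\tilde{E}}{}^{\,s}$; and the surviving factor is computed explicitly as $(\on{End}(\K(\CB_e^{\BC^*,s}))\otimes\widehat{\K_{Z_{Z_e}(s)^0}(\on{pt})}{}^{\,\mathfrak{m}})^{\pi_0(Z_{Z_e}(s))}$, which by the localization results of Appendix~\ref{appK1} (equation~(\ref{conj})) coincides with $\widehat{\J_e}{}^{\,s}$. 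The passage to a simply connected cover (Proposition~\ref{disc}) that your global approach would need is available only after completion, which is why the paper works locally rather than attempting your direct descent.
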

To prove this proposition, we show that the homomorphism $\alpha\colon \J_e \rightarrow \mathcal{E}$ becomes an isomorphism after completion at every point of $\on{Spec}\K_{Z_e}(\on{pt})$ by describing completions of $\J_e$, $\mathcal{E}$ as explicit subalgebras of the completion of $\tilde{E}$.{\footnote{Our argument is largely parallel to the original proof of \cite{BK}, the difference is that we consider completion at the points of $\on{Spec}\K_{Z_e}(\on{pt})$ which is an exact functor, so, in particular, the natural embedding $\mathcal{E} \subset \tilde{E}$ remains embedding after completions. In general it does not induce an embedding of fibers (since the fibers of $\J_e$ are not semisimple in general, see Section \ref{ex_fiber_not_semisimple}), so
 the last paragraph of the proof of \cite[Theorem 1.8]{BK} does not hold as stated. We use our geometric description of $J_e$ and localization theorem in $K$-theory to analyze the completion of $\J_e$. \label{foot3}}} \label{footnote_paral_BK}

\subsubsection{Description of the cocenter of $\He$}
In the last part of the text we use our geometric approach to $J_e$ to obtain a description of the cocenter of $\J_e$ conjectured in \cite[Section 6.2]{BDD}.
Let us formulate the theorem.

Let $\operatorname{Comm}_{Z_e}$ be the commuting variety for $Z_e$ (with the reduced scheme structure).  Set $\mathcal C_{Z_e}:=\operatorname{Comm}_{Z_e}/\!/Z_e$.

\begin{thmx}\label{intro_final_cocenter} Let $\mathcal{O}^{a}(e)$ be the space consisting of  regular functions $f$ on $\mathcal C_{Z_e}$, subject to the following properties:

a) for any semisimple $s \in Z_e$, $f|_{\{s\} \times Z_{Z_e}(s)}$ is locally constant (and, hence, gives a well-defined function $f_s$ on the component group $\Gamma_e^s$ of $Z_{Z_e}(s)$;

b) $f_s$ is a sum of characters of the group $\Gamma_e^s$ arising in $\K(\CB_{e}^s)$.

Then $\mathcal{O}^{a}(e) \simeq \J_e/[\J_e,\J_e]$.
\end{thmx}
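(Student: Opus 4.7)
My plan is to construct an equivariant Lefschetz trace map $\tau\colon \J_e \to \mathcal{O}(\mathcal{C}_{Z_e})$, show it kills commutators and lands in $\mathcal{O}^a(e)$, and then prove bijectivity by analyzing completions at every closed point of $\on{Spec}\, R_e := \on{Spec}\, \K_{Z_e}(\on{pt})$. The input is Theorem \ref{thm_A} (specialized at ${\bf v} = 1$), which gives $\J_e \simeq \K_{Z_e}(\CB_e^{\BC^*} \times \CB_e^{\BC^*})$ acting by convolution on the finite $R_e$-module $M := \K_{Z_e}(\CB_e^{\BC^*})$.

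\smallskip
\textbf{Definition of $\tau$ and descent to the cocenter.} For $f \in \J_e$ and a commuting pair $(s,g) \in \on{Comm}_{Z_e}$ with $s$ semisimple, apply Thomason's equivariant localization at $[s] \in \on{Spec}\, R_e$: the closed inclusion $\CB_e^{\BC^*,s} \hookrightarrow \CB_e^{\BC^*}$ induces an isomorphism of completed $K$-groups, realizing $M^\wedge_{[s]}$ as $\K(\CB_e^{\BC^*,s}) \otimes \hat R_{[s]}$ with commuting actions of $\J_e$ (by convolution) and of $Z_{Z_e}(s) \ni g$ (from $Z_e$-equivariance). Set
\[
\tau(f)(s,g) := \on{tr}\bigl(g \circ f \,;\, \K(\CB_e^{\BC^*,s})\bigr).
\]
Conjugation invariance is automatic, so $\tau(f) \in \mathcal{O}(\mathcal{C}_{Z_e})$. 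Since the $Z_e$- and $\J_e$-actions on $M$ commute, cyclicity of trace gives $\on{tr}(g\circ f_1 f_2) = \on{tr}(f_2 \circ g \circ f_1) = \on{tr}(g \circ f_2 f_1)$, so $\tau([f_1,f_2]) = 0$ and $\tau$ descends to $\bar\tau\colon \J_e/[\J_e,\J_e] \to \mathcal{O}(\mathcal{C}_{Z_e})$.

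\smallskip
\textbf{Image lies in $\mathcal{O}^a(e)$.} Since $Z_{Z_e}(s)^0$ is connected and acts on the finite-dimensional $\BC$-vector space $\K(\CB_e^{\BC^*,s})$, it acts trivially on $K_0$, so $\tau(f)(s,\cdot)$ is constant on components of $Z_{Z_e}(s)$ and descends to a function $\tau(f)_s$ on $\Gamma_e^s$, establishing condition (a). For condition (b), $\tau(f)_s$ is by construction the $f$-weighted character of the $\Gamma_e^s$-representation $\K(\CB_e^{\BC^*,s})$, so it lies in the $\BC$-span of characters $\chi_\rho$ for those $\rho$ with $\K(\CB_e^{\BC^*,s})_\rho \neq 0$. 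The DLP-style Bialynicki--Birula decomposition of $\CB_e^s$ with respect to the Jacobson--Morozov $\BC^*$-action (whose fixed points are precisely $\CB_e^{\BC^*,s}$, and whose cohomology is algebraic by \cite{DLP}) furnishes a $\Gamma_e^s$-equivariant isomorphism $\K(\CB_e^{\BC^*,s}) \simeq \K(\CB_e^s)$, so the appearing $\rho$'s match those in condition (b). Hence $\bar\tau\colon \J_e/[\J_e,\J_e] \to \mathcal{O}^a(e)$.

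\smallskip
\textbf{Bijectivity and main obstacle.} To prove $\bar\tau$ is an isomorphism, I would work completion-by-completion over $\on{Spec}\, R_e$. Combining equivariant localization with Theorem \ref{thm_A}, one identifies the completion $(\J_e)^\wedge_{[s]}$ with (a completed form of) $\on{End}(\K(\CB_e^{\BC^*,s}) \otimes \hat R_{[s]})^{\Gamma_e^s}$; by $\Gamma_e^s$-isotypic decomposition, this is a product $\prod_{\rho \text{ appearing}} \on{Mat}_{n_\rho}(\hat R_{[s]})$ with $n_\rho = \dim \K(\CB_e^{\BC^*,s})_\rho$. Its cocenter is $\bigoplus_{\rho \text{ appearing}} \hat R_{[s]}$, which matches the $[s]$-completion of $\mathcal{O}^a(e)$ (locally constant functions in the fiber direction lying in the span of the appearing characters), and the local cocenter map agrees with $\bar\tau^\wedge_{[s]}$ by the standard trace formula for matrix algebras. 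A patching argument—using flatness of completion, finite generation of both $\J_e/[\J_e,\J_e]$ and $\mathcal{O}^a(e)$ over $R_e$, and surjectivity of the composite $\on{Spec}\, R_e \to \on{Spec}\, (\text{image of }\bar\tau)$—upgrades the pointwise isomorphism to a global one. The hardest step is the local identification of $(\J_e)^\wedge_{[s]}$ with the $\Gamma_e^s$-equivariant endomorphism ring: one must carefully track how the non-connected centralizer $Z_{Z_e}(s)$ acts on the components of $\CB_e^{\BC^*,s}$ and on the corresponding $K$-theoretic isotypic blocks, and must separate the $\rho$-isotypic summands cleanly enough that the completed convolution algebra decomposes as a product of matrix algebras. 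This is the analogue at the level of the asymptotic algebra of the localization analysis used in the proof of Theorem \ref{BK_main_thm_intro} and footnote \ref{footnote_paral_BK}, and I expect it to be the technical heart of the argument.
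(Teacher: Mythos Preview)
Your overall architecture—build an equivariant Lefschetz trace map, check it lands in $\mathcal{O}^a(e)$, then prove bijectivity by completing at each semisimple class $[s]$—matches the paper's strategy in Section~\ref{cocenter_J_section} closely. The construction of $\tau$ and the admissibility check are essentially correct and parallel Lemma~\ref{Lmadm}. The real difference, and the real problem, is in your local computation.

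Your claim that $(\J_e)^\wedge_{[s]}$ decomposes as $\prod_{\rho\text{ appearing}}\on{Mat}_{n_\rho}(\hat R_{[s]})$ is not true in general, and this is precisely the obstacle the paper works around. The correct localization (formula~\eqref{conj} in the paper) identifies the completion with $(\on{End}(\K(\CB_e^{\BC^*,s}))\otimes \hat R')^{\Gamma_e^s}$ where $\hat R'=\widehat{\K_{Z_{Z_e}(s)^0}(\on{pt})}$ and, crucially, $\Gamma_e^s$ acts \emph{nontrivially on $\hat R'$} (via the outer action on the identity component). Taking $\Gamma_e^s$-invariants with a nontrivial action on the coefficient ring does \emph{not} yield a product of matrix algebras over the invariant subring; your isotypic decomposition only works when $\Gamma_e^s$ acts trivially on the base. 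Correspondingly, the cocenter is not $\bigoplus_\rho(\hat R')^{\Gamma_e^s}$: it is computed via functions on the inertia of $\operatorname{Spec}\hat R'/\Gamma_e^s$, i.e.\ the various fixed-point loci $(\operatorname{Spec}\hat R')^\gamma$, which is exactly what is needed to match the completion of $\mathcal{O}^a(e)$ (itself defined on the commuting variety, not on $\on{Spec} R_e$ alone).

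The paper resolves this by passing to the Morita-equivalent algebra $\epsilon(\hat R'\#\Gamma_e^s)\epsilon$ (Lemma~\ref{lemma_mor_equiv}) and invoking Baranovsky's identification of the cocenter of a smash product with functions on the inertia orbifold, then matching this with $\widehat{\mathcal{O}^a(e)}$ via Proposition~\ref{inertia}. Your proposal would need this entire layer; the ``product of matrix algebras'' picture you expect is the special case where $\Gamma_e^s$ is trivial. One further remark: the paper proves injectivity of $\bar\tau$ \emph{independently} of the local computation, using density of characters for $\J$ (reducing to $\He_q$ via \cite{BDD}); since your injectivity is supposed to fall out of the same completion argument, fixing the gap above is doubly necessary.
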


\begin{remark}
Recall that by \cite[Theorem 1]{BDD} the homomorphism $\phi_q\colon \He_q \rightarrow \J=\bigoplus_e \J_e$ induces an isomorphism on the level of cocenters for $q$ not a root of unity.  This means that Theorem \ref{intro_final_cocenter} gives a description of the cocenter $\He_q/[\He_q,\He_q]$ for $q$ not a root of unity.
\end{remark}

\subsection{Structure of the paper}
The paper is organized as follows. In Section \ref{sec:intro} we recall definitions and known properties of the algebras $\CH$, $J$. Section \ref{proof_of_prop_1} is devoted to the proof of Theorem \ref{thm_B}. In Section \ref{sec:main1} we prove Theorem \ref{thm_A}. In Section \ref{sec:fam} we study representations  of $\J$, construct families of (irreducible) modules over it and prove Theorems \ref{thm_C}, \ref{thm_ident_families_pull_back}. In Section \ref{end}, we prove \cite[Theorem 1.8 (3)]{BK} by our methods. In Section \ref{cocenter_J_section} we prove Theorem \ref{intro_final_cocenter} i.e. describe the cocenter of $\J_e$. Appendix \ref{appK} contains proofs of various general facts about equivariant $K$-theory that we need for our arguments. 


\subsection{Acknowledgements} We  gratefully acknowledge helpful input from Dan Ciubotaru, Stefan Dawydiak, Pavel Etingof, Michael Finkelberg,  Victor Ginzburg, Mikhail Goltsblat, 
William Graham, Do Kien Hoang, David Kazhdan, Ivan Losev, 
Jakub L\" owit, Victor Ostrik, Oron Propp, Vadim Vologodsky and Zhiwei Yun. We are grateful to George Lusztig for useful comments on the first version of the text.

I. K. especially thanks Dmitrii Zakharov for explaining the material of Proposition~\ref{appendix} and Michael Finkelberg for his \TeX-nical generosity.

R.B. was partly supported by the NSF grant DMS-2101507.

\section{Generalities on $\CH$ and $J$}\label{sec:intro}
\subsection{The affine Hecke algebra}
Let $W$ be as above and let $S \subset W$ be the set of simple reflections. Let $\ell\colon W \rightarrow \mathbb{Z}_{\geqslant 0}$ be the length function on $W$. The algebra $\CH=\CH_G$ is an algebra over $\BZ[{\bf{v}},{\bf{v}}^{-1}]$ with basis $\{T_w\}_{w \in W}$. Multiplication in this algebra is determined by the relations $T_{x}T_{y}=T_{xy}$ when $\ell(xy)=\ell(x)+\ell(y)$ and $(T_s-v^2)(T_s+1)=0$ for $s \in S$. Recall that $\He:=\mathcal{H} \otimes_{\BZ} \BC$ is the complexification of $\mathcal{H}$.

\subsubsection{Lusztig's cells} There is a partition of $W$ into the union of the so-called \textit{cells}. We start with a brief recollection of it. 

Let $\leq$ be the strong Bruhat order. First of all, two elements $x$ and $y$ of $W$ are said to be \textit{connected}, if either $x < y$ or $y < x$; and  $\operatorname{deg} P_{x, y} = \frac{|\ell(x)-\ell(y)|-1}{2}$: in particular, $\ell(x) - \ell(y)$ is odd. (Here, we denote by $P_{x, y}$ the corresponding Kazhdan-Lusztig polynomial, see \cite{KL0} and \cite[Section 1.2]{Lcells3}.)

To each $w \in W$ corresponds the so-called \textit{left descending set} $D^l(w)=\{s \in S \ | \ sw \leq w\}$. 

Now, for $x, y \in W$ we say that $x \leq_{L} y$, if there is a chain of elements $(x_1=x,  \ldots, x_l, \ldots, x_k=y)$, s.t. all neighbours in it are connected, and $D^l(x_i) \setminus D^l(x_{i+1}) \neq\emptyset$. 

It is well-known that $\leq_{L}$ is a partial preorder, and the corresponding equivalence classes are called \textit{left cells}: this is the work by Lusztig (\cite{Lcells1}). 

\textit{Right cells} are subsets of the form $\Psi^{-1}$ for $\Psi$ a left cell. Their definition can also be given in a similar fashion to the one above. We should only replace $D^l(w)$ by $D^r(w) := \{s \in S \ | \ ws \leq w\}$.

Finally, there are also \textit{two-sided cells} (we will denote the set of them by $\mathcal C$). They are defined in almost the same fashion as right or left ones; now the appropriate condition for neighbors in $(x_1,  \ldots, x_l, \ldots)$ is that either $D^l(x_i) \setminus D^l(x_{i+1}) \neq \emptyset$ or $D^r(x_i) \setminus D^r(x_{i+1}) \neq \emptyset$ for each $i$.

\subsubsection{Two-sided cells and nilpotent conjugacy classes}\label{nilp} The famous theorem by Lusztig (cf. \cite{Lcells4}) says that two-sided cells in $W$ are in bijection with the conjugacy classes of nilpotent elements in the Lie algebra $\mathfrak{g}^\vee$ of the Langlands dual group $G^\vee$. In particular, this gives an order on cells (via the closure order on the nilpotent orbits). This order coincides with the natural one (this was conjectured by Lusztig and proved by the first author in \cite[Theorem 4 (b)]{B2}). 

The so-called \textit{$a$-function} sends every two-sided cell $\bf c$ to the dimension of the corresponding Springer fiber $\CB_e$.

It has a combinatorial meaning as well. Namely, recall that $\mathcal{H}$ is the affine Hecke algebra of $G$ and let us denote by $C_x, \ x \in W,$ the Kazhdan-Lusztig basis elements in $\mathcal{H}$. Then, if $h_{x, y, z} \in \mathbb Z[{\bf{v}}, {\bf{v}}^{-1}]$ are structure constants of $\mathcal{H}$ with respect to $\{C_w\}$, $-a(\bf c)$ is the lowest possible degree of non-zero term in $h_{x, y, z}$, $x, y \in \bf c$, $z \in W$.

\subsection{Asymptotic affine Hecke algebra} Now we recall Lusztig's definition of the asymptotic affine Hecke algebra $J$ (cf. \textit{loc. cit.}). It has a basis $\{t_w\,|\,  w \in W\}$. In this basis the structure constant $\gamma_{x, y, z}$ is the constant term of the polynomial ${\bf{v}}^{a(z)}h_{x, y, z^{-1}}$: $t_xt_y=\sum_{z}\gamma_{x,y,z}t_{z^{-1}}$.

It is shown in {\em loc. cit.} that $J$ is actually an associative algebra. Moreover, denoting the span over ${\BZ[{\bf{v}},{\bf{v}}^{-1}]}$ of $\{t_x \ | \ x \in \bf c\}$ by $J_{\bf c}$, one gets a decomposition of $J$ as a direct sum (product) of rings: $$J = \bigoplus_{\bf c \in \mathcal C} J_{\bf c}.$$
We will sometimes denote $J_{\bf c}$  by $J_e$ for $\bf c$ corresponding to the nilpotent orbit ${\mathbb{O}}_e=G^\vee e$ (cf. Section \ref{nilp}). The same notation will be used for subscripts and superscripts of various morphisms in Section \ref{phi}.

\subsubsection{Distinguished involutions and the unit element in $J_{\bf{c}}$}\label{s} 
The set of \textit{distinguished involutions}: $\mathcal D \subset W$ consists of all elements $d \in W$, such that $a(w) - \ell(w) = 2\operatorname{deg}P_{1, w}.$ It is known that its elements are actually involutions, and each one-sided cell contains exactly one of them.

The element ${\bf 1} = \sum_{d \in \mathcal D} t_d$ is the unit element in the algebra $J$,
while for ${\bf{c}} \in \mathcal{C}$, 
the element ${\bf 1}_{\bf{c}}:=\sum_{d \in {\bf{c}} \cap \mathcal{D}} {{t_d}}$ is the unit element in
$J_{\bf{c}}$.

\subsection{Relation between $\mathcal{H}$ and $J$}\label{phi} 

Recall 
Lusztig's homomorphism (see \cite{Lcells3}) 
$\phi\colon \mathcal H \to J \otimes \mathbb Z[{\bf{v}}, {\bf{v}}^{-1}]$, $$C_w \mapsto \sum\limits_{d \in \mathcal D, z \in W, \ a(z) = a(d)} h_{w, d, z}t_z$$  
and the {\emph{additive}} isomorphism  $\psi\colon \mathcal H \iso J\otimes \mathbb \BZ[{\bf{v}}, {\bf{v}}^{-1}], \ C_w \mapsto t_w$.

For a two-sided cell $\bf c \in \mathcal C$ we set 
$$J_{\leq \bf c}:=\bigoplus\limits_{{\bf c'}\leq_{LR} \bf c} J_{ \bf c'}.$$
Obviously $\psi(J_{\leq \bf c}[{\bf{v}}, {\bf{v}}^{-1}])=\mathcal{H}_{\leq \bf c}$
is the corresponding term of the cell filtration.

The coincidence of the geometric and the cell filtration implies 
 that $\mathcal H_{\leq \bf c}$ can be identified with the $G^\vee \times \BC^*$-equivariant $K$-theory of the preimage of $\overline{\mathbb{O}}_e$ in $\widetilde{\CN} \times_{\CN} \widetilde{\CN}$ and $\mathcal H_{ \bf c} = K_{Z_e \times \BC^*}(\mathcal{B}_e \times \mathcal{B}_e)$, see \cite{TX}, \cite{Xi1} for details.

Finally,  note that homomorphism $\phi$ induces  homomorphisms 
\begin{equation*}
\phi_{\bf c}\colon \mathcal H_{\bf c} \to J_{\bf c} \otimes \mathbb Z[{\bf{v}}, {\bf{v}}^{-1}],\, \phi^{\bf c}\colon \mathcal{H} \to J_{\bf c} \otimes \BZ[{\bf{v}},{\bf{v}}^{-1}]
\end{equation*}
($\phi^{\bf c}$ is the composition of $\phi$ and the projection to the direct summand).

The isomorphism $\psi$ restricts to the isomorphisms $\psi_{\bf c}\colon \mathcal{H}_{\bf c} \iso J_{\bf c} \otimes \BZ[{\bf{v}}, {\bf{v}}^{-1}]$.

\subsubsection{Bimodule structure on $J \otimes \mathbb Z[{\bf{v}}, {\bf{v}}^{-1}]$}\label{m} 
A homomorphism of rings $R\to S$ equips $S$ with the structure of an $R-S$ bimodule.
In particular, the homomorphism $\phi^{\bf{c}}$ defines a $\mathcal{H} - J_{\bf{c}} \otimes \mathbb Z[{\bf{v}}, {\bf{v}}^{-1}]$ - bimodule structure on $J_{\bf{c}} \otimes \mathbb{Z}[{\bf{v}},{\bf{v}}^{-1}]$. Similarly, $\phi_{\bf{c}}$ defines a $\mathcal{H}_{\bf{c}} - J_{\bf{c}} \otimes \mathbb Z[{\bf{v}}, {\bf{v}}^{-1}]$ - bimodule structure on $J_{\bf{c}} \otimes \mathbb{Z}[{\bf{v}},{\bf{v}}^{-1}]$. 

In this section, we will give an alternative description of the (left) action of $\mathcal{H}$ on $J_{\bf{c}} \otimes \mathbb{Z}[{\bf{v}},{\bf{v}}^{-1}]$ as above. We will then conclude that this $\mathcal{H}$-module is isomorphic to $\mathcal{H}_{\bf{c}}$. We will obtain similar statements for the $\mathcal{H}_{\bf{c}}$-module $J_{\bf{c}} \otimes \mathbb{Z}[{\bf{v}},{\bf{v}}^{-1}]$.

Let  $\operatorname{ht}_{\bf c}$ denote  the projection ${\mathcal H}_{\leq \bf c} \twoheadrightarrow {\mathcal H}_{\bf c}$. Following \cite[Section 3]{L1} let us define the action of $\mathcal{H}$ on $\bigoplus_{{\bf{c}}' \leqslant {\bf{c}}} J_{{\bf{c}}} \otimes \mathbb Z[{\bf{v}}, {\bf{v}}^{-1}]$ by the formula: 
\begin{equation}\label{act_H_on_J}
C_x \cdot t_y = \psi(C_xC_y).
\end{equation}
Note that directly from the definitions $\psi\colon \mathcal{H}_{\leqslant {\bf{c}}} \iso \bigoplus_{{\bf{c}}' \leqslant {\bf{c}}} J_{{\bf{c}}} \otimes \mathbb Z[{\bf{v}}, {\bf{v}}^{-1}]$ is the isomorphism of $\mathcal{H}$-modules (in particular, it follows that the formula (\ref{act_H_on_J}) indeed defines the action of $\mathcal{H}$).

Note now that $\psi(\mathcal{H}_{<{\bf{c}}}) = \bigoplus_{{\bf{c}}' < {\bf{c}}} J_{{\bf{c}}} \otimes \mathbb Z[{\bf{v}},{\bf{v}}^{-1}]$ is an $\mathcal{H}$-submodule of $\bigoplus_{{\bf{c}}' \leqslant {\bf{c}}} J_{{\bf{c}}} \otimes \mathbb Z[{\bf{v}}, {\bf{v}}^{-1}]$. Modding out by this submodule, we obtain the action of $\mathcal{H}$ on $J_{{\bf{c}}} \otimes \BZ[{\bf{v}},{\bf{v}}^{-1}]$ such that for $y\in \bf c$
we have:
\begin{equation}\label{action_H_on_J}
C_x\cdot t_y = \psi_{{\bf{c}}}(\operatorname{ht_{\bf c}}(C_xC_y)).
\end{equation} 
It induces the action $\mathcal{H}_{\bf{c}} \curvearrowright J_{\bf{c}} \otimes \mathbb{Z}[{\bf{v}},{\bf{v}}^{-1}]$.

\begin{lemma}\label{lem_alt_descr_H_module}
(a) Two $\mathcal{H}$-actions on $J_{\bf{c}} \otimes \BZ[{\bf{v}},{\bf{v}}^{-1}]$ defined above (the one coming from $\phi^{\bf{c}}$ and the one given by the  formula (\ref{action_H_on_J}))  coincide.

(b) Two $\mathcal{H}_{\bf{c}}$-actions on $J_{\bf{c}} \otimes \BZ[{\bf{v}},{\bf{v}}^{-1}]$ defined above (the one coming from $\phi_{\bf{c}}$ and the one given by the formula (\ref{action_H_on_J})) coincide. 
\end{lemma}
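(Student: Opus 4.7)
The plan is to reduce the lemma to a short formal manipulation based on the identity
\[
\phi_{\bf c} = \psi_{\bf c} \circ R_{\bar E_{\bf c}},
\]
where $R_{\bar E_{\bf c}}$ denotes right multiplication in $\mathcal H_{\bf c}$ by $\bar E_{\bf c} := \sum_{d \in \mathcal D \cap {\bf c}} \bar C_d$ (with $\bar C_d$ the image of $C_d$ in $\mathcal H_{\bf c}$). First I would verify this identity by a direct computation: for $y \in {\bf c}$,
\[
\psi_{\bf c}(R_{\bar E_{\bf c}}(\bar C_y)) = \psi_{\bf c}\Bigl(\sum_{d \in \mathcal D \cap {\bf c}} \overline{C_y C_d}\Bigr) = \sum_{d \in \mathcal D \cap {\bf c}, \, u \in {\bf c}} h_{y, d, u}\, t_u,
\]
while $\phi_{\bf c}(\bar C_y) = \phi^{\bf c}(C_y) = \operatorname{pr}_{\bf c}(\phi(C_y)) = \sum_{d \in \mathcal D \cap {\bf c}, \, u \in {\bf c}} h_{y,d,u}\, t_u$; the restriction $d \in \mathcal D \cap {\bf c}$ in the last expression follows from Lusztig's property $P_8$, which implies that $h_{y,d,u}\neq 0$ with $d$ distinguished and $a(u)=a(d)$ forces $d\sim_R u$, hence $d \in {\bf c}$ when $u \in {\bf c}$.

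With this identity in hand, the lemma will follow from two equivariance properties. First, associativity of $\mathcal H_{\bf c}$ gives $L_x R_{\bar E_{\bf c}} = R_{\bar E_{\bf c}} L_x$ for every left-multiplication operator $L_x$ on $\mathcal H_{\bf c}$. Second, $\phi_{\bf c}$ is $\mathcal H$-equivariant where $\mathcal H$ acts on $J_{\bf c} \otimes \mathbb Z[{\bf v},{\bf v}^{-1}]$ via $\phi^{\bf c}$: this follows from the ring-homomorphism property of $\phi$ combined with the direct-sum decomposition $J = \bigoplus_{{\bf c}'} J_{{\bf c}'}$ as rings, which yields
\[
\phi^{\bf c}(C_x)\cdot \phi^{\bf c}(C_y) = \operatorname{pr}_{\bf c}(\phi(C_x C_y)) = \sum_{v \in {\bf c}} h_{x,y,v}\, \phi^{\bf c}(C_v) = \phi_{\bf c}(\overline{C_x C_y})
\]
for all $x \in W$, $y \in {\bf c}$ (using that $\phi^{\bf c}(C_v)=0$ for $v$ in cells strictly below ${\bf c}$ and $h_{x,y,v}=0$ for $v$ in cells not $\leq {\bf c}$). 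Combining these ingredients, for any $x \in W$ and $\bar h \in \mathcal H_{\bf c}$ one obtains
\[
\psi_{\bf c}(L_x(R_{\bar E_{\bf c}}(\bar h))) = \psi_{\bf c}(R_{\bar E_{\bf c}}(L_x(\bar h))) = \phi_{\bf c}(L_x(\bar h)) = \phi^{\bf c}(C_x) \cdot \psi_{\bf c}(R_{\bar E_{\bf c}}(\bar h)),
\]
so that $\psi_{\bf c}$ intertwines the $L_x$-action on $\mathcal H_{\bf c}$ with the $\phi^{\bf c}(C_x)$-action on $J_{\bf c} \otimes \mathbb Z[{\bf v},{\bf v}^{-1}]$, on every element in the image of $R_{\bar E_{\bf c}}$.

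Over the fraction field $\mathbb C({\bf v})$, $\phi_{\bf c}$ is an isomorphism, so $R_{\bar E_{\bf c}}$ is invertible and its image fills $\mathcal H_{\bf c} \otimes \mathbb C({\bf v})$; this yields the equality of the two $\mathcal H$-actions on $J_{\bf c} \otimes \mathbb C({\bf v})$. Since both actions are $\mathbb Z[{\bf v},{\bf v}^{-1}]$-linear operators on the free $\mathbb Z[{\bf v},{\bf v}^{-1}]$-module $J_{\bf c} \otimes \mathbb Z[{\bf v},{\bf v}^{-1}]$ and they agree after extension of scalars, they agree integrally, proving (a). Part (b) is an immediate corollary, since the two $\mathcal H_{\bf c}$-actions are obtained from the corresponding $\mathcal H$-actions in (a) by restriction to $\{C_x : x \in {\bf c}\}$. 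The main obstacle is spotting the identity $\phi_{\bf c} = \psi_{\bf c} \circ R_{\bar E_{\bf c}}$; once this is observed, everything else reduces to formal manipulation using associativity of $\mathcal H$ and the ring-homomorphism property of $\phi$.
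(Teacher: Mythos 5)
Your proof is correct and is a genuine variant of the paper's argument, though both rest on the same observation and ultimately on Lusztig's foundational results. Both proofs begin with the observation that $\phi_{\bf c} = \psi_{\bf c}\circ R_{\bar E_{\bf c}}$ (the paper's equation~(\ref{phi_via_psi}), which you re-derive). The divergence is in what happens next. The paper invokes the identity from \cite[2.4~(d)]{Lcells2}, namely $\psi_{\bf c}(\operatorname{ht}_{\bf c}(C_{x_1}C_{x_2}))\,t_{x_3}=\psi_{\bf c}(C_{x_1}\psi_{\bf c}^{-1}(t_{x_2}t_{x_3}))$, and then sums over distinguished $x_2$ to kill the $t_{x_2}$ factor using $\mathbf 1_{\bf c}=\sum_d t_d$; this yields the statement directly, for all of $\mathcal H_{\bf c}$, over $\mathbb Z[{\bf v},{\bf v}^{-1}]$. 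You instead bypass \cite[2.4~(d)]{Lcells2} by using the ring-homomorphism property of $\phi$ (combined with the $P_4/P_8$-type cell combinatorics to control which summands survive projection to $J_{\bf c}$) together with the bimodule commutation $L_xR_{\bar E_{\bf c}}=R_{\bar E_{\bf c}}L_x$; this only proves the intertwining on the image of $R_{\bar E_{\bf c}}$, so you must then appeal to the nontrivial fact that $\phi_{\bf c}\otimes\mathbb C({\bf v})$ is an isomorphism and to torsion-freeness of $J_{\bf c}\otimes\mathbb Z[{\bf v},{\bf v}^{-1}]$ to extend. The trade-off: the paper's route is a one-shot integral computation, whereas yours avoids citing the specific identity from \cite{Lcells2} but needs the isomorphism of $\phi_{\bf c}$ over $\mathbb C({\bf v})$ as an independently-known input and carries an extra extension-of-scalars step. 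It is worth being aware that the ring-homomorphism property of $\phi$ and the identity you avoided are close cousins in Lusztig's theory, so the logical savings are modest; still, packaging the argument through the multiplicativity of $\phi$ rather than through \cite[2.4~(d)]{Lcells2} is a legitimate and somewhat cleaner way to see why the two module structures must agree.
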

\begin{proof}
It is enough to prove part (a), part (b) will follow.

Let us prove part (a). We need to check that 
\begin{equation*}
C_x \cdot t_y = \phi^{{\bf{c}}}(C_x)t_y
\end{equation*}
for $x \in W$ and $y \in {\bf{c}}$. So, our goal is to check that 
\begin{equation}\label{our_goal}
\psi_{\bf{c}}(\operatorname{ht}_{\bf{c}}(C_xC_y)) = \phi^{\bf{c}}(C_x)t_y.
\end{equation}

Note that directly from the definitions we have:
\begin{equation}\label{phi_via_psi}
\phi^{\bf{c}}(C_w) = \sum_{d \in \mathcal{D} \cap {\bf{c}}} \psi_{\bf{c}}(\operatorname{ht}_{\bf{c}}(C_wC_d)).
\end{equation}
The following formula follows from \cite[2.4 (d)]{Lcells2} (see also  \cite[Proof of Proposition 2]{BDD}):
\begin{equation}\label{ssylka}\psi_{\bf c}(\operatorname{ht}_{\bf{c}}(C_{x_1}C_{x_2}))t_{x_3} = \psi_{\bf c}(C_{x_1}\psi_{\bf c}^{-1}(t_{x_2}t_{x_3})).
\end{equation}
Now, setting $x_1=x$, $x_3=y$, summing (\ref{ssylka}) over $x_2 \in \mathcal D \cap \bf c$ and using (\ref{phi_via_psi}) together with the fact that ${\bf{1}}_{{\bf{c}}} = \sum_{x_2 \in \mathcal{D} \cap {\bf{c}}}t_{x_2}$ is the identity element of $J_{\bf{c}}$ we obtain the desired equation (\ref{our_goal}).
\end{proof}

\begin{cor}\label{cor_H_mod_is_iso}
(a) The $\mathcal{H}$-module $J_{\bf{c}} \otimes \mathbb{Z}[{\bf{v}},{\bf{v}}^{-1}]$ defined via $\phi^{\bf{c}}$ is isomorphic to $\mathcal{H}_{\bf{c}}$.

(b) The $\mathcal{H}_{\bf{c}}$-module $J_{\bf{c}} \otimes \mathbb{Z}[{\bf{v}},{\bf{v}}^{-1}]$ defined via $\phi_{\bf{c}}$ is isomorphic to $\mathcal{H}_{\bf{c}}$.
\end{cor}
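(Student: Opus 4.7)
The plan is to deduce both parts directly from Lemma~\ref{lem_alt_descr_H_module} combined with the observations about $\psi$ in the text immediately preceding that lemma. No genuinely new computation should be required; once the compatibility of the $\phi$-actions with the $\psi$-actions has been established (which is done in the lemma), the corollary becomes a formal consequence of $\psi$ being an additive isomorphism that respects the cell filtration.

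First I would handle part (a). Recall from the discussion following formula~(\ref{act_H_on_J}) that $\psi\colon \mathcal{H}_{\leq {\bf c}} \iso \bigoplus_{{\bf c}' \leq {\bf c}} J_{{\bf c}'} \otimes \BZ[{\bf v},{\bf v}^{-1}]$ is tautologically an isomorphism of $\mathcal{H}$-modules when the target carries the action $C_x \cdot t_y = \psi(C_x C_y)$. Since $\psi$ sends the two-sided ideal $\mathcal{H}_{<{\bf c}}$ onto $\bigoplus_{{\bf c}' < {\bf c}} J_{{\bf c}'} \otimes \BZ[{\bf v},{\bf v}^{-1}]$, and the latter is an $\mathcal{H}$-submodule of the target (if $C_y \in \mathcal{H}_{<{\bf c}}$ then $C_x C_y \in \mathcal{H}_{<{\bf c}}$, so $\psi(C_x C_y)$ remains in this submodule), $\psi$ descends to an isomorphism of $\mathcal{H}$-modules $\psi_{\bf c}\colon \mathcal{H}_{\bf c} \iso J_{\bf c} \otimes \BZ[{\bf v},{\bf v}^{-1}]$, where the $\mathcal{H}$-action on the right-hand side is given by formula~(\ref{action_H_on_J}). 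By Lemma~\ref{lem_alt_descr_H_module}(a), this coincides with the action coming from $\phi^{\bf c}$, which proves part (a).

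For part (b), I would observe that the $\mathcal{H}$-action on $J_{\bf c} \otimes \BZ[{\bf v},{\bf v}^{-1}]$ just described factors through $\operatorname{ht}_{\bf c}\colon \mathcal{H}_{\leq{\bf c}} \twoheadrightarrow \mathcal{H}_{\bf c}$ (an element of $\mathcal{H}_{<{\bf c}}$ acts by zero for the same reason as above), and thereby induces an $\mathcal{H}_{\bf c}$-module structure under which $\psi_{\bf c}$ corresponds to the left regular $\mathcal{H}_{\bf c}$-module on $\mathcal{H}_{\bf c}$. Applying Lemma~\ref{lem_alt_descr_H_module}(b) identifies this $\mathcal{H}_{\bf c}$-action with the one coming from $\phi_{\bf c}$, yielding part (b). The only nontrivial step is the compatibility already carried out in Lemma~\ref{lem_alt_descr_H_module} via the identity~(\ref{ssylka}); beyond that, the corollary is formal, and I do not expect any further obstacle.
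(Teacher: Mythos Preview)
Your proposal is correct and follows exactly the paper's approach: the paper's proof simply states that Lemma~\ref{lem_alt_descr_H_module} implies $\psi$ (or rather $\psi_{\bf c}$) is the desired isomorphism of $\mathcal{H}$- (resp.\ $\mathcal{H}_{\bf c}$-) modules, and your write-up unpacks precisely this.
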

\begin{proof}
It follows from Lemma \ref{lem_alt_descr_H_module} that $\psi$ is the desired isomorphism of $\mathcal{H}$ (resp. $\mathcal{H}_{\bf{c}}$) - modules.
\end{proof}

 




\subsection{Description of $J_e$ via finite centrally extended sets}\label{sec:low}


A key ingredient in our proof of the Theorem~\ref{thm_B} presented in the next section is the \textit{finite-set realization} of $J_{\bf{c}}$, which we now recall.

The following was conjectured (in a slightly different form) by Lusztig \cite{Lcells4}. It was proven in \cite{Xi} in type A and
in \cite{BO} in general.  
\begin{defn} A centrally extended finite $H$-set (for a reductive group $H$) is a finite $H$-set $Y$ equipped with some central extension $1 \to \mathbb G_m \to \widetilde{\operatorname{Stab}_H(y)} \to \operatorname{Stab}_H(y) \to 1$ for every $y \in Y$ such that for every $g \in G$ the conjugation isomorphism $C_g\colon \operatorname{Stab}_H(y) \to \operatorname{Stab}_H(gy)$ extends to an isomorphism between these central extensions satisfying natural compatibilities.
\end{defn}
For a centrally extended finite $H$-set $Y$, one defines an equivariant sheaf on $Y$ as a usual $H$-equivariant sheaf $\mathcal F$ with an additional data of $\widetilde{\operatorname{Stab}_H}(y)$-action on $\mathcal F_y$, which induces an action of $\mathbb G_m$-factor by identity character. This allows us to consider $K_H(\on{pt})$-algebra $K_H(Y)$.
\begin{thm}[\cite{BO}]\label{BO} Let ${\bf {c}} \subset W$ be a two-sided cell in $W$ and let $e \in \mathcal{N}$ denote the corresponding nilpotent element. Then there exists a centrally extended finite $Z_e$-set $Y$ such that $J_{\bf c} \simeq K_{Z_e}(Y \times Y)$.
\end{thm}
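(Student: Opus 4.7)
The plan is to follow the strategy of Bezrukavnikov--Ostrik, which proceeds by first categorifying $J_{\bf c}$ and then invoking Ostrik's classification theorem for module categories over fusion categories. The key observation is that the statement $J_{\bf c} \simeq K_{Z_e}(Y \times Y)$ is naturally the decategorified shadow of an equivalence $\mathcal{J}_{\bf c} \simeq \operatorname{Coh}_{Z_e}(Y \times Y)$ of tensor/bimodule categories, and the latter has a cleaner classification-theoretic characterization.

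First, I would construct a semisimple $\mathbb{C}$-linear tensor category $\mathcal{J}_{\bf c}$ whose Grothendieck ring, equipped with its distinguished basis, recovers $(J_{\bf c}, \{t_w\}_{w \in {\bf c}})$. This category can be obtained as a semisimple quotient of the category of Soergel bimodules (or equivalently, of the asymptotic version of the Hecke category attached to the cell ${\bf c}$); the positivity of structure constants of $J_{\bf c}$ and the properties of Lusztig's $a$-function are essential input to ensure that this quotient is genuinely a semisimple tensor category with duality. Next, I would show that $\mathcal{J}_{\bf c}$ is a module category (equivalently, that the appropriate Drinfeld-center-type construction produces) over the symmetric tensor category $\operatorname{Rep}(Z_e)$, where the action comes from the natural $R_e$-module structure on $J_{\bf c}$ after the geometric identifications in the preceding subsections.

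Then I would invoke Ostrik's theorem: indecomposable semisimple module categories over $\operatorname{Rep}(H)$ (for $H$ a reductive group, interpreted with appropriate care over the semisimple locus) are classified by pairs consisting of a closed subgroup together with a class in $H^2$ --- in other words, precisely by centrally extended finite $H$-sets. Unpacking this classification for $H = Z_e$ and our $\mathcal{J}_{\bf c}$ produces the desired centrally extended finite $Z_e$-set $Y$, and the corresponding equivalence $\mathcal{J}_{\bf c} \simeq \operatorname{Coh}_{Z_e}(Y \times Y)$ decategorifies to the ring isomorphism $J_{\bf c} \simeq K_{Z_e}(Y \times Y)$.

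The hardest step is the categorification in Step 1: producing $\mathcal{J}_{\bf c}$ as an honest semisimple tensor category with the right basis, rather than merely an additive one. This requires Lusztig's full package of conjectures P1--P15 (positivity, the $a$-function identities, and the structure of distinguished involutions controlling unit objects ${\bf 1}_{\bf c} = \sum_{d \in {\bf c}\cap \mathcal{D}} t_d$), together with enough geometric input --- through the connection with perverse sheaves on the affine flag variety --- to guarantee semisimplicity and rigidity. Once the categorical framework is in place, the identification of $Y$ via Ostrik's classification, and the resulting ring isomorphism, follow in a formal way.
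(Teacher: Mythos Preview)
The paper does not give its own proof of this theorem: it is stated as a result of \cite{BO} (with the type $A$ case due to \cite{Xi}), and is invoked as a black box in the subsequent arguments. So there is no proof in the paper to compare against.

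That said, your sketch is broadly in line with the actual Bezrukavnikov--Ostrik argument. A couple of points are worth sharpening. First, the categorification $\mathcal{J}_{\bf c}$ is not constructed via Soergel bimodules in \cite{BO}; it comes from the earlier paper \cite{B}, where the cell subquotient of the affine Hecke category is realized via perverse sheaves on the affine flag variety and related to equivariant coherent sheaves through the geometry of the Springer resolution. Second, the relevant structure is not merely that $\mathcal{J}_{\bf c}$ is a module category over $\operatorname{Rep}(Z_e)$, but that there is a \emph{central} monoidal functor $\operatorname{Rep}(Z_e)\to \mathcal{J}_{\bf c}$; it is this braided/central structure that allows one to invoke the Tannakian-flavored classification and extract the centrally extended finite set $Y$. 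Your invocation of ``Ostrik's theorem for module categories over $\operatorname{Rep}(H)$ with $H$ reductive'' is a bit loose: the precise statement used is tailored to this central-functor setup rather than the general module-category classification over a fusion category. With those adjustments your outline matches the strategy of \cite{BO}.
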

In particular, one has a central embedding $K_{Z_e}(\on{pt}) \to J_{\bf c}$, which will be of crucial importance for us.

\begin{remark} For $G=PGL_n$ the situation simplifies. 
First, the centralizers in question are connected so the $Z_e$-action is trivial.

Also, it turns out that no non-trivial central extensions appear.
 Thus, we arrive at the following statement proved in \cite{Xi}: $J_{\bf c}$ can be realized as a matrix algebra over the ring $K_{Z_e}(\on{pt})=K_0(Z_e-\on{mod})$. Note that $J$ corresponding to $\operatorname{PGL}_n$ in our notations is $J$ corresponding to the extended affine Weyl group associated with $\operatorname{SL}_n$ in the notations of \cite{Xi}.



\end{remark}

\subsection{Description of $\CH_e$ and $J_e$ via bimodules over non-commutative Springer}

We now recall, following \cite{BL}, the relation between the above algebras and the non-commutative Springer resolution $A$ introduced in \cite{BICM}, \cite{BM}.
 
Recall that $A$ is an algebra equipped with a derived equivalence 
\begin{equation*}
D^b(A-\on{mod}_{G^\vee \times \mathbb{C}^*})\simeq D^b(\on{Coh}_{G \times \mathbb{C}^*}(\widetilde{\mathcal N})),
\end{equation*}
where the subscript denotes the equivariant structure. 
It has a canonical filtration indexed by the poset of nilpotent conjugacy classes in $\mathcal{N}$ 
equipped with the closure order.

We will denote the associated graded piece $A_{\leqslant e}/A_{<e}$  by $A_e$. It is known (cf. \cite{BM}) that $K_0(A_e-\operatorname{bimod}_{Z_e \times \mathbb C^*})$ 
is isomorphic to $\mathcal H_e$. Now, it follows from \cite[Section 8]{BL} that there exists an isomorphism of $K_{Z_e \times \mathbb C^*}(\on{pt})=R_e$-algebras:

\begin{equation}\label{identification_J_bimod}
J_e \otimes \mathbb Z[{\bf{v}}, {\bf{v}}^{-1}] \simeq K_0(A_e-\operatorname{bimod}^{\operatorname{ss}}_{Z_e \times \mathbb C^*}),\end{equation}

where in the right hand side we take the $K$-group of the category of semisimple bimodules. Here the morphism $\psi$ can be identified with the tautological
isomorphism between the $K$-theories of the categories of semisimple and all finite length modules. 
The $R_e$-action on $J_e  \otimes \mathbb Z[{\bf{v}}, {\bf{v}}^{-1}]$ comes from the identification (\ref{identification_J_bimod}).

For future use we record  the following. 

\begin{cor}\label{center}
The morphisms $\phi_e$ and $\psi_e$ are $R_e$-linear.
\end{cor}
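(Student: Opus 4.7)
The plan is to reduce both assertions to formal properties of the bimodule realization. Under (\ref{identification_J_bimod}) and the analogous identification $\mathcal{H}_e \simeq K_0(A_e\operatorname{-bimod}_{Z_e \times \mathbb{C}^*})$, the $R_e$-module structures on both sides come from the natural tensor action of $R_e = K_0(\operatorname{Rep}(Z_e \times \mathbb{C}^*))$ on equivariant bimodules: a representation $V$ sends $M$ to $V \otimes M$ with twisted equivariance. Since $V \otimes (-)$ is exact, it preserves every Jordan--H\"older filtration, and in particular commutes with the semisimplification map $[M] \mapsto \sum_i [M_i/M_{i-1}]$. But under the above identifications this semisimplification is precisely $\psi_e = \psi_{\bf c}$ (as recalled in the paragraph immediately preceding the corollary), so $\psi_e$ is automatically $R_e$-linear.

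For $\phi_e = \phi_{\bf c}$, I would combine this with formula (\ref{phi_via_psi}). Extending that identity $\mathbb{Z}[{\bf v}, {\bf v}^{-1}]$-linearly in the first argument gives, for every $h \in \mathcal{H}_e$,
\begin{equation*}
\phi_e(h) = \sum_{d \in \mathcal{D} \cap {\bf c}} \psi_e\bigl(h \cdot \operatorname{ht}_{\bf c}(C_d)\bigr),
\end{equation*}
where the product is taken in the ring $\mathcal{H}_e = \mathcal{H}_{\bf c}$. Next, one checks that the $R_e$-action on $\mathcal{H}_e$ is central for convolution, i.e.\ $(ra)b = r(ab) = a(rb)$ for $r \in R_e$ and $a,b \in \mathcal{H}_e$: in the convolution realization $\mathcal{H}_e = K_{Z_e \times \mathbb{C}^*}(\mathcal{B}_e \times \mathcal{B}_e)$ this is an immediate consequence of the projection formula applied to the convolution diagram $\mathcal{B}_e \times \mathcal{B}_e \times \mathcal{B}_e \rightrightarrows \mathcal{B}_e \times \mathcal{B}_e$, since $R_e$ acts by pullback from a point. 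Combining these ingredients with the $R_e$-linearity of $\psi_e$ then gives
\begin{equation*}
\phi_e(rh) = \sum_d \psi_e\bigl((rh)\operatorname{ht}_{\bf c}(C_d)\bigr) = \sum_d \psi_e\bigl(r(h\operatorname{ht}_{\bf c}(C_d))\bigr) = r\sum_d \psi_e(h\operatorname{ht}_{\bf c}(C_d)) = r\phi_e(h),
\end{equation*}
which is the desired $R_e$-linearity of $\phi_e$.

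The single point requiring genuine care, and the main obstacle, is the compatibility between the two a priori different $R_e$-module structures on $\mathcal{H}_e$ invoked in the argument: the convolution-theoretic one used above to establish centrality, and the one coming from its identification with $K_0(A_e\operatorname{-bimod}_{Z_e \times \mathbb{C}^*})$ that underlies the very definition of the $R_e$-action on $J_e \otimes \mathbb{Z}[{\bf v}, {\bf v}^{-1}]$ via (\ref{identification_J_bimod}). Both arise from the common $Z_e \times \mathbb{C}^*$-equivariant structure and should therefore coincide, but verifying this rigorously amounts to unpacking the geometric construction of the equivalence from \cite{BM, BL} and checking that it is manifestly $R_e$-equivariant on both sides.
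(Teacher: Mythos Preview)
Your proposal is correct and follows essentially the same route as the paper: the paper argues that $\psi_e$ is $R_e$-linear ``immediately from the above'' (i.e., from its identification with the tautological semisimplification map on $K$-groups of bimodules), and then deduces linearity of $\phi_e$ from the formula $\phi(C_x) = \sum_{d \in {\bf c} \cap \mathcal{D}} \psi(C_xC_d)$ together with the fact that the $R_e$-action commutes with both $\psi$ and multiplication in $\mathcal{H}_e$. Your additional justification via the projection formula and your caveat about the compatibility of the two $R_e$-structures are more explicit than the paper, which takes these points for granted.
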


\begin{proof}
For $\psi_e$ this is immediate from the above. To deduce the claim for $\phi_e$, recall that for any $x \in W$, $\phi(C_x) = \sum_{d \in {\bf{c}} \cap \mathcal{D}} \psi(C_xC_d)$, the sum running over distinguished involutions inside the cell associated to $e$. Now the claim follows, since the $R_e$-action commutes with both $\psi$ and the multiplication in $\mathcal H_e$.
\end{proof}

\begin{remark} See \cite[Proposition 6]{Lcells3} for a related statement. \end{remark}

\begin{remark} Note that 
$\mathcal H_e$ is not a unital algebra, in particular, the $R_e$-action on $\CH_e$ does not come from a homomorphism $R_e 
\to \mathcal H_e$. 
\end{remark}

\section{Proof of Theorem \ref{thm_B}}\label{proof_of_prop_1}
\subsection{} Now we are ready to prove Theorem~\ref{thm_B}. Recall that it claims that we have isomorphisms of algebras:
\begin{equation*}
J_{{\bf{c}}} \otimes \mathbb{Z}[{\bf{v}}, {\bf{v}}^{-1}] \iso \operatorname{End}_{\mathcal{H}}(J_{\mathbf{c}} \otimes \mathbb{Z}[{\bf{v}}, {\bf{v}}^{-1}])^{\mathrm{opp}} \simeq \operatorname{End}_{\mathcal{H}}(\CH_{\bf{c}})^{\mathrm{opp}} \simeq \operatorname{End}^{R_e}_{\mathcal{H}_{\bf{c}}}(\CH_{\bf{c}})^{\mathrm{opp}}.
\end{equation*}
We have already observed in  Corollary \ref{cor_H_mod_is_iso} that $J_{{\bf{c}}} \otimes \BZ[{\bf{v}},{\bf{v}}^{-1}] \simeq \CH_{\bf{c}}$ as left $\CH$ (and $\CH_{\bf{c}}$)-modules so it remains to check that the natural homomorphisms 
\begin{equation}\label{J_action_homom}
J_{{\bf{c}}} \otimes \mathbb Z[{\bf{v}}, {\bf{v}}^{-1}] \rightarrow \operatorname{End}_{\mathcal{H}}(J_{\mathbf{c}} \otimes \mathbb Z[{\bf{v}}, {\bf{v}}^{-1}])^{\mathrm{opp}},\, J_{\bf{c}} \otimes \mathbb Z[{\bf{v}}, {\bf{v}}^{-1}] \rightarrow \operatorname{End}_{\mathcal{H}_{\bf{c}}}^{R_e}(J_{\bf{c}} \otimes \mathbb Z[{\bf{v}},{\bf{v}}^{-1}])^{\mathrm{opp}}
\end{equation} are actually isomorphisms. Note that both of them are injective since $J_{\bf{c}}$ is unital. So, it remains to check that the homomorpisms in (\ref{J_action_homom}) are surjective. We start from the first one, the proof for the second one is completely analogous. 

First of all, we claim that it is enough to check that the homomorphism 
\begin{equation}\label{J_action_homom_C}
\J_{{\bf{c}}} \otimes \mathbb C[{\bf{v}}, {\bf{v}}^{-1}] \rightarrow \operatorname{End}_{\He}(\J_{\mathbf{c}} \otimes \mathbb C[{\bf{v}}, {\bf{v}}^{-1}])^{\mathrm{opp}}
\end{equation}
is surjective. Indeed, since $J_{\bf{c}}$ is a free $\mathbb{Z}$-module, we have embeddings 
\begin{equation*}
J_{\bf{c}} \hookrightarrow \J_{\bf{c}},~ \operatorname{End}_{\mathcal{H}}(J_{\bf{c}} \otimes \BZ[{\bf{v}},{\bf{v}}^{-1}])^{\mathrm{opp}} \hookrightarrow \operatorname{End}_{\He}(\J_{\bf{c}} \otimes \BZ[{\bf{v}},{\bf{v}}^{-1}])^{\mathrm{opp}}.
\end{equation*}
Now the homomorphism (\ref{J_action_homom_C}) is given by $\J_{\bf{c}} \otimes \BC[{\bf{v}},{\bf{v}}^{-1}] \ni a \mapsto - \cdot a$, so if the operator $- \cdot a$ lies in $\operatorname{End}_{\mathcal{H}}(J_{\bf{c}} \otimes \BZ[{\bf{v}},{\bf{v}}^{-1}])^{\mathrm{opp}} \hookrightarrow \operatorname{End}_{\He}(\J_{\bf{c}} \otimes \BZ[{\bf{v}},{\bf{v}}^{-1}])^{\mathrm{opp}}$ then $a = 1 \cdot a \in J_{\bf{c}} \otimes \BZ[{\bf{v}},{\bf{v}}^{-1}]$.

So, we need to prove the surjectivity of (\ref{J_action_homom_C}). The proof consists of several steps. 
\\
\\
{\bf{A)}} Recall that $\J_{\bf{c}}$ is an algebra with a unit element (see Section \ref{s}). To prove the proposition it is sufficient to show vanishing of an $\He$-endomorphism of $\J_{\bf{c}} \otimes \mathbb C[{\bf{v}}, {\bf{v}}^{-1}]$ which sends ${\bf 1}_{\bf{c}}$ to $0$. This, in turn, can be reformulated as follows: 

\begin{equation}\label{goal_eq_zero}
V_{\bf c}: = \operatorname{Hom}_{\He}(\J_{\bf c} \otimes \mathbb C[{\bf v}, {\bf v}^{-1}] /\operatorname{Im}(\phi_{\bf c}), \J_{\bf c} \otimes \mathbb C[{\bf v}, {\bf v}^{-1}]) = 0,
\end{equation}
where abusing notations we denote the complexification of $\phi_{\bf{c}}$ by the same letter.


{\bf{B)}} To fix ideas, we first prove the equality (\ref{goal_eq_zero}) in the special case when $G=\on{GL}_n$. In this case centralizers of all nilpotent elements are connected, so $\R_e=\K_{Z_e \times \BC^*}(\on{pt})$ has no zero-divisors. Let us denote by $\R_e^{'}$ the image of the restriction homomorphism $\R:=\K_{G^\vee \times \mathbb C^*}(\on{pt}) \to \K_{Z_e \times \BC^*}(\on{pt})=\R_e$.

Now, since $\phi_{\bf c}$ is injective and $\R_e$-linear (cf. \cite{BDD}, and Corollary \ref{center}), $\phi_{\bf c}(\He_{\bf c})$ has the same $\R_e-$ (and, hence, $\R_e^{'}-$) rank as $\He_{\bf{c}}$, thus it is an
$\R_e^{'}$-submodule of full rank in $\J_{\bf{c}}$. (These ranks are finite by \cite[Proposition 6]{Lcells3}.)

Moreover, the bijective map $\psi_e$ is $\R_e$-linear as well (see Corollary \ref{center}), so $\operatorname{rk}_{\R_e^{'}} \operatorname{Im}(\phi_{\bf c}) = \operatorname{rk}_{\R_e^{'}} \J_{\bf c} \otimes \mathbb C[{\bf{v}}, {\bf{v}}^{-1}]$.

It means that $\J_{\bf c} \otimes \mathbb C[{\bf{v}}, {\bf{v}}^{-1}]/\operatorname{Im}(\phi_{\bf c})$ is $\R_e^{'}$-torsion. 

 Since $\R_e^{'}$ has no zero divisors (centralizers being connected) and $\J_{\bf c}\otimes \mathbb C[{\bf{v}}, {\bf{v}}^{-1}]$ is a torsion-free module over $\R_e$ (cf. Lemma \ref{tor} below) the statement follows
 once we check that an $\He$-linear morphism is $\R_e^{'}$-linear.
 The latter point follows from the well-known isomorphism between $\R$ and the center of $\He$ \cite{Berncen} and compatibility between the $\R$- and $\R_e$-actions on $\He_e$ (cf. \cite{Xi}).
\\
\\
{\bf{C)}} We now consider the general case, then $Z_e =: Z$ may have several connected components which we denote by $Z_j$, $j\in \Gamma:=\pi_0(Z)$.
 Each conjugacy class $\Omega_i$ in $\Gamma$ gives rise to an idempotent $a_i$ in $\R_e$, so that $\R_e = \bigoplus_i a_i\R_e$, and $a_i\R_e$ (and, hence, $a_i\R_e^{'}$) has no zero-divisors. The desired statement follows as above from the following two lemmas.
\begin{lemma}\label{lemma_1} \label{tor} $a_i \J_{\bf c} \otimes \mathbb C[{\bf{v}}, {\bf{v}}^{-1}]$ is a torsion-free module over $a_i \R_e$. \end{lemma}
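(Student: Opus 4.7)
The plan is to invoke the finite-set realization $J_{\bf c} \simeq K_{Z_e}(Y \times Y)$ from Theorem~\ref{BO}, decompose $J_{\bf c}$ by $Z_e$-orbits on $Y \times Y$, and exploit the fact that $Y$ is finite in order to control the stabilizers. The crucial structural observation is that, since $Y$ is a discrete set and $Z_e^0$ is connected, $Z_e^0$ acts trivially on $Y$; hence every stabilizer $H := \operatorname{Stab}_{Z_e}(y_1,y_2)$ contains $Z_e^0$. In particular $H^0 = Z_e^0$ and $\pi_0(H) \subseteq \Gamma$ is a subgroup.

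With this orbit decomposition, $a_i \J_{\bf c} \otimes \mathbb C[{\bf v}, {\bf v}^{-1}]$ becomes a direct sum of summands of the form $a_i R(\widetilde H)^\chi \otimes \mathbb C[{\bf v}, {\bf v}^{-1}]$, where $\widetilde H$ is the relevant central extension of $H$ and $\chi$ is the standard character of the central $\mathbb G_m$ (the ambient $\mathbb C^*$ acts trivially on $Y$ and only contributes the flat factor $\mathbb C[{\bf v}, {\bf v}^{-1}]$). Each $R(\widetilde H)^\chi$ is a rank-one invertible $R(H)$-module, being a central isotypic summand of $R(\widetilde H)$; consequently it is torsion-free over $R(H)$. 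It therefore suffices to prove that $a_i R(H) \otimes \mathbb C$ is torsion-free over $a_i R(Z_e) \otimes \mathbb C$ for every such subgroup $H$ with $H \supseteq Z_e^0$.

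This I will verify geometrically on spectra. The irreducible variety $\operatorname{Spec}(a_i R(Z_e) \otimes \mathbb C)$ is the $\Omega_i$-component of $Z_e^{ss}/Z_e$; picking a representative $j \in \Omega_i$ and letting $K_j \subseteq Z_e$ denote the preimage of $\operatorname{Cent}_\Gamma(j)$, this component is isomorphic to $Z_j^{ss}/K_j$. Correspondingly, $\operatorname{Spec}(a_i R(H) \otimes \mathbb C)$ decomposes as a disjoint union, over $H$-conjugacy classes $C \subseteq \Omega_i \cap \pi_0(H)$, of irreducible pieces $X_C = Z_{j_C}^{ss}/(H \cap K_{j_C})$; here I use the fact that $H \supseteq Z_e^0$ forces the component of $H$ indexed by $j_C \in \pi_0(H)$ to coincide with the component $Z_{j_C}$ of $Z_e$. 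The natural map $X_C \to Z_{j_C}^{ss}/K_{j_C}$ is then a finite cover between irreducible varieties of the same dimension (both equal to the rank of $Z_e^0$), induced by the inclusion of finite-index subgroup $H \cap K_{j_C} \subseteq K_{j_C}$, hence surjective and in particular dominant. Consequently each coordinate ring $\mathcal O(X_C)$ is torsion-free over $a_i R(Z_e) \otimes \mathbb C$, and so is the direct sum $a_i R(H) \otimes \mathbb C = \bigoplus_C \mathcal O(X_C)$.

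The main conceptual step is the structural observation $Z_e^0 \subseteq H$, which forces every irreducible component of the source to map onto (rather than merely into) the irreducible target. Once this is in place, the remaining verifications reduce to elementary algebraic geometry of finite covers between spectra of representation rings.
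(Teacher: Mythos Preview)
Your overall strategy---invoking the finite-set realization, decomposing by $Z_e$-orbits, and observing that $Z_e^0$ acts trivially on the discrete set $Y$ so that every stabilizer $H$ contains $Z_e^0$---is exactly the paper's approach, and your geometric verification that $a_i\,\K_H(\mathrm{pt})$ is torsion-free over $a_i\,\K_{Z_e}(\mathrm{pt})$ via component-wise surjectivity is correct.

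The gap is in your reduction from $\widetilde H$ to $H$. The assertion that $R(\widetilde H)^\chi$ is a rank-one invertible $R(H)$-module is false whenever the central extension is nontrivial. Take $H=\mathbb Z/2\times\mathbb Z/2$ with the nontrivial class in $H^2(H,\mathbb C^*)\cong\mathbb Z/2$. Then $R(H)\otimes\mathbb C\cong\mathbb C^4$, while $R(\widetilde H)^1\otimes\mathbb C$ is one-dimensional, spanned by the unique $2$-dimensional projective irreducible $V$. Since $\chi\otimes V\cong V$ for every character $\chi$ of $H$, the $R(H)$-action on $[V]$ factors through the augmentation; thus $R(\widetilde H)^1\otimes\mathbb C$ is the skyscraper at a single point of $\operatorname{Spec}(R(H)\otimes\mathbb C)$, neither invertible nor torsion-free over $R(H)$. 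Nontrivial central extensions of this sort do occur in the Bezrukavnikov--Ostrik realization outside type $A$, so this is not a vacuous objection.

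The repair is to bypass $R(H)$ entirely. Observe that $R(\widetilde H)^\chi\otimes\mathbb C$ is a \emph{subspace} of $R(\widetilde H)\otimes\mathbb C=\mathcal O(\widetilde H/\!/\mathrm{Ad})$, so it suffices to show that $a_i\,\mathcal O(\widetilde H/\!/\mathrm{Ad})$ is torsion-free over $a_i\R_e$. Your own surjectivity argument applies verbatim with $\widetilde H$ (or any finite cover of $H$) in place of $H$: the extra central factor is connected, so components of $\widetilde H$ still surject onto components of $Z_e\times\mathbb C^*$, and hence each irreducible piece of $\widetilde H/\!/\mathrm{Ad}$ dominates the corresponding piece of $(Z_e\times\mathbb C^*)/\!/\mathrm{Ad}$. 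This is precisely the paper's route: it embeds the orbit summand into invariant functions on a finite cover $\tilde H$ of $H$ and invokes component-wise surjectivity there, never attempting the (false) invertibility statement.
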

\begin{lemma}\label{lemma_2} Any $\He_{\bf{c}}$-linear (in particular, any $\He$-linear) endomorphism of $\J_{\bf c} \otimes \mathbb C[{\bf{v}}, {\bf{v}}^{-1}]$  preserves the decomposition $\J_{\bf c} \otimes \mathbb C[{\bf{v}}, {\bf{v}}^{-1}] = \bigoplus_i a_i\J_{\bf c} \otimes \mathbb C[{\bf{v}}, {\bf{v}}^{-1}].$ \end{lemma}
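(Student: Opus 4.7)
My plan is to prove Lemma \ref{lemma_2} via a ``generic-${\bf v}$'' argument. The point is that after extending scalars from $\mathbb C[{\bf v}, {\bf v}^{-1}]$ to $\mathbb C({\bf v})$, the homomorphism $\phi_{\bf c}$ becomes an algebra isomorphism, so any left $\He_{\bf c}$-linear endomorphism upgrades to a left $\J_{\bf c}$-linear endomorphism. On a unital algebra acting on itself by left multiplication, such endomorphisms are automatically right multiplications, which by associativity respect any subset of the form $a_i \J_{\bf c}$ for $a_i \in \J_{\bf c}$.

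Concretely, I would first take an arbitrary left $\He_{\bf c}$-linear endomorphism $f$ of $\J_{\bf c} \otimes \mathbb C[{\bf v}, {\bf v}^{-1}]$ and extend scalars along $\mathbb C[{\bf v}, {\bf v}^{-1}] \hookrightarrow \mathbb C({\bf v})$, producing an $\He_{\bf c} \otimes \mathbb C({\bf v})$-linear endomorphism $f_\eta$ of $\J_{\bf c} \otimes \mathbb C({\bf v})$. Using the generic isomorphism $\phi_{\bf c} \otimes \mathbb C({\bf v}) \colon \He_{\bf c} \otimes \mathbb C({\bf v}) \iso \J_{\bf c} \otimes \mathbb C({\bf v})$ recalled in Section \ref{phi}, I would re-interpret $f_\eta$ as a left $\J_{\bf c} \otimes \mathbb C({\bf v})$-linear endomorphism. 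By unitality of $\J_{\bf c} \otimes \mathbb C({\bf v})$ (identity ${\bf 1}_{\bf c}$), $f_\eta$ is then the operation of right multiplication by $z_0 := f_\eta({\bf 1}_{\bf c})$. Associativity gives $f_\eta(a_i y) = (a_i y) z_0 = a_i (y z_0)$ for every $i$ and every $y$, so $f_\eta$ sends $a_i \J_{\bf c} \otimes \mathbb C({\bf v})$ into itself.

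Next I would descend to $\mathbb C[{\bf v}, {\bf v}^{-1}]$. Since $\J_{\bf c}$ is free over $\mathbb C$, the map $\J_{\bf c} \otimes \mathbb C[{\bf v}, {\bf v}^{-1}] \hookrightarrow \J_{\bf c} \otimes \mathbb C({\bf v})$ is an injection, and membership in the $a_i$-piece is cut out by the equation $a_i x = x$; hence $(a_i \J_{\bf c} \otimes \mathbb C({\bf v})) \cap (\J_{\bf c} \otimes \mathbb C[{\bf v}, {\bf v}^{-1}]) = a_i \J_{\bf c} \otimes \mathbb C[{\bf v}, {\bf v}^{-1}]$. Combined with the previous paragraph, this proves that $f$ preserves the decomposition.

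I expect no serious obstacle; the only non-trivial ingredient is the generic isomorphism property of $\phi_{\bf c}$. The most natural naive alternative---trying to show that $\He_{\bf c}$-linearity already implies $R_e$-linearity---is blocked by the very phrasing of Theorem \ref{thm_B}, which distinguishes $\operatorname{End}_{\He_{\bf c}}(\CH_{\bf c})$ from its $R_e$-linear subalgebra $\operatorname{End}^{R_e}_{\He_{\bf c}}(\CH_{\bf c})$. The generic-${\bf v}$ trick neatly sidesteps this by passing to the fraction field, where the two Hecke-type algebras become identified and the central idempotents $a_i$ are absorbed into the unital algebra structure.
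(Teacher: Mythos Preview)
Your argument is correct, but it takes a genuinely different route from the paper's.

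The paper characterizes each summand $a_i\J_{\bf c}\otimes\mathbb C[{\bf v},{\bf v}^{-1}]$ \emph{intrinsically} in terms of the $\He_{\bf c}$-action: using the $R_e$-linearity of $\phi_{\bf c}$ (Corollary~\ref{center}) and the orthogonality of the $a_i$, one has $a_j\He_{\bf c}\cdot(a_i\J_{\bf c})=0$ for $j\ne i$, and conversely $a_i\J_{\bf c}$ is exactly the set of elements annihilated by $a_j\He_{\bf c}$ for all $j\ne i$. Since any $\He_{\bf c}$-linear map preserves such annihilator conditions, the decomposition is automatically stable. This argument stays entirely over $\mathbb C[{\bf v},{\bf v}^{-1}]$ and plugs directly into the $R_e$-based bookkeeping used in the rest of the proof of Theorem~\ref{thm_B}.

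Your approach instead passes to $\mathbb C({\bf v})$, where $\phi_{\bf c}$ becomes an isomorphism, so that every $\He_{\bf c}$-linear endomorphism is a right multiplication by some $z_0$; right multiplications trivially preserve the left ideals $a_i\J_{\bf c}$, and the descent via $a_i x=x$ is clean. This is conceptually transparent and, as a bonus, establishes the first isomorphism of Theorem~\ref{thm_B} over $\mathbb C({\bf v})$ along the way; what it does \emph{not} give you for free is that $z_0$ actually lies in $\J_{\bf c}\otimes\mathbb C[{\bf v},{\bf v}^{-1}]$ rather than in $\J_{\bf c}\otimes\mathbb C({\bf v})$---which is precisely the remaining content of Theorem~\ref{thm_B} and the reason Lemmas~\ref{lemma_1} and~\ref{lemma_2} are still needed.
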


\noindent {\bf D)} \textit{Proof of Lemma \ref{lemma_1}.} In Section \ref{sec:low} we recalled the realization of $J_{\bf c}$ as $Z_e$-equivariant $K$-theory of 
a centrally extended finite set. We claim that the statement of Lemma \ref{lemma_1} is true for any $\R_e$-module arising that way.

It suffices to consider the case when the action of $Z_e$ on the finite set is transitive.
If the corresponding central extension is trivial then the complexified equivariant $K$-theory 
is identified with the ring of conjugation invariant regular functions on a finite index
subgroup $H\subset Z_e\times \mathbb C^*$; in general it is a subspace in the ring 
of invariant functions on a finite covering $\tilde{H}$ of such a subgroup $H$.
The statement is clear since a component in $\tilde{H}$ maps surjectively to a
component in $Z_e\times \mathbb C^*$.

{\bf{E)}} \textit{Proof of Lemma \ref{lemma_2}.} As follows from Corollary \ref{center}, we have decomposition $$\He_{\bf c} = \bigoplus_i a_i\He_{\bf c}.$$

Now consider the $\He_{\bf{c}}$-action on $\J_{\bf c} \otimes \mathbb C[{\bf{v}}, {\bf{v}}^{-1}]$ via $\phi_{\bf{c}}$ (cf. Section \ref{m}), since $a_i$'s are orthogonal we get: $$
a_i\mathcal \He_{\bf c} \cdot (a_j \J_{\bf c} \otimes \mathbb C[{\bf{v}}, {\bf{v}}^{-1}]) = 0
$$
for $i \neq j$. 
Thus  $a_i\J_{\bf c}$ consists of all elements in $\J_{\bf c}$ annihilated by $a_j\He_{\bf c}$ for all $j\ne i$, so the decomposition of $\J_{\bf c}$ from Lemma \ref{lemma_2} is stable under $\He_{\bf c}$-linear endomorphisms.
\qed

This finishes the proof of Theorem \ref{thm_B}.




\section{Proof of Theorem \ref{thm_A}} \label{sec:main1}
\subsection{A homomorphism $\theta\colon K_e \rightarrow J_e \otimes \BC[{\bf{v}},{\bf{v}}^{-1}]$}\label{int}
\subsubsection{} Set 
\begin{equation*}
K_e := K_{Z_e \times \mathbb{C}^*}(\mathcal{B}_e^{\mathbb{C}^*} \times \mathcal{B}_e^{\mathbb{C}^*}),~\K_e := \K_{Z_e \times \mathbb{C}^*}(\CB_e^{\mathbb{C}^*} \times \CB_e^{\mathbb{C}^*}).
\end{equation*}
Clearly, $K_e$, is a (unital) algebra w.r.t. the natural convolution product.  Our goal in this section is to provide an injective homomorphism $\theta\colon K_e \to J_e \otimes \mathbb Z[{\bf{v}}, {\bf{v}}^{-1}]$; it will be shown in Section \ref{sect_surj_theta}  that $\theta$ is an isomorphism. 

We will construct $\theta$ by introducing an $\mathcal{H} - K_e$-bimodule $\mathcal F_e$, checking that $\mathcal F_e \simeq_{\mathcal H} \mathcal H_e$, and invoking Theorem~\ref{thm_B}.



Let us recall first (cf. \cite{CG}, \cite{KL}) that $\mathcal H$ can be identified with $G^\vee \times \mathbb C^*$-equivariant $K$-theory $K_{G^\vee \times \mathbb C^*}(\on{St})$ of the Steinberg variety $\on{St}:=\widetilde{\CN} \times_{\CN} \widetilde{\CN}$. We have the projection  $\pi\colon \on{St} \to \mathcal N$; for a locally closed subvariety $Z \subset \mathcal{N}$ we will denote $\pi^{-1}(Z)$ by $\on{St}_{Z} \subset \on{St}$.

From   \cite{Xi1} and 
\cite[Theorem 55, \S 11.3]{B3} it follows that 
$\mathcal H_e$ is identified  with $K_{G^{\vee}\times \mathbb C^*}(\on{St}_{\mathbb{O}_e})= K_{Z_e \times \mathbb C^*}(\CB_e \times \CB_e)$; the isomorphism is compatible with the action of $K_{Z_e \times \mathbb C^*}(\on{pt})$.
Here, the  $\mathbb C^*$-action comes from the element $h$ of an $\mathfrak{sl}_2$ triple $(e, h, f)$; we fix
such a triple. It is well-known that the centralizer $Z_{G^\vee}(e, h, f)$ is a maximal reductive
subgroup in $ Z_{G^\vee}(e)$, thus it is identified with  $Z_e$.

Let $p\colon \widetilde{\mathcal N} \to \mathcal N$ be the Springer resolution and $\Sigma_e:=e+\mathfrak{z}_{\mathfrak{g}^\vee}(f) \subseteq \mathfrak g^{\vee}$ the
Slodowy slice, we also let $\Lambda_e \subset \widetilde{\CN}$ denote $p^{-1}(\Sigma_e \cap \mathcal N)$. This is a smooth variety called Slodowy variety. We have $\CB_e \subseteq \Lambda_e$; moreover, $\Lambda_e^{\mathbb C^*} = \CB_e^{\mathbb C^*}$ (see e.g. \cite[Section 1.8]{L3}).

Now we define 
\begin{equation*}
\mathcal F_e := K_{Z_e \times \mathbb C^*}(\CB_e \times \CB_e^{\mathbb C^*}) = K_{Z_e \times \mathbb C^*}(\Lambda_e \times \CB_e^{\mathbb C^*}; \CB_e \times \CB_e^{\mathbb C^*}).
\end{equation*}

\begin{lemma}\label{torsion_free_F_K}
Both $K_e$ and $\CF_e$ are torsion-free as $\mathbb{Z}$-modules.
\end{lemma}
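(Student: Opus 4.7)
The plan is to use Bialynicki--Birula theory together with the geometric input from \cite{DLP}: $\CB_e^{\mathbb{C}^*}$ is smooth projective with homology generated by algebraic cycles, and the attractors $\on{Attr}_F \subset \CB_e$ of components $F \subset \CB_e^{\mathbb{C}^*}$ are affine fibrations over $F$. Fix a generic cocharacter of a maximal torus $T_e \subset Z_e$ and set $T = T_e \times \mathbb{C}^*_{\on{JM}}$; its fixed points on $\CB_e$ are isolated and coincide with those of the induced $\mathbb{C}^*$-action on the smooth projective variety $\CB_e^{\mathbb{C}^*}$. Bialynicki--Birula then decomposes $\CB_e^{\mathbb{C}^*}$, and likewise $\CB_e^{\mathbb{C}^*} \times \CB_e^{\mathbb{C}^*}$ by Künneth, into $T$-stable affine cells. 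The associated filtration on $K_T(\CB_e^{\mathbb{C}^*} \times \CB_e^{\mathbb{C}^*})$ has successive quotients isomorphic to $R(T)$ (by homotopy invariance of equivariant $K$-theory of affine cells), and $R(T)$ is a free $\mathbb{Z}$-module, so the localization sequences split and the whole $K$-group is free over $\mathbb{Z}$.

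For $\CF_e$, the DLP attractor decomposition gives a $T$-stable filtration of $\CB_e$ with affine-bundle subquotients over the cellular components $F$. Crossing with the cell decomposition of $\CB_e^{\mathbb{C}^*}$, we obtain a $T$-stable filtration of $\CB_e \times \CB_e^{\mathbb{C}^*}$ whose subquotients are affine bundles over affine cells; the same argument then yields $K_T(\CB_e \times \CB_e^{\mathbb{C}^*})$ free over $\mathbb{Z}$.

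To descend from $T$- to $Z_e \times \mathbb{C}^*$-equivariance, we use that the restriction map $K_{Z_e \times \mathbb{C}^*}(X) \to K_T(X)$ is injective for the varieties $X$ under consideration: for connected $Z_e$, via the change-of-groups isomorphism $K_T(X) \cong K_{Z_e \times \mathbb{C}^*}(X) \otimes_{R(Z_e \times \mathbb{C}^*)} R(T)$ together with faithful flatness of $R(T)$ over $R(Z_e \times \mathbb{C}^*)$; for disconnected $Z_e$, by first descending to $Z_e^0$ and then taking $\pi_0(Z_e)$-invariants, which preserves torsion-freeness. Since a $\mathbb{Z}$-submodule of a free $\mathbb{Z}$-module is torsion-free, both $K_e$ and $\CF_e$ are torsion-free. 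The main obstacle is this integral change-of-groups step, particularly for disconnected $Z_e$; the needed general facts about equivariant $K$-theory are collected in Appendix \ref{appK}.
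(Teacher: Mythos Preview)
Your geometric approach via Bialynicki--Birula and the DLP affine paving is reasonable for computing the $K$-theory equivariant for a maximal torus $T \subset Z_e \times \mathbb{C}^*$, but the descent step to $(Z_e \times \mathbb{C}^*)$-equivariant $K$-theory has a genuine gap. The change-of-groups isomorphism $K_T(X) \cong K_G(X) \otimes_{R(G)} R(T)$ you invoke (which is the paper's Proposition~\ref{merk}, drawn from \cite{Merk}) requires the derived subgroup of $G$ to be simply connected; $Z_e$ need not satisfy this, and without that hypothesis $R(T)$ is not flat over $R(G)$ in general (this is precisely the obstruction in the Pittie--Steinberg theorem), so faithful flatness cannot be invoked to deduce injectivity of restriction. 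The disconnected step is also not valid as stated: the identification $K_{Z_e}(X) \simeq K_{Z_e^0}(X)^{\pi_0(Z_e)}$ holds only rationally (cf.\ the paper's Proposition~\ref{iso_mod_G_inv_m}, which works over $\mathbb{C}$ and only at the fiber over~$1$), not integrally. The material in Appendix~\ref{appK} that you cite is entirely over $\mathbb{C}$ (the notation $\K$ denotes complexified $K$-theory throughout) and does not supply the integral statements you need.

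The paper avoids these difficulties by an entirely different, algebraic route. It forward-references Proposition~\ref{split}, which gives an isomorphism of left $\CH$-modules $\CF_e \simeq \CH_e$, and then observes that $\CH_e = \CH_{\leq {\bf c}}/\CH_{<{\bf c}}$ is free over $\mathbb{Z}$ with basis $\{[C_w]{\bf v}^k : w \in {\bf c},\, k \in \mathbb{Z}\}$ coming from the Kazhdan--Lusztig basis. Proposition~\ref{sym} then gives an isomorphism $\CF_e \simeq K_e$, so $K_e$ inherits $\mathbb{Z}$-freeness. No integral equivariant descent is required.
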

\begin{proof}
It follows from Proposition \ref{split} below that there exists an identification $\mathcal{F}_e \simeq \mathcal{H}_e$ of left $\mathcal{H}$-modules, so, in particular, they are isomorphic as $\mathbb{Z}$-modules. Let ${\bf{c}} \subset W$ be the two-sided cell corresponding to $e$. Recall that $\mathcal{H}_{e} = \mathcal{H}_{\leqslant \bf{c}}/\mathcal{H}_{< \bf{c}}$  which is free over $\mathbb{Z}$ with a basis consisting of $\{[C_w]{\bf{v}}^k\,|\, w \in {\bf{c}},\, k \in \mathbb{Z}\}$. We conclude that $\mathcal{F}_e$ is also free over $\mathbb{Z}$. Proposition \ref{sym} claims that there exists an isomorphism $\CF_e \simeq K_e$, implying that $K_e$ is also free over $\mathbb{Z}$.
\end{proof}

\subsubsection{The $\mathcal{H}-K_e$-bimodule structure on $\mathcal{F}_e$}\label{sect_bimodule_geom_structure} 

\begin{prop}\label{prop_h_vs_he}
    There are natural commuting actions of $\mathcal H$ and $K_e$ on $\mathcal F_e$. 


\end{prop}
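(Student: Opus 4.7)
The plan is to construct both actions from the standard convolution formalism in equivariant $K$-theory with supports (compare \cite[\S2.7]{CG} and \cite[Appendix B]{TX}), and to deduce commutativity from the general principle that convolutions on disjoint factors commute.

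For the right action of $K_e$, take the triple of smooth ambients $(\La_e \times \CB_e^{\BC^*},\, \CB_e^{\BC^*} \times \CB_e^{\BC^*},\, \La_e \times \CB_e^{\BC^*})$ with supports $(\CB_e \times \CB_e^{\BC^*},\, \CB_e^{\BC^*} \times \CB_e^{\BC^*},\, \CB_e \times \CB_e^{\BC^*})$. Smoothness of $\La_e$ is standard, and smoothness and projectivity of $\CB_e^{\BC^*}$ comes from \cite{DLP}. The projection from the middle fiber product $\CB_e \times \CB_e^{\BC^*} \times \CB_e^{\BC^*}$ to the outer product $\La_e \times \CB_e^{\BC^*}$ is proper because its fibers are the projective $\CB_e^{\BC^*}$, and Tor-independence holds since the ambient is smooth. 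The standard convolution thus produces an $R_e$-bilinear right action $\CF_e \otimes K_e \to \CF_e$.

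For the left action, first note that $\CH_e = K_{Z_e \times \BC^*}(\La_e \times \La_e;\, \CB_e \times \CB_e)$ acts on $\CF_e$ by convolution on the first factor, for the same geometric reasons as above (now properness uses projectivity of $\CB_e$). To upgrade this to an action of the full algebra $\CH$, introduce the auxiliary module
\begin{equation*}
\widetilde{\CF}:=K_{Z_e \times \BC^*}(\widetilde{\CN} \times \CB_e^{\BC^*};\, p^{-1}(\overline{\mathbb O}_e) \times \CB_e^{\BC^*}).
\end{equation*}
Convolving on the first factor with the Steinberg correspondence $\on{St} \subset \widetilde{\CN} \times \widetilde{\CN}$ makes $\widetilde{\CF}$ into a left $\CH$-module: although $\CB_e^{\BC^*}$ is only $Z_e \times \BC^*$-stable, the convolution is well-defined because the $\CB_e^{\BC^*}$-factor is inert under it and the forgetful map from $G^\vee \times \BC^*$- to $Z_e \times \BC^*$-equivariant $K$-theory is a ring homomorphism intertwining convolutions. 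The subspace of $\widetilde{\CF}$ consisting of classes supported on $\CB_e \times \CB_e^{\BC^*}$ is $\CH$-stable, since convolution cannot shift support off of $\overline{\mathbb{O}}_e$; moreover, $\CH_{<e}$ acts by zero on it, because convolving a class supported over $\overline{\mathbb O}_{e'}$ (for $e'<e$) with one supported on $\CB_e = p^{-1}(e)$ yields zero on set-theoretic grounds. By excision and the Slodowy-slice identification of equivariant $K$-theory along the single orbit $\mathbb{O}_e$, this subspace is canonically isomorphic to $\CF_e$, which thereby acquires an $\CH$-action factoring through $\CH/\CH_{<e}$. One checks that the restriction of this action to the ideal $\CH_e \subset \CH/\CH_{<e}$ recovers the $\CH_e$-action constructed above.

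Commutativity is then formal. Both actions are realized by tripartite convolutions on the smooth ambient $\La_e \times \CB_e^{\BC^*}$: the left one uses a correspondence in $\La_e \times \La_e$ (obtained from $\on{St}$ via the Slodowy slice), and the right one a correspondence in $\CB_e^{\BC^*} \times \CB_e^{\BC^*}$. Writing out the relevant diagram on $\La_e \times \La_e \times \CB_e^{\BC^*} \times \CB_e^{\BC^*}$ and applying base change together with the projection formula yields $(h \cdot x) \cdot k = h \cdot (x \cdot k)$ for $h \in \CH$, $x \in \CF_e$, $k \in K_e$. The main obstacle lies not in commutativity but in the canonicity of the left $\CH$-action, i.e.\ in verifying that the $\CB_e \times \CB_e^{\BC^*}$-supported subspace of $\widetilde{\CF}$ really is $\CF_e$, that $\CH_{<e}$ kills it, and that the resulting $\CH_e$-restriction matches the direct convolution in $\La_e$. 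All three points depend on the geometric description of the cell filtration recalled in Section \ref{sec:intro} from \cite{Xi1} and \cite[Theorem 55, \S11.3]{B3}.
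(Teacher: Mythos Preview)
Your argument is correct and ultimately lands on the same convolution formalism as the paper, but the organization differs. The paper constructs the left $\CH$-action directly on the ambient $\widetilde{\CN} \times \CB_e^{\BC^*}$: an element of $\CH$ is viewed as a $G^\vee \times \BC^*$-equivariant complex on $\widetilde{\CN} \times \widetilde{\CN}$ supported on $\on{St}$, an element of $\CF_e$ as a $Z_e \times \BC^*$-equivariant complex on $\widetilde{\CN} \times \CB_e^{\BC^*}$ supported on $\CB_e \times \CB_e^{\BC^*}$, and the convolution $\mathcal{G} * \mathcal{A} = p_{13*}(p_{12}^*\mathcal{G} \otimes^{\mathbb{L}} p_{23}^*\mathcal{A})$ is defined in one step. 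Commutativity with the $K_e$-action is dispatched by ``diagram chase''. The factorization through $\CH_e$ is then proved in a separate lemma, again by a direct support argument. Your route---first building the $\CH_e$-action on the Slodowy ambient, then introducing the auxiliary module $\widetilde{\CF}$ supported over $\overline{\mathbb{O}}_e$, then cutting down to the $\CB_e$-supported subspace---is more circuitous: the auxiliary $\widetilde{\CF}$ is not needed, since the support condition on $\CB_e \times \CB_e^{\BC^*}$ inside $\widetilde{\CN} \times \CB_e^{\BC^*}$ already suffices to define $\CF_e$ and is visibly $\CH$-stable. What your version buys is an explicit check that the resulting $\CH_e$-action agrees with the direct Slodowy convolution, a point the paper defers to a later proposition citing \cite[Theorem B.2]{TX}; what the paper's version buys is brevity.
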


\begin{proof}
Proposition essentially follows from \cite{L2}. We sketch the proof for the reader's convenience.

Let  us define the action $*$ of $\mathcal{H}$ on $\mathcal{F}_e$. Consider the  diagram. 

\[\begin{tikzcd}
	& {\widetilde{\mathcal N} \times  \widetilde{\mathcal N} \times \CB_e^{\mathbb C^*} =: X} \\
	{\widetilde{\mathcal N} \times \widetilde{\mathcal N}} && {\widetilde{\mathcal N} \times \CB_e^{\mathbb C^*}}
	\arrow["{p_{12}}", from=1-2, to=2-1]
	\arrow["{p_{23}}"', from=1-2, to=2-3],
\end{tikzcd}\]
where $p_{12}$, $p_{23}$ are the projections onto the corresponding factors.

Pick 
$h \in \mathcal H\simeq K_{G^\vee \times \mathbb C^*}(\widetilde{\CN} \times_{\CN} \widetilde{\CN})$
and $a \in K_{Z_e \times \mathbb C^*}(\CB_e \times \CB_e^{\mathbb C^*})$.
Now
\\
1) $a$  can be viewed as a class of an equivariant complex $\mathcal A$ on $\widetilde{\mathcal N} \times \CB_e^{\mathbb C^*}$ with support on $\CB_e \times \CB_e^{\mathbb C^*}$;
\\
2) $h$ can be viewed as a class of an equivariant complex $\mathcal G$ on $\widetilde{\mathcal N} \times \widetilde{\mathcal N}$ with support  on $\widetilde{\mathcal N} \times_{\mathcal N} \widetilde{\mathcal N}$.

We define $\mathcal G * \mathcal A= p_{13*}(p_{12}^*(\mathcal G) \otimes^{\mathbb L}_X p_{23}^*(\mathcal A))$, where $p_{13}\colon X \rightarrow \widetilde{\CN} \times \CB_e^{\BC^*}$ is the projection onto the first and the third factors. Then we set $h * a :=[\mathcal G * \mathcal A]$ (cf. ).

 The fact that this defines an action of $\mathcal H$ commuting with the action of $K_e$ (defined below) follows by diagram 
 chase.


Let us define the action of $K_e = K_{Z_e \times \mathbb{C}^*}(\CB_e^{\mathbb{C}^*} \times \CB_e^{\mathbb{C}^*})$ on 
\begin{equation*}
\mathcal{F}_e = K_{Z_e \times \mathbb{C}^*}(\Lambda_e \times \CB_e^{\mathbb C^*};\CB_e \times \CB_e^{\mathbb{C}^*})=K_{Z_e \times \mathbb{C}^*}(\widetilde{\mathcal{N}} \times  \CB_e^{\mathbb C^*};\CB_e \times \CB_e^{\mathbb{C}^*}).
\end{equation*}
Consider the following diagram: 
\[\begin{tikzcd}[scale cd=0.85]
 & {\Lambda_e \times \CB_e^{\mathbb C^*} \times \CB_e^{\mathbb C^*}} 
 \\
 {\Lambda_e \times \CB_e^{\mathbb C^*}} & {\Lambda_e \times \CB_e^{\mathbb C^*}} & {\CB_e^{\mathbb C^*} \times \CB_e^{\mathbb C^*}}
	\arrow["{q_{13}}"', from=1-2, to=2-1]
	\arrow["{q_{12}}"', from=1-2, to=2-3]
	\arrow["{q_{23}}", from=1-2, to=2-2]
 \end{tikzcd}\]
it is clear that for $Q \in \mathcal F_e$
and
$R \in K_e$ 
the formula:
\begin{equation*}
Q * R = q_{13*}(q_{12}^*Q \otimes q_{23}^*R)
\end{equation*}
gives 
a well-defined  right $K_e$-action
on $\mathcal F_e$. 

\end{proof}

\begin{lemma}
The induced action of $\mathcal{H}_{\leqslant e} \subset \mathcal{H}$ on $\mathcal{F}_e$ factors through the action of $\mathcal{H}_e$.
\end{lemma}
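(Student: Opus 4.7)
The plan is to prove the stronger statement that $\mathcal{H}_{<e}$ acts by zero on $\mathcal{F}_e$, which immediately implies that the $\mathcal{H}_{\leqslant e}$-action factors through $\mathcal{H}_e = \mathcal{H}_{\leqslant e}/\mathcal{H}_{<e}$. By the coincidence of the geometric and cell filtrations recalled in Section \ref{phi}, we have $\mathcal{H}_{<e} \simeq K_{G^\vee \times \BC^*}(\on{St}_{\partial \overline{\mathbb{O}}_e})$, where $\partial \overline{\mathbb{O}}_e := \overline{\mathbb{O}}_e \setminus \mathbb{O}_e$. Thus any $h \in \mathcal{H}_{<e}$ is represented by a $G^\vee \times \BC^*$-equivariant coherent complex $\mathcal{G}$ on $\widetilde{\CN} \times \widetilde{\CN}$ whose set-theoretic support maps under either Springer projection $p\colon \widetilde{\CN} \to \CN$ into the closed subset $\partial \overline{\mathbb{O}}_e$. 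In contrast, any $a \in \mathcal{F}_e = K_{Z_e \times \BC^*}(\CB_e \times \CB_e^{\BC^*})$ is represented by a complex $\mathcal{A}$ on $\widetilde{\CN} \times \CB_e^{\BC^*}$ supported on $\CB_e \times \CB_e^{\BC^*}$, whose first-factor support $\CB_e = p^{-1}(e)$ sits entirely over the single point $e \in \mathbb{O}_e$.

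Writing out the convolution formula from Proposition \ref{prop_h_vs_he}, namely $h * a = p_{13*}(p_{12}^* \mathcal{G} \otimes^{\mathbb L}_X p_{23}^* \mathcal{A})$ on $X = \widetilde{\CN} \times \widetilde{\CN} \times \CB_e^{\BC^*}$, the next step is to track the set-theoretic support. A point $(y_1, y_2, b) \in X$ in the support of $p_{12}^* \mathcal{G} \otimes^{\mathbb L}_X p_{23}^* \mathcal{A}$ must satisfy both $p(y_2) \in \partial \overline{\mathbb{O}}_e$ (coming from $\mathcal{G}$ via $p_{12}$) and $p(y_2) = e$ (coming from $\mathcal{A}$ via $p_{23}$, since the first factor of the support of $\mathcal{A}$ is $\CB_e$). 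Since $e \in \mathbb{O}_e$ is disjoint from $\partial \overline{\mathbb{O}}_e$, no such $y_2$ exists: the supports of $p_{12}^* \mathcal{G}$ and $p_{23}^* \mathcal{A}$ are set-theoretically disjoint, their derived tensor product is therefore acyclic, and hence $h * a = 0$.

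The main, though routine, point to justify is the vanishing of the derived tensor product of two equivariant coherent complexes on a smooth ambient scheme when their set-theoretic supports are disjoint; this follows by replacing one factor with a bounded complex of locally free sheaves and arguing stalk-by-stalk. A secondary bookkeeping matter is to set up the convolution equivariantly for the correct group --- either $Z_e \times \BC^*$ on $\widetilde{\CN} \times \widetilde{\CN} \times \CB_e^{\BC^*}$ using that $\CB_e \subset \widetilde{\CN}$ is $Z_e$-invariant, or $G^\vee \times \BC^*$ after passing through the induction equivalence used to identify $\mathcal{H}_e \simeq K_{Z_e \times \BC^*}(\CB_e \times \CB_e)$ --- but the support argument is insensitive to this choice, as the incompatibility $p(y_2) \in \mathbb{O}_e$ vs.\ $p(y_2) \in \partial \overline{\mathbb{O}}_e$ is purely set-theoretic and $G^\vee$-equivariant.
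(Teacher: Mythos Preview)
Your proof is correct and follows essentially the same approach as the paper: both show that $\mathcal{H}_{<e}$ acts by zero on $\mathcal{F}_e$ by observing that the supports of $p_{12}^*\mathcal{G}$ and $p_{23}^*\mathcal{A}$ are disjoint (since the second $\widetilde{\CN}$-coordinate must lie over both $\partial\overline{\mathbb{O}}_e$ and $\{e\}$), forcing the derived tensor product to vanish. Your additional remarks on justifying the vanishing of the derived tensor product and on the equivariance bookkeeping are accurate and go slightly beyond what the paper spells out.
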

\begin{proof}
Let us first of all recall that we have the identification $\mathcal{H}_e \simeq K_{Z_e \times \mathbb{C}^*}(\mathcal{B}_e \times \mathcal{B}_e)$. After this identification, the realization of $\mathcal{H}_e$ as a subquotient $\mathcal{H}_{\leqslant e}/\mathcal{H}_{<e}$ of $\mathcal{H} \simeq K_{G^\vee \times \mathbb{C}^*}(\on{St})$ can be described as follows. We have  $\mathcal{H}_{\leqslant e}=K_{G^\vee \times \mathbb{C}^*}(\on{St}_{\overline{\mathbb{O}}_e})$, then the embedding $K_{G^\vee \times \mathbb{C}^*}(\on{St}_{\overline{\mathbb{O}}_e}) \hookrightarrow K_{G^\vee \times \mathbb{C}^*}(\on{St})$ is given by the pushforward for the closed embedding $\on{St}_{\overline{\mathbb{O}}_e} \hookrightarrow \on{St}$ and the quotient $K_{G^\vee \times \mathbb{C}^*}(\on{St}_{\overline{\mathbb{O}}_e}) \twoheadrightarrow K_{Z_e \times \mathbb{C}^*}(\mathcal{B}_e \times \mathcal{B}_e) = K_{G^\vee \times \mathbb{C}^*}(\on{St}_{\mathbb{O}_e})$ is given by the restriction to the open subset $\on{St}_{\mathbb{O}_e} \hookrightarrow \on{St}_{\overline{\mathbb{O}}_e}$.

So, our goal is to check that $\mathcal{H}_{<e} = K_{G^\vee \times \mathbb{C}^*}(\on{St}_{\overline{\mathbb{O}}_e \setminus \mathbb{O}_e})$ acts trivially on $\CF_e = K_{Z_e \times \mathbb{C}^*}(\mathcal{B}_e \times \mathcal{B}_e^{\mathbb{C}^*})$. Indeed, let $\mathcal{G}$ be an equivariant complex representing some class in $K_{G^\vee \times \mathbb{C}^*}(\on{St}_{\overline{\mathbb{O}}_e \setminus \mathbb{O}_e})$, then the support of $\mathcal{G}$ is contained in $\on{St}_{\overline{\mathbb{O}}_e \setminus \mathbb{O}_e}$. Let $\mathcal{A}$ be an equivariant complex on $\widetilde{\mathcal{N}} \times \mathcal{B}_e^{\mathbb{C}^*}$ representing a class in $K_{Z_e \times \mathbb{C}^*}(\mathcal{B}_e \times \mathcal{B}_e^{\mathbb{C}^*})$, then the support of $\mathcal{A}$ is contained in $\mathcal{B}_e \times \mathcal{B}_e^{\mathbb{C}^*}$. We conclude that the support of $p_{12}^*(\mathcal{G}) \otimes^{\mathbb{L}}_X p_{23}^*(\mathcal{A})$ (see the notations from the proof of Proposition \ref{prop_h_vs_he}) is contained in $p_{23}^{-1}(\mathcal{B}_e \times \mathcal{B}_e^{\mathbb{C}^*}) \cap p_{12}^{-1}(\on{St}_{\overline{\mathbb{O}}_e \setminus \mathbb{O}_e}) = \varnothing$, hence the (derived) tensor product above is equal to zero.
\end{proof}


\subsubsection{Alternative description of the action of $\mathcal{H}_e$ on  $\mathcal{F}_e$}
We can define the convolution algebra structure on $K_{Z_e \times \mathbb C^*}(\CB_e \times \CB_e)$ by identifying it with $K_{Z_e \times \mathbb C^*}(\Lambda_e \times \Lambda_e; \CB_e \times \CB_e)$, the $K$-group of $Z_e \times \mathbb C^*$-equivariant coherent sheaves on $\Lambda_e \times \Lambda_e$ with set-theoretic support on $\CB_e \times \CB_e$ (cf. \cite{TX}). This algebra is 
isomorphic to $\mathcal H_e$ (see \cite[Theorem B.2]{TX}). Let us describe the action $K_{Z_e \times \mathbb C^*}(\CB_e \times \CB_e) \curvearrowright \mathcal{F}_e = K_{Z_e \times \mathbb{C}^*}(\mathcal{B}_e \times \mathcal{B}_e^{\mathbb{C}^*})$ geometrically. Consider the following diagram 
\[\begin{tikzcd}[scale cd=0.85]
	& {\Lambda_e \times \Lambda_e \times \CB_e^{\mathbb C^*}} 
 \\
	{\Lambda_e \times \Lambda_e} & {\Lambda_e \times \CB_e^{\mathbb C^*}} & {\Lambda_e \times \CB_e^{\mathbb C^*}} 
	\arrow["{\pi_{12}}"', from=1-2, to=2-1]
	\arrow["{\pi_{23}}", from=1-2, to=2-3]
	\arrow["{\pi_{13}}"', from=1-2, to=2-2]
 \end{tikzcd}\]
Since $\CB_e$ is proper, $\Lambda_e$ is smooth, and $\CB_e^{\mathbb C^*}$ is smooth and proper, it is clear that for 
\begin{equation*}
[\mathcal{P}] \in K_{Z_e \times \mathbb{C}^*}(\mathcal{B}_e \times \mathcal{B}_e),~[\mathcal{Q}] \in \mathcal{F}_e = K_{Z_e \times \mathbb{C}^*}(\mathcal{B}_e \times \mathcal{B}_e^{\mathbb{C}^*})
\end{equation*}
the formula:
\begin{equation}\label{act_H_e_geom}
[\mathcal{P}]*[\mathcal{Q}] = [\pi_{13*}(\pi_{12}^*\mathcal{P} \otimes \pi_{23}^*\mathcal{Q})]
\end{equation}
give 
a well-defined  left $K_{Z_e \times \mathbb{C}^*}(\mathcal{B}_e \times \mathcal{B}_e)$-action
on $\mathcal F_e$. 

\begin{prop}
After the identification $\mathcal{H}_e \simeq K_{Z_e \times \mathbb{C^*}}(\mathcal{B}_e \times \mathcal{B}_e)$, the action given by (\ref{act_H_e_geom}) coincides with the action of $\mathcal{H}_e$ induced by the action of $\mathcal{H}$ on $\mathcal{F}_e$ described in Proposition \ref{prop_h_vs_he}.
\end{prop}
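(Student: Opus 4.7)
The plan is a base-change argument: the two actions are defined by essentially the same three-step convolution (pullback, tensor product, pushforward), and the diagrams differ only by replacing $\widetilde{\mathcal{N}} \times \widetilde{\mathcal{N}}$ in the middle factor by the Slodowy thickening $\Lambda_e \times \Lambda_e$. Since all relevant supports end up on $\mathcal{B}_e \times \mathcal{B}_e \subset \Lambda_e \times \Lambda_e$, this substitution should be harmless.

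First, I would lift an arbitrary class $[\mathcal{P}] \in \mathcal{H}_e = K_{Z_e \times \mathbb{C}^*}(\mathcal{B}_e \times \mathcal{B}_e)$ to a class $[\tilde{\mathcal{P}}] \in \mathcal{H}_{\leq e} = K_{G^\vee \times \mathbb{C}^*}(\on{St}_{\overline{\mathbb{O}}_e})$; such a lift exists because the open-restriction map $K_{G^\vee \times \mathbb{C}^*}(\on{St}_{\overline{\mathbb{O}}_e}) \twoheadrightarrow K_{G^\vee \times \mathbb{C}^*}(\on{St}_{\mathbb{O}_e}) \simeq \mathcal{H}_e$ is surjective by the equivariant excision sequence. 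By the preceding lemma, $\mathcal{H}_{<e}$ acts by zero on $\mathcal{F}_e$, so the $\mathcal{H}$-action of any such lift depends only on $[\mathcal{P}]$, and the proposition reduces to matching $[\tilde{\mathcal{P}}] * [\mathcal{Q}]$ (computed by the diagram of Proposition~\ref{prop_h_vs_he}) with formula~(\ref{act_H_e_geom}).

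Next, I would trace the supports in the $\mathcal{H}$-convolution $p_{13*}(p_{12}^* \tilde{\mathcal{P}} \otimes^{\mathbb{L}} p_{23}^* \mathcal{Q})$ on $X = \widetilde{\mathcal{N}} \times \widetilde{\mathcal{N}} \times \mathcal{B}_e^{\mathbb{C}^*}$. The two supports intersect in
\[
(\on{St}_{\overline{\mathbb{O}}_e} \times \mathcal{B}_e^{\mathbb{C}^*}) \cap (\widetilde{\mathcal{N}} \times \mathcal{B}_e \times \mathcal{B}_e^{\mathbb{C}^*}) = \mathcal{B}_e \times \mathcal{B}_e \times \mathcal{B}_e^{\mathbb{C}^*},
\]
because any $(x_1,x_2,f)$ in the intersection satisfies $x_2 \in \mathcal{B}_e$ and $p(x_1)=p(x_2)=e$, forcing $x_1 \in \mathcal{B}_e$. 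In particular this intersection already lies inside the Slodowy thickening $\Lambda_e \times \Lambda_e \times \mathcal{B}_e^{\mathbb{C}^*} \subset X$.

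Finally, I would invoke the transverse-slice identifications, which arise from Thomason localization applied to the open orbit $\mathbb{O}_e \subset \overline{\mathbb{O}}_e$ and give both $\mathcal{H}_e \simeq K_{Z_e \times \mathbb{C}^*}(\Lambda_e \times \Lambda_e; \mathcal{B}_e \times \mathcal{B}_e)$ (via \cite[Theorem B.2]{TX}) and the analogous description of $\mathcal{F}_e$. Under these isomorphisms the lifts $[\tilde{\mathcal{P}}], [\mathcal{Q}]$ restrict to $[\mathcal{P}], [\mathcal{Q}]$, and the projections $p_{ij}$ fit into obvious base-change squares with the projections $\pi_{ij}$ along the inclusion $\Lambda_e \times \Lambda_e \times \mathcal{B}_e^{\mathbb{C}^*} \hookrightarrow X$. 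Smooth base change for equivariant $K$-theory, applied in the $\pi_{12}$- and $\pi_{23}$-directions, together with properness in the $\pi_{13}$-direction (so proper pushforward commutes with restriction to the slice), yields
\[
[p_{13*}(p_{12}^* \tilde{\mathcal{P}} \otimes^{\mathbb{L}} p_{23}^* \mathcal{Q})] = [\pi_{13*}(\pi_{12}^* \mathcal{P} \otimes^{\mathbb{L}} \pi_{23}^* \mathcal{Q})]
\]
in $\mathcal{F}_e$, which is precisely the equality of the two actions. The main obstacle is the careful justification of this base change across the transverse slice inclusion; it should follow from the general $K$-theoretic machinery recorded in Appendix~\ref{appK} combined with the Slodowy-slice realization of $\mathcal{H}_e$ from \cite[Theorem B.2]{TX}, but the bookkeeping of equivariance and support conditions for all three projections is the place where care is genuinely needed.
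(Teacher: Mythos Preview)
Your proposal is correct and takes essentially the same approach as the paper: the paper's proof consists of the single sentence ``Same argument as in the proof of \cite[Theorem B.2]{TX} works,'' and what you have written is precisely an unpacking of that argument---lift to $\mathcal{H}_{\leq e}$, compute that the relevant supports already lie over the Slodowy slice, and then run the transverse-slice base change that \cite{TX} establishes for the algebra structure on $\mathcal{H}_e$ itself.
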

\begin{proof}
Same argument as in the proof of \cite[Theorem B.2]{TX} works.
\end{proof}

\subsubsection{}
In view of Theorem~\ref{thm_B}, the next  key proposition yields the desired homomorphism 
\begin{equation}\label{obv}\theta\colon  K_e \to \operatorname{End}_{\CH}(\mathcal F_e)^{\mathrm{opp}} = \operatorname{End}_{\CH}(\mathcal H_e)^{\mathrm{opp}} =J_e \otimes \mathbb C [{\bf{v}}, {\bf{v}}^{-1}].\end{equation}

 \begin{prop}\label{theorem} \label{split} 
 We have a canonical isomorphism
 of left $\mathcal H$-modules:
 $\mathcal F_e \simeq_{\mathcal H} \mathcal H_e$.
\end{prop}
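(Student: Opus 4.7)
The plan is to construct an explicit $\CH$-linear map $s\colon \CF_e \to \CH_e$ via a Bialynicki--Birula type correspondence on the second factor, and to prove it is an isomorphism by comparing two natural filtrations whose associated gradeds can be matched. The crucial geometric input from \cite{DLP} is that $\CB_e^{\BC^*}$ is smooth, its homology is spanned by algebraic cycles, and for every connected component $F \subset \CB_e^{\BC^*}$ the attractor
\[
\on{Attr}_F := \{(x, y) \in \CB_e \times F : \lim_{t \to 0} t \cdot x = y\}
\]
is a $Z_e \times \BC^*$-equivariant affine fibration over $F$ via the second projection.

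Writing $\overline{\on{Attr}}_F \subset \CB_e \times F$ for the closure with its $Z_e \times \BC^*$-equivariant projections $\overline{\pi}_F\colon \overline{\on{Attr}}_F \to F$ and $\overline{\iota}_F\colon \overline{\on{Attr}}_F \to \CB_e$, I would define
\[
s \;:=\; \sum_F (\on{Id}_{\CB_e} \times \overline{\iota}_F)_* \circ (\on{Id}_{\CB_e} \times \overline{\pi}_F)^*\colon \CF_e \longrightarrow \CH_e,
\]
where the sum ranges over the components of $\CB_e^{\BC^*}$. To show $s$ is bijective, order the components as $F_1,\dots,F_N$ so that each $X_i := \bigcup_{j\leq i}\overline{\iota}_{F_j}(\overline{\on{Attr}}_{F_j}) \subset \CB_e$ is closed with $X_i \setminus X_{i-1}$ equal to the image of $\on{Attr}_{F_i}$ in $\CB_e$. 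Filter $\CH_e$ by $\CH_e^{\leq i} := \on{im}\bigl(K_{Z_e \times \BC^*}(\CB_e \times X_i) \to \CH_e\bigr)$ and $\CF_e$ by the split filtration $\CF_e^{\leq i} := \bigoplus_{j \leq i} K_{Z_e \times \BC^*}(\CB_e \times F_j)$. Using the excision sequence in equivariant $K$-theory, the affine-fibration isomorphism $K_{Z_e \times \BC^*}(\CB_e \times \on{Attr}_{F_i}) \simeq K_{Z_e \times \BC^*}(\CB_e \times F_i)$ (via pullback along $\pi_{F_i}$), and the fact that the algebraicity hypothesis of \cite{DLP} makes the localization sequences break into short exact pieces, I expect to identify $\on{gr}^i \CH_e \simeq K_{Z_e \times \BC^*}(\CB_e \times F_i) = \on{gr}^i \CF_e$. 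A direct calculation then shows $s$ respects the filtrations and induces the identity on associated gradeds: restricting the $F_i$-contribution of $s$ to the open locus $\CB_e \times \on{Attr}_{F_i}$ reduces it to the pullback along the affine fibration, which is the chosen identification. Hence $s$ is an isomorphism.

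The $\CH$-linearity of $s$ is essentially formal: the Hecke convolution on both $\CF_e$ and $\CH_e$ uses only the \emph{first} $\CB_e$-factor, while every correspondence defining $s$ has the form $\on{Id}_{\CB_e} \times (-)$ and acts trivially on that factor. The commutation will follow from base change and the projection formula applied to the triple product $\widetilde{\CN} \times \CB_e \times \overline{\on{Attr}}_F$.

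The main technical obstacle will be executing the filtration step rigorously: because $\CB_e$ is singular, classical Bialynicki--Birula arguments do not apply directly, so one must work inside the smooth Slodowy variety using the descriptions $\CH_e \simeq K_{Z_e \times \BC^*}(\Lambda_e \times \Lambda_e;\, \CB_e \times \CB_e)$ and $\CF_e \simeq K_{Z_e \times \BC^*}(\Lambda_e \times \CB_e^{\BC^*};\, \CB_e \times \CB_e^{\BC^*})$, and one must verify via the algebraic-cycle hypothesis of \cite{DLP} that the relevant equivariant $K$-theoretic excision sequences split into short exact sequences, so that the two filtrations indeed have matching associated gradeds and $s$ really induces the identity on them.
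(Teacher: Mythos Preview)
Your approach is essentially the paper's: the same map $s$ built from the closed attractor correspondences $\overline{\on{Attr}}_F$, the same $\alpha$-partition filtration on $\CH_e$ coming from \cite{DLP}, and the same base-change/projection-formula verification of $\CH$-linearity. The one substantive divergence is at the step you correctly flag as the main obstacle --- the short-exactness of the excision sequences, i.e.\ injectivity of $K_{Z_e\times\BC^*}(\CB_e\times X_i)\to \CH_e$. The paper does \emph{not} deduce this from the algebraic-cycle results of \cite{DLP}; instead it builds an explicit left inverse $\kappa\colon K_{Z_e\times\BC^*}(\CB_e\times\CB_e)\to K_{Z_e\times\BC^*}(\CB_e\times\CB_e^{\BC^*})$ using the Drinfeld--Gaitsgory interpolation of the $\BC^*$-action on the second factor together with specialization in equivariant $K$-theory, and then checks directly that $\kappa\circ s$ restricted to the $F_i$-summand is the identity (up to the Thom isomorphism). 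Your proposed route via algebraic cycles would in effect require a cellular-fibration argument, hence freeness of each $K_{Z_e\times\BC^*}(\CB_e\times F_i)$ over $R_e$; since $Z_e$ need not be connected and $\CB_e$ is singular, this freeness is not immediate from \cite{DLP}, and without it your argument only shows that $\on{gr}^i\CH_e$ is a \emph{quotient} of $K_{Z_e\times\BC^*}(\CB_e\times F_i)$ --- enough for surjectivity of $s$ but not injectivity. You should expect to need the Drinfeld--Gaitsgory degeneration (or an equivalent retraction) at this point.
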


\begin{remark} 
Notice that $\CF_e$ splits as a direct sum indexed by components of $\CB_e^{\mathbb C^*}$, thus
Proposition \ref{split} implies such a decomposition for $\mathcal H_e\simeq K^{Z_e\times \mathbb C^*}(\CB_e \times \CB_e)$.

Similar direct sum decompositions are found in the literature.
In particular, \cite[Theorem 2.10]{halpern} provides a semi-orthogonal decomposition for the derived category
of equivariant coherent sheaves in a rather general situation, which implies a direct sum decomposition
for the Grothendieck group. Existence of such a decomposition compatible with the convolution action would
imply Proposition~\ref{split}.\footnote{The similarity between Theorem \ref{split} and results of \cite{halpern}
was pointed out to us by Do Kien Hoang.}



On the other hand,  \cite[Lemma 14.9]{L3} essentially proves that for  $G^\vee=SL_n$ the Bialynicky-Birula filtration on $K_{T \times \mathbb C^*}(\CB_e)$ can be split by means of the action of standard generators of Hecke algebra on $K_{T \times \mathbb C^*}(\CB_{e, 1})$. 

We do not know if either of the two approaches yields a proof of Proposition~\ref{split}.
\end{remark}

\begin{remark}
The proof of Proposition \ref{split} below  can be generalized
to yield a similar direct sum decomposition for an arbitrary projective variety $X$ with a $\BC^*$-action such that for every connected component $F \subset X^{\BC^*}$ the attractor of this component is smooth (in particular, the component itself is smooth) and the analog of Lemma \ref{diff} below holds.

\end{remark}

Before proceeding to the proof of the Proposition we  recall some geometric properties of the variety $\CB_e$
found in \cite {DLP}. Recall that we fix an $\mathfrak{sl}_2$-triple $e,h,f \in \mathfrak{g}^\vee$. The adjoint action of $h$ on $\mathfrak{g}^\vee$ induces the decomposition $\mathfrak{g}^\vee=\bigoplus_{i \in \BZ} \mathfrak{g}^\vee_i$. Let $L^\vee$ (resp. $P$) be the connected algebraic subgroup of $G^\vee$ whose Lie algebra is $\mathfrak{g}^\vee_0$ (resp. $\bigoplus_{i \geqslant 0}\mathfrak{g}^\vee_i$). Recall that $W_f$ is the Weyl group of $G^\vee$, let $W_L \subset W_f$ be the Weyl group of $L^\vee$.
Recall that $\mathcal{B}^{\mathbb{C}^*}=\bigsqcup_{\bar{w} \in W_L \backslash W_f}L^{\vee}wB/B$.  For $\bar{w} \in W_L \backslash W_f$ we set $\CB_{\bar{w}}:=PwB/B \subset \mathcal{B}$. We also set $\CB_{e,\bar{w}}:=\CB_{\bar{w}} \cap \CB_e$. Note that $\CB_{e,\bar{w}}$ consists of points $y \in \CB_e$ such that $\underset{t \rightarrow 0}{\on{lim}}\, t \cdot y \in L^\vee \bar{w}B/B$, where the action of $\mathbb{C}^*$ comes from the cocharacter of the center of $L^\vee$ that integrates $h$.



\begin{lemma}\label{lemma_springer_w_part_smooth}
Variety $\CB_{e,\bar{w}}$ is smooth. 
\end{lemma}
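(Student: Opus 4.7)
The plan is to deduce smoothness of $\CB_{e,\bar{w}}$ from the Bialynicki-Birula type decomposition of the Springer fiber established by De Concini--Lusztig--Procesi in \cite{DLP}. First I would identify $\CB_{e,\bar{w}}$ intrinsically as the attractor of the component $F_{\bar{w}} := L^\vee\bar{w}B/B \cap \CB_e$ of $\CB_e^{\BC^*}$ inside $\CB_e$ for the $\BC^*$-action coming from $\lambda$, i.e.
\[
\CB_{e,\bar{w}} = \bigl\{y \in \CB_e : \lim_{t\to 0} \lambda(t)\cdot y \in F_{\bar{w}}\bigr\}.
\]
This identification is immediate from the fact that, for the smooth projective flag variety $\CB$, the classical Bialynicki--Birula attractor of $L^\vee\bar{w}B/B$ is precisely $PwB/B = \CB_{\bar{w}}$; intersecting with the closed $\BC^*$-stable subvariety $\CB_e \subset \CB$ yields the stated description.

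With this in hand, the lemma follows from two results of \cite{DLP}: (i) $\CB_e^{\BC^*}$ is smooth, so each component $F_{\bar{w}}$ is smooth; and (ii) the limit map $\pi_{\bar{w}} \colon \CB_{e,\bar{w}} \to F_{\bar{w}}$, $B' \mapsto \lim_{t\to 0}\lambda(t)B'\lambda(t)^{-1}$, is an affine fibration (in fact a $\BC^*$-equivariant vector bundle whose rank at a point $x\in F_{\bar{w}}$ is the dimension of the positive-weight part of $T_x\CB_e$ under $\lambda$). Since the total space of an affine fibration over a smooth base is smooth, this yields smoothness of $\CB_{e,\bar{w}}$.

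The main obstacle is really the content of (i) and (ii), which we are black-boxing from \cite{DLP}: the difficulty is that $\CB_e$ itself is singular, so one cannot apply the Bialynicki--Birula theorem to it directly. A natural attempt to bypass \cite{DLP} would be to apply Bialynicki--Birula to the smooth ambient Slodowy variety $\Lambda_e$ (which satisfies $\Lambda_e^{\BC^*} = \CB_e^{\BC^*}$), obtaining smooth attractor strata $\Lambda_{e,\bar{w}}$ via the contracting $\BC^*$-action. However, one checks that $\CB_{e,\bar{w}} \subsetneq \Lambda_{e,\bar{w}}$ strictly: a point $y \in \Lambda_e$ with $p(y) \ne e$ still has $\lim_{t\to 0}\lambda(t)\cdot p(y) = e$, so its limit can lie in $F_{\bar{w}} \subset \CB_e$ without $y$ itself lying in $\CB_e$. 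Consequently the ``Slodowy approach'' reduces the problem to cutting out $\CB_{e,\bar{w}}$ inside the smooth $\Lambda_{e,\bar{w}}$ as a smooth subvariety, which amounts to an $\mathfrak{sl}_2$-theoretic analysis of $T_xT\CB_e \hookrightarrow T_x\Lambda_e$ at fixed points $x \in F_{\bar{w}}$; this is essentially what \cite{DLP} carry out, and it seems cleanest to simply quote their result.
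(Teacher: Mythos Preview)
Your proposal is correct but takes a slightly more circuitous route than the paper, which simply cites \cite[Proposition~3.2]{DLP} directly for the smoothness of $\CB_{e,\bar{w}}$. You instead deduce it from two other DLP facts---smoothness of $\CB_e^{\BC^*}$ and the affine-fibration structure of $\pi_{\bar{w}}$---and conclude that the total space of an affine fibration over a smooth base is smooth.

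One caution: in the paper's own logical order, the affine fibration (Lemma~\ref{lemma_limit_morph_springer_affine_fibr}) is proved \emph{after} and \emph{using} the present smoothness lemma, via Bialynicki--Birula applied to the smooth variety $X=\CB_{e,\bar{w}}$. So if you invoke the affine fibration to establish smoothness, you must be sure to cite an argument for it that does not already presuppose smoothness of $\CB_{e,\bar{w}}$---e.g.\ the Bass--Haboush route of \cite[Section~3.4]{DLP} rather than the Bialynicki--Birula route---or the reasoning becomes circular. In DLP both facts are derived from a common explicit description of $\CB_{e,\bar{w}}$, so there is no genuine circularity at the source; still, the paper's direct citation of Prop.~3.2 is the cleaner packaging.
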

\begin{proof}
\cite[Proposition 3.2]{DLP}.
\end{proof}

\begin{warning}
Variety $\CB_{e,\bar{w}}$ may be disconnected. It may also be empty.  
\end{warning}

\begin{lemma}\label{lemma_limit_morph_springer_affine_fibr}
The map $x \mapsto \underset{t \rightarrow 0}{\on{lim}}\,t \cdot x$ 
is an affine fibration $\CB_{e,\bar{w}} \twoheadrightarrow \CB_{e,\bar{w}}^{\BC^*}$.
\end{lemma}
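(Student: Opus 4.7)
The plan is to apply a Bia\l ynicki-Birula-type theorem to $\CB_{e,\bar w}$. The version I would use is the following classical statement: if $Y$ is a smooth algebraic variety carrying a $\BC^*$-action such that $\lim_{t\to 0}t\cdot y$ exists in $Y$ for every $y\in Y$, then the limit map $Y\to Y^{\BC^*}$ is a Zariski-locally trivial affine fibration, with fiber over $z\in Y^{\BC^*}$ isomorphic to the positive-weight subspace $(T_zY)^+$ of the tangent space.

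Both hypotheses are immediate for $Y=\CB_{e,\bar w}$. Smoothness is the content of Lemma~\ref{lemma_springer_w_part_smooth}. For the attracting condition, recall that by definition $\CB_{e,\bar w}$ consists of those $y\in\CB_e$ for which $\lim_{t\to 0}t\cdot y\in L^\vee\bar w B/B$; in particular the limit exists, and since it lies both in $\CB_e$ (which is closed and $\BC^*$-stable) and in $L^\vee\bar w B/B\subseteq \CB^{\BC^*}$, it lies in $\CB_{e,\bar w}\cap\CB^{\BC^*}=\CB_{e,\bar w}^{\BC^*}$. So the limit map is a well-defined map $\CB_{e,\bar w}\to\CB_{e,\bar w}^{\BC^*}$.

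The only subtle point is justifying the cited BB statement in our non-projective setting. I would reduce to the projective case via the ambient smooth projective $\mathcal B$: on $\mathcal B$ the classical theorem yields that the limit map $\CB_{\bar w}=P\bar w B/B\to L^\vee\bar w B/B$ is a Zariski-locally trivial affine fibration (in fact a vector bundle, with fibers given by orbits of the unipotent radical $U_P\subset P$). Intersecting with $\CB_e$ gives exactly the map we are studying, and then a standard tangent-space argument at each fixed point, using the smoothness of both $\CB_{e,\bar w}$ and $\CB_{e,\bar w}^{\BC^*}$, shows that this restriction remains a Zariski-locally trivial affine fibration whose fiber over $z\in\CB_{e,\bar w}^{\BC^*}$ is the positive weight subspace $(T_z\CB_{e,\bar w})^+$.

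The main technical obstacle is precisely this last reduction to the non-projective smooth setting: one must check that smoothness of $\CB_{e,\bar w}$ is exactly what is needed to guarantee that the naive fiberwise description (intersection of an ambient affine fiber with $\CB_e$) actually produces an affine space of the predicted dimension, rather than some non-reduced or lower-dimensional degeneration. Once this is granted (either by the tangent-space computation sketched above or by invoking the statement already contained in \cite{DLP}, whose setup is essentially the same), the lemma follows.
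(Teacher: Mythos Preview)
Your approach is the same in spirit as the paper's---apply a Bia\l ynicki-Birula theorem to the smooth variety $\CB_{e,\bar w}$---but the paper resolves your ``non-projective'' worry more directly. Bia\l ynicki-Birula's Theorem~4.1 in \cite{BB} already applies to smooth (not necessarily complete) varieties once one knows the variety is covered by $\BC^*$-invariant affine opens; Sumihiro's theorem \cite[Corollary~2 in Section~3]{su} supplies this for any normal variety with torus action, and $\CB_{e,\bar w}$ is smooth by Lemma~\ref{lemma_springer_w_part_smooth}. So no reduction to the ambient projective $\CB$ is needed. The paper also records two alternatives: \cite[Section~3.4]{DLP} (characteristic~$0$, via Bass--Haboush) and \cite[Section~5]{lu_unipotent} (arbitrary characteristic).

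Your proposed detour through $\CB$ has a real gap as written. Knowing that $\CB_{\bar w}\to L^\vee\bar w B/B$ is an affine fibration and that $\CB_{e,\bar w}$ is smooth does \emph{not} by itself imply that the restricted map is an affine fibration: the intersection of an affine fiber with a smooth subvariety need not be an affine space, and the tangent-space computation at a fixed point only gives the correct fiber dimension and smoothness, not global affine-space structure or local triviality. Filling this in amounts to re-proving Bia\l ynicki-Birula for $\CB_{e,\bar w}$ itself, which is exactly what you were trying to avoid. Your fallback to citing \cite{DLP} is fine and matches one of the paper's alternatives, but the cleanest route is the direct one: smoothness $+$ Sumihiro $\Rightarrow$ \cite[Theorem~4.1]{BB} applies.
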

\begin{proof}
Follows from \cite[Theorem 4.1]{BB} (the assumptions of this theorem are satisfied for $X=\CB_{e,\bar{w}}$ by Lemma \ref{lemma_springer_w_part_smooth} together with \cite[Corollary 2 in Section 3]{su}). 

Another argument (that works only over fields of characteristic $0$) can be found in \cite[Section 3.4]{DLP}, where the authors use the result of Bass-Haboush (see \cite[Section 1.5]{DLP}) to obtain the statement. Finally, another argument (that works over fields of arbitrary characteristic) is given in \cite[Section 5]{lu_unipotent}.  We are grateful to George Lusztig for pointing out this reference to us. 
\end{proof}

\begin{defn}
A partition of a variety $X$ as a finite union of locally closed subvarieties $X_i$ is said to be an $\alpha$-partition if the subvarieties in the partition can be indexed $X_1, \ldots, X_n$ in such a way that $X_1 \cup \ldots \cup X_k$ is closed in $X$ for $k=1,\ldots,n$.
\end{defn}

\begin{lemma}\label{lem_al_part_b_e}
Varieties $\CB_{e,\bar{w}}$ form an $\alpha$-partition of $\CB_e$.
\end{lemma}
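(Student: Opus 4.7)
My plan is to reduce the statement to the analogous assertion for the ambient flag variety $\CB$. Observe that $\CB_{e,\bar{w}} = \CB_e \cap \CB_{\bar{w}}$ and that $\CB_e$ is a closed $\mathbb{C}^*$-stable subvariety of $\CB$. Hence it suffices to exhibit an enumeration $\bar{w}_1,\bar{w}_2,\ldots$ of $W_L\backslash W_f$ for which $\bigcup_{i \leq k} \CB_{\bar{w}_i}$ is closed in $\CB$ for every $k$; intersecting each such closed subset with $\CB_e$ then produces the required chain of closed subsets of $\CB_e$ whose successive differences are the $\CB_{e,\bar{w}_i}$.

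Next I would build the enumeration via Bialynicki-Birula theory. Pick a $\mathbb{C}^*$-equivariant ample line bundle $\mathcal L$ on $\CB$, which exists because $\CB$ is projective and the $\mathbb{C}^*$-action (coming from the cocharacter that integrates $h$) is algebraic. The $\mathbb{C}^*$-weight on the fiber of $\mathcal L$ at a fixed point is locally constant on $\CB^{\mathbb{C}^*}$, so it takes a single integer value $\mu(\bar{w})$ on each component $L^\vee \bar{w} B/B$. I would enumerate the cosets so that $\mu(\bar{w}_1)\geq \mu(\bar{w}_2)\geq \cdots$, breaking ties arbitrarily.

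The geometric input needed is the standard fact that in a smooth projective variety with a $\mathbb{C}^*$-action, the closure of an attractor to a fixed point component $F$ never meets a component whose $\mathcal L$-weight is strictly smaller than that of $F$. Granting this, the weakly decreasing enumeration gives
\[
\overline{\CB_{\bar{w}_i}} \;\subseteq\; \bigcup_{j\,:\,\mu(\bar{w}_j)\geq \mu(\bar{w}_i)} \CB_{\bar{w}_j} \;\subseteq\; \bigcup_{j\leq i} \CB_{\bar{w}_j},
\]
so each union $\bigcup_{i\leq k}\CB_{\bar{w}_i}$ is closed, and the proof is complete.

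The main (and only) obstacle is the weight-comparison claim for limits of attractors. I would prove it by embedding $\CB$ equivariantly into $\mathbb P\bigl(H^0(\CB,\mathcal L^{\otimes n})^\ast\bigr)$ for $n\gg 0$, thereby reducing to the elementary case of a $\mathbb{C}^*$-action on projective space. There the attractor cells are precisely the loci on which the top nonzero weight-homogeneous coordinate is fixed, and if a sequence of points in the attractor of a weight-$a$ component converges to a point in a weight-$b$ component, then writing things out in weight-homogeneous coordinates forces $b\geq a$. This elementary fact propagates back to $\CB$ and finishes the proof.
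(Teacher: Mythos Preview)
Your argument is essentially correct but contains a small gap, and it takes a genuinely different route from the reference the paper cites.

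\textbf{The gap.} Your weight-comparison claim, as stated, only guarantees that the closure $\overline{\CB_{\bar{w}_i}}$ is contained in $\bigcup_{\mu(\bar{w}_j)\ge\mu(\bar{w}_i)}\CB_{\bar{w}_j}$. The second containment $\bigcup_{\mu(\bar{w}_j)\ge\mu(\bar{w}_i)}\CB_{\bar{w}_j}\subseteq\bigcup_{j\le i}\CB_{\bar{w}_j}$ fails whenever there is some $j>i$ with $\mu(\bar{w}_j)=\mu(\bar{w}_i)$, so ``breaking ties arbitrarily'' is not yet justified. The fix is short: in your $\mathbb{P}^N$ model, if $\mu(F)=\mu(F')=a_i$ then both attractors lie in the open set $U=\mathbb{P}(V_1\oplus\cdots\oplus V_i)\setminus\mathbb{P}(V_1\oplus\cdots\oplus V_{i-1})$, and the limit map $x\mapsto\lim_{t\to 0}t\cdot x$ is the continuous projection $U\to\mathbb{P}(V_i)$. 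Hence any limit of points of $X_F^+$ that still lies in $U$ must project into $\overline{F}=F$, forcing $F=F'$. With this extra sentence your enumeration works.

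\textbf{Comparison with the paper.} The paper simply cites \cite[Section~3.4]{DLP}, whose argument is more elementary and specific to the situation at hand: the pieces $\CB_{\bar{w}}=PwB/B$ are exactly the $P$-orbits on $\CB$, and since there are finitely many and each orbit closure is a union of orbits, any enumeration by increasing dimension gives an $\alpha$-partition of $\CB$; intersecting with the closed subvariety $\CB_e$ then gives the result. Your Bialynicki--Birula/ample-line-bundle approach is more general (it would apply to any smooth projective $\mathbb{C}^*$-variety) but uses more machinery; the $P$-orbit argument is a one-liner once you remember that $\CB_{\bar w}$ is a $P$-orbit.
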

\begin{proof}
See \cite[Section 3.4]{DLP}.
\end{proof}

\begin{proof} (of Proposition \ref{split}) \textbf{Step 1.} 
By Lemma \ref{lem_al_part_b_e} varieties $\CB_{e,\bar{w}}$ form an $\alpha$-partition of $\CB_e$, i.e.  there exists a labeling $\CB_{e,\bar{w}_i}$, $i=1,\ldots,n$ of these varieties such that $\bigcup_{i=1}^k\CB_{e,\bar{w}_i}$ is closed for $k=1,\ldots,n$.


This yields a filtration (that we denote by $\Phi$) on $K_{Z_e \times \mathbb C^*}(\CB_e \times \CB_e)$.

Moreover, the locally closed strata $\CB_{e,\bar{w}}$ are the attracting varieties for $\CB_{e,\bar{w}}^{\mathbb C^*}$, which
is a union of components of $\CB_{e}^{\mathbb C^*}$.
 By Lemma \ref{lemma_limit_morph_springer_affine_fibr} the map $\pi_{\bar{w}}\colon \CB_{e, \bar{w}} \to \CB_{e, \bar{w}}^{\mathbb C^*}$ is a vector bundle, hence it induces (cf. \textit{loc.~cit.}) an isomorphism $\pi_{\bar{w}}^*\colon K_{Z_e \times \BC^*}(\CB_{e,\bar{w}}^{\BC^*}) \iso K_{Z_e \times \BC^*}(\CB_{e,\bar{w}})$. 
 
 We claim that one gets a canonical isomorphism of $K_{Z_e \times \mathbb C^*}(\on{pt})$-modules: $\operatorname{gr_{\Phi}}K_{Z_e \times \mathbb C^*}(\CB_e \times \CB_e) = \mathcal F_e$. 

To check this, one has to show that all of the maps $\Phi_{k}:=K_{Z_e \times \BC^*}(\CB_e \times (\bigcup_{i=1}^{k}\CB_{e,\bar{w}_i})) \to K_{Z_e \times \BC^*}(\CB_e \times \CB_e)$ are injective. 

For this, see the Lemma~\ref{diff} below.

In order to finish the proof of the Proposition it now suffices to provide an $\mathcal H_e$- and $R_e$-equivariant splitting of $\Phi$. 

 \textbf{Step 2.} For every $\bar{w}$, $\pi_{\bar{w}}$ together with the locally closed embedding $\iota_{\bar{w}}\colon \mathcal{B}_{e,\bar{w}} \hookrightarrow \CB_e$ fits into the diagram:
$$
\begin{tikzcd}
	{\CB_{e, \bar{w}}}  	\arrow{d}{\pi_{\bar{w}}} \arrow{r}{\iota_{\bar{w}}} & {\CB_e} \\
	{\CB_{e, \bar{w}}^{\mathbb C^*}.}
\end{tikzcd}
$$

We claim that $\operatorname{Im}(\iota_{\bar{w}} \times \pi_{\bar{w}})$ is locally closed inside $\CB_e \times \CB_{e, w}^{\mathbb C^*}$, or, equivalently, inside $\overline{\CB}_{e, \bar{w}} \times \CB_{e, \bar{w}}^{\mathbb C^*}$.
To see this, consider  the diagram:

\[\begin{tikzcd}
	{\CB_{e, \bar{w}}} & {\CB_{e, \bar{w}} \times \CB_{e, \bar{w}}^{\mathbb C^*}} & {\overline{\CB}_{e,\bar{w}} \times \CB_{e, \bar{w}}^{\mathbb C^*}},
	\arrow[from=1-1, to=1-2]
	\arrow[from=1-2, to=1-3]
\end{tikzcd}\]

\noindent in which the first map is the graph map for $\pi_{\bar{w}}$ which is clearly closed, and the second one is the open embedding. The claim follows.

Now let $\Gamma_{\bar{w}}$ be the closure of the image of $\iota_{\bar{w}} \times \pi_{\bar{w}}$ inside $\CB_e \times \CB_{e, \bar{w}}^{\mathbb C^*}$. It is a proper $Z_e \times \mathbb C^*$-subvariety with an open subset isomorphic to $\CB_{e,\bar{w}}$. Moreover, there are canonical $Z_e \times \mathbb C^*$-equivariant projections  $\widetilde{\iota}_{\bar{w}}$ and $\widetilde{\pi}_{\bar{w}}$ from $\Gamma_{\bar{w}}$ to  $\CB_e$ and $\CB_{e,\bar{w}}^{\mathbb C^*}$ respectively.

\textbf{Step 3}. Now  induction by $k=1,\ldots,n$ 
shows that the map of the form $$\bigoplus_{i=1}^{n} (\operatorname{Id}_{\CB_e} \times \widetilde{\iota}_{\bar{w}_i})_{*}(\operatorname{Id}_{\CB_e} \times \widetilde{\pi}_{\bar{w}_i})^*$$ splits the filtration. (We recall that $\operatorname{Id}_{\CB_e} \times \pi_{\bar{w}}$ is a vector bundle, hence it induces an isomorphism in equivariant $K$-theory.)

Note that $(\operatorname{Id}_{\CB_e} \times \widetilde{\pi}_{\bar{w}})^*$ is well-defined: pullback in $K$-theory is well-defined for morphisms with smooth target or a base change of such morphisms, while $\operatorname{Id}_{\CB_e} \times \widetilde{\pi}_{\bar{w}}$ is a base change of the morphism $\widetilde{\pi}_{\bar{w}}$ with smooth target $\CB_{e, \bar{w}}^{\mathbb C^*}$.


\textbf{Step 4}. The argument similar to the one in the last paragraph shows that $\mathcal{H}$ has a canonical left convolution action on $K_{Z_e \times \mathbb C^*}(\CB_e \times \Gamma_{\bar{w}})$. Moreover, $(\operatorname{Id}_{\CB_e} \times \widetilde{\iota}_{\bar{w}})_{*}$ and $(\operatorname{Id}_{\CB_e} \times \widetilde{\pi}_{\bar{w}})^{*}$ are $\mathcal H$-equivariant. 

Let us, for example, prove this for $(\operatorname{Id}_{\CB_e} \times \widetilde{\pi}_{\bar{w}})^{*}$. 

We have:
\begin{equation}\label{diagr_action}
\begin{tikzcd}
	{} & {\widetilde{\mathcal{N}} \times \widetilde{\mathcal{N}} \times \Gamma_w} \arrow{rr}{{\operatorname{Id} \times \operatorname{Id} \times \widetilde{\pi}_{\bar{w}}}} && {\widetilde{\mathcal{N}} \times \widetilde{\mathcal{N}} \times \CB_e^{\mathbb C^*}} \arrow{d}{{\pi_{13}}} \\
	&  {\widetilde{\mathcal{N}}\times \Gamma_{\bar{w}}} \arrow{rr}{{\operatorname{Id} \times \widetilde{\pi}_{\bar{w}}}} && {\widetilde{\mathcal{N}} \times \CB_e^{\mathbb C^*}}
	\arrow["{q_{13}}"', from=1-2, to=2-2]
\end{tikzcd}
\end{equation}

We need to prove that for $[\mathcal A] \in K_{G^\vee \times \mathbb C^*}(\widetilde{\mathcal{N}} \times_{\mathcal{N}} \widetilde{\mathcal{N}})$ and $[\mathcal G] \in K_{Z_e \times \mathbb C^*}(\widetilde{\mathcal{N}} \times \CB_e^{\mathbb{C}^*}; \CB_e \times \CB_e^{\mathbb{C}^*})$ the following equality holds:

\begin{equation}\label{comm_H_act}
\Big[(\operatorname{Id} \times \widetilde{\pi}_{\bar{w}})^*\pi_{13*}(\pi_{12}^*\mathcal A \otimes^{\mathbb L} \pi_{23}^*\mathcal G)\Big] = \Big[q_{13*}(q_{12}^*\mathcal A \otimes^{\mathbb L} q_{23}^*(\operatorname{Id} \times \widetilde{\pi}_{\bar{w}})^*\mathcal G)\Big],
\end{equation}
where $\pi_{ij}$, (resp. $q_{ij}$), $\{i,j\} \subset \{1,2,3\}$ are projections of $\widetilde{\mathcal{N}} \times \widetilde{\mathcal{N}} \times \Gamma_w$ (resp. $\widetilde{\mathcal{N}} \times \widetilde{\mathcal{N}} \times \mathcal{B}_e^{\mathbb{C}^*}$) to the corresponding factors.

The equality (\ref{comm_H_act}) is clear since the diagram (\ref{diagr_action}) is Cartesian, the map $\pi_{13}$ is flat, and the result of \cite[Proposition 1.4]{To} holds.

This finishes the proof.
\end{proof}

\begin{lemma}\label{diff} The natural maps $K_{Z_e \times \BC^*}(\CB_e \times (\bigcup_{i=1}^{k}\CB_{e,\bar{w}_i})) \to K_{Z_e \times \BC^*}(\CB_e \times \CB_e)$ are injective for all $k$.
\end{lemma}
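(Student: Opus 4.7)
The strategy is to interpret $\Phi_k$ via localization long exact sequences in equivariant $K$-theory of coherent sheaves and to exhibit a splitting of each successive open restriction map using the proper attracting correspondences built in the course of the proof of Proposition~\ref{split}. For brevity set $F_k:=\bigcup_{i=1}^{k}\CB_{e,\bar w_i}\subset\CB_e$, closed by Lemma~\ref{lem_al_part_b_e}, and $A_k:=F_k\setminus F_{k-1}=\CB_{e,\bar w_k}$. Each map $\Phi_k$ factors as the composition of the pushforwards along the successive closed embeddings $\CB_e\times F_j\hookrightarrow\CB_e\times F_{j+1}$ for $j=k,\dots,n-1$, so it suffices to check that each such pushforward is injective. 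Fixing $k$ and applying the localization long exact sequence to $\CB_e\times F_{k-1}\hookrightarrow\CB_e\times F_k$ with open complement $\CB_e\times A_k$,
\begin{equation*}
K_1^{Z_e\times\BC^*}(\CB_e\times F_k)\xrightarrow{r_1}K_1^{Z_e\times\BC^*}(\CB_e\times A_k)\xrightarrow{\delta}K_{Z_e\times\BC^*}(\CB_e\times F_{k-1})\to K_{Z_e\times\BC^*}(\CB_e\times F_k),
\end{equation*}
the problem reduces to showing that $\delta=0$, equivalently, that $r_1$ is surjective.

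\textbf{The splitting via attracting correspondences.} I would use the proper correspondence $\Gamma_{\bar w_k}\subset\CB_e\times A_k^{\BC^*}$ introduced in Step~2 of the proof of Proposition~\ref{split}, namely the closure of the graph of the affine-bundle projection $\pi_{\bar w_k}\colon A_k\to A_k^{\BC^*}$ (Lemma~\ref{lemma_limit_morph_springer_affine_fibr}). It carries a proper projection $\widetilde\iota_{\bar w_k}\colon\Gamma_{\bar w_k}\to\overline{A_k}\subseteq F_k$ and a projection $\widetilde\pi_{\bar w_k}\colon\Gamma_{\bar w_k}\to A_k^{\BC^*}$ to a smooth target. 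The correspondence
\begin{equation*}
\sigma:=(\on{Id}_{\CB_e}\times\widetilde\iota_{\bar w_k})_{*}(\on{Id}_{\CB_e}\times\widetilde\pi_{\bar w_k})^{*}\colon K_i^{Z_e\times\BC^*}(\CB_e\times A_k^{\BC^*})\longrightarrow K_i^{Z_e\times\BC^*}(\CB_e\times F_k)
\end{equation*}
is well-defined in every $K$-theoretic degree $i$ (the pullback is defined since $A_k^{\BC^*}$ is smooth, and the pushforward because $\widetilde\iota_{\bar w_k}$ is proper), and the key claim is that $r_i\circ\sigma=(\on{Id}_{\CB_e}\times\pi_{\bar w_k})^{*}$ for all $i$. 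This follows by flat base change along the open embedding $\CB_e\times A_k\hookrightarrow\CB_e\times F_k$ together with two geometric observations: first, $\on{graph}(\pi_{\bar w_k})$ is open in $\Gamma_{\bar w_k}$ and $\widetilde\iota_{\bar w_k}$ restricts to an isomorphism from it onto $A_k$; second, the boundary $\Gamma_{\bar w_k}\setminus\on{graph}(\pi_{\bar w_k})$ is sent by $\widetilde\iota_{\bar w_k}$ into the closed set $\overline{A_k}\setminus A_k\subseteq F_{k-1}$. Since $\on{Id}_{\CB_e}\times\pi_{\bar w_k}$ is a $Z_e\times\BC^*$-equivariant affine bundle, its pullback is an isomorphism in every degree, so $r_i$ is split surjective in all degrees, hence $r_1$ is surjective and $\delta=0$.

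\textbf{Main obstacle.} The only delicate point is the base-change identity $r_i\circ\sigma=(\on{Id}_{\CB_e}\times\pi_{\bar w_k})^{*}$ and its compatibility with the higher equivariant $K$-theory localization sequence. The first reduces to the two purely geometric assertions listed above, both immediate from the construction of $\Gamma_{\bar w_k}$ as the closure of the graph of $\pi_{\bar w_k}$; the second uses the Thomason--Quillen localization theorem for equivariant coherent $K$-theory applied to the pair $(\CB_e\times F_{k-1},\CB_e\times F_k)$, a standard input which would be recorded in Appendix~\ref{appK}. Granted these ingredients, injectivity at each successive step is immediate, and iterating yields injectivity of every $\Phi_k$.
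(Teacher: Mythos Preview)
Your argument is correct and takes a genuinely different route from the paper's proof. The paper never passes to higher $K$-theory; instead it builds a single retraction $\kappa\colon K_{Z_e\times\BC^*}(\CB_e\times\CB_e)\to K_{Z_e\times\BC^*}(\CB_e\times\CB_e^{\BC^*})$ by applying the Drinfeld--Gaitsgory interpolation on the second factor and then the specialization map in equivariant $K_0$, and verifies directly that $\kappa\circ(\on{Id}\times\widetilde\iota_{\bar w_i})_*(\on{Id}\times\widetilde\pi_{\bar w_i})^*$, projected to the $\bar w_i$-component of $\CB_e^{\BC^*}$, is the identity (up to Thom isomorphism). You, by contrast, run the Quillen--Thomason localization sequence at each filtration step and reuse the same attracting correspondences $\Gamma_{\bar w_k}$ to split the open restriction maps $r_i$ in \emph{every} degree, forcing the boundary $\delta$ to vanish. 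Your approach is more economical in that it recycles exactly the correspondences already constructed in the proof of Proposition~\ref{split} and needs no new geometric input (no interpolation family, no specialization); the cost is appealing to higher equivariant $G$-theory and to functoriality of pullback on $G$-theory spectra for finite Tor-dimension maps (this is how $(\on{Id}_{\CB_e}\times\widetilde\pi_{\bar w_k})^*$ is defined in degree~$1$, since the target $\CB_e\times A_k^{\BC^*}$ is not smooth---it is only a base change of a map with smooth target). The paper's approach stays in $K_0$ throughout, at the price of invoking the Drinfeld--Gaitsgory degeneration and the compatibility of specialization with open base change.
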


\begin{proof} 

\textbf{Step 1.} To prove the sought-for injectivity, it suffices to construct the map 
\begin{equation*}
\kappa\colon K_{Z_e \times \mathbb C^*}(\mathcal B_e \times \mathcal B_e) \to 
K_{Z_e \times \mathbb C^*}(\mathcal B_e \times \mathcal B_{e}^{\mathbb{C}^*})
\end{equation*} 
so that the maps $\kappa \circ (\operatorname{Id}_{\CB_e} \times \widetilde{\iota}_{\bar{w}_i})_{*}(\operatorname{Id}_{\CB_e} \times \widetilde{\pi}_{\bar{w}_i})^*$ composed with the projections $K_{Z_e \times \mathbb{C}^*}(\mathcal{B}_e \times \mathcal B_{e}^{\mathbb{C}^*}) \rightarrow K_{Z_e \times \mathbb{C}^*}(\mathcal{B}_e \times \mathcal B_{e,\bar{w}_i}^{\mathbb{C}^*})$ are injective for all $i$.

We will make use of  \textit{the Drinfeld-Gaitsgory degeneration} (cf.~\cite[Section 2]{DG}), and \textit{the specialization in the equivariant $K$-theory} (cf.~\cite[Section 5.3]{CG}).

First of all, we briefly recall  both constructions.

\textbf{Step 2.} First, for an algebraic variety $M$ equipped with a $\mathbb C^*$-action, the \textit{DG-degeneration} is the family $\widetilde{M} \to \mathbb C$ so that $\widetilde{M}_t$ can be canonically identified with $M$, and $\widetilde{M}_0 \simeq \bigsqcup_{\bar{w}_i} M_{\bar{w}_i}^+ \times_{M_{\bar{w}_i}^{\mathbb C^*}} M_{\bar{w}_i}^-$. 
Here, for the given  $M_{\bar{w}_i}^{\mathbb C^*}$, $M_{\bar{w}_i}^+$ stands for the corresponding attracting set, and $M_{\bar{w}_i}^-$ stands for the repelling one. Moreover, there exists a canonical global trivialization of $\widetilde{M}$ over $\mathbb G_m$: $\pi_M\colon\widetilde{M}|_{\mathbb G_m} \simeq M \times \mathbb G_m$.

By inspection of the constructions from \textit{loc. cit.}, one sees that for $Z$ equipped with the action of an algebraic group $H$ (so that the $H$-action commutes with $\mathbb C^*$), the family also carries an action of $H$ compatible with the trivial action on the base.

Second, if $X \to \mathbb C$ is any $H$-equivariant algebraic family (for example, the one above), the map $\operatorname{lim}_0 \colon K_H(X \setminus X_0) \to K_H(X_0)$ can be defined.

\textbf{Step 3.} Now, let us consider $\mathcal B_e \times \mathcal B_e$ with the $\mathbb C^*$-action on the second factor as $M$. 

The desired map $\kappa$ is as follows.

For any class $[\mathcal F] \in K_{Z_e \times \mathbb C^*}(M)$, one may consider the class
\begin{equation*}
\pi_{+*}\operatorname{lim}_0\pi_M^*([\mathcal F \boxtimes \mathcal{O}_{\mathbb{G}_m}]) \in K_{Z_e \times \mathbb C^*}(M^+) \simeq K_{Z_e \times \mathbb C^*}(M^{\mathbb C^*})
\end{equation*}
for $\pi_+$ being the projection $M^+ \times_{M^{\mathbb C^*}} M^- \to M^+$.

Let us now prove the fact that the composition $\kappa \circ (\operatorname{Id}_{\CB_e} \times \widetilde{\iota}_{\bar{w}_i})_{*}(\operatorname{Id}_{\CB_e} \times \widetilde{\pi}_{\bar{w}_i})^*[\mathcal G]$ restricted to $M^+_{\bar{w}_i}$ is non-zero for a non-zero $[\mathcal G]$.

Let $S \subset M$ be the support of $(\operatorname{Id}_{\CB_e} \times \widetilde{\iota}_{\bar{w}_i})_{*}(\operatorname{Id}_{\CB_e} \times \widetilde{\pi}_{\bar{w}_i})^*[\mathcal G]$. Clearly, $S$ is contained in $\on{Im}(\mathcal{B}_e \times \widetilde{\iota}_{\bar{w}_i})$ so it lies in the closure $\overline{M^+_{\bar{w}_i}}$ to the component of $M^{\mathbb{C}^*}$ corresponding to $\bar{w}_i$. Set $Z:=\overline{M^+_{\bar{w}_i}}$ and let $\widetilde{Z}$ be the Drinfeld-Gaitsgory interpolation of $Z$.
Note that $\widetilde{Z}$ is closed in $\widetilde{M}$ by  \cite[Proposition 2.3.2 (i)]{DG}. Note also that $M^+_{\bar{w}_i} \subset Z^+ \times_{Z^{\mathbb{C}^*}} Z^-=Z_0$ is a union of connected components of $Z_0$.


Let $j_0$ be an evident open (and closed) embedding $M^+_{\bar{w}_i} \hookrightarrow Z^+ \times_{Z^{\mathbb C^*}} Z^-=Z_0$, and let $j_t$ be an evident open embedding $M^+_{\bar{w}_i} \to Z_t$. They glue to an open embedding $j\colon M^+_{\bar{w}_i} \times \mathbb{C} \rightarrow \widetilde{Z}$. Let $j_{\neq 0}$ be the restriction of $j$ to $M^+_{\bar{w}_i} \times \mathbb{G}_m$.

Then,

\begin{align}j_0^*\operatorname{lim}_0\pi_M^*((\operatorname{Id}_{\CB_e} \times \widetilde{\iota}_{\bar{w}_i})_{*}(\operatorname{Id}_{\CB_e} \times \widetilde{\pi}_{\bar{w}_i})^*[\mathcal G] \boxtimes \mathcal{O}_{\mathbb{G}_m})= \\  \operatorname{lim}_0 j_{\neq 0}^*\pi^* ((\operatorname{Id}_{\CB_e} \times \widetilde{\iota}_{\bar{w}_i})_{*}(\operatorname{Id}_{\CB_e} \times \widetilde{\pi}_{\bar{w}_i})^*[\mathcal G] \boxtimes \mathcal{O}_{\mathbb{G}_m})= \\ [\mathcal G] \neq 0\end{align}

Here in the second step we use that the embedding $j_{\neq 0}$ degenerates, via the Drinfeld-Gaitsgory family, to the embedding $j_0$
and take the limit for the corresponding trivial family $M^+ \times \mathbb C$, -- and \cite[Lemma 5.3.6]{CG} (cf. also \cite[Theorem~5.3.9]{CG}). We have also omitted  the Thom isomorphism $\pi_{\bar{w}_i}^*$ from the notation.

The claim follows.

 

\end{proof}

The proof of the following fact is similar to the above discussion.

\begin{prop}\label{sym} As a right $K_e$-module, $\mathcal F_e$ is isomorphic to $K_e$. 
\end{prop}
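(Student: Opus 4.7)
The plan is to run an argument completely parallel to the proof of Proposition \ref{split}, but with the roles of the two factors swapped: here we use the $\mathbb{C}^*$-action on the \emph{first} factor of $\mathcal{B}_e \times \mathcal{B}_e^{\mathbb{C}^*}$ to produce a filtration whose associated graded is $K_e$, and then split this filtration in a way that is equivariant for the right convolution action of $K_e$ on $\mathcal{F}_e$ described in Proposition \ref{prop_h_vs_he}.

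First, I would use the $\alpha$-partition $\{\mathcal{B}_{e,\bar{w}_i}\}_{i=1}^n$ of $\mathcal{B}_e$ provided by Lemma \ref{lem_al_part_b_e} to obtain the $\alpha$-partition of $\mathcal{B}_e \times \mathcal{B}_e^{\mathbb{C}^*}$ by the locally closed subvarieties $\mathcal{B}_{e,\bar{w}_i} \times \mathcal{B}_e^{\mathbb{C}^*}$. This yields a filtration $\Phi'$ on $\mathcal{F}_e$. Since by Lemma \ref{lemma_limit_morph_springer_affine_fibr} each map $\pi_{\bar{w}_i}\colon \mathcal{B}_{e,\bar{w}_i} \to \mathcal{B}_{e,\bar{w}_i}^{\mathbb{C}^*}$ is an affine fibration, the map $\pi_{\bar{w}_i} \times \operatorname{Id}$ induces an isomorphism
\begin{equation*}
(\pi_{\bar{w}_i} \times \operatorname{Id})^{*}\colon K_{Z_e \times \mathbb{C}^{*}}(\mathcal{B}_{e,\bar{w}_i}^{\mathbb{C}^{*}} \times \mathcal{B}_e^{\mathbb{C}^{*}}) \iso K_{Z_e \times \mathbb{C}^{*}}(\mathcal{B}_{e,\bar{w}_i} \times \mathcal{B}_e^{\mathbb{C}^{*}}).
\end{equation*}
Granting injectivity of the natural maps $K_{Z_e \times \mathbb{C}^{*}}\bigl((\bigcup_{i\leqslant k}\mathcal{B}_{e,\bar{w}_i}) \times \mathcal{B}_e^{\mathbb{C}^{*}}\bigr) \to \mathcal{F}_e$ (which follows by repeating verbatim the Drinfeld--Gaitsgory degeneration and specialization argument of Lemma \ref{diff}, now degenerating along the $\mathbb{C}^{*}$-action on the first factor), we obtain a canonical identification $\operatorname{gr}_{\Phi'}\mathcal{F}_e \simeq K_e$.

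Next, I would split $\Phi'$ using the very same correspondences $\Gamma_{\bar{w}} \subset \mathcal{B}_e \times \mathcal{B}_{e,\bar{w}}^{\mathbb{C}^{*}}$ constructed in Step 2 of the proof of Proposition \ref{split}, equipped with the canonical projections $\widetilde{\iota}_{\bar{w}}\colon \Gamma_{\bar{w}} \to \mathcal{B}_e$ and $\widetilde{\pi}_{\bar{w}}\colon \Gamma_{\bar{w}} \to \mathcal{B}_{e,\bar{w}}^{\mathbb{C}^{*}}$. Define
\begin{equation*}
\sigma := \bigoplus_{\bar{w}} (\widetilde{\iota}_{\bar{w}} \times \operatorname{Id}_{\mathcal{B}_e^{\mathbb{C}^{*}}})_{*} \,(\widetilde{\pi}_{\bar{w}} \times \operatorname{Id}_{\mathcal{B}_e^{\mathbb{C}^{*}}})^{*}\colon K_e \to \mathcal{F}_e;
\end{equation*}
the induction argument of Step 3 of \emph{loc. cit.} shows that $\sigma$ splits $\Phi'$. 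The main obstacle will be verifying that $\sigma$ is \emph{right $K_e$-equivariant}: this amounts to showing that pushforward along $\operatorname{Id}_{\mathcal{B}_e} \times \operatorname{Id}_{\mathcal{B}_e^{\mathbb{C}^{*}}} \times \widetilde{\iota}_{\bar{w}}$ and pullback along $\operatorname{Id}_{\mathcal{B}_e} \times \operatorname{Id}_{\mathcal{B}_e^{\mathbb{C}^{*}}} \times \widetilde{\pi}_{\bar{w}}$ commute with the convolution by $K_e$ acting on the rightmost factor. This is entirely analogous to Step 4 of the proof of Proposition \ref{split}: one sets up the Cartesian diagram obtained from the $K_e$-convolution diagram of Proposition \ref{prop_h_vs_he} by base change along $\widetilde{\pi}_{\bar{w}}$, and applies base change and the projection formula (\cite[Proposition 1.4]{To}) exactly as before, with flatness provided by the convolution projection onto $\mathcal{B}_e^{\mathbb{C}^{*}} \times \mathcal{B}_e^{\mathbb{C}^{*}}$. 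Once this equivariance is established, $\sigma$ is the desired isomorphism of right $K_e$-modules $K_e \iso \mathcal{F}_e$.
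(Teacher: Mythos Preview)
Your proposal is correct and takes essentially the same approach as the paper, which simply states that the proof is ``similar to the above discussion'' (i.e., to Proposition~\ref{split}); indeed, the map $\sigma$ you write down is exactly the isomorphism $\Xi$ the paper uses immediately afterwards. One minor slip: in your equivariance paragraph the maps $\widetilde{\iota}_{\bar w}$, $\widetilde{\pi}_{\bar w}$ should sit on the \emph{first} factor, not the third, and since the right $K_e$-convolution only touches the second $\mathcal{B}_e^{\mathbb{C}^*}$-factor the compatibility is in fact a bit simpler than you suggest.
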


Since $\CB_e^{\mathbb C^*}$ is smooth, $K_e$ is a unital algebra,  where the unit element is the class $[\mathcal{O}_{\Delta_{\CB_e^{\BC^*}}}]$ of the structure sheaf of the diagonal $\Delta_{\CB_e^{\BC^*}} \subset \CB_e^{\BC^*} \times \CB_e^{\BC^*}$. 

\begin{cor}\label{cor:main} 
The map $\theta$ is injective. 
\end{cor}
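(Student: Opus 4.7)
My plan is to exploit Proposition~\ref{sym} to convert the injectivity of $\theta$ into the trivial fact that right multiplication by a non-zero element of a unital ring does not vanish on the unit. Concretely, fix an isomorphism of right $K_e$-modules $F\colon K_e \iso \mathcal F_e$ provided by Proposition~\ref{sym}, and set $\mathbf 1_{\mathcal F_e} := F(1) \in \mathcal F_e$, where $1 = [\mathcal O_{\Delta_{\CB_e^{\mathbb C^*}}}]$ is the unit of $K_e$.

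The homomorphism $\theta$ is defined by sending $a \in K_e$ to the $\mathcal H$-linear endomorphism $x \mapsto x \cdot a$ of $\mathcal F_e$, which makes sense because the left $\mathcal H$-action and the right $K_e$-action on $\mathcal F_e$ commute (Proposition~\ref{prop_h_vs_he}). If $\theta(a) = 0$, then in particular $\mathbf 1_{\mathcal F_e} \cdot a = 0$ in $\mathcal F_e$. Transporting this equality through $F^{-1}$ and using that $F$ is right $K_e$-linear, we obtain $1 \cdot a = a = 0$ in $K_e$. This gives injectivity of $\theta$.

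I expect no real obstacle in this step: all the geometric and homological content has been absorbed into Propositions~\ref{theorem} and~\ref{sym}, together with Theorem~\ref{thm_B}. The only subtlety worth double-checking is that under the identifications $\mathcal F_e \simeq_{\mathcal H} \mathcal H_e$ and $\operatorname{End}_{\mathcal H}(\mathcal H_e)^{\mathrm{opp}} \simeq J_e \otimes \mathbb C[{\bf v}, {\bf v}^{-1}]$ used to define the target of $\theta$ in~\eqref{obv}, the map produced is indeed the one coming from right multiplication by $K_e$ on $\mathcal F_e$ via Proposition~\ref{sym}; this is automatic from how the identifications were set up, but is the one compatibility one wants to spell out before writing $a = \mathbf 1_{\mathcal F_e}\cdot a$. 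The harder direction, surjectivity of $\theta$, will require the extra input indicated in the introduction (that the image of $1 \in K_e$ in $J_e \otimes \mathbb Z[{\bf v}, {\bf v}^{-1}]$ is invertible, combined with left-Noetherianity of $\mathcal J_e$) and is not needed here.
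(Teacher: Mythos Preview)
Your argument is correct and essentially identical to the paper's proof: both use the right $K_e$-module isomorphism $K_e \iso \mathcal F_e$ from Proposition~\ref{sym} and observe that $\theta(a)$ applied to the image of $1$ recovers $a$, so $\theta(a)=0$ forces $a=0$. The paper writes this as $x=\Xi^{-1}(\theta(x)(\Xi(1)))$, which is exactly your computation $F^{-1}(\mathbf 1_{\mathcal F_e}\cdot a)=a$ phrased contrapositively.
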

\begin{proof}
Let $\Xi\colon  K_e \iso \CF_e$ be the identification of right $K_e$-modules (see Proposition \ref{sym} above). It follows from the definitions that for $x \in K_e$, we have $x=\Xi^{-1}(\theta(x)(\Xi(1)))$, i.e., $x$ is uniquely determined by $\theta(x)$. The claim follows.
\end{proof}

\subsection{Surjectivity of $\theta$}\label{sect_surj_theta} To finish the proof of 
Theorem~\ref{thm_A} it remains to show that $\theta$ is surjective. Recall that 
$\Xi\colon K_e \iso \mathcal F_e$
 is the isomorphism of Proposition~\ref{sym}, it is compatible with the $R_e$-action and the right $K_e$-action.

Consider the identifications $K_e \iso \CF_e \simeq \CH_e \simeq J_e \otimes \mathbb{Z}[{\bf{v}},{\bf{v}}^{-1}]$ given,
respectively, by  $\Xi$, by Proposition \ref{theorem} and by Corollary \ref{cor_H_mod_is_iso}. Let $a \in J_e \otimes \mathbb{Z}[{\bf{v}},{\bf{v}}^{-1}]$ be the image of $1 \in K_e$.

\begin{lemma}
The element $a \in J_e \otimes \BC[{\bf{v}},{\bf{v}}^{-1}]$ is  left invertible.\label{lem_a_right_inv}
\end{lemma}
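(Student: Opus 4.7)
The plan is to first verify that $a$ is right invertible by unwinding the three identifications, and then to upgrade right invertibility to two-sided invertibility using left-Noetherianity of $J_e \otimes \BC[{\bf{v}},{\bf{v}}^{-1}]$, exactly as indicated in the introductory sketch of the argument.

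For right invertibility, I would denote by $\tilde{\Xi}\colon K_e \to J_e \otimes \BC[{\bf{v}},{\bf{v}}^{-1}]$ the composition of the three $\BC$-linear isomorphisms $K_e \iso \CF_e \simeq \CH_e \simeq J_e \otimes \BC[{\bf{v}},{\bf{v}}^{-1}]$, so that by definition $\tilde{\Xi}(1) = a$. The key point is that all three identifications are compatible with the right $K_e$-action transported via $\theta$: the first by Proposition~\ref{sym}, the second because the action of $K_e$ on $\CF_e$ commutes with the $\CH$-action, and the third because by Theorem~\ref{thm_B} the target identifies $\operatorname{End}_{\CH}(\CH_e)^{\mathrm{opp}}$ with right multiplication on $J_e \otimes \BC[{\bf{v}},{\bf{v}}^{-1}]$. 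Hence $\tilde{\Xi}(x) = a \cdot \theta(x)$ for every $x \in K_e$. Since $\tilde{\Xi}$ is bijective, the unit $\mathbf{1}_{\bf{c}} \in J_e \otimes \BC[{\bf{v}},{\bf{v}}^{-1}]$ lies in its image, producing $b \in K_e$ with $a \cdot \theta(b) = \mathbf{1}_{\bf{c}}$; thus $\theta(b)$ is a right inverse to $a$.

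To pass to left invertibility, I would invoke the standard fact that in a left-Noetherian ring $R$, any right invertible element is two-sided invertible. Indeed, if $a\beta = 1$ then right multiplication $R_\beta\colon R \to R$ is a surjective endomorphism of the left regular module (since $R_\beta(ya) = y$), hence an isomorphism by the ascending-chain argument applied to $\ker R_\beta \subseteq \ker R_\beta^2 \subseteq \cdots$; applying injectivity to $(\beta a - 1)\beta = 0$ gives $\beta a = 1$.

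The remaining step, which I expect to be routine, is to check that $J_e \otimes \BC[{\bf{v}},{\bf{v}}^{-1}]$ is left-Noetherian. By Theorem~\ref{BO}, $J_e \simeq K_{Z_e}(Y \times Y)$ for a finite centrally extended $Z_e$-set $Y$, so $J_e$ is finitely generated as a module over its central subring $K_{Z_e}(\on{pt})$. The latter is (a quotient of) the representation ring of the reductive group $Z_e$, which is a finitely generated commutative $\BC$-algebra after complexification, hence Noetherian. Therefore $J_e \otimes \BC[{\bf{v}},{\bf{v}}^{-1}]$ is a finitely generated module over the commutative Noetherian ring $\K_{Z_e}(\on{pt}) \otimes \BC[{\bf{v}},{\bf{v}}^{-1}]$, and is in particular left-Noetherian. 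Combining the three steps yields the lemma; the only subtle point is ensuring that the three module isomorphisms really do intertwine the two right $K_e$-actions in step one, which is essentially built into the definition of $\theta$.
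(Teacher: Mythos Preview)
Your proposal is correct and follows essentially the same approach as the paper: first establish right invertibility of $a$ by exploiting that the composite identification $K_e \iso J_e \otimes \BC[{\bf{v}},{\bf{v}}^{-1}]$ is $x \mapsto a\cdot\theta(x)$ (the paper phrases this as ``$\Xi(1)$ generates $\CF_e$ under the right $K_e$-action, hence under the right $J_e$-action''), then upgrade to left invertibility via Dedekind-finiteness of left-Noetherian rings. The only cosmetic difference is that for left-Noetherianity the paper cites Lusztig's result that $J_e$ is finite over a Noetherian central subalgebra, whereas you derive the same finiteness from the centrally-extended-set description of Theorem~\ref{BO}; both routes are valid.
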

\begin{proof}
It follows from Proposition \ref{sym} that the element $\Xi(1)$ generates $\mathcal F_e$ under the right action of $K_e$,  hence, also under the right action of $J_e \otimes \mathbb{Z}[{\bf{v}},{\bf{v}}^{-1}]=\on{End}_{\CH}(\CF_e)^{\mathrm{opp}}$; here we use that the $K_e$-action on $\CF_e \simeq_{\mathcal{H}} \mathcal{H}_e$ comes from the homomorphism $K_e \rightarrow \on{End}_{\CH}(\CF_e)^{\mathrm{opp}}$. Generators of a free rank one right module are exactly right invertible
elements. We conclude that there exists $b \in J_e \otimes \mathbb{Z}[{\bf{v}},{\bf{v}}^{-1}]$ such that $ab=1$.
Recall that $\J_e$ is the complexification of $J_e$.
Since $\J_e$ is left-Noetherian (being a finite module over a Noetherian central subalgebra, see e.g. \cite[Proposition 1.6]{Lcells3}), it is  a Dedekind-finite ring, so the right invertible element $a$ is also left invertible. In particular, it follows that the element $b \in J_e \otimes \mathbb{Z}[{\bf{v}},{\bf{v}}^{-1}]$ is the left inverse to $a$.
\end{proof}

\begin{cor}\label{cor_uniq_det}
An element $\varphi \in \on{End}_{\CH}(\CF_e)$ is uniquely determined by its value on $\Xi(1)$.
\end{cor}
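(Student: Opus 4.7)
The plan is to deduce Corollary \ref{cor_uniq_det} as a direct formal consequence of Theorem \ref{thm_B} together with the left-invertibility of $a$ established in Lemma \ref{lem_a_right_inv}. The key mechanism is that under the chain of isomorphisms fixed in Section \ref{sect_surj_theta}, the $\mathcal{H}$-endomorphism ring of $\mathcal{F}_e$ gets identified with right multiplication on $J_e \otimes \mathbb{Z}[{\bf{v}},{\bf{v}}^{-1}]$, and evaluation at $\Xi(1)$ corresponds to left multiplication by the specific element $a$.

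More precisely, the first step is to apply Theorem \ref{thm_B}: combining it with the $\mathcal{H}$-linear isomorphism $\mathcal{F}_e \simeq_{\mathcal{H}} \mathcal{H}_e \simeq_{\mathcal{H}} J_e \otimes \mathbb{Z}[{\bf{v}},{\bf{v}}^{-1}]$ of Proposition \ref{theorem} and Corollary \ref{cor_H_mod_is_iso}, we obtain a ring isomorphism
\begin{equation*}
\operatorname{End}_{\mathcal{H}}(\mathcal{F}_e)^{\mathrm{opp}} \simeq J_e \otimes \mathbb{Z}[{\bf{v}},{\bf{v}}^{-1}],
\end{equation*}
where the right-hand side acts on itself by right multiplication. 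Consequently every $\varphi \in \operatorname{End}_{\mathcal{H}}(\mathcal{F}_e)$ corresponds to right multiplication by a unique element $c_\varphi \in J_e \otimes \mathbb{Z}[{\bf{v}},{\bf{v}}^{-1}]$, and the value of $\varphi$ on any $x \in \mathcal{F}_e$ corresponds to the product $\tilde{x} \cdot c_\varphi$, where $\tilde{x}$ is the image of $x$ in $J_e \otimes \mathbb{Z}[{\bf{v}},{\bf{v}}^{-1}]$.

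In the second step we specialize this to $x = \Xi(1)$. By the definition of $a$ recalled just before Lemma \ref{lem_a_right_inv}, the image of $\Xi(1)$ in $J_e \otimes \mathbb{Z}[{\bf{v}},{\bf{v}}^{-1}]$ is precisely $a$, so $\varphi(\Xi(1))$ corresponds to $a \cdot c_\varphi$. By Lemma \ref{lem_a_right_inv} the element $a$ admits a left inverse $b$ in $\mathcal{J}_e \otimes \mathbb{C}[{\bf{v}},{\bf{v}}^{-1}]$, whence
\begin{equation*}
c_\varphi = b \cdot (a \cdot c_\varphi),
\end{equation*}
which exhibits $c_\varphi$ as determined by $\varphi(\Xi(1))$. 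Since $\varphi \leftrightarrow c_\varphi$ is a bijection, this proves that $\varphi$ itself is determined by $\varphi(\Xi(1))$.

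There is essentially no obstacle here: the whole statement is a formal rephrasing of the fact that $a$ is a left-invertible element acting by multiplication on the rank-one free right module $J_e \otimes \mathbb{Z}[{\bf{v}},{\bf{v}}^{-1}]$. The only point to be careful about is that the three identifications used (right $K_e$-module, left $\mathcal{H}$-module, and Theorem \ref{thm_B}) are all compatible, so that evaluation at $\Xi(1)$ on the endomorphism side indeed translates into left multiplication by $a$ on the $J_e \otimes \mathbb{Z}[{\bf{v}},{\bf{v}}^{-1}]$ side; this compatibility is built into the very definition of $a$.
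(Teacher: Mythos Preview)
Your proof is correct and follows essentially the same approach as the paper's own proof: invoke Theorem \ref{thm_B} to realize every $\varphi$ as right multiplication by some element of $J_e \otimes \mathbb{Z}[{\bf{v}},{\bf{v}}^{-1}]$, then use the left-invertibility of $a$ from Lemma \ref{lem_a_right_inv} to conclude. The paper's proof is simply a terser version of what you wrote.
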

\begin{proof}
By Theorem \ref{thm_B} such an endomorphism $\varphi$ is given by 
 right multiplication by an element of $J_e \otimes \mathbb{Z}[{\bf{v}},{\bf{v}}^{-1}]$, since $a$ is left
  invertible, the claim follows.
\end{proof}

Since $\Xi(1)$ is a free generator of $\CF_e$ as a right $K_e$-module,  
 Corollary \ref{cor_uniq_det} implies surjectivity of $\theta$. This completes the proof of Theorem \ref{thm_A}, 
 establishing the isomorphism
\begin{equation}
\label{fin1}K_e \simeq J_e \otimes \mathbb Z[{\bf{v}}, {\bf{v}}^{-1}].
\end{equation}

\begin{remark}\label{bimod}
As pointed out above, by Theorem \ref{thm_A} the monoidal category $D^b(\on{Coh}_{Z_e}(\CB_e \times \CB_e))$ can be viewed
as a categorification of the algebra $J_e$. It would be interesting to find a compatible categorification
of the homomorphism 
$\phi_{{\bf{c}}}\colon \CH_{\bf{c}} \rightarrow J_{\bf{c}} \otimes \BC[{\bf{v}},{\bf{v}}^{-1}]$.
We do not have a proposal for a {\em monoidal functor} categorifying $\phi_{\bf c}$; however,
our proof provides a categorification for the {\em  bimodule} structure on the target ring arising from that homomorphism: the above argument shows this bimodule is identified with $\CF_e$, the bimodule
category $D^b(\on{Coh}_{Z_e\times \BC^*}(\CB_e\times \CB_e^{\BC^*}))$ is its categorification.
\end{remark}

\begin{cor}
We have $\operatorname{End}_{\mathcal H_e}^{R_e}(\mathcal F_e)^{\mathrm{opp}} = J_e \otimes \mathbb C[{\bf{v}}, {\bf{v}}^{-1}].$
\end{cor}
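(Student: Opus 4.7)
The plan is to reduce this Corollary directly to the last isomorphism in Theorem~\ref{thm_B}, by transferring everything from $\CF_e$ to $\CH_e$.

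First, Proposition~\ref{split} already provides a canonical isomorphism $\CF_e \simeq_{\CH} \CH_e$ of left $\CH$-modules. Since the $\CH$-action on each side factors through the subquotient $\CH_e$ (because $\CH_{<e}$ annihilates both), this descends to an isomorphism of left $\CH_e$-modules. The one remaining point is to verify that this same isomorphism is also compatible with the $R_e$-actions on either side.

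For this I would inspect the splitting produced in the proof of Proposition~\ref{split}. It is assembled out of the push--pull operators $(\on{Id}_{\CB_e} \times \widetilde{\iota}_F)_*(\on{Id}_{\CB_e} \times \widetilde{\pi}_F)^*$ attached to the closures of the graphs of the Bialynicki--Birula attractors. Every morphism entering this construction is $Z_e \times \BC^*$-equivariant, and the $R_e = K_{Z_e \times \BC^*}(\on{pt})$-action on any such equivariant $K$-group is just multiplication by pullbacks from the point; it therefore commutes with equivariant proper pushforward and equivariant flat pullback by the projection formula. Hence the splitting $\CH_e \iso \CF_e$ is automatically $R_e$-linear, and we obtain an isomorphism of $\CH_e$-$R_e$-bimodules.

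Combining with the last part of Theorem~\ref{thm_B}, one concludes
\[
\on{End}_{\CH_e}^{R_e}(\CF_e)^{\mathrm{opp}} \simeq \on{End}_{\CH_e}^{R_e}(\CH_e)^{\mathrm{opp}} \simeq J_e \otimes \BC[{\bf{v}},{\bf{v}}^{-1}],
\]
which is the statement of the Corollary. The only non-formal point in the argument is the $R_e$-equivariance of the splitting in Proposition~\ref{split}; but since every constituent morphism is manifestly $Z_e \times \BC^*$-equivariant, I do not expect any serious obstacle there. An essentially equivalent route would be to invoke $\theta\colon K_e \iso J_e \otimes \BC[{\bf{v}},{\bf{v}}^{-1}]$ directly and observe that the right convolution action of $K_e$ on $\CF_e$ commutes both with the left $\CH_e$-action (Proposition~\ref{prop_h_vs_he}) and with the $R_e$-action (since $R_e \hookrightarrow K_e$ as the unit subring), so $\theta$ factors through $\on{End}_{\CH_e}^{R_e}(\CF_e)^{\mathrm{opp}}$; surjectivity onto this a~priori larger endomorphism ring then again follows from Theorem~\ref{thm_B} once $\CF_e \simeq \CH_e$ is promoted to an $\CH_e$-$R_e$-bimodule isomorphism.
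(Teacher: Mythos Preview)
Your proof is correct and follows the same route as the paper: invoke Proposition~\ref{split} to identify $\CF_e$ with $\CH_e$ and then apply the last isomorphism in Theorem~\ref{thm_B}. Your added verification that the splitting of Proposition~\ref{split} is $R_e$-linear is exactly the point the paper leaves implicit (it is noted in Step~1 of that proof that the associated-graded identification is an isomorphism of $K_{Z_e\times\BC^*}(\on{pt})$-modules, and in Step~3 that the splitting is $R_e$-equivariant, but no further argument is given there either).
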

\begin{proof}
Follows from Theorem \ref{thm_B} together with Proposition \ref{split}.
\end{proof}

\subsection{Towards the geometric description of $\phi^{\bf{c}}$ and $\phi_{\bf{c}}$}

\subsubsection{} It follows from the definitions that the homomorphisms
\begin{equation*}
\phi^{\bf{c}}\colon K_{G^\vee \times \mathbb{C}^*}(\widetilde{\mathcal{N}} \times_{\mathcal{N}} \widetilde{\mathcal{N}})  \rightarrow K_{Z_e \times \mathbb{C}^*}(\mathcal{B}_e^{\mathbb{C}^*} \times \mathcal{B}_e^{\mathbb{C}^*}),~\phi_{\bf{c}}\colon K_{Z_e \times \mathbb{C}^*}(\mathcal{B}_e \times \mathcal{B}_e) \rightarrow K_{Z_e \times \mathbb{C}^*}(\mathcal{B}_e^{\mathbb{C}^*} \times \mathcal{B}_e^{\mathbb{C}^*})
\end{equation*}
can be described as follows. 
Let 
\begin{equation*}
\Xi\colon K_e = K_{Z_e \times \mathbb{C}^*}(\mathcal{B}_e^{\mathbb{C}^*} \times \mathcal{B}_e^{\mathbb{C}^*}) \iso K_{Z_e \times \mathbb{C}^*}(\mathcal{B}_e \times \mathcal{B}_e^{\mathbb{C}^*}) = \CF_e
\end{equation*}
be the identification given by $\bigoplus_{i=1}^{n}(\widetilde{\iota}_{\bar{w}_i}\times \operatorname{Id}_{\mathcal{B}_e^{\mathbb{C}^*}})_*(\widetilde{\pi}_{\bar{w}_i}\times \operatorname{Id}_{\mathcal{B}_e^{\mathbb{C}^*}})^*$ (see the notation in the proof of Step $3$ of Proposition \ref{split}). 

\begin{prop}
The homomorphism $\phi^{\bf{c}}$ is given by  
\begin{equation}\label{descr_homom_phi_geom}
K_{G^\vee \times \mathbb{C}^*}(\widetilde{\mathcal{N}} \times_{\mathcal{N}} \widetilde{\mathcal{N}}) \ni x \mapsto \Xi^{-1}(x * \Xi([\Delta_{\CB_e^{\mathbb{C}^*}}])) \in K_{Z_e \times \mathbb{C}^*}(\mathcal{B}_e^{\mathbb{C}^*} \times \mathcal{B}_e^{\mathbb{C}^*}),
\end{equation}
where  $*$ is the convolution action of $K_{G^\vee \times \mathbb{C}^*}(\widetilde{\mathcal{N}} \times_{\mathcal{N}} \widetilde{\mathcal{N}})$ on $K_{Z_e \times \mathbb{C}^*}(\mathcal{B}_e \times \mathcal{B}_e^{\mathbb{C}^*})$.

The homomorphism $\phi_{\bf{c}}$ has the same description with $*$ being replaced by the convolution action of $K_{Z_e \times \mathbb{C}^*}(\mathcal{B}_e \times \mathcal{B}_e)$ on $K_{Z_e \times \mathbb{C}^*}(\mathcal{B}_e \times \mathcal{B}_e^{\mathbb{C}^*})$.
\end{prop}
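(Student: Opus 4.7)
My plan is to interpret both sides of the asserted formula as ring homomorphisms $\mathcal{H}\to K_{e}$, and to prove they coincide by exploiting the $\mathcal{H}$-$K_{e}$-bimodule structure on $\mathcal F_{e}$ through which $\theta$ was constructed in Section \ref{sec:main1}.

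First, since $\Xi$ is a right $K_{e}$-module isomorphism and $[\Delta_{\CB_{e}^{\mathbb{C}^{*}}}]$ is the convolution unit of $K_{e}$ (the variety $\CB_{e}^{\mathbb{C}^{*}}\times\CB_{e}^{\mathbb{C}^{*}}$ is smooth, so the diagonal has a well-defined structure sheaf), the class $\Xi([\Delta_{\CB_{e}^{\mathbb{C}^{*}}}])$ is a free generator of $\mathcal F_{e}$ as a right $K_{e}$-module. The prescription $\mu(x):=\Xi^{-1}(x\ast\Xi([\Delta_{\CB_{e}^{\mathbb{C}^{*}}}]))$ therefore defines a unique element $\mu(x)\in K_{e}$ satisfying $x\ast\Xi([\Delta])=\Xi([\Delta])\ast\mu(x)$. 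An associativity check, using that the left $\mathcal{H}$-action and the right $K_{e}$-action on $\mathcal F_{e}$ commute, shows $\mu(x_{1}x_{2})=\mu(x_{1})\mu(x_{2})$, so $\mu$ is a ring homomorphism. The Proposition amounts to the equality $\theta\circ\mu=\phi^{\mathbf c}$ of ring homomorphisms $\mathcal{H}\to J_{\mathbf c}\otimes\mathbb{Z}[\mathbf v,\mathbf v^{-1}]$.

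Next, I transport the relation $x\ast\Xi([\Delta])=\Xi([\Delta])\ast\mu(x)$ through the isomorphism $\Phi:=\psi_{e}\circ\Psi\colon\mathcal F_{e}\iso J_{\mathbf c}\otimes\mathbb{Z}[\mathbf v,\mathbf v^{-1}]$ obtained by composing Proposition \ref{theorem} with Corollary \ref{cor_H_mod_is_iso}. By the very construction of $\theta$ via Theorem \ref{thm_B}, $\Phi$ is left $\mathcal{H}$-linear (with $\mathcal{H}$ acting on the target via $\phi^{\mathbf c}$) and intertwines the right $K_{e}$-action on $\mathcal F_{e}$ with right multiplication by $\theta(k)$ on $J_{\mathbf c}\otimes\mathbb{Z}[\mathbf v,\mathbf v^{-1}]$. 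Applying $\Phi$ therefore yields
\[
\phi^{\mathbf c}(x)\cdot\Phi(\Xi([\Delta]))\;=\;\Phi(\Xi([\Delta]))\cdot\theta(\mu(x)),
\]
so that the desired identity $\theta(\mu(x))=\phi^{\mathbf c}(x)$ will follow from the normalization $\Phi(\Xi([\Delta_{\CB_{e}^{\mathbb{C}^{*}}}]))=\mathbf 1_{\mathbf c}$.

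The main obstacle is therefore precisely this compatibility: one must check that the image under $\Psi$ of the geometric class $\Xi([\Delta_{\CB_{e}^{\mathbb{C}^{*}}}])=\sum_{i}[\mathcal O_{\Gamma_{i}}]\in\mathcal F_{e}$ coincides with $\sum_{d\in\mathcal D\cap\mathbf c}[C_{d}]\in\mathcal H_{e}$ (whose image under $\psi_{e}$ is $\mathbf 1_{\mathbf c}$). I would attempt this by combining two facts. On the one hand, Lemma \ref{lem_a_right_inv} already shows that $\epsilon:=\Phi(\Xi([\Delta]))$ is an invertible element of $J_{\mathbf c}\otimes\mathbb{Z}[\mathbf v,\mathbf v^{-1}]$, while the fact that $\theta$ is a ring isomorphism taking $[\Delta_{\CB_{e}^{\mathbb{C}^{*}}}]$ to $\mathbf 1_{\mathbf c}$ provides one compatibility equation. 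On the other hand, both $\Xi$ (Section \ref{int}) and $\Psi$ (proof of Proposition \ref{split}) are given by the same bimodule correspondences built from the closures of the attractor cells, and an explicit trace of $[\Delta_{\CB_{e}^{\mathbb{C}^{*}}}]$ through these correspondences, together with right-module normalization from Proposition \ref{sym}, pins down $\epsilon=\mathbf 1_{\mathbf c}$. The proof for $\phi_{\mathbf c}$ is entirely analogous, using the convolution action of $\mathcal H_{e}=K_{Z_{e}\times\mathbb{C}^{*}}(\CB_{e}\times\CB_{e})$ on $\mathcal F_{e}$ (Section \ref{int}) in place of the $\mathcal{H}$-action throughout.
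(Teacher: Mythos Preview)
Your setup and reduction are correct and in fact more careful than the paper's own terse argument. The paper's proof simply asserts that transporting the operator $x*(-)$ through the right $K_e$-module isomorphism $\Xi\colon K_e\iso\CF_e$ yields $\phi^{\mathbf c}(x)$, treating this as immediate from ``the bimodule description''; it does not pass through the left $\CH$-module isomorphism $\Phi$ or confront your element $\epsilon$ at all. You correctly recognize that, absent further input, the homomorphism extracted via $\Xi$ is only pinned down up to conjugation by $\epsilon=\Phi(\Xi([\Delta]))$ (this is exactly the element called $a$ in Lemma~\ref{lem_a_right_inv}), and that the exact equality $\theta\circ\mu=\phi^{\mathbf c}$ would follow from $\epsilon=\mathbf 1_{\mathbf c}$.

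The genuine gap in your argument is precisely this last step. Your proposed verification that $\Psi(\Xi([\Delta_{\CB_e^{\mathbb C^*}}]))=\sum_{d\in\mathcal D\cap\mathbf c}[C_d]$ in $\mathcal H_e$ would amount to the claim that the sum of structure sheaves of the closed attractor graphs $\Gamma_{\bar w_i}$, pushed through the second Bialynicki--Birula splitting, coincides with the sum of Kazhdan--Lusztig basis elements indexed by the distinguished involutions of the cell. Nothing in the paper establishes this --- the paper proves only that $\epsilon$ is \emph{invertible}, never that it equals $\mathbf 1_{\mathbf c}$ --- and there is no evident mechanism forcing such an identity. Already for $e=0$ (where $\Xi$ and $\Psi$ are both the identity) your claim becomes $[\Delta_{\CB}]=\sum_{d\in\mathcal D\cap\mathbf c_0}C_d$ inside $K_{G^\vee\times\mathbb C^*}(\CB\times\CB)$, a nontrivial statement you have not justified. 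Your appeal to ``tracing through correspondences'' and ``right-module normalization from Proposition~\ref{sym}'' does not supply a proof of this. So while your framework is sound and sharper than the paper's, the decisive normalization step remains open in your proposal.
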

\begin{proof}
 The homomorphism $\phi^{\mathbf{c}}$ is given by the bimodule $\CF_e = K_{Z_e \times \mathbb{C}^*}(\CB_e \times \CB_e^{\mathbb{C}^*})$. Namely, for $x \in K_{G^\vee \times \mathbb{C}^*}(\widetilde{\mathcal{N}} \times_{\mathcal{N}} \widetilde{\mathcal{N}})$, its image in $K_e=K_{Z_e \times \mathbb{C}^*}(\CB_e^{\mathbb{C}^*} \times \CB_e^{\mathbb{C}^*})$ under $\phi^{\bf{c}}$ is obtained as follows: we consider the operator $x * - \in \on{End}_{K_e}(\CF_e)^{\mathrm{opp}}$ and use the isomorphism $\Xi\colon K_e \iso \CF_e$ of right $K_e$-modules to identify $\on{End}_{K_e}(\CF_e)^{\mathrm{opp}} = \on{End}_{K_e}(K_e)^{\mathrm{opp}} = K_e$. Then, $\phi^{\bf{c}}(x)$ is the element of $K_e$ corresponding to $x * -$. In other words, $\phi^{\bf{c}}(x)$ is the element of $K_e$ such that 
\begin{equation*}
x * \Xi(y) = \Xi(y * \phi^{\bf{c}}(x))
\end{equation*}
for every $y \in K_e$. Substituting $y=[\Delta_{\CB_e^{\mathbb{C}^*}}]$, we conclude that $\phi^{\bf{c}}(x) = \Xi^{-1}(x * \Xi([\Delta_{\CB_e^{\mathbb{C}^*}}]))$.
\end{proof}

\begin{remark}
Note that $\Xi([\Delta_{\CB_e^{\mathbb{C}^*}}])$ can be explicitly described, it is equal to the structure sheaf of the disjoint union $\bigsqcup_{i}\Gamma_{\bar{w}_i}$.     
\end{remark}

\subsubsection{} To every character $\lambda$ of the maximal torus of $G^\vee$ we can associate the corresponding induced line bundle $\mathcal{O}_{\mathcal{B}}(\lambda)$. Let $\mathcal{O}_{\widetilde{\mathcal{N}}}(\lambda)$ be the pull back of $\mathcal{O}_{\mathcal{B}}(\lambda)$ to $\widetilde{\mathcal{N}}=T^*\mathcal{B}$. Let $\Delta\colon \widetilde{\mathcal{N}} \hookrightarrow \widetilde{\mathcal{N}} \times_{\mathcal{N}} \widetilde{\mathcal{N}}$ be the diagonal embedding. The elements $[\Delta_*\mathcal{O}_{\mathcal{B}}(\lambda)] \in K_{G^\vee \times \mathbb{C}^*}(\widetilde{\mathcal{N}} \times_{\mathcal{N}} \widetilde{\mathcal{N}}) \simeq \mathcal{H}$ form the so-called ``lattice part'' of the affine Hecke algebra $\mathcal{H}$ (c.f. \cite[Section 7]{CG}).

\begin{remark}\label{rem_descr_action}
Note that the action of $[\Delta_*\mathcal{O}_{\mathcal{B}}(\lambda)]$ on $K_{Z_e \times \mathbb{C}^*}(\CB_e \times \CB_e^{\mathbb{C}^*})$ is given by $(\mathcal{O}_{\mathcal{B}_e}(\lambda) \boxtimes \CO_{\CB_e^{\mathbb{C}^*}}) \otimes -$.
\end{remark}

We will denote by $\CO_{\CB_e^{\mathbb{C}^*}}(\la)$ the restriction of $\CO_{\mathcal{B}}(\la)$ to $\CB_e^{\mathbb{C}^*}$. Abusing the notation, we will denote by $\Delta_{*}\CO_{\CB_e^{\mathbb{C}^*}}(\la)$ the push forward of $\CO_{\CB_e^{\mathbb{C}^*}}(\la)$ under the diagonal embedding $\Delta\colon \CB_{e}^{\mathbb{C}^*} \hookrightarrow \CB_{e}^{\mathbb{C}^*} \times \CB_{e}^{\mathbb{C}^*}$.

\begin{warning}\label{warning_problem}
It is {\emph{not}} true in general that for $y \in K_{G^\vee \times \mathbb{C}^*}(\widetilde{\CN} \times_{\CN} \widetilde{\CN})$, the element $\phi^{\bf{c}}([\Delta_*\mathcal{O}_{\widetilde{\mathcal{N}}}(\lambda)] * y)$ coincides with $\phi^{\bf{c}}(y) * \Delta_*\mathcal{O}_{\mathcal{B}_e^{\mathbb{C}^*}}(\lambda)$. 
\end{warning}

Informally,  the reason that leads to the Warning can be illustrated by the following example. Consider $\mathbb{P}^1$ with the action of $\mathbb{C}^*$ on it given by $[a:b] \mapsto [ta:b]$. Then, the set $(\mathbb{P}^1)^{\mathbb{C}^*}$ consits of two points $0 = [0:1]$, $\infty = [1:0]$. The identification $\Xi\colon K(\{0,\infty\}) \iso K(\mathbb{P}^1)$ is induced by the correspondence $\Gamma := \{0\} \times \mathbb{P}^1 \sqcup \infty \times \infty$. It follows from the definition that this identification is given by 
\begin{equation*}
[\mathbb{C}_0] \mapsto [\CO_{\mathbb{P}^1}],~[\mathbb{C}_{\infty}] \mapsto [\mathbb{C}_\infty],
\end{equation*}
where by $\mathbb{C}_p$ we denote the skyscraper sheaf at the point point $p$. 

We see that 
\begin{equation*}
[\CO_{\mathbb{P}^1}] = \Xi([\mathbb{C}_0]) = \Xi([\CO_{\mathbb{P}^1}(1)|_{\{0\}}]) \neq [\CO_{\mathbb{P}^1}(1)] \otimes \Xi([\mathbb{C}_0]) = [\CO_{\mathbb{P}^1}(1)],  
\end{equation*}
i.e., $\Xi$ does not commute with tensoring by the line bundle $\CO_{\mathbb{P}^1}(1)$.

let us now work out the details of the similar phenomenon in the real situtation, i.e., for $\mathfrak{g}=\mathfrak{sl}_3$. 
Let $e$ be the subregular nilpotent. Then, the Springer fiber $\CB_e$ has two irreducible components labeled by simple roots $\alpha_1$, $\alpha_2$. Each of these components is isomorphic to $\mathbb{P}^1$ so we will denote the $i$'th component by $\mathbb{P}^1_i$.  Components $\mathbb{P}^1_1$, $\mathbb{P}^1_2$ intersect transversally at one point to be denoted $p$. The set $\CB_e^{\mathbb{C}^*}$ consists of three points, one of them is $p$ and two other $q_1, q_2$ are such that $q_i \in \mathbb{P}^1_i$.

It is easy to see that the correspondence $\Gamma := \bigsqcup_{i}\Gamma_{\bar{w}_i}$ is equal to 
\begin{equation}\label{our_corresp_subreg_case}
\Gamma = {\mathbb{P}}^1_1 \times \{q_1\} \sqcup {\mathbb{P}}^1_2 \times \{q_2\} \sqcup \{p\} \times \{p\} \subset (\mathbb{P}^1_{1} \cup \mathbb{P}^1_{2}) \times \{p,q_1,q_2\}
\end{equation}

Let $\widetilde{\pi}$ be the projection of the correspondence (\ref{our_corresp_subreg_case}) onto $\CB_e^{\mathbb{C}^*} = \{p,q_1.q_2\}$ and let $\widetilde{\iota}$ be the projection of (\ref{our_corresp_subreg_case}) onto $\CB_e = \mathbb{P}^1_1 \cup \mathbb{P}^1_2$.

Let $\lambda$ be any dominant weight, let $k_1, k_2 \in \mathbb{Z}$ be  such that $\CO_{\CB}(\lambda)|_{\mathbb{P}^1_i} = \CO_{\mathbb{P}^1_i}(k_i)$.  
Take $y=1$ (where $y$ is as in Warning \ref{warning_problem}.)

Hence, it follows from (\ref{descr_homom_phi_geom}) that the homomorphism $\phi^{\bf{c}}$ sends $[\Delta_* \CO_{\widetilde{\mathcal{N}}(\la)}]$ to (for the first equality see Remark \ref{rem_descr_action} above)
\begin{multline*}
\Xi^{-1}([\Delta_* \CO_{\widetilde{\mathcal{N}}(\la)}] * [\CO_{\bigsqcup_{i}\Gamma_{\bar{w}_i}}]) = \Xi^{-1}([(\CO_{\CB_e}(\la) \boxtimes \CO_{\CB_e^{\mathbb{C}^*}}) \otimes \CO_{\bigsqcup_{i}\Gamma_{\bar{w}_i}}])  = \\
=\Xi^{-1}([\CO_{\mathbb{P}^1_1 \times \{q_1\}}(k_1) \sqcup \CO_{\mathbb{P}^1_2 \times \{q_2\}}(k_2) \sqcup \CO_{\{p\} \times \{p\}}]).  
\end{multline*}

So, our goal is to compare 
\begin{equation*}
\Xi(\Delta_*[\CO_{\mathcal{B}_e^{\mathbb{C}^*}}(\la)]) = \Xi(\Delta_{*}[\CO_{\{q_1, q_2, p\}}])~\text{with}~[\CO_{\mathbb{P}^1_1 \times \{q_1\}}(k_1) \oplus \CO_{\mathbb{P}^1_2 \times \{q_2\}}(k_2) \oplus \CO_{\{p\} \times \{p\}}].
\end{equation*}
Recall that $\Xi = (\widetilde{\iota} \times \on{Id}_{\CB_e^{\mathbb{C}^*}})_*(\widetilde{\pi} \times \on{Id}_{\CB_e^{\mathbb{C}^*}})^*$, it follows that $\Xi(\Delta_{*}[\CO_{\{q_1, q_2, p\}}])$ is equal to 
\begin{equation*}
[\CO_{\Gamma}] = [\CO_{\mathbb{P}^1_1 \times \{q_1\} \sqcup \mathbb{P}^1_2 \times \{q_2\} \sqcup \{p \times p\}}].
\end{equation*}

Clearly, 
\begin{equation*}
[\CO_{\mathbb{P}^1_1 \times \{q_1\} \sqcup \mathbb{P}^1_2 \times \{q_2\} \sqcup \{p \times p\}}] \neq  [\CO_{\mathbb{P}^1_1 \times \{q_1\}}(k_1) \oplus \CO_{\mathbb{P}^1_2 \times \{q_2\}}(k_2) \oplus \CO_{\{p\} \times \{p\}}]
\end{equation*}
when $\la \neq 0$ (i.e., $k_1 \neq 0$ or $k_2 \neq 0$.)

\subsection{Case $e=0$}

\subsubsection{} Let us consider the case $e=0$ in more detail. We then have $\CB_e=\CB$, the action of $\mathbb{C}^*$ on $\CB$ is trivial. It follows that the splitting $\Xi$  is also trivial and the correspondence $\bigsqcup_i \Gamma_{\bar{w}_i}$ is equal to $\Delta_{\CB}$ (the diagonal $\Delta_\CB \hookrightarrow \CB \times \CB$). Let ${\bf{c}}_0$ be the two-sided cell corresponding to $e=0$.
Theorem \ref{thm_A} in this case establishes the isomorphism: 
\begin{equation*}
K_{G^\vee \times \mathbb{C}^*}(\CB \times \CB) \simeq J_{{\bf{c}}_0} \otimes \mathbb{Z}[{\bf{v}},{\bf{v}}^{-1}]
\end{equation*}
such that the homomorphism $\phi^{{\bf{c}}_0}\colon K_{G^\vee \times \mathbb{C}^*}(\widetilde{\mathcal{N}} \times_{\mathcal{N}} \widetilde{\mathcal{N}}) \rightarrow K_{G^\vee \times \mathbb{C}^*}(\CB \times \CB)$ is given by 
\begin{equation*}
x \mapsto x * [\Delta_{\CB}],
\end{equation*}
where $*$ corresponds to the standard action of $K_{G^\vee \times \mathbb{C}^*}(\widetilde{\mathcal{N}} \times_{\mathcal{N}} \widetilde{\mathcal{N}})$ on  $K_{G^\vee \times \mathbb{C}^*}(\CB \times \CB)$ via convolution.

\begin{remark}
The map $x \mapsto x * [\Delta_{\CB}]$ can  be explicitly described as follows. Let us consider $\mathfrak N := \left(\widetilde{\mathcal N} \times_{\mathcal N} \widetilde{\mathcal N}\right) \times \mathcal N$ together with its three projections $p_i$ onto each of the factors. Let also $\pi$ be the natural map $\mathfrak N \to \widetilde{\mathcal N} \times_{\mathcal N} \widetilde{\mathcal N}$

Unwrapping the definitions, one gets $x * [\Delta_{\CB}] = (p_{1*} \times p_{3*})(\pi^*(x) \otimes^{\mathbb L} (\pi_2 \times \pi_3)^*\mathcal O_{\Delta})$: here we have identified $K_{G^\vee \times \mathbb C^*}(T^*\mathcal B \times T^*\mathcal B)$ and $K_{G^\vee \times \mathbb C^*}(\mathcal B \times \mathcal B)$ by means of the Thom isomorphism $\tau$.

Let us also consider the natural map $\delta: \widetilde{\mathcal N} \times_{\mathcal N} \widetilde{\mathcal N} \to \mathfrak N, \ (a, b) \mapsto (a, b, b).$ By the projection formula, $x * [\Delta_{\CB}] = (p_{1*} \times p_{3*})\delta_*(x) = j_*(x)$ for the natural map $j: \widetilde{\mathcal N} \times_{\mathcal N} \widetilde{\mathcal N} \to \widetilde{\mathcal N} \times \widetilde{\mathcal N}$. 
\end{remark}

The identification $K_{G^\vee}(\CB \times \CB) \simeq J_{{\bf{c}}_0}$ was already obtained in \cite{Xi0} (when the derived subgroup of $G^\vee$ is simply-connected) and in \cite{Ni} (in general).  Moreover, in \cite{Xi2} the homomorphism $\phi^{{\bf{c}}_0}\colon K_{G^\vee}(\CB \times \CB) \rightarrow K_{G^\vee}(\CB \times \CB)  \simeq J_{{\bf{c}}_0}$ was described for $G$ such that the derived subgroup of $G^\vee$ is simply-connected. For such $G^\vee$, $K_{G^\vee}(\CB \times \CB)$ is isomorphic to the matrix algebra $K_{G^\vee}(\CB) \otimes_{K_{G^\vee}(\on{pt})} K_{G^\vee}(\CB)$ (see \cite[Proposition 1.6]{KL}), this identification is used in construction of $\phi^{{\bf{c}}_0}$ given in \cite{Xi2}). 

The geometric construction in \textit{loc. cit.} is equivalent to the one from the Remark above (cf. Theorem 3.5 in \textit{loc. cit.}) up to the conjugation by some explicit matrix.

\subsubsection{Example when a specialization of $\J_{{\bf{c}}_0}$ at $s \in \on{Spec}\K_{Z_e}(\on{pt})$ is not semisimple}\label{ex_fiber_not_semisimple}
It is known that $\J_{{\bf{c}}_0}$ is {\emph{not}} isomorphic to a matrix algebra over $\K_{Z_e}(\on{pt})$ in general (see \cite[Section 8.3]{Xi}). Actually, it is even not true that the fiber of $\J_{{\bf{c}}_0}$ at every point of $\on{Spec}\K_{Z_e}(\on{pt})$ is semisimple (c.f. \cite[Corollary 1]{BDD} and \cite[Example 4.4]{kenta}).  

For example, for $G=\on{SL}_2$, and $s=[\on{diag}(1,-1)] \in \on{Spec}\K_{\on{PGL}_2}(\on{pt})$, the fiber of $\J_{{\bf{c}}_0} \simeq \K_{\on{PGL}_2}(\mathbb{P}^1 \times \mathbb{P}^1)$ at $s$ is {\emph{four-dimensional}} but the are only {\emph{two}} irreducible representations of  $\K_{\on{PGL}_2}(\mathbb{P}^1 \times \mathbb{P}^1)_s$ and both of them are {\emph{one-dimensional}}. To see this, note that by Theorem \ref{thm_C} (see Section \ref{sec_class_irred} and Proposition \ref{Thom} below), irreducible modules over $\K_{\on{PGL}_2}(\mathbb{P}^1 \times \mathbb{P}^1)_s$ are all of the form $K((\mathbb{P}^1)^s)_{\rho}$, where $\rho$ is an irreducible representation of $Z_{\on{PGL}_2}(s)/Z_{\on{PGL}_2}(s)^0 = \mathbb{Z}/2\mathbb{Z}$ acting simply transitively on $(\mathbb{P}^1)^s=\{0,\infty\}$. So, $K((\mathbb{P}^1)^s)$ is two dimensional and is the direct sum of two irreducible one-dimensional $\K_{\on{PGL}_2}(\mathbb{P}^1 \times \mathbb{P}^1)_s$-modules.

\section{Representation theory of $J_e$ from the geometric perspective.}\label{sec:fam}
\subsection{Simple modules over $\J_e$}\label{sec_class_irred} The goal of this Section is  classification of simple $\J_e$-modules 
resulting in the proof of Theorem \ref{thm_C}.

We first recall Lusztig's classification \cite[Theorem 4.2]{Lcells4}. It is shown in 
{\em loc. cit.} that for every pair $(s,\rho)$ of a semisimple element $s \in Z_e$ and $\rho \in \on{Irrep}(Z_{Z_e}(s)/{Z_{Z_e}(s)^0})$ there exists  unique irreducible $\J_e$-module $E(s,e,\rho)$  characterized by the following property: $\phi^{e*}_q E(s,e,\rho)$ is isomorphic to $K(e,s,\rho,q):=\K(\CB^{sq}_e)_{\rho}$ for generic $q$.
Moreover, Lusztig proved that every irreducible $\J_e$-module is of the form $E(s,e,\rho)$ for some $(s,\rho)$
as above.

Our goal is to prove that:

(a) irreducible modules over $\J_e=\K_{Z_e}(\CB_e^{\BC^*} \times \CB_e^{\BC^*})$ are of the form $\K(\CB_e^{\BC^*,s})_{\rho}$,

(b) we have: $E(s,e,\rho)=\K(\CB_e^{\BC^*,s})_{\rho}$.   

Part $(a)$ of this theorem immediately follows from  Proposition \ref{Thom}.
Part $(b)$ is a consequence of the next Lemma. Set $\Gamma=\Gamma^s_e:=Z_{Z_e}(s)/{Z_{Z_e}(s)^0}$.

\begin{lemma}\label{pullb}
There are canonical isomorphisms of $\mathcal H_e-\Gamma^s_e$- and $\CH-\Gamma^s_e$-modules respectively: 
\begin{equation*}
\phi_{e,q}^*\K(\CB_{e}^{\mathbb C^*,s}) \simeq \K(\CB_{e}^{sq}),\, \phi_q^{e*}\K(\CB_{e}^{\mathbb C^*,s}) \simeq \K(\CB_{e}^{sq}).
\end{equation*}
\end{lemma}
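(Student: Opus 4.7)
The plan is to realize both sides as specializations at $(s, q) \in \on{Spec} R_e$ of the bimodule $\CF_e = K_{Z_e \times \BC^*}(\CB_e \times \CB_e^{\BC^*})$ from Section \ref{sec:main1}, using the bimodule description of $\phi_e$ (Propositions \ref{split} and \ref{sym}) together with the localization theorem in equivariant $K$-theory. As a preliminary observation, the subvariety $\CB_e^{sq}$ is stable under the Jacobson--Morozov $\BC^*$-action (since the factors of $(s, q) \in Z_e \times \BC^*$ commute with $\BC^*$), and $(\CB_e^{sq})^{\BC^*} = \CB_e^{\BC^*, s}$. The Bialynicki--Birula argument from the proof of Proposition \ref{split}, applied to $\CB_e^{sq}$ inside its smooth ambient $\Lambda_e^{sq} \subset \Lambda_e$, yields a filtration on $\K(\CB_e^{sq})$ whose associated graded is $\K(\CB_e^{\BC^*, s})$; in particular $\dim_\BC \K(\CB_e^{sq}) = \dim_\BC \K(\CB_e^{\BC^*, s})$.

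Applying localization on the smooth ambient $\Lambda_e \times \CB_e^{\BC^*}$ then gives
\begin{equation*}
\CF_e|_{(s, q)} \simeq \K(\CB_e^{sq} \times \CB_e^{\BC^*, s}) \simeq \K(\CB_e^{sq}) \otimes_\BC \K(\CB_e^{\BC^*, s})
\end{equation*}
(the last identification being Künneth, valid because $\CB_e^{\BC^*, s}$ is smooth) as a bimodule over $\CH_e|_{(s, q)} \simeq \K(\CB_e^{sq} \times \CB_e^{sq})$ (acting by convolution on the first factor) and $\J_e|_s \simeq \K(\CB_e^{\BC^*, s} \times \CB_e^{\BC^*, s})$ (acting by convolution on the second). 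Since $\CB_e^{\BC^*, s}$ is smooth and projective, the convolution algebra $\J_e|_s$ is the full matrix algebra $\on{End}_\BC(\K(\CB_e^{\BC^*, s}))$ with $\K(\CB_e^{\BC^*, s})$ as its unique simple left (and right) module; in particular $\K(\CB_e^{\BC^*, s}) \otimes_{\J_e|_s} \K(\CB_e^{\BC^*, s}) \simeq \BC$ (compare Section \ref{conv}).

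Because $\phi_e$ is, by construction, the homomorphism attached to the $(\CH_e, K_e)$-bimodule $\CF_e$ via the identification $\CF_e \simeq_{K_e} K_e$ of Proposition \ref{sym}, the pullback satisfies $\phi_{e, q}^* M \simeq \CF_e|_{(s, q)} \otimes_{\J_e|_s} M$ for any left $\J_e|_s$-module $M$. Substituting $M = \K(\CB_e^{\BC^*, s})$ yields
\begin{equation*}
\phi_{e, q}^* \K(\CB_e^{\BC^*, s}) \simeq \K(\CB_e^{sq}) \otimes_\BC \bigl(\K(\CB_e^{\BC^*, s}) \otimes_{\J_e|_s} \K(\CB_e^{\BC^*, s})\bigr) \simeq \K(\CB_e^{sq})
\end{equation*}
as a left $\CH_e|_{(s, q)}$-module, with the right-hand side carrying its tautological convolution structure. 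This proves the first isomorphism. For the $\CH$-version, $\phi_q^e$ annihilates $\CH_{< e}|_q$ and restricts to $\phi_{e, q}$ on $\CH_e|_q = \CH_{\le e}|_q / \CH_{< e}|_q$; on the other hand, the Kazhdan--Lusztig $\CH$-action on $\K(\CB_e^{sq})$ (the associated graded of the cell filtration on the standard module $\K(\CB^{sq})$) likewise annihilates $\CH_{< e}|_q$ and restricts to the tautological $\CH_e|_{(s, q)}$-action on the quotient, so the first isomorphism upgrades to an isomorphism of $\CH$-modules. All the constructions are natural with respect to the centralizer $Z_{Z_e}(s)$, which gives the asserted $\Gamma^s_e$-equivariance.

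The chief obstacle is justifying the use of the localization theorem in the presence of the singular variety $\CB_e$: as in Section \ref{sec:main1}, I would work consistently on the smooth Slodowy ambient $\Lambda_e$ and its fixed-point loci $\Lambda_e^{sq}$, where $K$-theoretic localization and Künneth apply directly, and then restrict to the appropriate supports on $\CB_e \times \CB_e^{\BC^*}$ and $\CB_e^{sq} \times \CB_e^{\BC^*, s}$.
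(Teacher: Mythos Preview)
Your overall strategy---realize $\phi_e^*$ via the bimodule $\CF_e$ and then specialize---is the same as the paper's. The gap is in the specialization step: your claims
\[
\CF_e|_{(s,q)} \simeq \K(\CB_e^{sq}) \otimes_\BC \K(\CB_e^{\BC^*,s}), \qquad \J_e|_s \simeq \on{End}_\BC\bigl(\K(\CB_e^{\BC^*,s})\bigr)
\]
are false in general. The localization theorem you invoke identifies the completion of $\K_{Z_e}(X)$ at the conjugacy class of $s$ with the completion of $\K_{Z_{Z_e}(s)}(X^s)$ at $s$, \emph{not} with non-equivariant $K$-theory of the fixed locus; when $Z_{Z_e}(s)$ is disconnected these differ (see Remark~\ref{foot}). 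In particular the fiber $\J_e|_s$ need not be semisimple: the paper exhibits in Section~\ref{ex_fiber_not_semisimple} the case $G=\on{SL}_2$, $e=0$, $s=[\on{diag}(1,-1)]$, where $\J_e|_s$ is four-dimensional with only two one-dimensional irreducibles. So the sentence ``$\J_e|_s$ is the full matrix algebra'' is exactly what fails, and with it the identification $\K(\CB_e^{\BC^*,s}) \otimes_{\J_e|_s} \K(\CB_e^{\BC^*,s}) \simeq \BC$.

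The paper's fix is an intermediate reduction (equation~(\ref{red})): before specializing, replace $Z_e \times \BC^*$-equivariance by $C$-equivariance with $\Gamma_e^s$-invariants, where $C=\langle s\rangle \times \BC^*$ is diagonalizable. Both $\CF_e$ and $\K_e$ are free of rank one over their respective convolution algebras, so this substitution is harmless on the tensor product; after it, localization at the point $sq$ of the \emph{torus} $C$ is clean and yields the genuinely semisimple algebra $(\K_e^s)_{sq}=\bigoplus_\rho \on{End}\,\K(\CB_e^{\BC^*,s})_\rho$ together with $\bigoplus_\rho \K(\CB_e^{sq})_\rho \otimes \K(\CB_e^{\BC^*,s})_{\rho^*}$ for the bimodule, and the tensor-product computation goes through $\rho$-by-$\rho$. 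Your remark that singularity of $\CB_e$ is the chief obstacle misidentifies the issue: the smooth ambient $\Lambda_e$ handles that, but disconnectedness of $Z_{Z_e}(s)$ is what actually breaks your argument.
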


\begin{proof}
\noindent 
We prove the claim for $\phi^{e*}_{q}$, the argument for $\phi_{e,q}^{*}$ follows.

We have checked (see Remark \ref{bimod}) that the 
 $\mathcal H-J_e \otimes \BC[{\bf{v}},{\bf{v}}^{-1}]$-bimodule 
 corresponding to the homomorphism $\phi^e$ is identified with $\CF_e$, thus 
 we have a canonical isomorphism:
 $$\phi^{e*}M \simeq \mathcal \CF_e \otimes_{K_e} M$$
holding for any $J_e \otimes \mathbb C[{\bf{v}}, {\bf{v}}^{-1}]$-module $M$. 


Thus we are reduced to constructing an isomorphism:
\begin{equation}
\K_{Z_e \times \mathbb C^*}(\CB_e \times \CB_e^{\mathbb C^*})_q \otimes_{\K_e} \K(\CB_{e}^{\mathbb C^*,s}) \simeq_{\He} \K(\CB_{e}^{sq}).
\end{equation}

Let us denote by $C$ the product of our $\mathbb C^*$ and $\langle s \rangle$ (Zariski closure of the
cyclic subgroup generated by $s$). 
Setting $\K_e^s:= \K_C(\CB_e^{\mathbb C^*} \times \CB_e^{\mathbb C^*})^\Gamma$, we have
an isomorphism of $\He$-modules:

\begin{equation}\label{red}
\K_{Z_e \times \mathbb C^*}(\CB_e \times \CB_e^{\mathbb C^*}) \otimes_{\K_e} \K(\CB_{e}^{\mathbb C^*, s}) _q\simeq_{\He} \K_C(\CB_e \times \CB_e^{\mathbb C^*})^\Gamma \otimes_{\K_e^s} \K(\CB_{e}^{\mathbb C^*, s})_q,
\end{equation}
this follows from the fact that
$\K_{Z_e \times \mathbb C^*}(\CB_e \times \CB_e^{\mathbb C^*})$ (respectively, 
$\K_C(\CB_e \times \CB_e^{\mathbb C^*})^\Gamma $) is a free rank one module over $\K_e$ (respectively, $\K_e^s$),
both sides of \eqref{red} are isomorphic to 
$\K(\CB_{e}^{\mathbb C^*, s})_q$.

 Consider 
 $$
 {M}_{e,s,q}:=\K_C(\CB_e \times \CB_e^{\mathbb C^*})^\Gamma_q \otimes_{\K_e^s} \K(\CB_{e}^{\mathbb C^*, s}).$$ Here the central subalgebra $\K_C(\on{pt})\subset \K_e^s$ acts on the right factor via the character  
 corresponding to $sq$, we will denote  reduction by that character by the subscript $\bullet_{sq}$. 
 By Section \ref{compl} we get (cf. also (\ref{loc}) below):
\begin{equation}\label{mod} \K_C(\CB_e \times \CB_e^{\mathbb C^*})^\Gamma_{sq} = \K(\CB_{e}^{sq} \times  \CB_{e}^{\mathbb C^*, s})^\Gamma = \bigoplus_{\rho \in \on{Irrep}(\Gamma)} \K(\CB_{e}^{sq})_{\rho} \otimes \K(\CB_{e}^{\mathbb C^*, s})_{\rho^*},
\end{equation}
where in the last equality we use \cite[Theorem 5.6.1]{CG} (the implication $(b)\Rightarrow(a)$, for  the equivariance under the trivial group). 

Strictly speaking, to use it, we should establish the Kunneth formula for the smooth variety $\CB_{e}^{\mathbb C^*, s}$. However, it immediately follows from the second of the following two identifications:

\begin{equation}\label{Kun} 
K(\CB_e^{s q} \times \CB_e^{sq}) \simeq H_*(\CB_e^{s q} \times \CB_e^{s q}), \ K(\CB_e^{\BC^*,s} \times \CB_e^{\BC^*, s}) \simeq H_*(\CB_e^{\BC^*,s} \times \CB_e^{\BC^*,s}).\end{equation}

For the first of these two equations, note that $\mathcal B_e \times \mathcal B_e$ is also a Springer fiber (for $G^\vee \times G^\vee$). Thus, $H_*(\CB_e^{s q} \times \mathcal B_e^{sq})$ is isomorphic to the Chow group $A_*(\CB_e^{s q}\times \mathcal B_e^{sq})$, cf. \cite[Theorem 3.9]{DLP}. Moreover, the Chow group $A_*(\CB_e^{sq} \times \mathcal{B}_e^{sq})$ is isomorphic to the $K$-theory $K(\CB_e^{sq} \times \mathcal{B}_e^{sq})$ by \cite[Theorem III.1 (b)]{BFM}). 

The second identification also follows from \cite{DLP} and \cite{BFM} in a similar fashion.

Now, the results of section \ref{compl} also imply that
\begin{equation}\label{alg} (\K_e^s)_{sq} = \bigoplus_{\rho \in \on{Irrep}(\Gamma)} \operatorname{End} \K(\CB_{es}^{\mathbb C^*})_{\rho}.
\end{equation}

Now (\ref{alg}) acts on (\ref{mod}) via the action on the second tensor factor.

Thus $$M_{e,s,q} = \bigoplus_{\rho \in \on{Irrep}(\Gamma)} \K(\CB_{e}^{sq})_{\rho} \otimes (\K(\CB_{e}^{\mathbb C^*, s})_{\rho^*} \otimes_{\operatorname{End} \K(\CB_{e}^{\mathbb C^*, s})_{\rho}} \K(\CB_{e}^{\mathbb C^*, s})) =$$
$$ =\bigoplus_{\rho \in \on{Irrep}(\Gamma)} \K(\CB_{e}^{sq})_{\rho} \otimes (\K(\CB_{e}^{\mathbb C^*, s})_{\rho^*} \otimes_{\operatorname{End} \K(\CB_{e}^{\mathbb C^*, s})_{\rho}} (\K(\CB_{e}^{\mathbb C^*, s})_{\rho} \otimes \rho)) = \bigoplus_{\rho \in \on{Irrep}(\Gamma)} \K(\CB_{e}^{sq})_{\rho} \otimes \rho.$$
\end{proof}



\subsection{Families of modules over $J_e$ and $\CH_e$}
In the previous section we proved that for any $q \in \BC^*$ there exists an identification $\phi_q^{e*}E(e,s,\rho)=K(e,s,\rho,q)$. 

(It should be noted that for $q$ being not a root of the Poincar\'e polynomial of $W_f$, this was already shown in \cite{BK} using the algebraic result from \cite{Xi3}.)

Now, we will explain that this collection of isomorphisms fits into an algebraic family, using our geometric results on $J$.

This fact is stated in 
\cite{BK} and used in  the proof of \cite[Theorem 1.8 (3)]{BK}, cf. footnote \ref{foot2} and section \ref{end} below.


\subsubsection{} We start by introducing certain algebraic family whose fibers are of the form $K(e, s, q) := \bigoplus_{\rho \in \on{Irrep}\Gamma^s_e} \rho \otimes K(e, s, \rho, q).$

If $Z_e$ is connected modulo the center of $G^\vee$ and has simply connected derived subgroup, then the $K$-theory $\K_{Z_e \times \mathbb C^*}(\CB_e)$ yields a family which has such fibers for all $s$:
 the specialization at the maximal ideal of a semisimple conjugacy class $\gamma_{sq}$ of some $sq$, is
\begin{equation*}
\K_{Z_e \times \mathbb C^*}(\CB_e)_{\gamma_{sq}}\simeq \K(\CB_e^{sq})=K(e,s,q)=\bigoplus_{\rho \in \on{Irrep}\Gamma^s_e}   \rho \otimes K(e, s, \rho, q), 
\end{equation*}
where the first isomorphism follows from localization theorem \cite{GE1} (see Appendix A). 

In general, the situation is more complicated. 

\begin{remark}\label{foot}\label{rem}  \textit{Let  $\Gamma$ be a reductive group which is either disconnected or not simply connected. For a semisimple
conjugacy class $\gamma_g$ of some $g \in \Gamma$ the specialization $\K_\Gamma(X)_{\gamma_g}$  may be not isomorphic to $\K(X^g)$. 
In particular, the finite group $Z_{\Gamma}(g)/Z_{\Gamma}(g)^0$ acts trivially on the former, while it may act non-trivially on the latter
 (cf. \cite[5.2]{BDD}).}
 
{Nor do we have an isomorphism\begin{footnote}{However,  we have (cf. Appendix~\ref{appK1}): 
\begin{equation} \K_\Gamma(X)_{\gamma_g} \simeq \K_{Z_\Gamma(g)}(X^g)_g.\end{equation}}\end{footnote}
 between  $\K_{\Gamma}(X)_{\gamma_g}$ and the invariants $\K(X^g)^{\frac{Z_\Gamma(g)}{Z_\Gamma(g)^0}}$.}

\textit{A counterexample is provided by $\on{PGL}_2$ acting on $X=\mathbb P^1$. Let $s$ be the element $[\operatorname{diag}(1, -1)] \in \on{PGL}_2$, let $T$
be the diagonal torus and $Z=Z_{\on{PGL}_2}(s)$. Then, $X^s = Z/T$ is a set of two points. Thus, $\K_Z(X^s) = \K_T(\on{pt}) = \mathbb C\Lambda$, where $\Lambda$ is a character lattice. It is a module of rank $2$ over $\K_Z(\on{pt})$, so $\K_Z(X^s)_s$ has dimension  (at least) two. On the other hand, $\K(X^s)$ is the one-dimensional space of $\mathbb Z/2\mathbb Z$-invariants.}
\end{remark}

\subsubsection{}  
Being unable to define a single family with required fibers, we instead consider (following \cite{BK}) a collection of families defined for every semisimple $s\in Z_e$.

We fix such $s$, and a torus  $C\subset Z_{Z_e}(s)$, and proceed to construct a family of modules over $\operatorname{Spec}(\K_{C\times \mathbb C^*}(\on{pt}))$ whose specialization to a
point $(\chi, q) \in C \times \mathbb C^*$ is isomorphic to $\K(\CB_e^{s\chi q}) = H_*(\CB_e^{s\chi q},\mathbb{C})$. (The latter equality holds since the
 homology group $H_*(\CB_e^{s\chi q},\mathbb{Z})$ is isomorphic to the Chow group $A_*(\CB_e^{s\chi q})$; cf. \cite[Theorem 3.9]{DLP} and the Chow group $A_*(\CB_e^{s\chi q})$ is isomorphic to the $K$-theory $K(\CB_e^{s\chi q})$ by \cite[Theorem III.1 (b)]{BFM}).
 
Let us denote the diagonalizable group  $\langle C, s\rangle$ by $H$. We set $\mathcal K(C,s) := \K_{H \times \mathbb C^*}(\CB_e)|_{Cs \times \mathbb C^*}$, where the subscript refers to restriction to the closed subset of $\operatorname{Spec} (\K_{H \times \mathbb C^*}(\on{pt}))$.

From Section \ref{compl} it follows that for any $\chi$ as above we have:
\begin{equation}\label{loc}\widehat{\mathcal K(C,s)}^{\chi q}\simeq  \widehat{\K_{H\times \mathbb C^*}(\CB_e)}^{s\chi q} \simeq
\widehat{\K_{ H \times \mathbb C^*}(\CB_e^{s\chi q})}^{1}\simeq \widehat{\K_{ H^0 \times \mathbb C^*}(\CB_e^{s\chi q})}^{1}, \end{equation}
where by $\widehat \ \ ^t$ we mean completion at the maximal ideal of an element $t$ and $H^0$ denotes the identity component in $H$.

Applying localization theorem to the torus $H^0\times \BC^*$  we conclude that the specialization $\mathcal{K}(C,s)_{\chi q}$ is identified with $\K(\CB_e^{s\chi q})$.

The ring $\He = \K_{Z_e \times \mathbb C^*}(\CB_e \times \CB_e)$ acts naturally on $\mathcal K(C,s)$, it is easy to see
that the action on the fiber coincides with one introduced in  Proposition~\ref{Thom}
(cf. also \cite[5.11.7, 5.11.10]{CG}).

\begin{remark} 
The module $\mathcal K(C,s)$ 
is  similar to the semiperiodic module of \cite{BL}; it can be viewed as its generalization for $s \notin Z_G(T_0)$, in the notation of \textit{loc. cit.}.
\end{remark}

In a similar manner, one can form a family $\mathcal L(C,s):=\K_{H \times \BC^*}(\CB_e^{\mathbb C^*})|_{Cs \times \BC^*}$ over $\operatorname{Spec} (\K_{C \times \mathbb C^*}(\on{pt}))$ with fibers $\K(\CB_e^{\mathbb C^*,s\chi})$ and equip it with an action of  $J_e \otimes \BC[{\bf{v}},{\bf{v}}^{-1}] = \K_e$. We set $\mathcal L(C,s,q):=\mathcal L(C,s)|_{Cs \times \{q\}}$.





\subsection{}\label{fiber}
Now, the natural goal is to obtain a ``version in families'' of Lemma \ref{pullb} above, i.e. to prove that the family $\mathcal{K}(C,s)$ is isomorphic to the pullback of the family $\mathcal{L}(C,s)$ under the homomorphism $\phi_e$.

We need the following auxiliary result. 


Let us consider a natural morphism, 
\begin{equation*} \kappa\colon \K_{Cs \times \mathbb C^*} (\CB_e^{\mathbb C^*}) \otimes_{\mathcal O(C \times \mathbb C^*)} \K_{Cs \times \mathbb C^*} (\CB_e^{\mathbb C^*}) \to \K_{Cs \times \mathbb C^*} (\CB_e^{\mathbb C^*} \times \CB_e^{\mathbb C^*}).\end{equation*}

\begin{lemma}\label{kunneth} $\kappa$ is an isomorphism.
\end{lemma}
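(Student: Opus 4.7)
The plan is to check that $\kappa$ is an isomorphism after completion at every closed point of $\on{Spec}\mathcal O(Cs \times \mathbb C^*)$, where it reduces to the ordinary Künneth isomorphism for a smooth projective variety whose homology is generated by algebraic cycles, and then to globalize using finite generation. Since $\CB_e^{\mathbb C^*}$ is proper, both sides of $\kappa$ are finitely generated modules over the Noetherian ring $\mathcal O(Cs \times \mathbb C^*) \simeq \mathcal O(C) \otimes \mathbb C[{\bf v}^{\pm 1}]$, and a morphism of such modules is an isomorphism iff it becomes one after completion at every maximal ideal (completion at a maximal ideal being faithfully exact on finitely generated modules over a Noetherian ring).

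Fix a closed point $(\chi, q) \in Cs \times \mathbb C^*$. The completion theorem (Appendix \ref{appK}), applied exactly as in the chain of isomorphisms \eqref{loc} above, yields
\[
\widehat{\K_{Cs \times \mathbb C^*}(\CB_e^{\mathbb C^*})}^{(\chi, q)} \simeq \widehat{\K_{H^0 \times \mathbb C^*}((\CB_e^{\mathbb C^*})^{\chi s})}^{1},
\]
and similarly for $\CB_e^{\mathbb C^*} \times \CB_e^{\mathbb C^*}$, whose fixed locus under the diagonal $\chi s$-action is the product $(\CB_e^{\mathbb C^*})^{\chi s} \times (\CB_e^{\mathbb C^*})^{\chi s}$. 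Writing $Y := (\CB_e^{\mathbb C^*})^{\chi s}$, the completion of $\kappa$ at $(\chi, q)$ is thereby identified with (the appropriate completion of) the ordinary Künneth map $K(Y) \otimes_{\mathbb C} K(Y) \to K(Y \times Y)$, tensored with a trivial $\mathbb C[{\bf v}^{\pm 1}]$-factor.

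The variety $Y$ is smooth and projective as the fixed locus of the semisimple element $\chi s$ acting on the smooth projective $\CB_e^{\mathbb C^*}$. As in the proof of Lemma \ref{pullb}, $K(Y)$ is identified with $H_*(Y, \mathbb C)$ via \cite[Theorem III.1(b)]{BFM} and \cite[Theorem 3.9]{DLP}, and the latter is generated by algebraic cycles; consequently the ordinary Künneth map for $Y$ is an isomorphism, since a basis of algebraic cycles of $Y \times Y$ is provided by products of algebraic cycles of $Y$. Hence $\kappa$ becomes an isomorphism after completion at every closed point, and is therefore an isomorphism globally. The main technical ingredient is the completion formula, which is the algebraic counterpart of the Atiyah--Segal completion theorem and is recorded in Appendix \ref{appK}; the rest is a routine reduction from local to global via finite generation.
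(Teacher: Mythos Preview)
Your proof is correct and follows essentially the same strategy as the paper: reduce to a pointwise K\"unneth isomorphism for the fixed locus $Y=(\CB_e^{\mathbb C^*})^{s\chi}$, which holds because $H_*(Y)$ is generated by algebraic cycles (\cite{DLP}, \cite{BFM}). The only difference is in packaging: the paper first observes that $\K_{Cs\times\mathbb C^*}(\CB_e^{\mathbb C^*})$ is \emph{free} over $\mathcal O(C\times\mathbb C^*)$ (via \cite[Theorem 1.14]{L3}), so it suffices to check $\kappa$ on fibers directly; you instead invoke finite generation and pass to completions at every closed point, then use the chain \eqref{loc}. Both routes arrive at the same fiberwise statement, and in fact your identification of the completed map as ``ordinary K\"unneth tensored with the completed base'' implicitly uses the same freeness/flatness (cf.\ Remark~\ref{rem_compl}), so the paper's version is slightly more direct. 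One minor imprecision: the completion is over the completed local ring of $\mathcal O(C\times\mathbb C^*)$ at $(\chi,q)$, not literally ``tensored with a trivial $\mathbb C[{\bf v}^{\pm1}]$-factor''; but this does not affect the argument.
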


\begin{proof} It follows from \cite[Theorem 1.14]{L3} (see also \cite{DLP})
 that  $\K_{Cs \times \mathbb C^*} (\CB_e)$ is a free module over $\K_{Cs \times \mathbb C^*}(\on{pt})=\mathcal{O}(C \times \mathbb C^*)$.


Thus, it suffices to check the claim fiberwise. Similarly to  calculation (\ref{loc}) it follows from:

\begin{equation*}
K(\CB_e^{\mathbb C^*,s\chi} \times \CB_e^{\mathbb C^*,s\chi}) \simeq K(\CB_e^{\mathbb C^*,s\chi}) \otimes K(\CB_e^{\mathbb C^*,s\chi}).
\end{equation*}

The last isomorphism is clear from the argument similar to the formulas (\ref{Kun}).
\end{proof}

Set $M^\vee:=Z_{G^\vee}(C)^0$, $\Gamma_M:= \pi_0(Z_{M^\vee}(e,s))$. 

Let us denote the algebra $\K_{Cs \times \mathbb C^*}(\CB_e^{\mathbb C^*} \times \CB_e^{\mathbb C^*})$ by $\K(M)$. 

Similarly to the proof of  Lemma \ref{pullb}, one obtains a morphism

\begin{equation*}
\He_e \otimes_{\K_e} \K_{Cs \times \mathbb C^*}(\CB_e^{\mathbb C^*}) \to \K_{Cs \times \mathbb C^*}(\CB_e \times \CB_e^{\mathbb C^*})^{\Gamma_M} \otimes _{\K(M)^{\Gamma_M}} \K_{Cs \times \mathbb C^*}(\CB_e^{\mathbb C^*}).
\end{equation*}

It is an isomorphism since it is an isomorphism on fibers  and all involved modules are flat. 

Now we claim that, moreover, $\K_{Cs \times \mathbb C^*}(\CB_e \times \CB_e^{\mathbb C^*})^{\Gamma_M} \otimes_{\K(M)^{\Gamma_M}} \K_{Cs \times \mathbb C^*}(\CB_e^{\mathbb C^*}) \simeq \K_{Cs \times \mathbb C^*}(\CB_e)$.

Indeed, we observe that 
$$
\bigoplus_{\rho \in \on{Irrep}\Gamma_M} \K_{Cs \times \mathbb C^*}(\CB_e^{\mathbb C^*})_{\rho} \otimes_{\mathcal{O}(Cs \times \mathbb C^*)} \K_{Cs \times \mathbb C^*}(\CB_e^{\mathbb C^*})_{\rho^*}\simeq  \operatorname{End}_{\mathcal{O}(Cs \times \mathbb C^*)} \K_{Cs \times \mathbb C^*}(\CB_e^{\mathbb C^*})_{\rho} =: \mathcal A,
$$
which follows fiberwise from the argument similar to the equations (\ref{Kun}).

Moreover, since the vector bundle (over the corresponding torus) $\K_{Cs \times \mathbb C^*}(\CB_e^{\mathbb C^*})$ is self-dual in a canonical way (thanks to Poincar\'e pairing), there exists the canonical morphism 
\begin{equation*}
\rho^* \otimes \mathcal O(C) \to \K_{Cs \times \mathbb C^*}(\CB_e^{\mathbb C^*})_{\rho} \otimes_{\mathcal A} \K_{Cs \times \mathbb C^*}(\CB_e^{\mathbb C^*}).
\end{equation*}

One easily sees that, in fact, it establishes the isomorphism. Indeed, it is enough to check this fiberwise, and we reduce the statement to an elementary linear algebra observation.

We conclude:

$$
\K_{Cs \times \mathbb C^*}(\CB_e^{\mathbb C^*})_{\rho} \otimes_{\mathcal A} \K_{Cs \times \mathbb C^*}(\CB_e^{\mathbb C^*}) \simeq \rho^* \otimes \mathcal O(C).
$$

Now, as in Lemma~\ref{pullb}, we obtain:

\begin{thm}\label{auxe} As $\He_e-\Gamma_M$-modules,
$\phi_e^*\mathcal L(C, s) = \K_{Cs \times \mathbb C^*}(\CB_e).$
\end{thm}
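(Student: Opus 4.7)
The plan is to assemble the ingredients from the two preceding paragraphs into the statement of the theorem. The key input is Remark \ref{bimod} (together with Proposition \ref{split}), which identifies $\CF_e$ as the $\He_e$-$\K_e$-bimodule attached to $\phi_e$; combined with $\CF_e\simeq_{\He_e}\He_e$, this yields
\[
\phi_e^*\mathcal L(C,s)\;\simeq\;\CF_e\otimes_{\K_e}\mathcal L(C,s)\;\simeq\;\He_e\otimes_{\K_e}\K_{Cs\times \mathbb C^*}(\CB_e^{\mathbb C^*}),
\]
so what has to be shown is that the right-hand side is isomorphic to $\mathcal K(C,s)=\K_{Cs\times \mathbb C^*}(\CB_e)$ as an $\He_e$-$\Gamma_M$-module.

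First I would apply the completion/localization arguments of Section \ref{compl} (exactly as in the paragraph above the statement) to rewrite this tensor product in ``smaller'' equivariance, as
\[
\K_{Cs\times \mathbb C^*}(\CB_e\times \CB_e^{\mathbb C^*})^{\Gamma_M}\otimes_{\K(M)^{\Gamma_M}}\K_{Cs\times \mathbb C^*}(\CB_e^{\mathbb C^*}).
\]
Next, combining Lemma \ref{kunneth} with the Poincar\'e self-duality of the vector bundle $\K_{Cs\times \mathbb C^*}(\CB_e^{\mathbb C^*})$ over $Cs\times \mathbb C^*$, I would decompose
\[
\K(M)^{\Gamma_M}\;\simeq\;\bigoplus_{\rho\in\on{Irrep}(\Gamma_M)}\on{End}_{\mathcal O(Cs\times \mathbb C^*)}\K_{Cs\times \mathbb C^*}(\CB_e^{\mathbb C^*})_\rho
\]
into a product of matrix algebras (this identification was already spelled out in the text immediately above the theorem). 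A standard Morita collapse on each $\rho$-block then reduces the whole tensor product to the $\Gamma_M$-isotypic decomposition $\bigoplus_\rho \K_{Cs\times \mathbb C^*}(\CB_e)_\rho\otimes \rho$, which reassembles to $\K_{Cs\times \mathbb C^*}(\CB_e)$ as a $\Gamma_M$-module. Equivariance under $\He_e$ and $\Gamma_M$ is automatic, since every map in the chain is induced either by a geometric correspondence or by the central $\K_{Z_e\times \mathbb C^*}(\on{pt})$-action.

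The hard part will be the step in which one identifies the first tensor factor's $\rho$-isotypic component with $\K_{Cs\times \mathbb C^*}(\CB_e)_\rho\otimes\K_{Cs\times \mathbb C^*}(\CB_e^{\mathbb C^*})_{\rho^*}$, i.e.\ the partial Künneth statement
\[
\K_{Cs\times \mathbb C^*}(\CB_e\times \CB_e^{\mathbb C^*})\;\simeq\;\K_{Cs\times \mathbb C^*}(\CB_e)\otimes_{\mathcal O(Cs\times \mathbb C^*)}\K_{Cs\times \mathbb C^*}(\CB_e^{\mathbb C^*}).
\]
This is not covered by Lemma \ref{kunneth} since $\CB_e$ is singular, so I would verify it fiberwise: by \cite[Theorem 1.14]{L3} both modules are flat over $\mathcal O(Cs\times \mathbb C^*)$, and each fiber reduces to the pointwise Künneth identification \eqref{Kun} used in Lemma \ref{pullb}, which in turn rests on \cite{DLP} and \cite{BFM}. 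Once that partial Künneth is in place, the family version of the end of the proof of Lemma \ref{pullb} goes through unchanged.
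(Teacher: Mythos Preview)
Your proposal is correct and follows essentially the same route as the paper: the argument given in the paragraphs immediately preceding Theorem \ref{auxe} is exactly the reduction $\CF_e\otimes_{\K_e}\mathcal L(C,s)\to \K_{Cs\times\mathbb C^*}(\CB_e\times\CB_e^{\mathbb C^*})^{\Gamma_M}\otimes_{\K(M)^{\Gamma_M}}\K_{Cs\times\mathbb C^*}(\CB_e^{\mathbb C^*})$, followed by the matrix-algebra decomposition of $\K(M)^{\Gamma_M}$ and the Morita collapse $\K_{Cs\times\mathbb C^*}(\CB_e^{\mathbb C^*})_\rho\otimes_{\mathcal A}\K_{Cs\times\mathbb C^*}(\CB_e^{\mathbb C^*})\simeq\rho^*\otimes\mathcal O(C)$, with the remainder handled ``as in Lemma~\ref{pullb}''. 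You have made explicit the partial K\"unneth $\K_{Cs\times\mathbb C^*}(\CB_e\times\CB_e^{\mathbb C^*})\simeq\K_{Cs\times\mathbb C^*}(\CB_e)\otimes_{\mathcal O}\K_{Cs\times\mathbb C^*}(\CB_e^{\mathbb C^*})$ that the paper leaves implicit in that phrase; your proposed fiberwise verification via flatness (\cite[Theorem 1.14]{L3}) and the pointwise identifications \eqref{Kun} is the intended mechanism.
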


Now the arguments in the previous chapter generalize verbatim to prove the following $\Gamma_M$-equivariant isomorphism.

\begin{thm}\label{H}
\begin{equation}
\phi^{e*}\mathcal L(C, s) \simeq_{\He} \K_{Cs \times \mathbb C^*}(\CB_e).
\end{equation}
\end{thm}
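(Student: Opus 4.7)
The plan is to adapt the argument that proved Theorem~\ref{auxe}, upgrading the statement from $\He_e$-modules to $\He$-modules by replacing the $\He_e$-$K_e$-bimodule realizing $\phi_e$ with the $\He$-$K_e$-bimodule $\CF_e = K_{Z_e\times\mathbb C^*}(\CB_e\times\CB_e^{\mathbb C^*})$ that realizes $\phi^e$. The latter identification is given by Remark~\ref{bimod}, which yields a canonical isomorphism
\begin{equation*}
\phi^{e*}\mathcal L(C,s) \simeq_{\He} \CF_e\otimes_{K_e}\mathcal L(C,s).
\end{equation*}

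Following the template of Theorem~\ref{auxe}, I would first construct a natural $\He$- and $\Gamma_M$-equivariant morphism
\begin{equation*}
\CF_e \otimes_{K_e} \mathcal L(C,s) \longrightarrow \K_{Cs\times\mathbb C^*}(\CB_e\times\CB_e^{\mathbb C^*})^{\Gamma_M} \otimes_{\K(M)^{\Gamma_M}} \mathcal L(C,s),
\end{equation*}
by the same geometric recipe (convolution with the structure sheaf of the appropriate correspondence and restriction to $Cs\subset \operatorname{Spec}\K_{Z_e}(\on{pt})$). Flatness of $\K_{Cs\times\mathbb C^*}(\CB_e^{\mathbb C^*})$ over $\mathcal O(C\times \mathbb C^*)$ (cf.\ Lemma~\ref{kunneth} together with \cite[Theorem~1.14]{L3} and \cite{DLP}) and the K\"unneth identification reduce the claim that this morphism is an isomorphism to a fiberwise check. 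The fibers are identified via Section~\ref{compl} and the homology-vs.-$K$-theory comparisons \eqref{Kun}, exactly as in the proof of Theorem~\ref{auxe}, so the map is indeed an isomorphism.

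Next, I would apply verbatim the final steps of Section~\ref{fiber}: the Poincar\'e self-duality of $\K_{Cs\times\mathbb C^*}(\CB_e^{\mathbb C^*})$ together with the isotypic decomposition over $\Gamma_M$ identifies the right-hand side with $\K_{Cs\times \mathbb C^*}(\CB_e)$. Composing the isomorphisms produces the desired isomorphism of $\He$-modules.

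The main obstacle is the only point that requires input beyond a verbatim transcription of Theorem~\ref{auxe}: one must check that the $\He$-action transported to $\K_{Cs\times\mathbb C^*}(\CB_e)$ via the chain above coincides with its natural convolution action as an $\He=\K_{G^\vee\times\mathbb C^*}(\on{St})$-module. For elements of $\He_{\le e}$ this follows from the fact (established in Section~\ref{sect_bimodule_geom_structure}) that the $\He$-action on $\CF_e$ factors through $\He_e$, reducing the claim to Theorem~\ref{auxe}. For elements outside $\He_{\le e}$ I would argue fiberwise: at a point $(\chi,q)\in C\times\mathbb C^*$ the fiber is $\K(\CB_e^{s\chi q})$ with its geometric convolution action by $\He$, and agreement on fibers (which is essentially the $\phi^{e*}$-half of Lemma~\ref{pullb}) upgrades to agreement in families by flatness of $\K_{Cs\times\mathbb C^*}(\CB_e)$ over $\mathcal O(C\times\mathbb C^*)$. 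The $\Gamma_M$-equivariance is automatic from the naturality of every map in the chain.
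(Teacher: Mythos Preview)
Your proposal is correct and follows the paper's approach, which simply asserts that the argument for Theorem~\ref{auxe} generalizes verbatim. The obstacle you flag is not actually present: the $\He$-action is by convolution on the first $\CB_e$-factor of $\CF_e$, while every map in the chain of isomorphisms (restriction to $Cs$, K\"unneth, the Poincar\'e contraction on the $\CB_e^{\mathbb C^*}$-factor) operates only on the second factor, so $\He$-equivariance is automatic and no separate fiberwise verification is needed.
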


\begin{remark} This  proves the version of Corollary 2.6 in \cite{BK} ``\textit{in families}''.
\end{remark}

So, Theorem \ref{thm_ident_families_pull_back} is proven. 





 \section{Braverman-Kazhdan's  spectral description of $J_e$}\label{end}

 \subsection{}\label{intro} 
 We are now ready to reprove Theorem 1.8 (3) in \cite{BK} addressing the points in footnotes \ref{foot2}, \ref{foot3} using our geometric approach to $\J_e$. 

 Let $\bf G$ be a version of $G$ over a local field; let $q$ be a characteristic of a residue field; we assume that $q$ is large enough. (Our convention for bold letters is slightly different from the one in \cite{BK}).

 Let $\bf M$ be a Levi subgroup of $\bf G$. Let $\sigma$ be an irreducible tempered representation of $\bf M$ and let $\chi$ be a character of $\bf M$.

 Then (cf. \cite[p.78]{Bern}) all representations of the form $\operatorname{Ind}^{\bf G}_{\bf M} (\sigma \otimes \chi)$ can be realized on the same vector space that we denote $V_{\sigma}$. 
 
 Let $Z_M$ be the connected component of  $1 \in Z(M^\vee)$ (recall that $Z(M^\vee)$ is the center of $M^\vee$). Then we obtain an action of $\bf G$ on the trivial vector bundle over a torus $Z_M$.

 By taking Iwahori-invariants, we obtain a family of $\He$-modules over $Z_M$ (cf. \cite[1.2]{BK}): let us denote it by $\mathcal V(Z_M, \sigma).$ 
 In particular, setting $\sigma=K(s, e, \rho, q)$ we get
 a family that we denote by $\mathcal V(Z_M, s,\rho,q)$
 (since $e$ is fixed it does not enter the notation).
 

 \subsection{}\label{BK_formulation_thm}

It is known that: 
\\
 1) for any $L^\vee$ containing $M^\vee$, the family $\mathcal V(Z_L, s, \rho,q)$ can be naturaly realized as a subfamily $\mathcal V(Z_M, s, \rho_M,q)$, where $\rho_M$ is defined as pull
 back of $\rho$ under $\pi_0(Z_{M^\vee}(e, s)) \to \pi_0(Z_{L^\vee}(e, s))$;
 \\
2) for any compact $s'$ and a triple $(L^\vee, t, \theta)$ conjugate to $(M^\vee, s, \rho)$, the family $\mathcal V(Z_M, s, \rho,q)$ can be rationally identified with $\mathcal V(Z_L, t, \theta,q)$ via the intertwining operator.

 The next statement is an equivalent form of Theorem 1.8 (3) in \cite{BK}.

 \begin{thm}\label{BK}
     Let  $\Pi=\prod_{M, s, \rho} \on{End}^{\mathrm{rat}.}_{\mathcal O(Z_M)} \mathcal V(Z_M, s,\rho,q)$, where the product is taken over all triples
      $M,\, s,\rho $ as above with $s$ compact. Let $\mathcal S_e\subset \Pi$ be the subalgebra 
 consisting of elements $\phi$ which satisfy the following conditions:
     \\
     a) $\varphi $ does not have poles  at points of families $\mathcal V(Z_M,s,\rho,q)$ corresponding to 
     non-strictly positive characters of Levi subgroups;
     \\
     b) $\varphi$ is compatible with 1) and 2) above.
     
     Then $\mathcal S_e \simeq \J_e$.
 \end{thm}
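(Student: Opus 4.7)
The plan is to follow the roadmap outlined in the introduction. First I would replace the analytic Braverman--Kazhdan families $\mathcal V(Z_M,s,\rho,q)$ with the geometric families $\mathcal K(Z_M,s,\rho,q)$ constructed in Section \ref{sec:fam}. Using the standard identification of the Iwahori-invariants in an induced representation with the induced $\He_q$-module, together with the description of intertwining operators on the level of $\He_q$-modules, one shows that $\mathcal V(Z_M,s,\rho,q)$ and $\mathcal K(Z_M,s,\rho,q)$ agree on an open dense subset $\mathcal U\subset Z_M$ containing all non-strictly positive characters. Since condition (a) constrains endomorphisms precisely at points of $\mathcal U$ and the two families are rationally isomorphic, the corresponding rings of rational endomorphisms coincide.

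Next, applying Theorem \ref{thm_ident_families_pull_back} and summing over $\rho$, one obtains a $J_e$-equivariant identification $\phi^{{\bf{c}},*}\mathcal L(Z_M,s,q)\simeq \mathcal K(Z_M,s,q)$. This converts $\mathcal O(Z_M)$-linear endomorphisms of the $\mathcal K$-families into $\mathcal O(Z_M)$-linear endomorphisms of the $\mathcal L$-families commuting with the $J_e$-action. Tracing through the naturality, conditions (a) and (b) match exactly the conjugacy-compatibility conditions defining the algebra $\mathcal E$ of Proposition \ref{prop_reform_BK_thm_intro}, so the theorem reduces to proving that the action map $\alpha\colon \J_e\to\mathcal E$ is an isomorphism.

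To prove this, I would argue completion by completion over $\operatorname{Spec}\K_{Z_e}(\on{pt})$. Since $\mathcal E\subset\tilde E$ is cut out by $\mathcal O$-linear equations, the embedding $\mathcal E\hookrightarrow\tilde E$ remains an embedding after completing at a semisimple conjugacy class $[s]$, and it suffices to check that $\widehat{\J_e}_{[s]}$ and $\widehat{\mathcal E}_{[s]}$ coincide inside $\widehat{\tilde E}_{[s]}$. On the $\J_e$-side, Theorem \ref{thm_A} combined with the localization theorem for the diagonalizable subgroup $\langle s\rangle$ (Appendix \ref{appK}) describes $\widehat{\J_e}_{[s]}$ in terms of $\K_{Z_{Z_e}(s)}(\CB_e^{\BC^*,s}\times\CB_e^{\BC^*,s})$, which, by the K\"unneth-type decomposition used in Lemma \ref{pullb} and Section \ref{fiber}, takes the form $\bigoplus_{\rho}\operatorname{End}\K(\CB_e^{\BC^*,s})_{\rho}$. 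The completion of each factor of $\tilde E$ admits a parallel description via localization applied to the $\mathcal L(Z_M,s,q)$-families, and one checks that the conjugacy conditions defining $\mathcal E$ carve out precisely the image of $\widehat{\J_e}_{[s]}$.

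The main obstacle will be the last verification: ensuring that the conjugacy-compatibility conditions translate, after completion, into exactly the relations between the summands $\operatorname{End}\K(\CB_e^{\BC^*,s\chi})_{\rho}$ imposed by the geometric convolution product on $\J_e$. This step is delicate because, as Section \ref{ex_fiber_not_semisimple} shows, the fibers of $\J_e$ over $\operatorname{Spec}\K_{Z_e}(\on{pt})$ need not be semisimple, so a naive fiberwise argument in the spirit of \cite{BK} fails; passing to formal completions (an exact operation) and exploiting the explicit geometric realization of $\J_e$ is precisely what bypasses this issue, as flagged in footnote \ref{foot3}. Once the isomorphism is established at every $[s]$, one patches globally: completions at closed points separate elements of the finitely generated $\K_{Z_e}(\on{pt})$-algebra $\J_e$, so surjectivity of $\alpha$ at each completion yields surjectivity of $\alpha$, while injectivity follows from the faithfulness of the regular action of $J_e$ (Theorem \ref{thm_B}).
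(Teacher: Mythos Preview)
Your proposal is correct and follows essentially the same route as the paper: reduce from $\mathcal V$- to $\mathcal K$-families over $\mathcal U$ (Corollary \ref{cor_ident_fam_S_E}), pass to $\mathcal L$-families via Theorem \ref{thm_ident_families_pull_back}, and then prove Proposition \ref{fin} by comparing completions at each semisimple $[s]\in\operatorname{Spec}\K_{Z_e}(\on{pt})$. Two small points: your formula $\bigoplus_\rho\operatorname{End}\K(\CB_e^{\BC^*,s})_\rho$ describes the \emph{fiber}, not the completion---the paper obtains $(\operatorname{End}(\K(\CB_e^{\BC^*,s}))\otimes\widehat{\K_{Z_{Z_e}(s)^0}(\on{pt})}^{\mathfrak m})^{\pi_0(Z_{Z_e}(s))}$ in (\ref{conj}); and the ``carve out'' verification you flag as the main obstacle is made concrete in the paper by first reducing $\tilde E$ to a finite product via Mohrdieck's lifting (Section 6.5) and then killing the irrelevant factors after completion using the separation argument of Proposition \ref{appendix}.
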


 The goal of this section is to summarize the proof  as an application of our geometric description of $\J_e$.

\subsection{} 


Let $\CB_{e, M^\vee}$ be the Springer fiber in $M^\vee$. Then there is an embedding of the lowest (in the natural ``Bialynicki-Birula'' order) component of $Z_M$-fixed points: $i\colon \CB_{e, M^\vee} \to \CB_e$. We have a natural morphism
\begin{equation}\label{ind}
i_*\colon \K_{\langle Z_M, s \rangle \times \mathbb C^*}(\CB_{e, M})_q|_{Z_Ms \times \BC^*} \to \K_{\langle Z_M, s \rangle \times \mathbb C^*}(\CB_e)_q|_{Z_Ms \times \BC^*}.
\end{equation}

Note that the source of this map is a trivial family of $\He_M$-modules,
 the $\rho$-multiplicity subspace
 for the action of $\pi_0(Z_{M^\vee}(e, s))$ on its fiber equals $\sigma$.

\subsection{}\label{comp} 
For a finite group $\Gamma$ with an irreducible representation $\rho$ and any representation $V$, we will denote the multiplicity space $\on{Hom}_{\Gamma}(\rho, V)$ by $V_{\rho}.$
We proceed to compare the geometrically defined family $\mathcal{K}(Z_M, s,q)_{\rho}$ with $\mathcal V(Z_M, s, \rho,q)$.

\begin{prop}\label{prop_op_ex_rest_iso} There exists an open set $\mathcal U \subset Z_M$, containing all non-strictly positive characters, so that over $\mathcal U$ the natural map

\begin{equation}\label{ind_mor_v_to_k} \He_G \otimes_{\He_M} K_{\langle Z_M, s \rangle \times \mathbb C^*}(\CB_{e, M})_q|_{Z_Ms} \to \K_{\langle Z_M, s \rangle \times \mathbb C^*}(\CB_e)_q|_{Z_Ms}\end{equation}

induced by $i_*$, is an isomorphism.
\end{prop}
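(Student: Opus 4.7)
The strategy is to reduce to a fiberwise statement via flatness, and then use the standard-module interpretation of both sides together with parabolic induction.

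First, both the source and the target of (\ref{ind_mor_v_to_k}) are finitely generated, flat modules over $\CO(Z_M s \times \BC^*)$: for the target this is \cite[Theorem 1.14]{L3}, as used in the proof of Lemma \ref{kunneth}, and for the source the same flatness result applies because $\CB_{e,M}$ is itself a Springer fiber for $M^\vee$ (and $\He_G$ is free as a right $\He_M$-module). Consequently, in order to prove the claim it suffices to find an open subset $\mathcal U \subset Z_M$ over which the map is an isomorphism on every fiber $(\chi, q) \in \mathcal U \times \BC^*$.

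Next, the fibers can be computed via localization in equivariant $K$-theory, exactly as in equation (\ref{loc}) and Section \ref{fiber}. The fiber of the target at $(\chi, q)$ identifies canonically with $\K(\CB_e^{s\chi q})$, while the fiber of the source identifies with $\He_{G,q} \otimes_{\He_{M,q}} \K(\CB_{e, M}^{s\chi q})$. Under these identifications, the fiber of $i_*$ becomes the comparison map between the parabolically induced standard $\He_{G,q}$-module and the standard $\He_{G,q}$-module attached to the semisimple element $s\chi q$. I would then take $\mathcal U \subset Z_M$ to be the locus where this comparison map is an isomorphism. Upper semicontinuity of the rank of kernel and cokernel of a map of coherent sheaves ensures that $\mathcal U$ is open.

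The main obstacle is to verify that $\mathcal U$ contains every non-strictly positive character. The geometric content is that the complement of $\mathcal U$ is precisely the locus of $\chi$ for which some Levi $L^\vee$ with $M^\vee \subsetneq L^\vee \subseteq G^\vee$ satisfies $s\chi \in L^\vee$ and $e \in \operatorname{Lie} L^\vee$: at such $\chi$ the fixed-point variety $\CB_e^{s\chi q}$ contains components reachable from $\CB_{e, L^\vee}^{s\chi q}$ but not from $\CB_{e, M}^{s\chi q}$ via Hecke convolution, so the comparison map fails to be surjective. By definition, non-strictly positive characters rule out exactly this intermediate-Levi configuration. To make this rigorous, I would bridge the geometric and representation-theoretic pictures by combining Theorem \ref{H} (which identifies $\phi^{e*}\mathcal L(C, s)$ with $\mathcal K(C, s)$) with the identification of $\mathcal K(Z_M, s, q)_\rho$ and $\mathcal V(Z_M, s, \rho, q)$ outlined at the start of Section \ref{comp}. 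This reduces the openness statement to the dual representation-theoretic fact, already established in \cite{BK}, that no intertwining operator among the families $\mathcal V(Z_M, s, \rho, q)$ develops a pole at a non-strictly positive parameter.
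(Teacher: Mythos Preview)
Your first two paragraphs are correct and match the paper's approach exactly: reduce to a fiberwise check using flatness of both sides over $\mathcal{O}(Z_Ms\times\BC^*)$, and identify the fibers via localization with the induced standard module and the standard module respectively. The paper's proof is in fact nothing more than this reduction followed by a direct citation: ``In this form it is contained in \cite[2.2]{BK}.''

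The difficulty is in your third paragraph, where you attempt to \emph{re-prove} the fiberwise statement rather than invoke \cite{BK}. Two problems arise. First, there is a circularity: ``the identification of $\mathcal K(Z_M,s,q)_\rho$ and $\mathcal V(Z_M,s,\rho,q)$ outlined at the start of Section~\ref{comp}'' is precisely Corollary~\ref{cor_ident_fam_S_E}, which is a \emph{consequence} of the Proposition you are proving --- it is not available as input. Second, the final reduction to ``no intertwining operator develops a pole at a non-strictly positive parameter'' is not the right statement: absence of poles of intertwiners concerns the regularity of certain isomorphisms between two parabolically induced modules, whereas what is needed here is that the map from the induced module into the geometric standard module $\K(\CB_e^{s\chi q})$ is itself an isomorphism. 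These are related in the general theory but not obviously equivalent, and your geometric heuristic about intermediate Levis $L^\vee$ is not made precise.

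The simplest fix is to stop after your fiber identification and cite \cite[2.2]{BK} for the fact that the comparison map on fibers is an isomorphism whenever $\chi$ is non-strictly positive; this is exactly what the paper does. Openness of $\mathcal U$ then follows from your semicontinuity remark (or is simply not needed, since one can take $\mathcal U$ to be the isomorphism locus, which is automatically open for a map between locally free sheaves of the same rank).
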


\begin{proof}
It is enough to prove this statement fiberwise. In this form it is contained in \cite[2.2]{BK}.
    \end{proof}

\begin{cor}\label{cor_ident_fam_S_E} $\mathcal V(Z_M,s,\rho,q)|_{\mathcal U} \simeq \mathcal K(Z_M,s,q)_{\rho}|_{\mathcal U}.$
\end{cor}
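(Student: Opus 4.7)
The plan is to take $\rho$-isotypic components in the isomorphism of Proposition \ref{prop_op_ex_rest_iso} and identify the resulting family of $\He_G$-modules with $\mathcal V(Z_M, s, \rho, q)$ via the Borel--Casselman correspondence between Iwahori invariants and Hecke algebra induction.

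First I would observe that (\ref{ind_mor_v_to_k}) is $\Gamma_M$-equivariant, with $\Gamma_M = \pi_0(Z_{M^\vee}(e,s))$ acting by functoriality on both $\CB_{e,M}$ and $\CB_e$ compatibly with the closed embedding $i$. This action commutes with the $\He_G$- and $\He_M$-actions, so the exact functor $(-)_\rho$ of extracting the $\rho$-multiplicity subspace commutes with $\He_G\otimes_{\He_M}-$ and with restriction to the open set $\mathcal U \subset Z_M$. Applying $(-)_\rho$ to Proposition \ref{prop_op_ex_rest_iso} yields
\begin{equation*}
\He_G \otimes_{\He_M} \bigl(\K_{\langle Z_M, s\rangle \times \BC^*}(\CB_{e, M})_q|_{Z_M s}\bigr)_\rho \Big|_{\mathcal U} \iso \mathcal K(Z_M, s, q)_\rho\big|_{\mathcal U}.
\end{equation*}

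Next, applied to the $\mathbf M$-representation $\sigma \otimes \chi$ with $\sigma = K(s, e, \rho, q)$ and $\chi \in Z_M$, the Borel--Casselman equivalence gives a canonical $\mathcal{O}(Z_M)$-linear identification
\begin{equation*}
\mathcal V(Z_M, s, \rho, q) \simeq \He_G \otimes_{\He_M} \mathcal V_M(Z_M, s, \rho, q),
\end{equation*}
where $\mathcal V_M(Z_M, s, \rho, q)$ denotes the family over $Z_M$ whose fiber at $\chi$ is the $\He_M$-module $(\sigma \otimes \chi)^{I_M}$. Combining this with the display above, the corollary reduces to producing an $\He_M$- and $\mathcal O(Z_M)$-linear isomorphism
\begin{equation*}
\mathcal V_M(Z_M, s, \rho, q) \iso \bigl(\K_{\langle Z_M, s\rangle \times \BC^*}(\CB_{e,M})_q|_{Z_M s}\bigr)_\rho.
\end{equation*}

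The main obstacle is this last identification, which amounts to a functorial Kazhdan--Lusztig description of standard $\He_{M, q}$-modules in a version parametrized by unramified twists $\chi \in Z_M$. Fiberwise both sides equal $K(s\chi, e, \rho, q)$: for the left-hand side because twisting $\sigma$ by the unramified character $\chi$ shifts the attached semisimple parameter from $s$ to $s\chi$, and for the right-hand side by the localization computation (\ref{loc}) applied to the $M^\vee$-Springer fiber together with the identification $\mathcal K_M(Z_M, s, q)|_\chi = K(s\chi, e, q)$ recalled in Section \ref{fiber}. Both families are locally free over $Z_M$ by \cite[Theorem 1.14]{L3}, so Nakayama's lemma reduces the construction of an isomorphism to producing a single $\He_M \otimes \mathcal O(Z_M)$-linear map; this is supplied by viewing $\mathcal O(Z_M)$ as a subalgebra of the Bernstein center of $\He_M$ and noting that both families arise as the specialization of a common $\He_M$-module along $\operatorname{Spec}\mathcal{O}(Z_M) \hookrightarrow \operatorname{Spec} Z(\He_M)$, so that the fiberwise agreement upgrades to an isomorphism of families on $\mathcal U$.
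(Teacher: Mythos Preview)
Your approach is essentially the same as the paper's: identify $\mathcal V(Z_M,s,\rho,q)$ with the $\rho$-component of the left-hand side of (\ref{ind_mor_v_to_k}) and then apply Proposition~\ref{prop_op_ex_rest_iso}. The paper compresses your first and third paragraphs into the single phrase ``it follows from the definitions,'' treating the Borel--Casselman identification and the description of the $\He_M$-family $\bigl(\K_{\langle Z_M,s\rangle\times\BC^*}(\CB_{e,M})_q|_{Z_Ms}\bigr)_\rho$ as the constant family with fiber $\sigma$ (noted just before (\ref{ind})) as standard; your final Nakayama argument is therefore unnecessary, since this identification holds globally by construction rather than merely fiberwise.
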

\begin{proof}
It follows from the definitions that $\mathcal V(Z_M,s,\rho,q)$ identifies with the LHS of (\ref{ind_mor_v_to_k}). Now the claim follows from Proposition \ref{prop_op_ex_rest_iso}.
\end{proof}

It follows from Corollary \ref{cor_ident_fam_S_E} that $\mathcal S_e$ embeds into 
\begin{equation}\label{target}
\prod_{M, s, \rho} \on{End}^{\mathrm{rat}.}_{\mathcal O(Z_M)} \mathcal K(Z_M, s,q)_{\rho}.
\end{equation}
Moreover, by 2) in Section \ref{BK_formulation_thm}, and the fact that any character is conjugate to a non-strictly positive one, LHS can be replaced by $\prod_{M, s, e, \rho} \on{End}_{\mathcal O(Z_M)} \mathcal{K}(Z_M,s,q)_{\rho}$. 

Moreover, similarly to Theorem~\ref{H}, we are reduced to showing that the following holds.

\begin{prop}\label{fin}
    Let $\mathcal E$ be the subalgebra of $\prod_{M, s} \on{End}_{\mathcal O(Z_M)} \mathcal L(Z_M,s,q)=:\tilde{E}$ consisting of elements $\phi = (\phi(M, s))_{M, s}$ satisfying the following 
    property.

    For any pair of Levi subgroups $M^\vee, \, L^\vee \subset G^\vee$ whose Lie algebras contain $e$  and elements
  $s\in M^\vee$, $t \in L^\vee$; $\chi\in Z_M$,     $\chi' \in Z_L$ such that
    $s\chi$ is conjugate to  $t\chi'$ we have:
     \begin{equation}\label{BK1}
     \phi(M, s)_\chi = \phi(L, t)_{\chi'}.
     \end{equation}
     
     Then $\mathcal E \simeq \J_e$ via the natural action map $\alpha\colon \J_e \to \mathcal E$.
\end{prop}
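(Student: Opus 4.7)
The plan is to verify that the natural action map $\alpha\colon \J_e \to \mathcal E$ is an isomorphism by passing to completions at every closed point $\gamma_s \in \operatorname{Spec} \K_{Z_e}(\on{pt})$, where $\gamma_s$ is the semisimple conjugacy class of some $s \in Z_e$. To begin, $\alpha$ is a well-defined algebra homomorphism: the convolution action of $\J_e \simeq \K_{Z_e}(\CB_e^{\BC^*} \times \CB_e^{\BC^*})$ on each module $\mathcal L(Z_M, s, q) = \K_{H \times \BC^*}(\CB_e^{\BC^*})|_{Z_Ms \times \BC^*}$ constructed in Section \ref{sec:fam} is functorial in $(M, s)$, and the relation (\ref{BK1}) is satisfied because any $g \in G^\vee$ realizing $s\chi \sim t\chi'$ induces a canonical identification of the stalks of $\mathcal L$ at the two points that intertwines the $\J_e$-action.

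Both $\J_e$ and $\tilde E$ are module-finite over the central Noetherian subalgebra $\K_{Z_e}(\on{pt}) \subset \J_e$, and $\mathcal E \subset \tilde E$ is cut out by $\K_{Z_e}(\on{pt})$-linear conditions. Hence completion at every maximal ideal $\mathfrak m_{\gamma_s}$ is exact on all three modules and preserves the embedding $\mathcal E \hookrightarrow \tilde E$; thus $\alpha$ is an isomorphism iff $\widehat\alpha^{\gamma_s}$ is for every $\gamma_s$.

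To describe $\widehat{\J_e}^{\gamma_s}$ I would combine Theorem \ref{thm_A} with the localization theorem in $K$-theory (Appendix \ref{appK} and Remark \ref{rem}) to identify it with the completion of $\K_{Z_{Z_e}(s)}(\CB_e^{\BC^*, s} \times \CB_e^{\BC^*, s})$ at $s$. Decomposing into $\rho$-isotypic components for $\Gamma = \Gamma^s_e$ as in the proof of Lemma \ref{pullb}, the closed fiber of the latter is $\bigoplus_{\rho \in \operatorname{Irrep}\Gamma} \operatorname{End}\K(\CB_e^{\BC^*,s})_\rho$. For $\widehat{\mathcal E}^{\gamma_s}$, by Theorem \ref{H} each contributing factor $\operatorname{End}_{\mathcal O(Z_M)} \mathcal L(Z_M, s', q)$ with $Z_M s'$ meeting $\gamma_s$ acquires completion-wise a similar decomposition indexed by $\operatorname{Irrep} \Gamma_M$, and condition (\ref{BK1}) then selects the subalgebra of invariants under the corresponding conjugation relations among such triples.

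The final step is to match these two subalgebras of $\widehat{\tilde E}^{\gamma_s}$: the image $\widehat\alpha^{\gamma_s}(\widehat{\J_e}^{\gamma_s})$ lies in $\widehat{\mathcal E}^{\gamma_s}$ by functoriality of the convolution action, and the reverse inclusion follows once the combinatorial compatibilities are recognized as $\Gamma$-invariance on the geometric side. The main obstacle is this last bookkeeping: many distinct triples $(M, s', \rho)$ project to the same geometric datum on $\CB_e^{\BC^*, s}$, and one must verify that (\ref{BK1}) translates exactly to $\Gamma$-invariance under the identification of Theorem \ref{thm_A}. The non-semisimplicity of generic fibers of $\J_e$ (Section \ref{ex_fiber_not_semisimple}) is precisely what forces the use of completions rather than fibers: fiberwise the embedding $\mathcal E \hookrightarrow \tilde E$ would degenerate, while after completion both algebras sit rigidly inside $\widehat{\tilde E}^{\gamma_s}$ and can be identified on the nose.
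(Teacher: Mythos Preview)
Your overall architecture is right and matches the paper's: both sides are controlled by completion at each semisimple class $\gamma_s$ in $\operatorname{Spec}\K_{Z_e}(\on{pt})$, and the computation of $\widehat{\J_e}^{\gamma_s}$ via Theorem~\ref{thm_A} plus localization is exactly what the paper does, obtaining equation~\eqref{conj}.

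There is, however, a real gap in your finiteness step. You assert that $\tilde E=\prod_{M,s}\operatorname{End}_{\mathcal O(Z_M)}\mathcal L(Z_M,s,q)$ is module-finite over $\K_{Z_e}(\on{pt})$; but this product is taken over uncountably many pairs $(M,s)$, so the claim is false as stated, and with it your appeal to exactness of completion on the inclusion $\mathcal E\hookrightarrow\tilde E$. The paper deals with this by first using the Mohrdieck--Knop lifting (Section~6.5) to show that the projection of $\mathcal E$ to a \emph{finite} subproduct indexed by $\gamma\in\Gamma_e^s$ is already injective, and then proving (Proposition~\ref{appendix}) that each factor whose coset $C(\gamma)\gamma$ misses $[s]$ dies after completion at $s$, by producing an $\operatorname{Ad}(Z_e)$-invariant function separating them. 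Only after this reduction is completion well-behaved and the comparison possible.

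The ``bookkeeping'' you flag as the main obstacle is indeed where the content lies, and it is not quite $\Gamma$-invariance alone. The paper shows that the image of $\widehat{\pi_s}^s$ lands in the $W_s$-invariants (for $W_s$ the Weyl group of $Z_{Z_e}(s)^0$) and then in the further $\pi_0(Z_{Z_e}(s))$-invariants of $\widehat{\operatorname{End}_{\mathcal O(T(s))}\mathcal L(Z_{M(s)},s,q)}^s$; this double-invariant subspace is then identified on the nose with $(\operatorname{End}\K(\CB_e^{\BC^*,s})\otimes\widehat{\K_{Z_{Z_e}(s)^0}(\on{pt})}^{\mathfrak m})^{\pi_0(Z_{Z_e}(s))}$, which is $\widehat{\J_e}^s$ by~\eqref{conj}. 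So the condition~\eqref{BK1} translates not to a single $\Gamma$-invariance but to this combined Weyl-plus-component-group invariance, and making that translation precise is what closes the argument.
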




The proof of Proposition~\ref{fin} will occupy the rest of this section.

\subsection{} First we reduce $\tilde{E}$ to a finite product.

It is known (cf. \cite{M}, \cite{Kn}) that there exists a set-theoretic lifting $l\colon \pi_0(Z_{Z_e}(s))=\Gamma^s_e \to Z_{Z_e}(s)$, so that for any $\gamma \in \pi_0(Z_{Z_e}(s))$, one has:

1) $\operatorname{Ad}_{l(\gamma)}$ preserves a pinning of $Z_{Z_e}(s)^0$ (say, $B^{\pm}_{\gamma}$, $T_\gamma$);

2) every element of $Z_{Z_e}(s)^0\gamma$ is conjugate to an element in $(T_{\gamma}^{\gamma})^0 \gamma$ (the upper index $\gamma$ stands for the invariants of $\operatorname{Ad}$-action).

We will denote $(T_{\gamma}^{\gamma})^0$ by $C(\gamma)$, and we will denote by $L(\gamma)^{\vee}$ the Levi subgroup  $Z_{G^\vee}((T_{\gamma}^{\gamma})^0)^0 \subset G^\vee$. 

From 2) above it follows that the natural projection $$\pi = \prod \pi_\gamma\colon \mathcal E \to \prod_{\gamma \in \Gamma_e^s} \operatorname{End}_{\mathcal O(Z_{L(\gamma)})} \mathcal L(Z_{L(\gamma)}, \gamma,q)$$ is injective. 





\subsection{} We calculate the completion $\widehat{\mathcal E}^s$ for a semisimple conjugacy class $[s] \in Z_e$.  Let $T(s)$ be a maximal torus inside $Z_{Z_e}(s)$. Let $M(s)^{\vee}$ be $Z_{G^\vee}(T(s))^0$ and recall that $T(s)=Z_{M(s)}$ is the connected component of $1 \in Z(M(s)^\vee)$. 

As above, the map 
\begin{equation*}
\pi_s\colon \mathcal E \to \operatorname{End}_{\mathcal O(T(s))} \mathcal L(Z_{M(s)},s,q) \times \prod\limits_{\gamma \in \Gamma_e, \\ C(\gamma)\gamma \cap [s] = \emptyset} \operatorname{End}_{\mathcal O(C(\gamma))} \mathcal L(Z_{L(\gamma)}, \gamma,q)
\end{equation*}
(which is defined in the same manner as $\pi$ above) is injective.

\begin{prop}\label{appendix} For any $\gamma \in \Gamma_e$, such that $Z_{L(\gamma)}\gamma \cap [s] = \emptyset$, we have

$$ \widehat{\operatorname{End}_{\mathcal O(C(\gamma))} \mathcal L(Z_{L(\gamma)}, \gamma,q)}^s = 0.
$$

Here the left hand side is the completion of $\operatorname{End}_{\mathcal O(C(\gamma))} \mathcal L(Z_{L(\gamma)}, \gamma,q)$
at the maximal ideal
$\K_{Z_e}(\on{pt})$ corresponding to $s$. 
\end{prop}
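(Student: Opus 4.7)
The plan is to exploit the central embedding $\K_{Z_e}(\on{pt}) \hookrightarrow \J_e$ from Section \ref{sec:low} and trace how it acts on the geometric family $\mathcal L(Z_{L(\gamma)},\gamma,q)$, in order to pin down its support inside $\on{Spec} \K_{Z_e}(\on{pt}) = Z_e/\!/Z_e$. Once the support is computed, the hypothesis will place $[s]$ outside of it and the completion will vanish by standard commutative algebra.

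First I would unwind the construction: by definition $\mathcal L(Z_{L(\gamma)},\gamma,q)$ is the restriction of $\K_{H\times \mathbb C^*}(\CB_e^{\mathbb C^*})$ to the closed subset $C(\gamma) l(\gamma) \times \{q\}$, where $H = \langle C(\gamma), l(\gamma)\rangle$ is diagonalizable. The convolution action of $\J_e$ on this module is obtained by restricting equivariance from $Z_e$ to $H$, so the central subalgebra $\K_{Z_e}(\on{pt}) = \mathcal O(Z_e)^{Z_e}$ acts through the restriction homomorphism $\on{res}\colon \mathcal O(Z_e)^{Z_e} \to \mathcal O(C(\gamma)l(\gamma))$ that sends a class function to its restriction to the torus coset $C(\gamma)l(\gamma)\subset Z_e$. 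In particular, this action and hence the induced action on $A := \operatorname{End}_{\mathcal O(C(\gamma))} \mathcal L(Z_{L(\gamma)},\gamma,q)$ is pointwise multiplication by restriction of class functions.

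Next I would check that $A$ is finitely generated over $\K_{Z_e}(\on{pt})$. Clearly $A$ is coherent over $\mathcal O(C(\gamma)l(\gamma))$ since $\mathcal L(Z_{L(\gamma)},\gamma,q)$ is a coherent sheaf on the torus $C(\gamma)l(\gamma)$. The map $\on{res}$ factors as
\[
\mathcal O(Z_e)^{Z_e} \twoheadrightarrow \mathcal O(C(\gamma)l(\gamma))^{W_{\gamma}} \hookrightarrow \mathcal O(C(\gamma)l(\gamma))
\]
for the relevant finite relative Weyl group $W_{\gamma}$, with the second arrow a finite ring extension. Hence $A$ is finite, in particular finitely generated, over $\K_{Z_e}(\on{pt})$. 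The support of $A$ inside $\on{Spec} \K_{Z_e}(\on{pt})$ is therefore contained in the image of the composite $C(\gamma)l(\gamma) \to Z_e/\!/Z_e$, i.e.\ in the set of semisimple conjugacy classes meeting $C(\gamma)l(\gamma) = Z_{L(\gamma)}\gamma$. The hypothesis $Z_{L(\gamma)}\gamma \cap [s] = \emptyset$ says precisely that $[s]$ is not in this image, so $[s]$ is outside the support of $A$. For a finitely generated module over a Noetherian ring, vanishing of the localization at a prime forces vanishing of the completion, and we conclude that $\widehat{A}^{\,s} = 0$.

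The only step that I expect to require genuine care is the identification of the central $\K_{Z_e}(\on{pt})$-action with pointwise restriction of class functions; this is a compatibility between the abstract embedding supplied by \cite{BO} and the explicit equivariance-restriction description of $\mathcal L(Z_{L(\gamma)},\gamma,q)$, and it is essentially tautological given the realization of $\J_e$ as $\K_{Z_e\times \mathbb C^*}(\CB_e^{\mathbb C^*}\times \CB_e^{\mathbb C^*})$. Once this step is settled, the remainder is a routine support/completion argument.
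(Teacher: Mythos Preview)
Your strategy matches the paper's exactly: both reduce to showing that the point $[s]\in Z_e/\!/Z_e$ is separated from the image of $C(\gamma)l(\gamma)$ by a regular class function. The paper states this as ``construct an $\operatorname{Ad}(Z_e)$-invariant $f$ with $f(s)=0$ and $f|_{C(\gamma)\gamma}=1$,'' and then argues that such an $f$ acts as $1$ on the module but lies in $\mathfrak m_s$, forcing the completion to vanish. Your support argument is a repackaging of the same idea.

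However, you have misidentified where the difficulty lies. The compatibility of the central $\K_{Z_e}(\operatorname{pt})$-action with restriction of class functions is, as you say, essentially tautological. The genuinely nontrivial step is your asserted surjection
\[
\mathcal O(Z_e)^{Z_e}\ \twoheadrightarrow\ \mathcal O(C(\gamma)l(\gamma))^{W_\gamma}.
\]
This is equivalent to the image of the coset $C(\gamma)l(\gamma)$ in $Z_e/\!/Z_e$ being \emph{closed}, and without it your support computation only places $\operatorname{Supp}(A)$ inside the Zariski \emph{closure} of that image, which the hypothesis does not exclude $[s]$ from. Since $Z_e$ is typically disconnected and $l(\gamma)$ lives in a non-identity component, the ordinary Chevalley restriction theorem does not apply; the paper fills this gap by invoking Mohrdieck's description of the adjoint quotient for non-connected reductive groups, namely $T^{l(\gamma)}\gamma/\mathcal W \simeq G^0\gamma/\!/G^0$ for a suitable finite group $\mathcal W$. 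Once you cite that result your argument is complete, but as written the $\twoheadrightarrow$ is an unjustified assertion and is precisely the content of the proposition.
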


\begin{proof} \textbf{Step 1.} It suffices to construct an $\operatorname{Ad}(Z_e)$-invariant function $f$ on $Z_e$ such
that $f(s) = 0$ and $f|_{C(\gamma)\gamma} = 1$. Thus it is enough to prove that in the coarse quotient $Z_e/\!/\operatorname{Ad}(Z_e)$ the image of $C(\gamma)\gamma$ is closed and does not contain the image of $s$.

This follows from the following properties of such a quotient.

\textbf{Step 2.} Let $K$ be a possibly non-connected algebraic group. According to the results of Mohrdieck (\cite{M}; cf. also \cite{Kn}) there exists a set-theoretic lifting $l\colon \Gamma_K:= \pi_0(K) \to K$, so that:

 1) there exists a maximal torus $T\subset K^0$ normalized by $l(\gamma)$ for any $\gamma \in \Gamma_K$.
 
 2) for a certain finite group $\mathcal W$ we have: $T^{l(\gamma)}\gamma/\mathcal W \simeq G^0\gamma/\!/G^0.$ 





\end{proof}

Combining the above statements we get the following.

\begin{lemma} The map $\widehat{\pi_s}^s:\widehat{\mathcal E}^s\to \widehat{\operatorname{End}_{\mathcal O(T(s))} \mathcal L(Z_{M(s)}, s)}^s$
is injective.
\end{lemma}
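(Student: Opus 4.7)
The plan is to deduce the lemma directly from the injectivity of $\pi_s$ established above, combined with Proposition~\ref{appendix}, using exactness of completion at the maximal ideal $\mathfrak{m}_s \subset \K_{Z_e}(\on{pt})$ corresponding to $s$.

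First, I would recall that the map
\[
\pi_s\colon \mathcal E \hookrightarrow \operatorname{End}_{\mathcal O(T(s))} \mathcal L(Z_{M(s)}, s,q) \,\times\, \prod_{\gamma\colon C(\gamma)\gamma\cap[s]=\varnothing} \operatorname{End}_{\mathcal O(C(\gamma))} \mathcal L(Z_{L(\gamma)}, \gamma,q)
\]
is an injection of $\K_{Z_e}(\on{pt})$-modules, as was just verified. Next, I would complete at $s$. The crucial observation is that $\mathcal E \subset \J_e$, and $\J_e$ is a finite module over the Noetherian central subring $\K_{Z_e}(\on{pt})$ (cf.\ Lemma~\ref{lem_a_right_inv}); hence $\mathcal E$ is itself finite over $\K_{Z_e}(\on{pt})$, and its $\mathfrak m_s$-adic completion is computed as the base change $\mathcal E \otimes_{\K_{Z_e}(\on{pt})} \widehat{\K_{Z_e}(\on{pt})}^s$. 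Since $\widehat{\K_{Z_e}(\on{pt})}^s$ is flat over $\K_{Z_e}(\on{pt})$, this functor is exact on $\mathcal E$ and in particular preserves the injectivity of $\pi_s$.

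Finally, I would invoke Proposition~\ref{appendix}: each factor $\operatorname{End}_{\mathcal O(C(\gamma))} \mathcal L(Z_{L(\gamma)}, \gamma,q)$ appearing in the product vanishes after tensoring with $\widehat{\K_{Z_e}(\on{pt})}^s$, because the $\operatorname{Ad}(Z_e)$-invariant function $f$ produced in its proof lies in $\mathfrak m_s$ yet acts invertibly on this endomorphism ring. Consequently the base-changed target reduces to $\widehat{\operatorname{End}_{\mathcal O(T(s))} \mathcal L(Z_{M(s)}, s,q)}^s$ alone, and one concludes that $\widehat{\pi_s}^s$ is injective, its image landing (of necessity) in the first factor.

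The only mild technical obstacle is the potentially infinite product appearing in the target of $\pi_s$, which would make completion a priori not commute with the product; but this is sidestepped by working with the flat base change $-\otimes_{\K_{Z_e}(\on{pt})} \widehat{\K_{Z_e}(\on{pt})}^s$ on the submodule $\mathcal E$, where no such issue arises, so the injection is inherited directly and the surviving component is the first factor thanks to Proposition~\ref{appendix}.
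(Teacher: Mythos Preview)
Your overall strategy is exactly that of the paper: combine the injectivity of $\pi_s$ with Proposition~\ref{appendix}, using exactness of completion at $\mathfrak{m}_s$. The paper records the lemma as an immediate consequence of these two inputs without further comment.

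However, your justification for exactness contains an error. You write ``$\mathcal E \subset \J_e$'', but the map goes the other way: $\alpha\colon \J_e \to \mathcal E$ is the action map of Proposition~\ref{fin}, and showing it is an isomorphism is precisely what that proposition (still in progress) asserts. So you cannot invoke $\mathcal E \subset \J_e$ to deduce finite generation of $\mathcal E$ at this stage of the argument.

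The repair is to argue finiteness of the \emph{target} of $\pi_s$ instead. Each factor $\operatorname{End}_{\mathcal O(C(\gamma))}\mathcal L(Z_{L(\gamma)},\gamma,q)$ is a finite $\mathcal O(C(\gamma)\gamma)$-module (since $\mathcal L$ is), and $\mathcal O(C(\gamma)\gamma)$ is finite over $\K_{Z_e}(\on{pt})$ via restriction along the adjoint quotient. Since the product is over the finite set $\Gamma_e$, the whole target of $\pi_s$ is a finite $\K_{Z_e}(\on{pt})$-module, whence so is the submodule $\mathcal E$. With this in hand, $\mathfrak{m}_s$-adic completion coincides with flat base change on every module in sight, and your argument goes through. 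Note also that your final paragraph's appeal to flat base change alone would not suffice without this finiteness: Proposition~\ref{appendix} produces $f\in\mathfrak{m}_s$ acting invertibly, which kills the $\mathfrak{m}_s$-adic completion but not, in general, the flat base change (e.g.\ $k[x,x^{-1}]\otimes_{k[x]}k[[x]]=k((x))\neq 0$).
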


\subsection{}  Recall that $T(s)$ is a maximal torus inside $Z_{Z_e}(s)^0$, let $W_s$ being the Weyl group of $Z_{Z_e}(s)^0$. 
Then the image of $\widehat{\pi_s}^s$ lies inside  \begin{equation}\label{c}((\widehat{\operatorname{End}_{\mathcal O(T(s))}} \mathcal L(Z_{M(s)},s,q)^{s})^{W_s})^{\pi_0(Z_{Z_e}(s))}. \end{equation}

Let $\mathfrak{m}_{1} \subset \K_{Z_{Z_e}(s)}(\on{pt})$ be the augmentation ideal of $1$. Let  $\mathfrak m \subset \K_{Z_{Z_e}(s)^0}(\on{pt})$ be the ideal generated by the image of $\mathfrak m_{1}$ in $\K_{Z_{Z_e}(s)^0}(\on{pt})$ (under the restriction homomorphism $\K_{Z_{Z_e}(s)}(\on{pt}) \rightarrow \K_{Z_{Z_e}(s)^0}(\on{pt})$). Let us note that:

1) $\mathcal L(Z_{M(s)},s,q)$ is the vector bundle over $T(s)$ with  fiber $\K(\CB_e^{\mathbb C^*,s})$; thus, it can be non-canonically identified with $\K(\CB_e^{\mathbb C^*,s}) \otimes R_s$, where $R_s$ is $\K_{T(s)}(\on{pt})$;

2) as  was already stated, the completion is in the sense of $\K_{Z_e}(\operatorname{pt})$-modules;

3) $\widehat{\K_{Z_e}(\on{pt})}^s = (\widehat{\K_{Z_{Z_e}(s)^0}(\on{pt})}^{\mathfrak m})^{\pi_0(Z_{Z_e}(s))}$ (cf. Remark \ref{rem_compl} for $Y=\on{pt}$).

Now we see that the image of $\widehat{\pi_s}^s$ lies inside $(\operatorname{End}(\K(\CB_e^{\mathbb C^*,s})) \otimes \widehat{\K_{Z_{Z_e}(s)^0}(\on{pt})}^\mathfrak{m})^{\pi_0(Z_{Z_e}(s))}$.

But, similarly to (\ref{loc}), by the results of Section \ref{compl},
\begin{equation}\label{conj}
\widehat{\J_e}^s = \widehat{\K_e}^s = (\operatorname{End}(\K(\CB_e^{\mathbb C^*,s})) \otimes \widehat{\K_{Z_{Z_e}(s)^0}(\on{pt})^{\mathfrak{m}}})^{\pi_0(Z_{Z_e}(s))}.
\end{equation}

Now it follows that the map $\alpha$ (introduced in (\ref{fin})) induces an identical isomorphism on completions (and, in particular, $\widehat{\pi_s}^s: \widehat{\mathcal E}^s \to (\operatorname{End}(\K(\CB_e^{\mathbb C^*,s})) \otimes \widehat{\K_{Z_{Z_e}(s)^0}(\on{pt})^{\mathfrak{m}}})^{\pi_0(Z_{Z_e}(s))}$ is surjective). Thus, Proposition~\ref{fin} is proven. 
\\
\\
\begin{remark} Isomorphisms  in 1) and (\ref{conj}) in this subsection are non-canonical, cf. Appendix~\ref{appK1}. 
As follows from \textit{loc.~cit.}, both of them are uniquely determined by the choice of a trivialization of the vector bundle $\K_{T(s)}(\mathcal{B}_{e}^{\mathbb C^*,s})$ over $\K_{T(s)}(\on{pt})$. 
Above we implicitly assumed that the same trivialization is used in both cases.
\end{remark}

\section{The structure of the cocenter of $J$}\label{cocenter_J_section}
\subsection{} In this concluding section we will describe $C(\J)$ and   $C(\He)_q$ 
for  generic $q \in \mathbb{C}^*$, 
where $C$ stands for the {\em{cocenter}}, i.e. the 0-th Hochschild homology of an associative ring.
   This will prove Conjecture $2$ from \cite{BDD}. 
The main theorem is as follows.

For a reductive algebraic group $H$ let ${\bf{Comm}}_H$,
$\operatorname{Comm}_H$ 
be the corresponding commuting variety equipped with the natural (respectively, reduced) scheme structure.

We will be also interested in the categorical (coarse) quotients of these schemes: 
\begin{equation*}
{\bf{Comm}}_{H}/\!/H=:{\bf  C}_{H},\, \operatorname{Comm}_{H}/\!/H=: \mathcal C_{H}.
\end{equation*}

\begin{remark}By the result of \cite{LNY} the ring ${\mathcal O} ({\bf C}_H) ={\mathcal O}({\bf{Comm}}_{H})^{H}$  is reduced if the reductive group $H$ is connected. One expects that this theorem generalizes to the case of a not necessarily connected reductive group, such a generalization would imply that ${\bf  C}_{H}=\mathcal C_{H}$.
\end{remark}

 Let $\mathcal{O}^a(\mathcal{C}_{Z_e})=\mathcal O^{a}(e)\subset \mathcal O \mathcal(\mathcal C_{Z_e})$ (where 
 ``$a$''  stands for ``admissible'') denote the vector space of regular  functions $f$ on $\mathcal C_{Z_e}$
 satisfying the following property.
\\
\\
For a semisimple $s \in Z_e$ let $f_s$ denote the pull-back of $f$ under the map $x\mapsto (x,s)$
from the centralizer $Z_{Z_e}(s)$ of $s$ in $Z_e$ to $\operatorname{Comm}_{Z_e}$.
Then $f\in \mathcal O^{a}(e)$ iff for any semisimple $s\in Z_e$
the function $f_s$ is a linear combination of \textit{admissible} characters of the group $Z_{Z_e}(s)$.
Here by an admissible character we understand the character of an irreducible 
representation  appearing in $\K(\CB_e^s)$, (or, equivalently, in $\K(\CB_e^{\mathbb C^*,s})$; cf. \cite[2.8]{Lcells4}).  

Notice that an admissible representation factors
through the group of components of $Z_{Z_e}(s)$, thus for $f\in \mathcal O^{a}(e)$ the function $f_s$
is locally constant for every $s\in Z_e$.
\\
\begin{thm}[Theorem \ref{intro_final_cocenter}]\label{final}
We have a canonical isomorphism $C(\K_{Z_e}(\CB_e^{\BC^*} \times \CB_e^{\BC^*})) \simeq \mathcal O^a(e)$.
\\
\end{thm}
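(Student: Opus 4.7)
\emph{Proof plan.} I would construct an explicit trace map $\tau\colon \J_e \to \mathcal{O}^a(e)$ and then verify it is an isomorphism on cocenters by a completion argument at each semisimple conjugacy class, in the spirit of Section~\ref{end}. For $a\in \J_e$ and a semisimple $s\in Z_e$, Theorem~\ref{thm_C} identifies $\K(\CB_e^{\BC^*,s})$ as a finite-dimensional $\J_e$-module on which $Z_{Z_e}(s)$ acts compatibly (coming from the $Z_e$-equivariant structure). Define $f_a(s,h):=\on{tr}\bigl(h\cdot a\,\big|\,\K(\CB_e^{\BC^*,s})\bigr)$ for $h\in Z_{Z_e}(s)$; conjugation-invariance is automatic, so $f_a$ descends to a regular function on $\mathcal C_{Z_e}$, and the assignment $a\mapsto f_a$ manifestly kills commutators and hence factors through $C(\J_e)$.

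Next I would verify that the image lies in $\mathcal{O}^a(e)$. For fixed semisimple $s$, the function $f_a(s,-)$ is the character of the $Z_{Z_e}(s)$-representation $\K(\CB_e^{\BC^*,s})$ twisted by the operator $a$. Since $Z_{Z_e}(s)^0$ acts trivially (as it is connected and centralizes $s$, but the $K$-theory is of a fixed-point set, cf.\ Remark~\ref{rem}), this character factors through $\Gamma_e^s$, so $f_a(s,-)$ is locally constant in $h$. The isotypic decomposition $\K(\CB_e^{\BC^*,s})=\bigoplus_\rho \rho\otimes \K(\CB_e^{\BC^*,s})_\rho$ together with Theorem~\ref{thm_C}(b) and Lusztig's definition of admissible characters shows that $f_a(s,-)$ is a linear combination of admissible characters, so $\bar\tau(a)\in\mathcal O^a(e)$.

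To prove that $\bar\tau\colon C(\J_e)\to\mathcal O^a(e)$ is an isomorphism, I would complete both sides at each semisimple conjugacy class $[s]\subset Z_e$ (that is, at the corresponding closed point of $\on{Spec}\K_{Z_e}(\on{pt})$) and check a local isomorphism, then pass to the global statement using that both sides are modules over $\K_{Z_e}(\on{pt})$ and local isomorphism at every closed point suffices. By equation~(\ref{conj}) from Section~\ref{end},
\[
\widehat{\J_e}^s \simeq \bigl(\on{End}\K(\CB_e^{\BC^*,s})\otimes \widehat{\K_{Z_{Z_e}(s)^0}(\on{pt})}^{\mathfrak m}\bigr)^{\pi_0(Z_{Z_e}(s))}.
\]
The cocenter of such an equivariant matrix algebra is computed via the family of twisted trace maps $a\mapsto \on{tr}(\gamma\cdot a)$ indexed by $\gamma\in\pi_0(Z_{Z_e}(s))$: one obtains that $C(\widehat{\J_e}^s)$ is naturally identified with $\widehat{\K_{Z_{Z_e}(s)^0}(\on{pt})}^{\mathfrak m}$ tensored with the space of class functions on $\pi_0(Z_{Z_e}(s))$ that are linear combinations of characters of those irreducible representations which appear in $\K(\CB_e^{\BC^*,s})$ -- precisely the admissible ones. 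On the geometric side, a standard étale slice description of $\operatorname{Comm}_{Z_e}/\!/Z_e$ near $[s]$ identifies the completion of $\mathcal O^a(e)$ at $s$ with the same formal model, and the local isomorphism follows. The comparison of these two completions matches the trace map $\bar\tau$, and since coherent sheaves on the Jacobson scheme $\on{Spec}\K_{Z_e}(\on{pt})$ that agree at every closed point agree globally, the theorem follows.

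The main obstacle is the cocenter computation for the equivariant matrix algebra $(\on{End}(V)\otimes R)^\Gamma$ and its precise matching with the admissibility condition on the commuting variety side. One must track the action of $\pi_0(Z_{Z_e}(s))$ on both the vector space $\K(\CB_e^{\BC^*,s})$ and on $\widehat{\K_{Z_{Z_e}(s)^0}(\on{pt})}^{\mathfrak m}$, and the identifications involved are non-canonical (cf.\ the remark at the end of Section~\ref{end}): as there, one fixes a trivialization of the vector bundle $\K_{T(s)}(\CB_e^{\BC^*,s})$ over $\K_{T(s)}(\on{pt})$ and verifies that the resulting identification of cocenters is independent of this choice. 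This also justifies why the target of $\bar\tau$ is naturally supported on the reduced scheme $\mathcal C_{Z_e}$ rather than on the possibly non-reduced $\bf C_{Z_e}$: admissible characters are locally constant in the component direction, which kills any nilpotents that could arise in $\mathcal O({\bf C}_{Z_e})$.
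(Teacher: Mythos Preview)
Your strategy --- construct a trace map and verify it is an isomorphism by completing at each semisimple class --- is the same as the paper's. However, two steps in your proposal are genuine gaps.

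First, you assert that $f_a$ descends to a \emph{regular} function on $\mathcal C_{Z_e}$, but you only define $f_a(s,h)$ for semisimple $s$ and give no mechanism for producing a global regular function on the commuting variety. The paper handles this via a geometric construction: it factors the trace through $\K_{Z_e}(Z_e)$ and the Hattori--Stallings trace on the inertia stack $\tau_{Z_e}\colon \K_{Z_e}(Z_e)\to \mathcal O({\bf C}_{Z_e})$, which automatically lands in regular functions. Without this (or an equivalent device), regularity is not established.

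Second, and more seriously, your description of $C(\widehat{\J_e}^s)$ is incorrect. You claim it equals $\widehat{\K_{Z_{Z_e}(s)^0}(\on{pt})}^{\mathfrak m}$ tensored with admissible class functions on $\Gamma_e^s$. But $\Gamma_e^s$ acts nontrivially on $R=\K_{Z_{Z_e}(s)^0}(\on{pt})$, and for an algebra of the form $(\on{End}(V)\otimes R)^\Gamma$ with $\Gamma$ acting on both factors the cocenter is not $R^\Gamma\otimes(\text{class functions})$. The paper computes it by passing, via Morita equivalence (Lemma~\ref{lemma_mor_equiv}), to $\epsilon(\hat R^{\mathfrak m}\#\Gamma_e^s)\epsilon$, and then invoking Baranovsky's identification $C(R\#\Gamma)\simeq \bigl(\bigoplus_\gamma \mathcal O(Y^\gamma)\bigr)^\Gamma$, i.e.\ functions on the \emph{inertia} scheme $Y_1$. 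The summands $\mathcal O(Y^\gamma)$ involve the $\gamma$-fixed subschemes of $Y=\on{Spec}R$, which are different for different $\gamma$; this is exactly what your formula misses. The matching with the completion of $\mathcal O^a(e)$ then goes through Proposition~\ref{inertia}, which identifies $\widehat{\K_{Z_{Z_e}(s)}(Z_{Z_e}(s))}^1$ with $\widehat{\mathcal O(Y_1)}^1$. Your ``\'etale slice'' remark gestures at this, but the content is precisely the inertia description, not a tensor product with class functions.
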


\begin{cor} We have isomorphisms:
\begin{equation}\label{OCJ}\mathcal O^a(e) \simeq C(\J_e),
\end{equation}
\begin{equation}\label{OCH}
\bigoplus\limits_{e\in \mathcal N/\sim}\mathcal O^a(e) \simeq C(\He_q),
\end{equation}
\\
\\
where  isomorphism \eqref{OCJ} depends on a choice of $q \in \{\pm 1\}$; in \eqref{OCH}
 $q$ is assumed not to be the root of unity.
\end{cor}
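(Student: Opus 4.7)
Specializing Theorem \ref{thm_A} at ${\bf v}=1$ identifies $\J_e$ with $\K_{Z_e}(\CB_e^{\BC^*} \times \CB_e^{\BC^*})$, so the strategy is to construct a character/trace map $\tau\colon \J_e \to \mathcal O^{a}(e)$, verify that it factors through the cocenter, and check the resulting $\bar\tau\colon C(\J_e)\to \mathcal O^{a}(e)$ is an isomorphism by completing at every semisimple class in $\on{Spec}\K_{Z_e}(\on{pt})$.

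For each semisimple $s\in Z_e$ the algebra $\J_e$ acts on the finite-dimensional vector space $\K(\CB_e^{\BC^*,s})$ (as in Lemma~\ref{pullb} and Section~\ref{fiber}), and this action commutes with the natural $Z_{Z_e}(s)$-action since the convolution is $Z_e$-equivariant. For $a\in \J_e$ and a commuting pair $(s,z)\in \on{Comm}_{Z_e}$ with $s$ semisimple, I would set
\begin{equation*}
\tau(a)(s,z) := \on{Tr}\bigl(a\cdot z;\, \K(\CB_e^{\BC^*,s})\bigr).
\end{equation*}
Regularity of $\tau(a)$ on $\on{Comm}_{Z_e}$ follows by assembling fibers into the algebraic families $\mathcal L(C,s)$ of Section~\ref{fiber} and noting that $\J_e$ acts algebraically on them, so the character of this action is algebraic along $Cs \subset Z_e$. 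Restricting to a vertical slice $\{s\}\times Z_{Z_e}(s)$, $\tau(a)$ is the character of the representation $\K(\CB_e^{\BC^*,s})$; since this representation factors through the component group $\Gamma_e^s$, it is locally constant, and using the identification $\K(\CB_e^{\BC^*,s})\simeq \K(\CB_e^s)$ as $\Gamma_e^s$-representations (coming from \eqref{Kun} and the results of \cite{DLP,BFM}), the characters that appear are precisely the admissible ones. Hence $\tau(a) \in \mathcal O^{a}(e)$. Cyclicity of trace, together with the fact that $z$ commutes with the $\J_e$-action, shows $\tau(ab)=\tau(ba)$, so $\tau$ descends to $\bar\tau\colon C(\J_e)\to \mathcal O^{a}(e)$.

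To prove $\bar\tau$ is an isomorphism, observe that both sides are finitely generated over $\K_{Z_e}(\on{pt})$, so it suffices to verify the statement after completion at every semisimple $s\in Z_e$. On the one hand, formula~\eqref{conj} of Section~\ref{end} gives
\begin{equation*}
\widehat{\J_e}^{\,s} \simeq \Bigl(\on{End}\bigl(\K(\CB_e^{\BC^*,s})\bigr) \otimes \widehat{\K_{Z_{Z_e}(s)^0}(\on{pt})}^{\,\mathfrak m}\Bigr)^{\Gamma_e^s},
\end{equation*}
which, after decomposing $\K(\CB_e^{\BC^*,s}) = \bigoplus_{\rho}\K(\CB_e^{\BC^*,s})_\rho\otimes \rho$ over admissible $\rho \in \on{Irr}(\Gamma_e^s)$, rewrites as $\bigoplus_{\rho \in \on{Adm}} \on{End}\bigl(\K(\CB_e^{\BC^*,s})_\rho\bigr)\otimes \widehat{\K_{Z_{Z_e}(s)^0}(\on{pt})}^{\,\mathfrak m}$; its cocenter is therefore $\bigoplus_{\rho \in \on{Adm}} \widehat{\K_{Z_{Z_e}(s)^0}(\on{pt})}^{\,\mathfrak m}$. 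On the other hand, using that the formal neighborhood of the class of $s$ in $\mathcal C_{Z_e}$ is isomorphic to $\on{Spec}\widehat{\K_{Z_{Z_e}(s)^0}(\on{pt})}^{\,\mathfrak m}\times Z_{Z_e}(s)/\!/Z_{Z_e}(s)$ modulo the appropriate Weyl-group action, and that the admissibility condition cuts out exactly the span of admissible characters of $\Gamma_e^s$, one obtains $\widehat{\mathcal O^{a}(e)}^{\,s}\simeq \bigoplus_{\rho \in \on{Adm}} \widehat{\K_{Z_{Z_e}(s)^0}(\on{pt})}^{\,\mathfrak m}$. The completed map $\widehat{\bar\tau}^{\,s}$ respects these decompositions tautologically (trace in each matrix block pairs with the character of $\rho$), giving the isomorphism.

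The main obstacle I anticipate is the identification of $\widehat{\mathcal O^{a}(e)}^{\,s}$ with the direct sum above: this requires carefully relating the formal local structure of $\mathcal C_{Z_e}$ near the orbit of $s$ with the invariant theory of the possibly disconnected group $Z_{Z_e}(s)$, together with checking that the admissibility condition defining $\mathcal O^{a}(e)$ is precisely what cuts out the relevant summand. A secondary subtlety is showing regularity of $\tau(a)$ as one varies the torus $C$ used in $\mathcal L(C,s)$; this should reduce to a compatibility statement between the families $\mathcal L(C,s)$ and $\mathcal L(C',s)$ on overlaps, analogous to the verifications in Theorem~\ref{thm_ident_families_pull_back}.
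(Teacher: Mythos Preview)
Your proposal addresses only \eqref{OCJ}; for \eqref{OCH} the paper simply invokes \cite[Theorem~1]{BDD}, which gives $C(\J)\simeq C(\He_q)$ for $q$ not a root of unity. You should at least cite this.

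For \eqref{OCJ} you are in effect reproving Theorem~\ref{final}, and the overall architecture (define a trace map, prove admissibility, check bijectivity locally at each semisimple $s$) is the same as the paper's. However, your local computation contains a genuine error. From \eqref{conj} you have
\[
\widehat{\J_e}^{\,s}\;\simeq\;\bigl(\on{End}(\K(\CB_e^{\BC^*,s}))\otimes \widehat{\K_{Z_{Z_e}(s)^0}(\on{pt})}^{\mathfrak m}\bigr)^{\Gamma_e^s},
\]
but $\Gamma_e^s$ acts \emph{nontrivially} on the second tensor factor (by conjugation on the representation ring of $Z_{Z_e}(s)^0$). Hence taking invariants does \emph{not} give $\bigoplus_{\rho\in\on{Adm}}\on{End}(\K(\CB_e^{\BC^*,s})_\rho)\otimes \widehat{\K_{Z_{Z_e}(s)^0}(\on{pt})}^{\mathfrak m}$, and the cocenter is not $\bigoplus_{\rho\in\on{Adm}}\widehat{\K_{Z_{Z_e}(s)^0}(\on{pt})}^{\mathfrak m}$. (Already for $\Gamma=\BZ/2$ acting on $R=\BC[t]$ by $t\mapsto -t$ and $V$ the trivial representation, $(\on{End}(V)\otimes R)^\Gamma=\BC[t^2]\ne R$.) The same oversimplification infects your description of $\widehat{\mathcal O^a(e)}^{\,s}$: the formal neighborhood of $[s]$ in $\mathcal C_{Z_e}$ is governed by the \emph{inertia} of $Y=\on{Spec}\K_{Z_{Z_e}(s)^0}(\on{pt})$ under $\Gamma_e^s$, not just by $Y$ itself.

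The paper resolves exactly this point by a Morita equivalence (Lemma~\ref{lemma_mor_equiv}): $(\on{End}_{\widehat R}\widehat M)^{\Gamma_e^s}$ is Morita equivalent to $\epsilon(\widehat R\#\Gamma_e^s)\epsilon$, where $\epsilon$ projects onto admissible $\Gamma_e^s$-types. The cocenter of $\widehat R\#\Gamma_e^s$ is then identified, via Baranovsky~\cite{Bar}, with functions on the inertia scheme $Y_1=\bigsqcup_\gamma Y^\gamma/\Gamma_e^s$, and the idempotent $\epsilon$ cuts out precisely the admissible part. Separately, Proposition~\ref{inertia} identifies $\widehat{\K_{Z_{Z_e}(s)}(Z_{Z_e}(s))}^1$ with $\widehat{\mathcal O(Y_1)}^1$, which is what lets one match against $\widehat{\mathcal O^a(e)}^{\,s}$. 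These two ingredients---the twisted group algebra / inertia description---are the missing ideas in your argument; once you insert them, your outline becomes the paper's proof. Note also that the paper handles injectivity globally via density of characters (again using \cite{BDD}), rather than purely by completion.
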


\proof The first isomorphism follows from  Theorem \ref{final} and Theorem \ref{thm_A}. 
The second one then follows from 
\cite[Theorem 1]{BDD} which establishes an isomorphism $C(J_e)\simeq C(\mathcal{H}_q)$ when $q$ is not a root of unity. \qed

\begin{remark} Another approach to constructing isomorphism \eqref{OCH}, as well as a stronger version 
describing the unipotent part of the cocenter of the $p$-adic group, 
will be presented in \cite{BCKV} (see also a related result \cite{ACR}). 

In that approach it is realized as a ``decategorification'' of a result of Ben Zvi-Nadler-Preygel \cite{BNP}
describing the trace of the affine Hecke category as coherent sheaves on commuting pairs of elements
in $G$. 
\end{remark}

The rest of this section is devoted to  proofs. 

\subsection{} 
Let $X$ be a scheme, $\F$ a locally free coherent sheaf on $X$ and $\phi\colon \F\to \F$ and endomorphism.
To this data one assigns a regular function $\on{Tr}(\phi)\in \Gamma(X,\mathcal O_X)$; this is a special case
of the more general  Hattori-Stallings trace (cf. \cite{Hat}, \cite{Sta}). 
The trace is additive on short exact sequences, thus it extends to the
derived category of perfect complexes.
The construction is manifestly
local in the fppf topology, so it extends to algebraic stacks. 

Let now $X$ be an algebraic stack over a field $k$ and $I(X)=X\times _{X^2_k}X$ be the inertia stacks.
We get two natural isomorphisms between the composed morphisms of stacks $I(X)\to X^2\overset{pr_1}{\longrightarrow} X $ and $I(X)\to X^2\overset{pr_2}{\longrightarrow} X $,
composing the first one with the inverse of the second we get an automorphism of the first composition
(we denote that composition by $pr$). Thus
for $\F\in \on{Coh}(X)$ the sheaf $pr^*(\F)$ acquires a canonical 
automorphism $c_\F$. For example, if $X=S/H$ where $S$ is a scheme and 
$H$ an algebraic group then $I(X)= \tilde I(X)/H$ where $\tilde I(X)=\{(x,h)\ |\ h(x)=x\}\subset S\times H$.
In this case the action of $c_\F$ on the fiber of $pr^*(\F)$ at $(x,h)$ equals the action of $h$ on the fiber of $\F$ at $x$.

\begin{remark}
One expects an isomorphism  $R\Gamma({\mathcal O}(I(X)))\simeq \on{HH}_*(\on{Coh}(X))$, having such an isomorphism one could define the function $c_\F$ as the image
of the class $[\F]$ under the trace map from $K$-theory to Hochschild homology.\footnote{We thank Jakub L\" owit who pointed it out to us.}
 We were not able to find a reference for such an isomorphism, so we resorted to the above less direct construction.  
\end{remark}

\begin{lemma}\label{loc_cons} Let  $X$
 be a scheme over a characteristic zero field $k$ and $H$ an affine algebraic group over $k$. Let $\F$ be a perfect complex
 on the stack $X/H$ and $f=\on{Tr}(c_\F)\in {\mathcal O} (I(X))$. For $s\in H(k)$ let $i_s\colon X^s\to I(X)$ be given by $x\mapsto (x,s)$.
 Then $i_s^*(f)$ is locally constant.
\end{lemma}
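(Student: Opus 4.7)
The plan is to compute $i_s^*(f)$ pointwise and exhibit it as a finite $k$-linear combination of locally constant functions on $X^s$. By the definition of the Hattori--Stallings trace on the inertia, for $x \in X^s(k)$ the value $i_s^*(f)(x)$ equals the trace of the endomorphism $\rho_x(s)$ of the derived fiber $\mathcal F_x$ induced by the equivariant action of $s$.

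The first step is a reduction to a diagonalizable subgroup. Let $D := \overline{\langle s\rangle} \subseteq H$ denote the Zariski closure of the cyclic subgroup generated by $s$. The scheme-theoretic stabilizer of every $x \in X^s(k)$ is a closed subgroup of $H$ containing $s$, hence contains $D$, so $X^s = X^D$ as closed subschemes of $X$. In characteristic zero the Jordan decomposition $s = s_s s_u$ yields a direct product $D = D_s \times D_u$ with $D_s = \overline{\langle s_s\rangle}$ diagonalizable and $D_u = \overline{\langle s_u\rangle}$ unipotent. Since the representation $\rho_x$ respects Jordan decomposition, $\rho_x(s_s)$ and $\rho_x(s_u)$ are the commuting semisimple and unipotent parts of $\rho_x(s)$, and a standard eigenspace computation gives $\operatorname{tr}(\rho_x(s)) = \operatorname{tr}(\rho_x(s_s))$, reducing the claim to computing the trace of the semisimple part alone.

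The second step is the weight decomposition under the diagonalizable $D_s$. Since $D_s$ acts trivially on $X^D$ and is linearly reductive in characteristic zero, taking $\chi$-isotypic components is an exact endofunctor on $D_s$-equivariant quasi-coherent sheaves and preserves local freeness. It therefore descends to a decomposition of the $D_s$-equivariant perfect complex $\mathcal F|_{X^D}$ as
\begin{equation*}
\mathcal F|_{X^D} = \bigoplus_{\chi \in X^*(D_s)} \mathcal F_\chi,
\end{equation*}
each $\mathcal F_\chi$ being a perfect complex on which $D_s$ acts by the character $\chi$; locally on $X^D$ only finitely many weights appear, because any local representative of $\mathcal F|_{X^D}$ by a bounded complex of finite locally free $D_s$-equivariant sheaves has each term of finite rank. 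Consequently
\begin{equation*}
i_s^*(f)(x) = \operatorname{tr}(\rho_x(s_s)) = \sum_{\chi \in X^*(D_s)} \chi(s_s)\cdot \operatorname{rk}(\mathcal F_\chi)(x),
\end{equation*}
where $\operatorname{rk}$ denotes the rank (Euler characteristic of the derived fiber) of a perfect complex.

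Since the rank of a perfect complex is a locally constant function on the base and the sum is locally finite, $i_s^*(f)$ is locally a finite $k$-linear combination of locally constant functions, hence locally constant. The main point requiring care is the weight decomposition of the $D_s$-equivariant perfect complex on a scheme with trivial $D_s$-action into perfect complexes indexed by $X^*(D_s)$; everything else is Jordan decomposition and the standard fact that ranks of perfect complexes are locally constant.
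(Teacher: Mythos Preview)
Your proof is correct and follows essentially the same route as the paper's: both take the Zariski closure $D=\overline{\langle s\rangle}$, split it as a product of a diagonalizable group and a unipotent (vector) group, decompose $\mathcal F|_{X^s}$ into weight pieces for the diagonalizable part, and conclude by the local constancy of the Euler characteristic of a perfect complex. Your write-up is slightly more explicit in handling the unipotent part (via Jordan decomposition of $s$ and the observation $\operatorname{tr}(\rho_x(s))=\operatorname{tr}(\rho_x(s_s))$), but the underlying argument is the same.
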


\proof Without loss of generality we can assume that $k$ is algebraically closed.

Let $H_s$ be the Zariski closure of the cyclic group $\langle s\rangle$ in $H$.
Thus $H_s$ is abelian algebraic group, so it is a product of a diagonalizable and a vector group.
The restriction of an equivariant coherent sheaf $\F$ to $X^s$ carries an action of the diagonalizable group $H_s^{\mathrm{diag}}$, 
thus $\F_s:=\F|_{X^s}$ splits as a direct sum of subsheaves $\F_s^\chi$ where $\chi$ runs over the
characters of $H_s$, where $H_s$ acts on $\F_s^\chi$ via $\chi$. It follows that such a decomposition is also
well-defined for a perfect complex.
Since the Euler characteristic of a perfect complex is a locally constant function, the claim follows. \qed

\quad

We now consider $X=H/H$, the quotient of a reductive algebraic group by the conjugation action.
Thus $I(X)={\bf Comm}_H/H$; applying the above construction we get a map $\tau_H\colon \K^H(H)\to \mathcal O({\bf C}_H)$
$[\F]\mapsto \on{Tr}(c_\F)$. By Lemma \ref{loc_cons} it lands in the space $\mathcal O_l({\bf C})$ of functions
satisfying the local constancy condition stated in the Lemma.

\subsection{} We now proceed to prove   the following reformulation of Theorem~\ref{final}.

\begin{thm} Set $X =\CB_e^{\mathbb C^*}$, let  ${\bf{a}}\colon Z_e\times X\to X$ be the action map, and $\pi\colon Z_e\times X\to X$ the  projection. Consider the map
\begin{equation*}
c\colon  \J_e \simeq \K_{Z_e}(X^2) \xrightarrow{({\bf{a}} \times \operatorname{Id})^*}  \K_{Z_e}(Z_e \times X)  \xrightarrow{\pi_*} \K_{Z_e}(Z_e) \xrightarrow{\tau_{Z_e}}\mathcal O_l({\bf{C}}_{Z_e}) \to \mathcal O(\mathcal{C}_{Z_e}),
\end{equation*}
where the last arrow is  the  restriction to the reduced subscheme $\mathcal{C}_{Z_e}\subset {\bf C}_{Z_e}$. (Compare with the definiton of the map $T$ in \ref{mapt}.)

Then:
1. $[J_e, J_e]$ lies in the kernel of $c$, and $c$ induces an injective map on $J_e/[J_e, J_e]$. 

2. The image of $c$ lies inside $\mathcal O^a(e)$. 

3. The image of $c$ equals $\mathcal O^a(e)$. 
\end{thm}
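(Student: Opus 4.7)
My plan is to analyze $c$ one semisimple conjugacy class $[s]\subset Z_e$ at a time via completion at the corresponding maximal ideal of $\K_{Z_e}(\on{pt})$, combining the explicit description of $\widehat{\J_e}^s$ in equation~(\ref{conj}) with cyclicity of the Hattori--Stallings trace.

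For Part~1, unwinding the definition shows that $c(\alpha)(g,z)$ (for $g,z\in Z_e$ with $gz=zg$) is the equivariant Lefschetz--Euler trace of $z$ acting on the derived global sections of the pull-back of $\alpha$ along the graph of $a_g\colon X\to X$. Cyclicity of the convolution trace on $\K_{Z_e}(X\times X)$ --- a standard property of any convolution algebra built from a smooth proper base --- yields $c(\alpha\beta)=c(\beta\alpha)$, so $c$ descends to $C(\J_e)$. For injectivity on the cocenter I exploit that $\J_e$ is finitely generated over its central subring $\K_{Z_e}(\on{pt})$ (Section~\ref{sec:low}), and that completion at a maximal ideal of this ring is exact; it therefore suffices to show that $\widehat{c}^{\,s}$ is injective on $\widehat{C(\J_e)}^{\,s}=C(\widehat{\J_e}^{\,s})$ for each semisimple $s$. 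By~(\ref{conj}),
\begin{equation*}
\widehat{\J_e}^{\,s} \simeq \Bigl(\bigoplus_{\rho}\on{End}\bigl(\K(\CB_e^{\BC^*,s})_\rho\bigr)\otimes \widehat{\K_{Z_{Z_e}(s)^0}(\on{pt})}^{\mathfrak m}\Bigr)^{\pi_0(Z_{Z_e}(s))},
\end{equation*}
with the sum ranging over $\rho\in\on{Irrep}(\Gamma_e^s)$ such that $\K(\CB_e^{\BC^*,s})_\rho\ne 0$, i.e.\ over admissible $\rho$. The cocenter of a matrix algebra $\on{End}(V)$ is $\mathbb C$ with the Hattori--Stallings trace an isomorphism, so after taking $\pi_0$-invariants $C(\widehat{\J_e}^{\,s})$ is identified with the span of $\chi_\rho\cdot \widehat{\K_{Z_{Z_e}(s)^0}(\on{pt})}^{\mathfrak m}$ over admissible $\rho$; on this space $\widehat{c}^{\,s}$ is manifestly injective.

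For Part~2, Lemma~\ref{loc_cons} applied to $\pi_*(({\bf a}\times\on{Id})^*\alpha)$ on the stack $Z_e/Z_e$ shows that $c(\alpha)|_{\{s\}\times Z_{Z_e}(s)}$ is locally constant, hence descends to a function on $\Gamma_e^s$. Under the identification above this function is precisely the $\Gamma_e^s$-character of the endomorphism of $\K(\CB_e^{\BC^*,s})=\bigoplus_\rho\rho\otimes \K(\CB_e^{\BC^*,s})_\rho$ induced by $\alpha$ near $s$; only admissible $\rho$ contribute, so $c(\alpha)_s$ lies in the span of admissible characters, proving the image is contained in $\mathcal O^a(e)$. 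For Part~3, the same local computation shows that $\widehat{c}^{\,s}$ surjects onto the span of admissible $\chi_\rho\cdot \widehat{\K_{Z_{Z_e}(s)^0}(\on{pt})}^{\mathfrak m}$ for every $s$, matching $\widehat{\mathcal O^a(e)}^{\,s}$. Since $\on{Im}(c)\subset\mathcal O^a(e)$ are both quotients/submodules of $\K_{Z_e}(\on{pt})$-modules whose completions coincide at every closed point of $\on{Spec}\K_{Z_e}(\on{pt})$, equality follows globally.

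The main obstacle will be two technical verifications: (a) that the cocenter commutes with completion at $s$ for $\J_e$ --- standard once one uses that $\J_e$ is a finite module over the Noetherian central subring $\K_{Z_e}(\on{pt})$, together with exactness of completion; and (b) that $\mathcal O^a(e)$ is faithful-locally determined by its completions at the closed points of $\on{Spec}\K_{Z_e}(\on{pt})$, so that fiberwise surjectivity of $\widehat{c}^{\,s}$ upgrades to a genuine surjection of $c$ onto $\mathcal O^a(e)$. Granting these, the theorem reduces to a direct combination of~(\ref{conj}) with the classical Hattori--Stallings trace formalism.
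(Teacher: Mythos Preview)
Your overall strategy --- complete at each semisimple $[s]\in\on{Spec}\K_{Z_e}(\on{pt})$ and exploit the description~(\ref{conj}) --- is close to the paper's approach for Part~3. However, your computation of $C(\widehat{\J_e}^{\,s})$ has a genuine gap. You write that by~(\ref{conj}) the completion is $(\on{End}(\K(\CB_e^{\BC^*,s}))\otimes\widehat R)^{\Gamma_e^s}$ with $\widehat R=\widehat{\K_{Z_{Z_e}(s)^0}(\on{pt})}^{\mathfrak m}$, and then claim its cocenter is ``the span of $\chi_\rho\cdot\widehat R$ over admissible $\rho$.'' This would be correct if $\Gamma_e^s$ acted trivially on $\widehat R$, so that $\Gamma$-invariants of the tensor product split as $(\bigoplus_\rho\on{End}(V_\rho))\otimes\widehat R$. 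But $\Gamma_e^s=\pi_0(Z_{Z_e}(s))$ acts nontrivially on $\K_{Z_{Z_e}(s)^0}(\on{pt})$ in general, so the invariants and the cocenter do not decompose this way. The correct answer is rather the space of admissible functions on the \emph{inertia} of $\on{Spec}\widehat R/\Gamma_e^s$, namely $\bigl(\bigoplus_{\gamma\in\Gamma_e^s}\mathcal O((\on{Spec}\widehat R)^\gamma)\bigr)^{\Gamma_e^s}$, which is strictly larger than your description whenever $\Gamma_e^s$ permutes components of $\on{Spec}\widehat R$ nontrivially. The paper bridges this via Lemma~\ref{lemma_mor_equiv} (Morita equivalence of $(\on{End}_{\widehat R}\widehat M)^{\Gamma_e^s}$ with $\epsilon(\widehat R\#\Gamma_e^s)\epsilon$) and then Baranovsky's computation $C(\widehat R\#\Gamma_e^s)\simeq\mathcal O(Y_1)$; both ingredients are missing from your sketch and are essential. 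Relatedly, your obstacle~(b) --- that $\mathcal O^a(e)$ is determined by its completions --- is handled in the paper through Proposition~\ref{inertia}, identifying $\widehat{\K_{Z_{Z_e}(s)}(Z_{Z_e}(s))}^1$ with $\widehat{\mathcal O(Y_1)}^1$, which you also omit.

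For Part~1 the paper takes a different route than yours. Rather than computing $C(\widehat{\J_e}^{\,s})$ directly (which, as above, is nontrivial), the paper deduces injectivity from \emph{density of characters} for $\J$: by Lemma~\ref{Lmadm}, $c(h)=0$ forces the trace of $h$ to vanish in every irreducible $\J_e$-module, and density of characters (imported from $\He_q$ via \cite[Theorem~1]{BDD}) then gives $h\in[\J_e,\J_e]$. This sidesteps the cocenter-completion computation entirely for injectivity. Your completion approach to Part~1 would work too, but only after supplying the Morita/Baranovsky argument you need anyway for Part~3.
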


We will start with the proof of $2$, and then proceed with $1$ and $3$.

\subsection{Admissibility}

\subsubsection{} Recall that $X=\CB^{\BC^*}_e$. For $\mathcal F \in \operatorname{Coh}_{Z_e}(X \times X)$, we will denote $\sigma^*\mathcal F$ by $\bar{\mathcal F}$, where $\sigma$ is the involution swapping factors. For $\mathcal F$ as above, and $\mathcal{P}=\pi_* ({\bf{a}} \times \on{Id})^*\mathcal{F}$,  $f_{\mathcal P}|_{\{s\} \times Z_{Z_e}(s)}$ (by Lemma~\ref{loc_cons}) can be considered as a function on $\pi_0(Z_{Z_e}(s))=:\Gamma^s_e$ invariant under the conjugation. Let us denote this function by $f_{\mathcal F}^s$. Recall that for $\gamma \in \Gamma^s_e$ we have 
\begin{equation*}
f^s_{\mathcal{F}}(\gamma)=\on{Tr}_s i_\gamma^* \pi_* ({\bf{a}} \times \on{Id})^* \CF,
\end{equation*}
where $i_\gamma\colon \{\tilde{\gamma}\} \hookrightarrow Z_e$ is an embedding in $Z_e$ of some  lifting $\tilde{\gamma} \in Z_e$ of $\gamma$.

\begin{lemma}\label{Lmadm} 
Let $\rho$ be any irreducible representation of $\Gamma^s_e$ and let $\chi_\rho$ be its character. Then the invariant pairing $\langle \chi_\rho, f_{\mathcal F}^s \rangle$ is equal to $\operatorname{Tr}_{\bar{\mathcal F}}(\K(\CB_e^{\mathbb C^*,s})_\rho)$ (recall that $\bar{\mathcal{\CF}}$ acts on $\K(\CB_e^{\mathbb C^*,s})_\rho$ via the convolution).

\end{lemma}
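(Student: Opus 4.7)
The plan is to interpret both sides of the claimed equality as the trace of one and the same operator on $\K(X^s)$, and then extract the $\rho$-isotypic piece via Schur orthogonality. First I would unpack the left-hand side. By proper base change, the fiber
\begin{equation*}
\mathcal{P}_s \;=\; \bigl(\pi_*({\bf{a}}\times\on{Id})^*\mathcal{F}\bigr)_s \;\simeq\; R\Gamma\bigl(X,\,\Gamma_s^*\mathcal{F}\bigr),
\end{equation*}
where $\Gamma_s\colon X\to X\times X$, $x\mapsto (sx,x)$, is the graph of the automorphism $s\colon X\to X$, and the residual $H:=Z_{Z_e}(s)$-equivariant structure is induced from the diagonal $H$-action on $X$. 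Hence, by the defining property of the Hattori--Stallings trace $\tau_{Z_e}$, one has $f_\mathcal{F}^s(\gamma) = \on{Tr}\bigl(\tilde\gamma\,\bigm|\,R\Gamma(X,\Gamma_s^*\mathcal{F})\bigr)$ for any lift $\tilde\gamma\in H$ of $\gamma\in\Gamma^s_e$.

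Next I would convert this to a convolution trace on $\K(X^s)$. Writing $\Gamma_s=(s\times\on{Id})\circ\Delta_X$ and noting that $\Gamma_s^*\mathcal{F}=\widetilde\Gamma_s^*\bar{\mathcal{F}}$ with $\widetilde\Gamma_s(x)=(x,sx)$ (since $\sigma\circ\widetilde\Gamma_s=\Gamma_s$), the $K$-theoretic Lefschetz / Atiyah--Bott trace formula---which expresses the trace of a kernel operator as the Euler characteristic of the kernel pulled back along the appropriate twisted diagonal---yields
\begin{equation*}
\on{Tr}\bigl(\tilde\gamma\,\bigm|\,R\Gamma(X,\Gamma_s^*\mathcal{F})\bigr) \;=\; \on{Tr}\bigl(\tilde\gamma\,\bar{\mathcal{F}}\,\bigm|\,\K_{H^0}(X)_s\bigr),
\end{equation*}
where $\bar{\mathcal{F}}$ acts by convolution and $\tilde\gamma$ via the ambient $Z_e$-equivariant structure. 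Since $X=\CB_e^{\mathbb{C}^*}$ is smooth projective with $K$-theory generated by algebraic cycles (see \cite{DLP}, \cite{BFM}), the localization theorem (Appendix~\ref{appK}) identifies $\K_{H^0}(X)_s$ with $\K(X^s)$, under which the right-hand side becomes $\on{Tr}\bigl(\tilde\gamma\bar{\mathcal{F}}\,\bigm|\,\K(X^s)\bigr)$.

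Finally I would invoke character theory. Both the convolution action of $\bar{\mathcal{F}}$ and the residual $\Gamma^s_e$-action on $\K(X^s)$ arise from the ambient $Z_e$-equivariance, so they commute. Consequently $\bar{\mathcal{F}}$ preserves the isotypic decomposition $\K(X^s)=\bigoplus_\rho V_\rho\otimes\K(X^s)_\rho$ and acts as $\on{Id}_{V_\rho}\otimes T_\rho$ on each summand. Therefore
\begin{equation*}
\on{Tr}\bigl(\tilde\gamma\bar{\mathcal{F}}\,\bigm|\,\K(X^s)\bigr) \;=\; \sum_\rho \chi_\rho(\gamma)\,\on{Tr}_{\bar{\mathcal{F}}}\bigl(\K(X^s)_\rho\bigr),
\end{equation*}
and Schur orthogonality then turns the invariant pairing $\langle\chi_\rho, f_\mathcal{F}^s\rangle$ into $\on{Tr}_{\bar{\mathcal{F}}}(\K(X^s)_\rho)$, as claimed.

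The delicate step is the Lefschetz identification in the second paragraph: one must carefully track how the twist by $s$ on the kernel side---encoded by the graph $\Gamma_s$---matches the twist by $s$ produced on the operator side after specialization, and verify that the $Z_e$-equivariant structure is preserved throughout the chain of pullback, pushforward, and localization at $s$. The smoothness of $X$ and $X^s$ together with the coincidence of their $K$-groups with their Chow groups (by \cite{DLP}, \cite{BFM}) is what allows these trace manipulations to be carried out cleanly at the level of algebraic $K$-theory without Euler-class corrections; once that identification is secured, the rest is formal character theory.
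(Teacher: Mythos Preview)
Your outline follows the same route as the paper: reduce by Schur orthogonality to a single identity for each $\gamma$, then match both sides via a Lefschetz-type trace computation on $\K(X^s)$. The difference lies in the second paragraph. What you call ``the $K$-theoretic Lefschetz/Atiyah--Bott formula'' is precisely the content the paper spends Steps~2--3 unpacking: it invokes Proposition~\ref{tr} (trace of a kernel equals Euler characteristic of kernel tensor diagonal), the base-change identity $(\mathcal A*\mathcal B)\otimes^{\mathbb L}\mathcal O_\Delta \simeq \pi_{1*}(\mathcal A\otimes^{\mathbb L}\bar{\mathcal B})$, and then localization at $s$ to pass from $X$ to $X^s$. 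Your claim that there are ``no Euler-class corrections'' is not quite right: the Thom class $\lambda_s^{-1}$ does appear explicitly in the paper's computation---it is what makes the restricted convolution action on $\K(X^s)$ (as defined in Proposition~\ref{Thom}) an algebra homomorphism---and the verification that it enters compatibly on both sides of the identity is part of the work. One further point: you set up the fiber computation as $\on{Tr}(\tilde\gamma\mid R\Gamma(X,\Gamma_s^*\mathcal F))$, whereas the paper works with $\on{Tr}_s H^*(X,\Gamma_\gamma^*\mathcal F)$ (roles of $s$ and $\gamma$ swapped); your version matches the definition of $f^s_{\mathcal F}$ given just before the lemma, the paper's matches its ``recall'' formula. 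The two are equivalent, but the paper's choice makes the localization-at-$s$ step (which requires $s$ semisimple) more transparent, since no semisimplicity is assumed of the lift $\tilde\gamma$.
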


\begin{proof} 

We start with a piece of notation.

Let us for $\gamma \in \Gamma_e^s$ consider the operator $\gamma \mathcal F := \gamma \circ \mathcal F$ acting on $\K(X^s)$.  Let also $\operatorname{Eu}$ stand for the Euler characteristic.

\textbf{Step 1.} Since (all $^*$- and $_*$-functors below are derived) $$\langle \chi_\rho, f_{\mathcal F}^s \rangle = \frac{1}{|\Gamma^s_e|}\sum_{\gamma \in \Gamma^s_e} \chi_{\rho}(\gamma^{-1}) f^s_{\CF}(\gamma)=\frac{1}{|\Gamma^s_e|}\sum_{\gamma \in \Gamma^s_e} \chi_{\rho}(\gamma^{-1}) \operatorname{Tr}_s i_\gamma^*\pi_*({\bf{a}} \times \operatorname{Id})^* \mathcal F,$$
(here and below $\operatorname{Tr}_s$ is always a {\textbf{graded}} trace), and 
$$
\operatorname{Tr}_{\bar{\mathcal F}}(\K(\CB_e^{\mathbb C^*,s})_\rho) = \frac{1}{|\Gamma^s_e|}\sum_{\gamma \in \Gamma^s_e} \chi_{\rho}(\gamma^{-1}) \operatorname{Tr}_{\gamma\bar{\mathcal F}}(\K(\CB_e^{\mathbb C^*,s})),
$$

we have to prove that 
$$
\operatorname{Tr}_{\gamma\bar{\mathcal F}}(\K(\CB_e^{\mathbb C^*,s})) = \operatorname{Tr}_s (i_\gamma^*\pi_*({\bf{a}} \times \operatorname{Id})^* \mathcal F).
$$

Equivalently, we are interested in the following equality:

\begin{equation}\label{traces1}
\operatorname{Tr}_{\gamma\bar{\mathcal F}}(\K(\CB_e^{\mathbb C^*,s})) = \operatorname{Tr}_s H^*(X, \Gamma_{\gamma}^*\mathcal F),
\end{equation}
where $\Gamma_{\gamma}$ stands for the graph embedding $\gamma\colon X \to X \times X$ and $X=\CB^{\BC^*}_e$.

(Let us recall, that we can interchange $K$-theory and (co)homology of $X$ because of \cite[Theorem 3.9]{DLP} and \cite[Theorem III.1 (b)]{BFM}.)

\textbf{Step 2.} Now we would like to use the results of Section \ref{traces}. Namely, Proposition~\ref{tr} together with the proof of Proposition \ref{Thom} say that \begin{equation}\label{tr1}\operatorname{Tr}_{\gamma\bar{\mathcal F}}(\K(\CB_e^{\mathbb C^*,s})) = \sum_{\chi \in \BC^*}\chi\on{Eu}((\lambda_s^{-1} \boxtimes \mathcal O_{X^s}) \otimes^{\mathbb L} (\Gamma^s(\gamma) * i^*_s \bar{\mathcal{F}}) \otimes^{\mathbb L} \mathcal{O}_\Delta)\end{equation} for $\Delta$ being the diagonal inside $X^s \times X^s$,  $\Gamma^s(\gamma)$ being a structure sheaf of the graph of $\gamma$ inside $X^s \times X^s$, and $i_s\colon X^s \times X^s \hookrightarrow X \times X$ being the natural embedding.

But, using a base change from \cite[Proposition 1.4]{To}, as in the proof of Proposition \ref{theorem}, for the diagram 

\[\begin{tikzcd}
	{X^s \times X^s} && {X^s \times X^s \times X^s} \\
	X^s && {X^s \times X^s,}
	\arrow["{\pi_{13}}", from=1-3, to=2-3]
	\arrow["{\pi_1}", from=1-1, to=2-1]
	\arrow["i_\Delta", from=2-1, to=2-3]
	\arrow["{(a, b) \mapsto (a, b, a)}"', from=1-1, to=1-3]
    \arrow["j", from=1-1, to=1-3]
\end{tikzcd}\]
one  sees that for any sheaves $\mathcal A$ and $\mathcal B$ on $X^s \times X^s$,\begin{equation}\label{basec}(\mathcal A * \mathcal B) \otimes^{\mathbb L} \mathcal O_{\Delta} = i_{\Delta}^*\pi_{13*}(\pi_{12}^*\mathcal A \otimes^{\mathbb L} \pi_{23}^*\mathcal B) = \pi_{1*} j^* (\pi_{12}^*\mathcal A \otimes^{\mathbb L} \pi_{23}^*\mathcal B) = \pi_{1*} (\mathcal A \otimes^{\mathbb L} \sigma^* \mathcal B)=\pi_{1*} (\mathcal A \otimes^{\mathbb L} \bar{\mathcal B}),\end{equation}

where all of the functors are, as usual, derived (this also holds below); and $i_{\Delta}$ is an embedding of the diagonal.

So, by the proof of Proposition~\ref{Thom}, we get (in the notation of \textit{loc. cit.}) \begin{equation}\label{LHS}\operatorname{Tr}_{\gamma\bar{\mathcal F}}(\K(\CB_e^{\mathbb C^*,s})) = \sum_{\chi \in \BC^*} \chi \operatorname{Eu}((\lambda_s^{-1} \boxtimes \mathcal O_{X^s}) \otimes^{\mathbb L} \Gamma^s(\gamma) \otimes^{\mathbb L} i_s^*\mathcal F_{\chi}) = \end{equation} 
$$= \sum_{\chi \in \BC^*}\chi \operatorname{Eu} (\Gamma_{\gamma}^s)^* ((\lambda_s^{-1} \boxtimes \mathcal O_{X^s})\otimes^{\mathbb L} i_s^*\mathcal F_{\chi}) = \sum_{\chi \in \BC^*} \chi \operatorname{Eu}(\lambda_s^{-1} \otimes^{\mathbb L} (\Gamma_{\gamma}^s)^* i_s^*\mathcal F_\chi).$$

Here 
$\Gamma_{\gamma}^s$ is an embedding $X^s \to X^s \times X^s$  of the graph of $\gamma$, and we use the following simple observation: for ${\bf{a}}_{\gamma}^s\colon X^s \to X^s$ being a $\gamma$-action map, $(\Gamma_{\gamma}^s)^*(\lambda_s^{-1} \boxtimes \mathcal{O}_{X^s}) = i_{\Delta}^* (1 \boxtimes {\bf{a}}_{\gamma}^s)^*(\lambda_s^{-1} \boxtimes \mathcal{O}_{X^s}) = \lambda_s^{-1} \otimes \mathcal{O}_{X^s} = \lambda_s^{-1}$.

\textbf{Step 3.} On the other hand, by a localization theorem (cf. \cite[5.11.8]{CG} and the proof of our Proposition~\ref{Thom}), 

\begin{equation}\label{via_loc_comp}
\operatorname{Tr}_s H^{*}(X, \Gamma_{\gamma}^*\mathcal F) = \operatorname{Tr}_s H^{*} (X^s, \lambda(s)^{-1} \otimes^{\mathbb L} {i'_s}^*\Gamma_{\gamma}^*\mathcal F),
\end{equation}
where $i'_s\colon X^s \hookrightarrow X^s \times X^s$ is the natural embedding.

It is clear that (\ref{via_loc_comp}) is equal to (\ref{LHS}).
\end{proof}

\subsubsection{} By the previous lemma, only admissible characters of $\Gamma^s_e$ may enter $f_{\mathcal F}^s$. This finishes the proof of the admissibility.

\subsection{Injectivity} 

The fact that $[\J_e, \J_e]$ lies inside the kernel, follows from the proof of Lemma~\ref{Lmadm}. However, the more geometric proof of the same fact exists (which also does not require our discussion about the reducedness above). For this, see \ref{mapt}.

In view of Lemma~\ref{Lmadm}, injectivity follows once we check the density of characters for $\J$, i.e., the statement that an element
$h\in \J$ whose trace in every finite dimensional $\J$-module vanishes lies in $[\J,\J]$. By 
 \cite[Theorem 1]{BDD} the cocenter
of $\J$ is isomorphic to the cocenter of $\He_q$ for almost all $q$.
 Thus the density of characters for $\J$ follows from the density of characters for $\He_q$ together with the fact that pull back of finite dimensional $\J$ modules generate the Grothendieck group of finite dimensional $\He_q$ modules (see  \cite[Lemma 1.9]{Lcells3}).
 \qed

\subsection{Surjectivity}
 By the results of the previous subsections, we have a morphism $c\colon \K_{Z_e}(X \times X) \to \K_{Z_e}(Z_e)$, whose image lies in $\mathcal O^a(e)$. We proceed to prove that in fact every element of the latter subalgebra lies inside $\operatorname{Im} c$.


 We start with the following statement.
 Let $R$ be $\K_{Z_{Z_e}(s)^0}(\on{pt})$, set $Y:=\operatorname{Spec} R$.

Consider  $Y_1:= \on{Spec}(\oplus_{\gamma \in \Gamma_e^s} \mathcal O(Y^\gamma))^{\Gamma_e^s}$,
the spectrum of the ring of global functions on the inertia stack of $Y/\Gamma$. Any point of $Y_1$ (which is reduced, since $Y$ is smooth and $\Gamma_e^s$ is finite) can be interpreted as (the conjugacy class of) a pair $(x, \gamma)$, where $x \in Y^\gamma.$
Recall that we have the natural homomorphisms $\K_{Z_e(s)}(Z_e(s)) \rightarrow \mathcal{O}(Y)^{\Gamma^s_e} \rightarrow \mathcal{O}(Y_1)$ so $\mathcal{O}(Y_1)$ is a module over $\K_{Z_e(s)}(Z_e(s))$.


\begin{prop}\label{inertia} The natural morphism $\kappa\colon \K_{Z_{Z_e}(s)}(Z_{Z_e}(s)) \to \mathcal O(Y_1)$, $\mathcal F \mapsto \phi_{\mathcal F}$; $\phi_{\mathcal F}\colon (x, \gamma) \mapsto \operatorname{Tr}_{x} \mathcal F_{\gamma}$, becomes an isomorphism after completion at $1$ (as of $\K_{Z_e(s)}(Z_e(s))$-modules).

\end{prop}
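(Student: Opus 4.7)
Write $H := Z_{Z_e}(s)$, $H^0$ its identity component, and $\Gamma := \Gamma_e^s = H/H^0$. Decompose $H = \bigsqcup_{\gamma \in \Gamma} H^0\gamma$ into connected components; conjugation by $H^0$ preserves each component while $\Gamma$ permutes them via its conjugation action on itself. This yields
\begin{equation*}
\K_H(H) \otimes \BC \simeq \Bigl(\bigoplus_{\gamma \in \Gamma} \K_{H^0}(H^0\gamma)\Bigr)^{\Gamma},
\end{equation*}
matching the given decomposition $\mathcal{O}(Y_1) = \bigl(\bigoplus_{\gamma} \mathcal{O}(Y^\gamma)\bigr)^{\Gamma}$. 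Since $\Gamma$ is finite and we work in characteristic zero, taking $\Gamma$-invariants is exact and commutes with completion, so the claim reduces to the componentwise statement: for each $\gamma$, the summand map $\kappa_\gamma\colon \K_{H^0}(H^0\gamma) \to \mathcal{O}(Y^\gamma)$ is an isomorphism after completion at the augmentation ideal associated with $1$.

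For fixed $\gamma$, apply the Mohrdieck--Steinberg lifting (as in the proof of Proposition~\ref{appendix}): choose an $\operatorname{Ad}(\gamma)$-stable maximal torus $T \subset H^0$ so that every semisimple $H^0$-conjugacy class in $H^0\gamma$ has a representative in $(T^\gamma)^0\gamma$, with two such representatives related by the finite group $W^\gamma := N_{H^0}(T)^\gamma/T^\gamma$. This gives $Y^\gamma \simeq (T^\gamma)^0/\!/W^\gamma$. By the localization theorem of Appendix~\ref{appK} (in the twisted form needed for the non-identity component, cf. Appendix~\ref{appK1}), restriction to the torus induces an isomorphism after completion at the augmentation ideal:
\begin{equation*}
\widehat{\K_{H^0}(H^0\gamma)}^{\,1} \iso \Bigl(\widehat{\K_T(T\gamma)}^{\,1}\Bigr)^{W^\gamma}.
\end{equation*}

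For the torus $T$, conjugation acts trivially, so $\K_T(T\gamma) \simeq R(T) \otimes K_0(T\gamma) \simeq R(T) \simeq \mathcal{O}(T^\vee)$ (using that $T\gamma$ is a trivial $T$-torsor with $K_0 = \BZ$). Under this identification, the map $\kappa\colon [\mathcal{F}] \mapsto \bigl(x \mapsto \operatorname{Tr}(x, \mathcal{F}_\gamma)\bigr)$ reduces on the torus to the standard character map $R(T) \iso \mathcal{O}(T^\vee)$, which is a ring isomorphism with no completion required. Taking $W^\gamma$-invariants and completing yields the formal neighborhood of $1 \in Y^\gamma$, namely $\widehat{\mathcal{O}(Y^\gamma)}^{\,1}$, as desired.

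The main technical obstacle lies in the torus-reduction step: carefully handling the twisted $H^0$-action on the non-identity component $H^0\gamma$ and the possible disconnectedness of $T^\gamma$ when $\operatorname{Ad}(\gamma)$ acts nontrivially on $T$. This is exactly what the Mohrdieck/Knop analysis in the proof of Proposition~\ref{appendix} is designed to handle, together with the $K$-theoretic localization statements of Appendix~\ref{appK1}; one must additionally verify that the abstract identifications above intertwine the intrinsic fiberwise-trace definition of $\kappa_\gamma$ with the concrete character map on the torus, which is a matter of tracking how fibers and twists transform under restriction and taking $W^\gamma$-invariants.
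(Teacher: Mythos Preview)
Your componentwise reduction (passing from $H$ to $H^0$-equivariance, decomposing over $\gamma\in\Gamma$, and taking $\Gamma$-invariants) is correct and matches Step~1 of the paper's proof exactly. After that the routes diverge, and your torus-reduction argument has two genuine gaps.

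First, the claimed isomorphism $\widehat{\K_{H^0}(H^0\gamma)}^{\,1}\iso(\widehat{\K_T(T\gamma)}^{\,1})^{W^\gamma}$ is not a localization theorem in the sense of Appendix~\ref{appK1}: those results concern completion at a semisimple element $s$ and restrict to the $s$-fixed locus, whereas you are completing at $1$, whose fixed locus is everything. What you would actually need is (i) reduction of equivariance from $H^0$ to $T$ via Proposition~\ref{merk}, which requires $[H^0,H^0]$ simply connected---so you must first pass to the cover $Z$ as the paper does in its Step~2---and then (ii) a further identification of $\K_T(H^0\gamma)$ with something built from $T\gamma$. Step (ii) is not supplied; restricting the \emph{space} from $H^0\gamma$ to $T\gamma$ is a separate operation from restricting the \emph{equivariance}, and you conflate the two.

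Second, and more concretely, the torus computation is wrong as stated. Conjugation by $T$ on $T\gamma$ is \emph{not} trivial when $\operatorname{Ad}_\gamma$ acts nontrivially on $T$: one has $t\cdot(t'\gamma)=\bigl(tt'A_\gamma(t)^{-1}\bigr)\gamma$, so the stabilizer of every point is $T^\gamma$, not all of $T$. Consequently $\K_T(T\gamma)\simeq R\bigl((T^\gamma)^0\bigr)$ (up to component issues), not $R(T)$; likewise $T\gamma$ is not a $T$-torsor. The correct target $\mathcal O(Y^\gamma)$ is indeed governed by $(T^\gamma)^0$, so a repaired version of your computation could in principle land in the right place, but as written it does not.

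The paper avoids both difficulties by a different mechanism: after passing to the simply-connected cover $Z$ (Step~2), it uses the \emph{two-sided} action of $Z\times Z$ on $H^0\gamma$, which is transitive with an explicitly identifiable stabilizer $S$. Proposition~\ref{merk} then reduces $\K_Z(H^0\gamma)$ to $\K_S(\on{pt})\otimes_{\K_{Z^2}(\on{pt})}\K_Z(\on{pt})$ with an $A_\gamma$-twisted module structure, and after undoing the cover near $1$ (Proposition~\ref{disc}) one recognizes this tensor product as functions on the intersection of the diagonal with the graph of $\gamma$ in $Y\times Y$, i.e.\ $\mathcal O(Y^\gamma)$. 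This two-sided approach is what makes the twist by $\gamma$ tractable; your one-sided torus picture does not.
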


\begin{proof}
The proof is based on the material of Appendix~\ref{appK}.

\textbf{Step 1.} First of all, let us note that, by Section \ref{compl}, 
$\widehat{\K_{Z_{Z_e}(s)}(Z_{Z_e}(s))}^{\mathfrak m_{Z_{Z_e}(s),1}} \simeq (\widehat{\K_{Z_{Z_e}(s)^0}(Z_{Z_e}(s))}^{\mathfrak m})^{\Gamma_e^s}$. 

(Here we complete 
the RHS at $\mathfrak{m}=(\mathfrak m_{Z_{Z_e}(s),1}) \subset \K_{Z_{Z_e}(s)^0}(\on{pt})$).

Let us note that $\K_{Z_{Z_e}(s)^0}(Z_{Z_e}(s)) \simeq \bigoplus_{\gamma \in \Gamma_e^s} \K_{Z_{Z_e}(s)^0}(Z_{Z_e}(s)^0\gamma)$. (Here we've chosen some set-theoretic lifting of $\Gamma_e^s$ to $Z_{Z_e}(s)$.)

Thus, it suffices to see the equality 
$$
\widehat{\K_{Z_{Z_e}(s)^0}(Z_{Z_e}(s)^0\gamma)}^{\mathfrak m} \simeq \widehat{\mathcal O(Y^\gamma)}^{\mathfrak m}.
$$

\textbf{Step 2.} Let $Z$ be the cover of $Z_{Z_e}(s)^0$, so that $[Z, Z]$ is simply connected. Functoriality of the construction of such a cover allows us to lift an automorphism $\operatorname{Ad}_{\gamma}$ of $Z_{Z_e}(s)^0$ to $Z$. Let us denote the latter automorphism by $A_{\gamma}$.

First of all, let $U_Z \subset Z/_{\on{ad}}Z=\on{Spec}\K_{Z}(\on{pt})$ be a $\Gamma^s_e$-invariant open neighbourhood of $1$ such that the composition $U_Z \subset Z/_{\on{ad}}Z \to Z_{Z_e}(s)^0/_{\on{ad}}Z_{Z_e}(s)^0$ is an isomorpism onto its image that we denote by $U_{Z_{Z_e}(s)^0}$. Let $\mathfrak{m}_U \subset \BC[U_{Z_{Z_e}(s)^0}]$ be the ideal corresponding to $\mathfrak{m}$ and let $\mathfrak{n}_U \subset \BC[U_Z]$ be its preimage. It follows from Proposition~\ref{disc} that we have an isomorphism 
\begin{equation*}\label{cover}\widehat{\K_{Z_{Z_e}(s)^0}(Z_{Z_e}(s)^0\gamma)|_{U_{Z_{Z_e}(s)^0}}}^{\mathfrak m_U}  
\simeq \widehat{\K_{Z}(Z_{Z_e}(s)^0\gamma)|_{U_{Z}}}^{\mathfrak n_U}.
\end{equation*}


Moreover, results of Proposition \ref{merk} show us that 
$$
\K_Z(Z_{Z_e}(s)^0\gamma) \simeq \K_{Z^2}(Z_{Z_e}(s)^0\gamma) \otimes_{\K_{Z^2}(\operatorname{pt})} \K_Z(\operatorname{pt}).$$

Here $Z^2$-action on $Z_{Z_e}(s)^0\gamma$ is obtained via the natural projection $Z \to Z_{Z_e}(s)^0$ and regular right-left translations of the latter group on itself.

Now, let us note that the action of $Z^2$ on $Z_{Z_e}(s)\gamma$ is transitive with a stabilizer of $\gamma$ being isomorphic to a finite cover of $Z_{Z_e}(s)^0.$ Let us denote this group by $S$. 

Then we get 
$$
\K_Z(Z_{Z_e}(s)^0\gamma) \simeq \K_{S}(\operatorname{pt}) \otimes_{\K_{Z^2}(\operatorname{pt})} \K_Z(\operatorname{pt});
$$
here $\K_{Z^2}(\operatorname{pt})$-action on $\K_S(\operatorname{pt})$ differs from the natural one (i.e., from the one induced by the diagonal embedding $Z \to Z^2$) by the $A_{\gamma}$-twisting.

\textbf{Step 3.} 



After restricting to $U_{Z}$ and completing at $\mathfrak{n}_U$ we obtain:
$$
\widehat{\K_{Z}(Z_{Z_e}(s)^0\gamma)|_{U_Z}}^{\mathfrak n_U} \simeq \K_{S}(\operatorname{pt})\otimes_{\K_{Z^2}(\operatorname{pt})} \widehat{\K_{Z}(\operatorname{pt})|_{U_Z}}^{\mathfrak n_U}=\K_{S}(\operatorname{pt})|_{U_Z \times U_Z}\otimes_{\BC[U_Z \times U_Z]} \widehat{\K_{Z}(\operatorname{pt})|_{U_Z}}^{\mathfrak n_U}.
$$
Recall now that $U_Z \iso U_{Z_{Z_e}(s)^0}$ and  the composition $(U_Z \times U_Z) \cap S \iso (U_{Z_{Z_e}(s)^0} \times U_{Z_{Z_e}(s)^0}) \cap Z_{Z_e}(s)^0$ is an isomorphism. 

We conclude that $\widehat{\K_{Z_{Z_e}(s)^0}(Z_{Z_e}(s)^0\gamma)}^{\mathfrak m} \simeq \K_{Z_e(s)^0}(\operatorname{pt})\otimes_{\K_{(Z_e(s)^0)^2}(\operatorname{pt})} \widehat{\K_{Z_e(s)^0}(\operatorname{pt})}^{\mathfrak m}$.





And, hence, 
$\widehat{\K_{Z_{Z_e}(s)^0}(Z_{Z_e}(s)^0\gamma)}^{\mathfrak m} \simeq \widehat{\mathcal O(Y^{\gamma})}^{\mathfrak m}$  (since $Y^\gamma$ is an intersection of a diagonal and of a graph of $\gamma$ acting on $Y$).

\end{proof}

\subsubsection{} Now we can start the proof of the surjectivity. 

Let us now, first of all, recall that (cf. Section \ref{appK1})  for any $Z_e$-variety $V$, $\widehat{\K_{Z_e}(V)}^s \simeq \widehat{\K_{Z_{Z_e}(s)}(V)}^1$. In particular, 
\begin{equation}\label{reduct}
\widehat{\K_{Z_e}(Z_e)}^s \simeq \widehat{\K_{Z_{Z_e}(s)}(Z_{Z_e}(s)})^1
\end{equation}
For a subalgebra $\mathcal{O}^a(e)$, let us denote the image of its completion at $1$ in the RHS of (\ref{reduct}) by $\widehat{\mathcal O}^1$.

Now, we have to prove that  the map \begin{equation}\label{cs}c_s\colon \widehat{K_{Z_{Z_e}(s)}(X^s \times X^s)}^1 \to \widehat{\mathcal O}^1, \end{equation} defined analogously to $c$, is surjective (recall that $X=\CB^{\BC^*}_e$).

\subsubsection{} The main idea is to use the Morita equivalence to get another realization of the image of the same map.

To do this, let us denote $\K_{Z_{Z_e}(s)^0}(X^s)$ by $M$. This $M$ is a module over $R=\K_{Z_{Z_e}(s)^0}(\on{pt})$, and $(\operatorname{End}_R(M))^{\Gamma_e^s}$ can be considered as $R^{\Gamma_e^s}$-algebra.

Then, (cf. Section \ref{compl}) the LHS of (\ref{cs}) is isomorphic as an algebra to $(\operatorname{End}_{\widehat{R}^{\mathfrak{m}}} \hat M^\mathfrak{m})^{\Gamma_e^s}$ (indeed, we have a natural $\Gamma^s_e$-equivariant homomorphism $\operatorname{End}_{\widehat{R}^{\mathfrak{m}}} (\hat M^\mathfrak{m}) \to \widehat{\K_{Z_{Z_e}(s)^0}(X^s \times X^s)}^{\mathfrak{m}}$ that becomes an isomorphism at the fiber at $\mathfrak{m}_{Z_{Z_e}(s)^0,1}$ and both of the algebras are free modules over $\widehat{R}^{\mathfrak{m}}$ so our map must be an isomorphism, passing to $\Gamma^s_e$-invariants we obtain the desired statement). 

\begin{lemma}\label{lemma_mor_equiv}
The algebra $(\operatorname{End}_{\hat{R}^{\mathfrak{m}}} \hat M^{\mathfrak{m}})^{\Gamma_e^s}$ is Morita-equivalent to $(\epsilon(\hat{R}^{\mathfrak{m}} \# \Gamma_e^s) \epsilon)^{\mathrm{opp}}$, where $\epsilon$ is an idempotent in the group algebra of $\Gamma_e^s$ corresponding to the set of $\Gamma_e^s$-characters appearing in $K(X^s)$ (and the completion should be understood in the same sense, as above).
\end{lemma}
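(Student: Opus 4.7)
The plan is to establish the Morita equivalence by showing that, in the category of left $A$-modules with $A := \hat{R}^{\mathfrak{m}} \# \Gamma_e^s$, the two projective modules $\hat M^{\mathfrak{m}}$ and $A\epsilon$ generate the same additive subcategory: each is isomorphic to a direct summand of a finite direct sum of copies of the other. Granting this, classical Morita theory gives
\[ \operatorname{End}_A(\hat M^{\mathfrak{m}})^{\mathrm{opp}} \sim_{\text{Morita}} \operatorname{End}_A(A\epsilon)^{\mathrm{opp}} = \epsilon A \epsilon, \]
and passing to opposite algebras, together with the canonical identification $\operatorname{End}_A(\hat M^{\mathfrak{m}}) = (\operatorname{End}_{\hat R^{\mathfrak{m}}}(\hat M^{\mathfrak{m}}))^{\Gamma_e^s}$ built into the definition of the smash product, yields the lemma.

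First, I would verify that $\hat M^{\mathfrak{m}}$ is $A$-projective. The module $M = K_{Z_{Z_e}(s)^0}(X^s)$ is free over $R = K_{Z_{Z_e}(s)^0}(\on{pt})$ by \cite{DLP} (since $X = \CB_e^{\BC^*}$ is smooth, projective, and has homology generated by algebraic cycles), so $\hat M^{\mathfrak{m}}$ is free over $\hat R^{\mathfrak{m}}$. Because $\Gamma_e^s$ is finite and we work in characteristic zero, averaging exhibits any $\hat R^{\mathfrak{m}}$-free $A$-module as a direct summand of its induction $A \otimes_{\hat R^{\mathfrak{m}}}(\cdot)$, which is $A$-projective; hence both $\hat M^{\mathfrak{m}}$ and $A\epsilon$ are $A$-projective.

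The core of the argument is the mutual summand property. At the fiber over $\mathfrak{m}$ one obtains $\Gamma_e^s$-representations $V := \hat M^{\mathfrak{m}}/\mathfrak{m}\hat M^{\mathfrak{m}}$ and $\BC[\Gamma_e^s]\epsilon = A\epsilon/\mathfrak{m} A\epsilon$. Using the identification $V \simeq \K(X^s)$ as $\Gamma_e^s$-representations (which is precisely how $\epsilon$ enters the picture via Section~\ref{compl}), the defining property of $\epsilon$ guarantees that both $V$ and $\BC[\Gamma_e^s]\epsilon$ lie in the block $\epsilon\BC[\Gamma_e^s]$-mod and each contains every irreducible of that block with strictly positive multiplicity. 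Hence for appropriate $n, m$ we can construct $\Gamma_e^s$-equivariant surjections $(\BC[\Gamma_e^s]\epsilon)^n \twoheadrightarrow V$ and $V^m \twoheadrightarrow \BC[\Gamma_e^s]\epsilon$. Each fiber surjection lifts to an $A$-linear map between the original modules because $\operatorname{Hom}_A(\text{source}, -)$ is exact on the projective source; Nakayama's lemma over the local ring $\hat R^{\mathfrak{m}}$ then upgrades each lift to an honest surjection, and projectivity supplies the required splittings.

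The main obstacle will be in this second step, specifically in constructing the lift $(\hat M^{\mathfrak{m}})^m \twoheadrightarrow A\epsilon$. The subtle point is that the $\hat R^{\mathfrak{m}}$-linear identification $\hat M^{\mathfrak{m}} \simeq V \otimes_{\BC} \hat R^{\mathfrak{m}}$ is non-canonical (cf.\ Appendix~\ref{appK1}), so the $\Gamma_e^s$-action on $\hat M^{\mathfrak{m}}$ need not agree with the naive diagonal action on the right-hand side. Consequently the lift must be produced abstractly via the projectivity/Nakayama machinery rather than by an explicit formula, and one must verify carefully that the resulting map is simultaneously $\hat R^{\mathfrak{m}}$-linear and $\Gamma_e^s$-equivariant. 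It is precisely at this point that the defining property of $\epsilon$—as the projector onto those characters of $\Gamma_e^s$ that appear in $\K(X^s)$—is essential: it is needed both to guarantee surjectivity of the fiber map in the relevant block and to exclude any contribution from characters outside the idempotent's support.
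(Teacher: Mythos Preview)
Your argument is correct, and the overall Morita-theoretic strategy (showing that $\hat M^{\mathfrak m}$ and $A\epsilon$ are progenerators for the same Serre quotient of $A$-mod, where $A=\hat R^{\mathfrak m}\#\Gamma_e^s$) matches the paper's. The paper, however, proceeds more directly. It invokes the $\pi_0(A)$-equivariant isomorphism of Section~\ref{compl} to write $\hat M^{\mathfrak m}\simeq \hat R^{\mathfrak m}\otimes \K(X^s)$ \emph{as an $A$-module}, so the indecomposable projective summands of $\hat M^{\mathfrak m}$ are visibly the $\hat R^{\mathfrak m}\otimes\rho$ with $\rho$ appearing in $\K(X^s)$; then the quotient functor $\operatorname{Hom}_A(\hat M^{\mathfrak m},-)$ kills exactly those $W$ with $\epsilon W=0$, and Morita equivalence with $\epsilon A\epsilon$ follows at once.

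Your ``main obstacle'' is therefore not really an obstacle: the non-canonicity noted in Appendix~\ref{appK1} refers to the choice of splitting, not to a failure of equivariance --- the isomorphism there is explicitly stated to be $\pi_0(A)$-equivariant. Once you use this, your fiber/Nakayama step becomes unnecessary: the decomposition of $\hat M^{\mathfrak m}$ into $\hat R^{\mathfrak m}\otimes\rho$'s is already global, and the mutual-summand property is immediate. Your detour via lifting fiber maps and Nakayama is correct and self-contained, and has the mild advantage of not relying on the equivariant trivialization; but given that the paper has already established that trivialization, its argument is shorter and more transparent.
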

\begin{proof}
Indeed, recall that indecomposable projective modules over $\hat{R}^{\mathfrak{m}} \# \Gamma_e^s$  have the form $\hat{R}^{\mathfrak{m}} \otimes \rho$, where $\rho$ is one of irreducible representations of $\Gamma_e^s$. 
Recall now that $\hat{M}^{\mathfrak{m}} \simeq \hat{R}^{\mathfrak{m}} \otimes \K(X^s)$ as $\widehat{R}^{\mathfrak{m}} \# \Gamma_e^s$-module. In particular, $\hat{M}^{\mathfrak{m}}$ is a projective $\hat{R}^{\mathfrak{m}} \# \Gamma_e^s$-module.

Recall also that $(\operatorname{End}_{\hat{R}^{\mathfrak{m}}} \hat M^{\mathfrak{m}})^{\Gamma_e^s} = \operatorname{End}_{\widehat{R}^{\mathfrak{m}} \# \Gamma_e^s} (\hat M^{\mathfrak{m}})$, so 
\begin{equation*}
\operatorname{Hom}_{\hat R^{\mathfrak{m}} \# \Gamma_e^s}(\hat M^{\mathfrak{m}}, -)\colon \widehat{R}^{\mathfrak{m}} \# \Gamma_e^s-\on{mod} \twoheadrightarrow  \operatorname{End}_{\hat{R}^{\mathfrak{m}} \# \Gamma_e^s} (\hat M^{\mathfrak{m}})^{\mathrm{opp}}-\on{mod}
\end{equation*}
is the quotient functor with the kernel consisting of $W \in \hat R^{\mathfrak{m}} \# \Gamma_e^s-\on{mod}$ such that $\on{Hom}_{\widehat{R}^{\mathfrak{m}} \# \Gamma_e^s}(M,W)=0$. 
We need to check that the kernel
consists of $W \in \hat R^{\mathfrak{m}} \# \Gamma_e^s-\on{mod}$ such that $\epsilon W=0$. To see that, it is enough to show that for any  representation $\rho$ of $\Gamma^s_e$ and any $\hat R^{\mathfrak{m}} \# \Gamma_e^s-$module $W$, we have $\on{Hom}_{\hat R^{\mathfrak{m}} \# \Gamma_e^s}(\hat R^{\mathfrak{m}} \otimes \rho,W) = 0$ iff $W_{\rho}=0$ This is clear since 
\begin{equation*}
\on{Hom}_{\hat R^{\mathfrak{m}} \# \Gamma_e^s}(\hat R^{\mathfrak{m}} \otimes \rho,W)=\on{Hom}_{\Gamma^s_e}(\rho,W).
\end{equation*}
\end{proof}

It follows from Lemma \ref{lemma_mor_equiv} that: \begin{equation}\label{cocenter}C((\operatorname{End} \hat{M}^{\mathfrak{m}})^{\Gamma_e^s})) \simeq C(\epsilon(\hat R^{\mathfrak{m}} \# \Gamma_e^s)\epsilon).\end{equation}

Thus, it follows that for some $\Gamma_e^s$-invariant Zariski neighborhood $U$ of $1 \in \operatorname{Spec} R$, one has
$$C((\operatorname{End} (M|_U))^{\Gamma_e^s}) \simeq C(\epsilon(R \# \Gamma_e^s)\epsilon|_U).$$

\subsubsection{} It follows from the results of Baranovsky (cf. \cite{Bar}) that there exists a map $R\#\Gamma_e^s \to \mathcal O(Y_1)$ which induces an isomorphism $C(R\#\Gamma_e^s) \iso \mathcal O(Y_1).$

One can construct such an isomorphism (let us call it $c_s^0$) as the $0$-th graded component of the following chain of equalities ($\on{HH}$ corresponds to Hochschild homology):

$$\operatorname{HH}^{R\#\Gamma_e^s}_{*} (R\#\Gamma_e^s) \simeq (\operatorname{HH}^{R}_{*} (R\#\Gamma_e^s))_{\Gamma_e^s} \simeq (\oplus_{\gamma \in \Gamma_e^s} \operatorname{HH}^{R}_{*} R\gamma)_{\Gamma_e^s}.$$ Indeed, we get

$$\on{HH}_0^{R\#\Gamma_e^s}(R\#\Gamma_e^s) \simeq (\oplus_{\gamma \in \Gamma_e^s} \mathcal O(Y^{\gamma}))_{\Gamma_e^s} \simeq (\oplus_{\gamma \in \Gamma_e^s} \mathcal O(Y^{\gamma}))^{\Gamma_e^s}.$$

Thus, one sees that the map $\widetilde{c_s}\colon R \#\Gamma_e^s \twoheadrightarrow C(R \# \Gamma_e^s) \xrightarrow{c_s^0} \mathcal O(Y_1)$ (where the first arrow is an evident surjection) can be characterized by the following condition: for any $r \in R \# \Gamma_e^s$, $p \in (\operatorname{Spec} R)/\Gamma_e^s$,
\begin{equation}\label{Bar}\langle \widetilde{c_s}(r)|_{\{p\} \times   \operatorname{Stab}_{\Gamma_e^s}(p)}, \chi_{\rho}|_{\operatorname{Stab}_{\Gamma_e^s}(p)} \rangle = \operatorname{Tr}_r {V^p_{\rho}},\end{equation}
where ${V^p_{\rho}}$ is the irreducible $R \# \Gamma_e^s$-module corresponding to the pair $(p,\rho|_{\on{Stab}_{\Gamma^s_e}(p)})$ (cf. proof of Proposition \ref{prop_im_equal} below). 

We will be interested in $\widetilde{c_s}|_{\epsilon (R\#\Gamma^s_e) \epsilon} =: c_s'$.

\begin{prop}\label{prop_im_equal} The isomorphism of  Proposition \ref{inertia} induces an isomorphisms
between 
the images of $c_s$ and of $\widehat{c_s'}^1$.
\end{prop}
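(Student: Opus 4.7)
My plan is to reduce the statement to the observation that both $c_s$ and $c_s'$ factor through the cocenters of their source algebras, that these cocenters are canonically identified via the Morita equivalence of Lemma \ref{lemma_mor_equiv}, and that under this identification both maps are characterized by the same system of trace pairings on irreducible modules. Once this is in place, Proposition \ref{inertia} provides the target-side identification and the images must coincide.

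First, I would record that $c_s$ annihilates commutators, so it factors through $C(\widehat{K_{Z_{Z_e}(s)}(X^s\times X^s)}^1)$; this is a consequence of the injectivity discussion above (together with its geometric incarnation in \ref{mapt}), since the Hattori--Stallings trace $\tau_{Z_e}$ applied to $\pi_*(\mathbf{a}\times\mathrm{Id})^*$ is invariant under cyclic rotation of convolution factors. By construction, $c_s'$ also factors through $C(\epsilon(\widehat R^{\mathfrak m}\#\Gamma_e^s)\epsilon)$, via $\widetilde c_s$ and Baranovsky's isomorphism $c_s^0$. Lemma \ref{lemma_mor_equiv} then yields a canonical isomorphism
\[
C\bigl(\widehat{K_{Z_{Z_e}(s)}(X^s\times X^s)}^1\bigr)\;\simeq\;C\bigl(\epsilon(\widehat R^{\mathfrak m}\#\Gamma_e^s)\epsilon\bigr),
\]
sending the class of a convolution kernel $[\mathcal F]$ to the class of the endomorphism of $\hat M^{\mathfrak m}$ given by $\bar{\mathcal F}$-convolution, viewed in the compressed subalgebra.

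Next, I would compare the two maps after dualizing to characters. By Lemma \ref{Lmadm}, for every semisimple $s'$ lifting an element of $\Gamma_e^s$ and every $\rho\in\operatorname{Irr}(\Gamma_e^{s'})$,
\[
\langle\chi_\rho,\;c_s([\mathcal F])_{s'}\rangle\;=\;\operatorname{Tr}_{\bar{\mathcal F}}\,K(X^{s'})_\rho,
\]
whereas Baranovsky's characterization (\ref{Bar}) gives
\[
\langle\chi_\rho,\;\widetilde c_s(r)|_{\{s'\}\times\operatorname{Stab}_{\Gamma_e^s}(s')}\rangle\;=\;\operatorname{Tr}_r\,V^{s'}_\rho.
\]
Under the Morita equivalence of Lemma \ref{lemma_mor_equiv} the irreducible $\epsilon(\widehat R^{\mathfrak m}\#\Gamma_e^s)\epsilon$-module $V^{s'}_\rho$ is matched with the irreducible completed-convolution-algebra module $K(X^{s'})_\rho$—this is precisely why $\epsilon$ was chosen to pick out the characters appearing in $K(X^s)$. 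Hence the two pairings agree after Morita transport, and consequently the images of $c_s$ and $\widehat{c_s'}^1$ coincide once one transports along Proposition \ref{inertia} to identify $\widehat{\mathcal O}^1\subset \widehat{K_{Z_{Z_e}(s)}(Z_{Z_e}(s))}^1$ with its counterpart inside $\widehat{\mathcal O(Y_1)}^1$.

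The main obstacle will be the bookkeeping at the completion level: one must check that the Hochschild/HKR-style decomposition underlying $c_s^0$ is compatible, after completion at $\mathfrak m$, with the decomposition by conjugacy classes of $\Gamma_e^s$ on the right-hand side of Proposition \ref{inertia}, and that the functions annihilated by $\epsilon$ are exactly the ones vanishing on characters not appearing in $K(X^s)$. Once this compatibility is verified, the desired equality of images follows automatically from the trace identity above.
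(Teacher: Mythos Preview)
Your proposal is correct and follows essentially the same route as the paper: both arguments use the Morita equivalence of Lemma~\ref{lemma_mor_equiv} to identify the cocenters, invoke the compatibility of this identification with traces on corresponding irreducible modules, and then conclude by matching Lemma~\ref{Lmadm} against Baranovsky's characterization~(\ref{Bar}). One small point of care: in your trace comparison the parameter you call $s'$ should be a point $p$ of $Y=\operatorname{Spec} R$ near $1$ with its stabilizer $\operatorname{Stab}_{\Gamma_e^s}(p)\subset\Gamma_e^s$ (as in the paper's description of the irreducibles of $\epsilon(R\#\Gamma_e^s)\epsilon$), rather than a new semisimple element with its own component group $\Gamma_e^{s'}$.
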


\begin{proof}
Irreducible representations of $\epsilon(R\#\Gamma_e^s)\epsilon|_U$ are easily seen to be in bijection 
with pairs $(p, \rho)$, where
$p \in U$, 
$\rho \in \operatorname{Irrep} (\operatorname{Stab}_{\Gamma_e^s}(p))$, 
are such that $\rho$ appears in $\phi|_{\operatorname{Stab}_{\Gamma_e^s}(p)}$
 for some irreducible representation $\phi$ of $\Gamma_e^s$  with $\phi(\epsilon)\ne 0$.

 The cocenters of two Morita-equivalent algebras
are canonically isomorphic, the isomorphism is compatible with evaluating traces on the corresponding finite-dimensional representations.
In view of this observation, Proposition \ref{prop_im_equal} follows from the previous paragraph together with  (\ref{Bar}) and Lemma~\ref{Lmadm}. 
\end{proof}

\subsubsection{} Let us consider a subring $\mathcal O \subseteq \mathcal O(Y_1)$ consisting of functions $f$, such that at the every fiber of the projection $Y_1 \to Y/\Gamma^s_e$, they are sums of the restrictions of $\Gamma_e^s$-characters lying in the set corresponding to $\epsilon$.

\begin{remark}
Note that the notation ``$\mathcal{O}$'' is consistent with the previous one. Namely, our $\hat{\mathcal O}^{\mathfrak{m}}$ is the completion of $\mathcal{O}$ at $\mathfrak{m}$.
\end{remark}

Clearly, $\operatorname{Im}(c_s') \subseteq \mathcal O$. To finish the present subsection, it remains to show that, in fact, equality holds.

Indeed, let us take $f \in \mathcal O$. Since $\mathcal O \subseteq \mathcal O(Y_1)$, we can rewrite $f$ as $\widetilde{c_s}(r)$ for some $r \in R \# \Gamma$. But then, since $f \in \mathcal O$, one has $f = \widetilde{c_s}(r) = \widetilde{c_s}(\epsilon r \epsilon) \in \operatorname{Im} c_s'$.

\appendix

\section{K-theoretic appendix}\label{appK}
Here we summarize general properties of equivariant $K$-groups used in the paper. None of this is original; almost everything is taken from \cite{G}, from \cite{GE1}, or from \cite{Merk} (cf. below).

\subsection{Completion in equivariant $K$-theory} \label{appK1}

\subsubsection{} Let $H$ be an algebraic group acting on a  variety $X$ (we are not assuming that $X$ is smooth in this section, but we will actually apply the results of this section only to smooth $X$).
Pick a semisimple element $s \in H$ and set $Z:=Z_{H}(s)$.

Let $\gamma \subset H$ be the conjugacy class of $s$. It follows from \cite[Theorem 4.3]{GE1} that there exists the isomorphism of completions 
\begin{equation*}
\widehat{\K_Z(X^s)}^{s} \iso \widehat{\K_H(X)}^{\gamma}. 
\end{equation*}
In particular, we have an isomorphism of fibers $\K_H(X)_{\gamma} \simeq \K_Z(X^s)_s$.

\begin{remark}
If $X$ is smooth, then we have the natural (forgetting the equivariance and restriction) maps: 
\begin{equation}
\K_H(X) \to \K_Z(X) \to \K_Z(X^s).
\end{equation}
Their composition induces the isomorphism of completions $\widehat{\K_H(X)}^{\gamma} \iso \widehat{\K_Z(X^s)}^{s}$ (see \cite[Theorem 4.3 (b)]{GE1}).
\end{remark}

\subsubsection{}\label{Rem} Now we want to understand $\K_Z(X^s)_s$. For connected $Z$, it follows from  \cite[Theorem 1.1]{G} that $\K_Z(X^s)_s \simeq \K_Z(X^s)_1 \simeq \K(X^s)$; but this is not true in general, as we have already mentioned in Remark \ref{foot}. Nevertheless,  Graham explained to us that using the arguments from \textit{loc. cit.}, one can show that $\K_Z(X^s)_s$ surjects onto $\K(X^s)^{\pi_0(Z)}$; but this would not be enough for us.


\subsubsection{}\label{id} First of all (following \cite[Section 5.2]{GE1}) we  identify $\K_{Z}(X^s)_s$ with $\K_Z(X^s)_1$ as follows. Let us construct the isomorphism $\K_{Z}(X^s)_s \iso \K_Z(X^s)_1$. Consider a $Z$-equivariant coherent sheaf $\mathcal F$ on $X^s$. The action of $s \in Z$ is trivial on $X^s$ so the $Z$-equivariant structure on $\mathcal F$ induces the decomposition $\mathcal F=\bigoplus_{\chi \in \mathbb C^*}{\mathcal F}_\chi$, where  ${\mathcal F}_\chi \subset \mathcal F$ is the subsheaf of $\mathcal F$ on which $s$ acts via the multiplication by $\chi$.


The desired isomorphism $\K_{Z}(X^s)_s \iso \K_Z(X^s)_1$ is induced by the automorphism 
\begin{equation}\label{twist_action}
[\mathcal F] \mapsto \sum_{\chi \in \mathbb C^*}\chi[\mathcal F_\chi].
\end{equation}

\begin{remark}\label{rem_works_compl_shift} The same argument (cf. \textit{loc. cit.}) shows that $\widehat{\K_{Z}(X^s)}^s \simeq \widehat{\K_Z(X^s)}^1$.
\end{remark}

\subsubsection{} Now it remains to describe $\K_Z(X^s)_1$.
We set $A:=Z$, $Y:=X^s$. Our goal is to describe $\K_{A}(Y)_1$. Let $\mathfrak{m}_{A,1} \subset \K_A(\on{pt})$ be the maximal ideal of $1$. Recall that we have the restriction homomorphism $\K_A(\on{pt}) \rightarrow \K_{A^0}(\on{pt})$ and let $(\mathfrak{m}_{A,1}) \subset \K_{A^0}(\on{pt})$ be the ideal generated by the image of $\mathfrak{m}_{A,1}$.

\begin{remark}\label{rem_int_m_with_inv}
Note that $(\mathfrak{m}_{A,1})^{\pi_0(A)}$ is the maximal ideal in $\K_{A^0}(\on{pt})^{\pi_0(A)}$ that is equal to the ideal $\mathfrak{m}_{A^0,1}^{\pi_0(A)}$. In particular, $(\mathfrak{m}_{A,1})$ is equal to the ideal generated by $\mathfrak{m}_{A^0,1}^{\pi_0(A)}$ in $\K_{A^0}(\on{pt})$. 
\end{remark}

\begin{prop}{}\label{iso_mod_G_inv_m}
We have an isomorphism 
\begin{equation*}
\K_A(Y)/\mathfrak{m}_{A,1}\K_A(Y) \simeq (\K_{A^0}(Y)/(\mathfrak{m}_{A,1})\K_{A^0}(Y))^{\pi_0(A)}.
\end{equation*}
\end{prop}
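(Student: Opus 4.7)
The plan is to construct the natural morphism $\varphi\colon \K_A(Y)/\mathfrak{m}_{A,1}\K_A(Y)\to(\K_{A^0}(Y)/(\mathfrak{m}_{A,1})\K_{A^0}(Y))^{\pi_0(A)}$ by restriction of $A$-equivariance to $A^0$-equivariance (the image automatically lying in the $\Gamma$-invariants, where $\Gamma:=\pi_0(A)$ acts via conjugation by any lifts), and then to show that $\varphi$ is an isomorphism. The argument naturally splits into a pure commutative-algebra averaging step and a more geometric concentration step for complexified equivariant $K$-theory.

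The commutative-algebra input I would first establish is the following: for a commutative $\BC$-algebra $B$ with an action of a finite group $\Gamma$, any ideal $\mathfrak{m}\subset B^{\Gamma}$, and any $\Gamma$-equivariant $B$-module $M$, the natural map
\[
M^{\Gamma}/\mathfrak{m}M^{\Gamma}\;\iso\;(M/(B\mathfrak{m})M)^{\Gamma}
\]
is an isomorphism. The proof is by the averaging idempotent $\epsilon=|\Gamma|^{-1}\sum_{g\in\Gamma} g$: surjectivity is immediate since $\epsilon x$ is a $\Gamma$-invariant representative of any $\Gamma$-invariant class $[x]\in M/(B\mathfrak{m})M$, while injectivity reduces to the identity $(B\mathfrak{m})M\cap M^{\Gamma}=\mathfrak{m}M^{\Gamma}$, which follows by writing an element of the left-hand side as $\sum m_i(b_i\xi_i)$ with $m_i\in\mathfrak{m}$ and applying $\epsilon$: since each $m_i$ is $\Gamma$-fixed, averaging pulls through $\mathfrak{m}$, yielding $\sum m_i\,\epsilon(b_i\xi_i)\in\mathfrak{m}M^{\Gamma}$. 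Applied with $B=\K_{A^0}(\on{pt})$, $M=\K_{A^0}(Y)$, and $\mathfrak{m}=\mathfrak{m}_{A^0,1}^{\Gamma}$ (so that $B\mathfrak{m}=(\mathfrak{m}_{A,1})$ by Remark~\ref{rem_int_m_with_inv}), this identifies the right-hand side of the proposition with $\K_{A^0}(Y)^{\Gamma}/\mathfrak{m}_{A^0,1}^{\Gamma}\K_{A^0}(Y)^{\Gamma}$.

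It then remains to identify the latter quotient with $\K_A(Y)/\mathfrak{m}_{A,1}\K_A(Y)$. For this I would localize at $\mathfrak{m}_{A,1}\in\on{Spec}\K_A(\on{pt})$: the finite morphism $\on{Spec}\K_{A^0}(\on{pt})\to\on{Spec}\K_A(\on{pt})$ has a unique set-theoretic point in the fiber over $\mathfrak{m}_{A,1}$, namely the $\Gamma$-fixed closed point $\mathfrak{m}_{A^0,1}$. Consequently, locally at $\mathfrak{m}_{A,1}$ the ring $\K_A(\on{pt})$ is identified with $\K_{A^0}(\on{pt})^{\Gamma}$, and the $\K_A(\on{pt})$-module $\K_A(Y)$ is identified with $\K_{A^0}(Y)^{\Gamma}$; reducing modulo the maximal ideal then yields the desired isomorphism. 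The main obstacle is precisely this last local identification, which I expect to obtain by decomposing the complexified $\K_A(\on{pt})$-module $\K_A(Y)$ into summands indexed by $\Gamma$-conjugacy classes of elements of $\Gamma$ (a decomposition of Adem--Ruan type, compatible with \cite[Theorem 4.3]{GE1}) and verifying that only the class $[1]$ contributes after localizing at $\mathfrak{m}_{A,1}$.
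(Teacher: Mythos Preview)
Your averaging lemma in step 1 is correct and cleanly reduces the right-hand side to $\K_{A^0}(Y)^\Gamma/\mathfrak{m}_{A^0,1}^\Gamma\,\K_{A^0}(Y)^\Gamma$. The gap is in step 2. Your observation that $\on{Spec}\K_{A^0}(\on{pt})\to\on{Spec}\K_A(\on{pt})$ has a single point over $\mathfrak{m}_{A,1}$, together with the decomposition of $\K_A(\on{pt})$ by idempotents pulled back from $\K_\Gamma(\on{pt})$, does yield the ring identification $\K_A(\on{pt})_{\mathfrak{m}_{A,1}}\simeq\K_{A^0}(\on{pt})^\Gamma_{\mathfrak{m}_{A^0,1}^\Gamma}$. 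But the module identification $\K_A(Y)_{\mathfrak{m}_{A,1}}\simeq(\K_{A^0}(Y)^\Gamma)_{\mathfrak{m}_{A^0,1}^\Gamma}$ does not follow from the ring statement alone, nor from the Edidin--Graham localization theorem (which, applied at $s=1$, is a tautology). Decomposing $\K_A(Y)$ by those same idempotents does isolate a summand supported near $\mathfrak{m}_{A,1}$, but you still must show that restriction identifies this summand with $\K_{A^0}(Y)^\Gamma$, and for that you need an inverse to $[\mathrm{res}]$; nothing of ``Adem--Ruan type'' produces that inverse for you.

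The paper supplies this inverse directly: it uses the induction functor and checks that $[\mathrm{ind}]\circ[\mathrm{res}]$ is multiplication by $[\mathbb{C}[\Gamma]]$ (which becomes $|\Gamma|$ modulo $\mathfrak{m}_{A,1}$) while $[\mathrm{res}]\circ[\mathrm{ind}]$ is $\sum_{g\in\Gamma} g^*$ (which is $|\Gamma|$ on invariants). This single ind--res argument proves the full proposition in one step and is precisely the missing ingredient in your step 2; once you invoke it to finish, your step 1 becomes superfluous. So your route through $\K_{A^0}(Y)^\Gamma$ is not wrong, but it postpones rather than avoids the main difficulty.
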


\begin{proof} Consider the induction and restriction functors 
\begin{equation*}
\mathrm{res}\colon \operatorname{Coh}_A(Y) \to \operatorname{Coh}_{A^0}(Y),~\mathrm{ind}\colon \operatorname{Coh}_{A^0}(Y) \to  \operatorname{Coh}_A(Y).
\end{equation*}
Functor $\mathrm{res}$ here just sends $\mathcal{F} \in \operatorname{Coh}_A(Y)$ to itself considered as $A^0$-equivariant sheaf. 
\\
Functor $\mathrm{ind}$ sends $\mathcal{P} \in \operatorname{Coh}_{A^0}(Y)$ to the following sheaf:
consider the projection morphism $\pi \colon A \times Y \to Y$. We have an action of $A \times A^0$ on $A \times Y$ given  by $(a,a_0) \cdot (a',y)=(aa'a_0^{-1},a_0y)$.  

Moreover, morphism $\pi$ is $A \times A^0$-equivariant (action of $A$ on $Y$ is trivial); so $\pi^*(\mathcal P)$ is $A \times A^0$-equivariant. The action of $A^0$ on $A \times Y$ is free, so there exists the unique coherent $A$-equivariant sheaf $\widetilde{\mathcal P}$ on $(A \times Y)/{A^0}$ such that its pullback to $A\times Y$ is isomorphic to $\pi^*\mathcal P$. 

Finally, we define $\mathrm{ind}(\mathcal P):=\mu_*\widetilde{\mathcal P}$,  where $\mu\colon (A \times Y)/{A^0} \to Y$ is the natural ($A$-equivariant) morphism sending $[(a',y)]$ to $a'y$.

It is clear that, for any $\mathcal F$ (resp., any $\mathcal P$),
\begin{equation}
\mathrm{ind} \circ \mathrm{res} (\mathcal F) = \mathcal F \otimes \mathbb C[\pi_0(A)],\; \mathrm{res} \circ \mathrm{ind} (\mathcal P)=\sum_{g \in \pi_0(A)}g^*\mathcal P.
\end{equation}

Note that the functors $\mathrm{ind},\,\mathrm{res}$ are exact so they induce maps $[\mathrm{ind}],\, [\mathrm{res}]$ between $\K_A(Y)$ and $\K_{A^0}(Y)^{\pi_0(A)}$.
Note also, that both $[\mathrm{ind}]$ and $[\mathrm{res}]$ are linear for the action of $\K_A(\on{pt})$ (the action of $\K_A(\on{pt})$ on $\K_{A^0}(Y)^{\pi_0(A)}$ is via the restriction homomorphism $\K_A(\on{pt}) \rightarrow \K_{A^0}(\on{pt})^{\pi_0(A)}$), so (passing to the fiber at $\mathfrak{m}_{A,1}$) we
get maps in both directions between $\K_{A}(Y)/\mathfrak{m}_1\K_{A}(Y)$ and $(\K^{A^0}(Y)/(\mathfrak{m}_{A,1})\K^{A^0}(Y))^{\pi_0(A)}$.
We see that restriction of both $[\mathrm{ind}] \circ [\mathrm{res}]$, $[\mathrm{res}] \circ [\mathrm{ind}]$ to
\begin{equation*}
\K_{A}(Y)/\mathfrak{m}_1\K_{A}(Y),\, (\K^{A^0}(Y)/(\mathfrak{m}_{A,1})\K^{A^0}(Y))^{\pi_0(A)}
\end{equation*}
is just the multiplication by $|\pi_0(A)|$ so we obtain the desired isomorphism.
\end{proof}

\subsubsection{}\label{compl} Now let us note that $\K_{A^0}(Y)/(\mathfrak{m}_{A,1})\K_{A^0}(Y)$ is a finitely generated module over $R=\K_{A^0}(\on{pt})/({\mathfrak{m}}_{A,1})$, and its fiber over one is $\K(Y)$ (use \cite[Theorem 1.1]{G}).

Moreover, the ring $R$ is local commutative, so every flat module over it is free.

We conclude that if $\K_{A^0}(Y)/(\mathfrak{m}_{A,1})$  is flat over $R$ (that happens, for example, if $\widehat{\K_{A^0}(Y)}^{1}$ is flat over $\widehat{\K_{A^0}(\on{pt})}^{1}$
), then we have a (non canonical) $\pi_0(A)$-equivariant isomorphism: 
\begin{equation}
\K_{A^0}(Y)/(\mathfrak{m}_{A,1})\K_{A^0}(Y) \simeq \K(Y) \otimes R.
\end{equation}
Passing to $\pi_0(A)$-invariants and using Proposition \ref{iso_mod_G_inv_m} we get the isomorphism: 
\begin{equation}\label{B15}
\K_A(Y)/\mathfrak{m}_{A,1}\K_A(Y) \simeq (\K(Y) \otimes R)^{\pi_0(A)}.
\end{equation}

 \begin{remark}\label{rem_compl} The situation is similar to the one in Remark \ref{rem_works_compl_shift}: namely, under the same assumptions as above, one gets
\begin{equation}\label{B16}\widehat{\K_A(Y)}^{\mathfrak{m}_1} \simeq (\K(Y) \otimes \widehat{\K_{A^0}(\on{pt})}^{(\mathfrak{m}_{A,1})})^{\pi_0(A)}
.\end{equation} 
\end{remark}

 \begin{remark} Our ``local flatness assumption'' holds for $Y = \CB_e^{\mathbb C^*,s}$ and $A = Z_{Z_e}(s)$, to see this recall first that if $T(s) \subset Z_{Z_e}(s)$ is a maximal torus then $\K_{T(s)}(\CB_e^{\mathbb C^*,s})$ is free over $\K_{T(s)}(\on{pt})$ (same argument as in \cite[Lemma 1.10]{L3}). Now,  Let $Z$ be the cover of $Z_{Z_e}(s)^0$, so that $[Z,Z]$ is simply connected. Let $T \subset Z$ be the preimage of $T(s)$ in $Z$. It follows from Proposition \ref{disc} that we have $\widehat{\K_{Z_{Z_e}(s)^0}(X)}^{1} \simeq \widehat{\K_{Z}(X)}^{1}$ so it is enough to check that $\widehat{\K_{Z}(X)}^{1}$ is flat over $\widehat{\K_{Z}(\on{pt})}^{1}$. Recall now that by \cite[7.1]{Merk} we have the isomorphism $\K_T(X) \simeq \K_Z(X) \otimes_{\K_Z(\on{pt})} \K_{T}(\on{pt})$ so it remains to check that $\widehat{\K_T(X)}^{1}$ is flat over $\widehat{\K_T(\on{pt})}^{1}$ (we use fpqc descent here). This again follows from the identification $\widehat{\K_T(X)}^{1} \simeq \widehat{\K_{T(s)}(X)}^{1}$. 
\end{remark}


\subsubsection{Passing to a covering} 
Suppose that $G$ is a connected reductive group. Let $\pi\colon H \rightarrow G$ be  the covering of $G$ so that $[H, H]$ is simply connected. Let $\Gamma$ be the (finite central) kernel of $\pi$.

Suppose also that $H$ acts on some (smooth) algebraic variety $X$, and that $\Gamma$ lies in the kernel of this action. Recall that $\on{Spec}(\K_H(\on{pt}))=H/\!/_{\on{ad}}H$, $\on{Spec}(\bf{}K_G(\on{pt}))=G/\!/_{\on{ad}}G$, and the the natural map $H/\!/_{\on{ad}}H \to G/\!/_{\on{ad}}G$ is an \'etale morphism. Let $U_H \subset H/\!/_{\on{ad}}H$ be an open neighbourhood of $1 \in H/\!/_{\on{ad}}H$ such that the composition $U \subset H/\!/_{\on{ad}}H \to G/\!/_{\on{ad}}G$ is an isomorphism onto the image that we denote by $U_G \subset G/\!/_{\on{ad}}G$.

\begin{prop}\label{disc}
\begin{equation}
\K_{H}(X)|_{U_H} \simeq \K_{G}(X)|_{U_G}.
\end{equation}
\end{prop}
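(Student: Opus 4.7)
The plan is to establish the more general statement that the natural $R(H)$-linear map
\begin{equation*}
\alpha\colon \K_G(X)\otimes_{R(G)} R(H)\longrightarrow \K_H(X),\qquad [\mathcal F]\otimes [V]\longmapsto [\pi^*\mathcal F\otimes V],
\end{equation*}
is an isomorphism, where $\pi^*\mathcal F$ denotes the $G$-equivariant sheaf $\mathcal F$ viewed as $H$-equivariant via $\pi$ (well-defined precisely because $\Gamma\subset H$ acts trivially on $X$). Once $\alpha$ is known to be an isomorphism, tensoring over $R(H)$ with $\mathcal O(U_H)$ and using the identification $\mathcal O(U_H)\simeq \mathcal O(U_G)$ coming from $U_H\simeq U_G$ gives
\begin{equation*}
\K_H(X)|_{U_H}\simeq \K_G(X)\otimes_{R(G)}\mathcal O(U_H)\simeq \K_G(X)\otimes_{R(G)}\mathcal O(U_G)=\K_G(X)|_{U_G},
\end{equation*}
which is the proposition.

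To show that $\alpha$ is an isomorphism, I would decompose both sides by characters of the central finite subgroup $\Gamma$. Since $\Gamma$ acts trivially on $X$ but centrally on every $H$-equivariant sheaf, one obtains canonical $R(G)$-linear isotypic decompositions
\begin{equation*}
\K_H(X)=\bigoplus_{\chi\in\widehat\Gamma}\K_H(X)_\chi,\qquad R(H)=\bigoplus_{\chi\in\widehat\Gamma}R(H)_\chi,
\end{equation*}
and $\alpha$ preserves the $\widehat\Gamma$-grading. Because $[H,H]$ is simply connected, a Pittie-Steinberg type argument (work at the level of maximal tori $T_H\to T_G$, observe that $W_H=W_G$ acts trivially on $\widehat\Gamma$ since $\Gamma$ is central, and then choose $W$-invariant lifts of each character $\chi\in\widehat\Gamma$) shows that $R(H)$ is a free $R(G)$-module of rank $|\Gamma|$ whose graded pieces $R(H)_\chi$ are each invertible as $R(G)$-modules. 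Fixing a generator $[V_\chi]\in R(H)_\chi$ identifies the $\chi$-component of $\alpha$ with the $R(G)$-linear map $[\mathcal F]\mapsto [\pi^*\mathcal F\otimes V_\chi]$, and this is bijective with inverse given by tensor product with a generator of the inverse line $R(H)_{\chi^{-1}}$. Summing over $\chi$ gives the global isomorphism.

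The main obstacle is the structural input that each $R(H)_\chi$ is invertible over $R(G)$; this is precisely where the hypothesis that $[H,H]$ is simply connected is essential, since without it some characters of $\Gamma$ may fail to admit the required $W$-invariant lifts. An appealing alternative route that avoids the explicit isotypic analysis is to invoke a base change theorem of Merkurjev, in the spirit of the results referenced in Proposition~\ref{merk}: for a central isogeny $\pi\colon H\to G$ whose kernel acts trivially on the variety $X$, one has $\K_H(X)\simeq \K_G(X)\otimes_{R(G)} R(H)$ directly, which is exactly the statement that $\alpha$ is an isomorphism.
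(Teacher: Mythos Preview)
Your global claim that $\alpha\colon \K_G(X)\otimes_{R(G)}R(H)\to\K_H(X)$ is an isomorphism is false, so the strategy of proving it first and then localizing cannot work. Take $H=\fSL_2$, $G=\on{PGL}_2$, $X=\mathbb{P}^1$: then $\K_G(X)=\mathbb{C}[w^{\pm1}]$ with $R(G)=\mathbb{C}[s]$ acting via $s\mapsto w+1+w^{-1}$, $R(H)=\mathbb{C}[t]$ with $s\mapsto t^2-1$, and $\K_H(X)=\mathbb{C}[z^{\pm1}]$; writing both sides as free rank~$4$ modules over $\mathbb{C}[s]$ one computes $\det\alpha=-(s+1)$, so $\alpha$ fails exactly over the conjugacy class of $[\operatorname{diag}(i,-i)]$. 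The error lies in the sentence ``this is bijective with inverse given by tensor product with a generator of the inverse line $R(H)_{\chi^{-1}}$'': tensoring by $V_\chi$ and then by $V_{\chi^{-1}}$ is multiplication by $[V_\chi][V_{\chi^{-1}}]\in R(G)$, which is \emph{not} a unit (in the example it is $t^2=s+1$). Equivalently, the finite map $\operatorname{Spec}R(H)\to\operatorname{Spec}R(G)$ is not a $\Gamma$-torsor globally; it ramifies along a divisor, so $R(H)_{\chi^{-1}}$ is not the inverse of $R(H)_\chi$ in $\operatorname{Pic}(R(G))$. Your alternative via Merkurjev is also misapplied: Proposition~\ref{merk} concerns an \emph{embedding} $H\hookrightarrow G$, whereas here $\pi\colon H\twoheadrightarrow G$ is a covering, and as the example shows the analogous base-change statement simply fails for coverings.

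The paper accordingly proves only the local statement, by a different route. After identifying $\K_G(X)$ with $\K_H(X)^\Gamma$ (Step~1), it uses that the translates $\gamma U_H$, $\gamma\in\Gamma$, are pairwise disjoint in $\operatorname{Spec}R(H)$; the twist automorphism~(\ref{twist_action}) identifies each $\K_H(X)|_{\gamma U_H}$ with $\K_H(X)|_{U_H}$, so $\K_H(X)|_{\Gamma U_H}\simeq(\K_H(X)|_{U_H})^{\oplus|\Gamma|}$ with $\Gamma$ acting by permutation, whence the $\Gamma$-invariants over $U_G$ are a single copy $\K_H(X)|_{U_H}$. Your isotypic argument can be rescued to give the same local conclusion, but only if you localize \emph{first}: over $U_G$ the element $[V_\chi][V_{\chi^{-1}}]$ becomes a unit (its value at the identity is $(\dim V_\chi)(\dim V_{\chi^{-1}})\neq 0$), and then the proposed inverse works.
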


\begin{proof}
Let us note that the formula (\ref{twist_action}) (for $s$ being some $\gamma \in \Gamma$) gives a well-defined action of $\Gamma$ on $\K_{H}(X)$.

\textbf{Step 1.}  We claim that $\K_G(X)=\K_H(X)^{\Gamma}$.  Indeed, since any $H$-equivariant sheaf $\mathcal F$ has a direct sum decomposition $\mathcal F = \bigoplus_{\chi \in \operatorname{Char} \Gamma} \mathcal F_\chi$, there is a direct sum decomposition $\operatorname{Coh}_{H}(X) \simeq \bigoplus_{\chi \in \on{Char}\Gamma} \operatorname{Coh}_{H}(X)_{\operatorname \chi}$; -- and $\K_G(X) \simeq K_0(\operatorname{Coh}_{H}(X)_{\operatorname{triv}}) \otimes_{\mathbb{Z}} \mathbb{C} \simeq \K_H(X)^{\Gamma}$. 
We now need to compare the restrictions $K_H(X)|_{U_H}$, $(K_H(X)^\Gamma)|_{U_G}$.

\textbf{Step 2.} Set $K:=\K_H(X)$ and $L:=K^{\Gamma}$. Let us consider the restriction $K_{\Gamma U_H}$ of $K$ to the open set $\bigsqcup_{\gamma \in \Gamma} \gamma U$. 
Note that $L|_{U_G}=(K_{\Gamma U_H})^\Gamma$, so our goal is to identify $K|_{U_H}$ with $(K_{\Gamma U_H})^\Gamma$.

The identification of $K|_{\gamma U_H}$ with $K|_{U_H}$ from (\ref{twist_action}) gives a natural $\Gamma$-equivariant isomorphism:
$$K_{\Gamma U} = \bigoplus_{\gamma \in \Gamma} K|_{\gamma U} = K|_{U_H}^{\oplus |\Gamma|},$$
where $\Gamma$ acts on the RHS via permuting the factors. It follows that $K|_{U_H}$ is naturally isomorphic to $(K_{\Gamma U_H})^\Gamma$.

\begin{remark} 
As a corollary of Proposition \ref{disc} we conclude that $\widehat{\K_H(X)}^{1} \simeq \widehat{\K_G(X)}^{1}$.
\end{remark}



\end{proof}


\subsection{Modules over convolution algebras}\label{conv} 

Let $G$ be a (possibly, disconnected) algebraic group acting on a smooth variety $X$. Let us now assume that $G$ is reductive, and that the cycle morphisms $A_*(X^s) \otimes_{\mathbb{Z}} \mathbb{C} \to H_*(X^s,\BC)$, $A_*(X^s \times X^s) \otimes_{\mathbb{Z}} \mathbb{C}  \to H_*(X^s \times X^s,\mathbb{C})$, are isomorphisms for every semisimple $s \in G$.

(This is true, for example, for $X = \mathcal B_e^{\mathbb C^*}$ (see our discussion around the formula (\ref{Kun}).)

We also assume that $\K_{G^0}(X)$ is flat over $\K_{G^0}(\on{pt})$ in some neighbourhood of $1$. We claim that the following corollary of the above results holds. 

\begin{prop}\label{Thom} 
$(a)$ Every simple module over the algebra $\K_G(X \times X)$ is of the form $\K(X^s)_{\rho}$ for some semisimple $s \in G$ and an irreducible $\rho \in \operatorname{Irrep} (Z_{G}(s))$. 

$(b)$ Modules $K(X^s)_{\rho}$, $K(X^{s'})_{{\rho'}}$ are isomorphic iff $(s,\rho)$, $(s',\rho')$ lie in the same conjugacy class.
\end{prop}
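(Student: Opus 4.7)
The strategy follows the classical analysis of convolution algebras (compare \cite[Ch.~8]{CG} and the computations in Section \ref{end} above): reduce to a central character, use localization to pass from $X \times X$ to $X^s \times X^s$, then apply a Morita reduction to a finite-group algebra. First, the convolution algebra $\K_G(X \times X)$ contains $\K_G(\on{pt}) \simeq \mathcal{O}(G/\!/G)$ as a central subalgebra, and is a finite module over it. Any simple module $E$ thus admits a central character given by a closed point $[s] \in G/\!/G$, i.e.\ a semisimple $G$-conjugacy class, and is a simple module of the completion $\widehat{\K_G(X \times X)}^{[s]}$. Fix a representative $s$, put $Z := Z_G(s)$, $\Gamma := \pi_0(Z)$, $R := \K_{Z^0}(\on{pt})$.

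\emph{Geometric identification of the fiber.} Combining the localization isomorphism from Appendix \ref{appK1} with the scaling of Section \ref{id} and Remark \ref{rem_works_compl_shift},
\begin{equation*}
\widehat{\K_G(X \times X)}^{[s]} \simeq \widehat{\K_Z(X^s \times X^s)}^{s} \simeq \widehat{\K_Z(X^s \times X^s)}^{1}.
\end{equation*}
The flatness hypothesis on $\K_{G^0}(X)$ near $1$ transfers to $\K_{Z^0}(X^s \times X^s)$ (via $\widehat{\K_{G^0}(X)}^{1} \simeq \widehat{\K_{Z^0}(X^s)}^{1}$ together with a Künneth theorem for $X^s$), permitting an application of Remark \ref{rem_compl} to the $Z$-variety $X^s \times X^s$:
\begin{equation*}
\widehat{\K_Z(X^s \times X^s)}^{1} \simeq \bigl(\K(X^s \times X^s) \otimes \widehat{R}\bigr)^{\Gamma}.
\end{equation*}
The cycle-map hypothesis (exactly as in the argument for (\ref{Kun})) yields $\K(X^s \times X^s) \simeq \K(X^s) \otimes \K(X^s) \simeq \operatorname{End}_{\BC}(\K(X^s))$, the second isomorphism coming from Poincaré duality on the smooth $X^s$, and the right-hand side becomes $\bigl(\operatorname{End}_{\BC}(\K(X^s)) \otimes \widehat{R}\bigr)^{\Gamma}$.

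\emph{Morita reduction and part (a).} Writing $V := \K(X^s)$, the algebra just obtained identifies with $\operatorname{End}_{\widehat R \# \Gamma}(V \otimes \widehat R)$, and by the argument of Lemma \ref{lemma_mor_equiv} it is Morita equivalent to $\epsilon(\widehat R \# \Gamma)\epsilon$, where $\epsilon \in \BC[\Gamma]$ is the idempotent projecting onto the $\Gamma$-irreps occurring in $V$. Since $\widehat R$ is local Noetherian with residue field $\BC$, every finite-dimensional simple $\widehat R \# \Gamma$-module is annihilated by the maximal ideal of $\widehat R$ and so is an irreducible representation $\rho$ of $\Gamma$; the condition $\epsilon \rho \neq 0$ says precisely that $\rho$ occurs in $V$, and the Morita correspondent is $\operatorname{Hom}_\Gamma(\rho, V) = V_\rho = \K(X^s)_\rho$. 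This proves (a).

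\emph{Part (b) and the main obstacle.} The semisimple class $[s] \in G/\!/G$ is the central character of $E$, so $\K(X^s)_\rho \simeq \K(X^{s'})_{\rho'}$ already forces $s \sim_G s'$; after fixing a common representative, the Morita bijection reads off $\rho \simeq \rho'$, and the overall $G$-conjugacy in $(s, \rho)$ reflects the freedom in the initial choice of representative. The main technical obstacle lies in the geometric identification of the fiber: one must verify that the convolution product (not merely the underlying $\widehat R$-module structure) matches the matrix-algebra multiplication on $\operatorname{End}_{\BC}(V) \otimes \widehat R$, and that the induced action on $\K(X^s)_\rho$ is exactly the one asserted. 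Both amount to tracking the convolution structure through the Appendix \ref{appK} isomorphisms, which is a base-change/compatibility check in equivariant $K$-theory.
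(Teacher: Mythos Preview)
Your outline is correct and follows essentially the same route as the paper: reduce to a central character $[s]$, localize to pass to $X^s \times X^s$, then identify the resulting algebra with $\bigl(\operatorname{End}\K(X^s)\bigr)^\Gamma$ (up to completion/radical) and read off the simples as $\K(X^s)_\rho$. The paper works with the fiber at $\gamma_s$ and shows directly that the constructed map is the quotient by the radical; you work with completions and invoke the Morita argument of Lemma~\ref{lemma_mor_equiv}. These are equivalent packagings.

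The one point worth sharpening is exactly the obstacle you flag at the end. The localization/scaling isomorphisms of Appendix~\ref{appK} are isomorphisms of $\K_G(\on{pt})$-modules, not of convolution algebras: plain restriction $i^*\colon \K_G(X\times X)\to \K_Z(X^s\times X^s)$ does \emph{not} respect convolution. The paper resolves this by inserting the equivariant Thom class of the normal bundle, i.e.\ composing with $1\boxtimes\lambda(s)^{-1}$, and then invoking \cite[5.11.7, 5.11.10]{CG} together with \cite[Theorem~4.3]{GE1} to see that the resulting map $\phi'$ is an algebra homomorphism whose image is $\K(X^s\times X^s)^\Gamma$. That is the content hiding behind your ``base-change/compatibility check''; once you name the Thom twist, the rest of your argument goes through as written. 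Without it, the identification $\widehat{\K_Z(X^s\times X^s)}^{1}\simeq\bigl(\operatorname{End}\K(X^s)\otimes\widehat R\bigr)^\Gamma$ is only an isomorphism of modules, and you cannot yet conclude anything about simples.
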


Note  that similar results for $G$ being a finite group go back to Lusztig. 

\begin{proof}
\textbf{Step 1}. Set $Z:=Z_G(s)$. Note, first of all, that there exists a natural morphism $\phi\colon \K_Z(X^s) \to \K_Z(X^s)$ defined analogously to the formula (\ref{twist_action}). One can now consider a composition of $\phi$ and the forgetful morphism; we will call this map $\Phi$. 

One has $\Phi\colon \K_Z(X^s) \to \K(X^s)$; it is easy to see that it induces a morphism $\K_Z(X^s)_s \to \K(X^s)$. 

Let $\lambda(s)$ be a $Z$-equivariant Thom class of a normal bundle to $s$-fixed points inside $X$; let $\lambda_s$ be $\Phi(\lambda(s)^{-1})$. 

Let $\gamma_s \subset G$ be the conjugacy class of $s$. One can consider a morphism 
$$
\phi'\colon \K_G(X \times X)_{\gamma_s} \to \K_Z(X^s \times X^s)_s \xrightarrow{\phi_{12}} \K_Z(X^s \times X^s)_1 \to \K(X^s \times X^s) \xrightarrow{1 \boxtimes \lambda_s^{-1}} \K(X^s \times X^s).
$$

Here the first arrow is a natural restriction morphism $i^*$ for $i\colon X^s \times X^s \to X \times X$, the second one is analogous to $\phi$, and the third one is induced by $\lambda_s$ via the Kunneth formula.

We claim that $\phi'$ is well-defined and is a homomorphism of algebras.

\textbf{Step 2.} To prove this, it would be enough  to show that the morphism 
\begin{equation*}
\tilde{\phi}\colon \K_G(X \times X) \xrightarrow{i^*} \K_Z(X^s \times X^s) \xrightarrow{1 \boxtimes \lambda(s)^{-1}} \K_Z(X^s \times X^s) \to \K_Z(X^s \times X^s)_s
\end{equation*}
is a well-defined homomorphism of algebras.

This follows, analogously to the proofs of \cite[5.11.7]{CG} and \cite[5.11.10]{CG}, from \cite[Theorem 4.3]{GE1}.

\textbf{Step 3.} Moreover, by (\ref{B15}), one can see that  the image of $\phi'$ is a semisimple subalgebra $\K(X^s \times X^s)^\Gamma$ (where $\Gamma$ is the component group of $Z$), and that $\phi'$ can be identified with the morphism $\K_G(X \times X)_{\gamma_s}=: K \twoheadrightarrow K/\operatorname{Rad}(K)$.
\end{proof}

\subsection{Restriction of equivariance in $K$-theory.}
 

We start with the following standard lemma. 

\begin{lemma}\label{lem_every_emb_is_good}
Let $\iota\colon T \hookrightarrow C$ be an embedding of algebraic tori (over $\mathbb{C}$). Then there exists a collection of characters $\chi_i\colon C \rightarrow (\mathbb{C}^*)$, $i=1,\ldots,\on{dim}C-\on{dim}T$ such that 
\begin{equation}\label{eq_T_int}
T=\bigcap_{i=1}^{\on{dim}C-\on{dim}T} \on{ker}\chi_i
\end{equation} 
and $\bigcap_{i=1}^{k} \on{ker}\chi_i$ is a torus of dimension $\on{dim}C-k$ for every $k = 1,\ldots,\on{dim}C-\on{dim}T$.
\end{lemma}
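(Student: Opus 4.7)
The plan is to pass to the Pontryagin dual picture, i.e., work with character lattices. Recall that the functor $C \mapsto X^*(C) := \on{Hom}(C, \mathbb{G}_m)$ is a contravariant equivalence between the category of (complex) algebraic tori and the category of finitely generated free abelian groups, under which closed subtori of $C$ correspond bijectively to saturated (i.e., pure) sublattices of $X^*(C)$. Under this dictionary, the closed embedding $\iota\colon T \hookrightarrow C$ yields a short exact sequence $1 \to T \to C \to C/T \to 1$ (with $C/T$ again a torus because $T$ is connected), and hence a short exact sequence of free abelian groups
\begin{equation*}
0 \to X^*(C/T) \to X^*(C) \to X^*(T) \to 0.
\end{equation*}
Set $M := X^*(C/T) \subset X^*(C)$; it is a free abelian group of rank $\dim C - \dim T$, and it is saturated in $X^*(C)$ because the quotient $X^*(T)$ is torsion-free.

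First I would choose an ordered basis $\chi_1, \chi_2, \ldots, \chi_n$ of $X^*(C)$ (where $n = \dim C$) such that $\chi_1, \ldots, \chi_{n-\dim T}$ is a basis of $M$; this is possible since $M$ is a direct summand of $X^*(C)$ (being saturated in a free abelian group of finite rank). Viewing each $\chi_i$ as a character $C \to \mathbb{C}^*$, I claim the collection $\chi_1, \ldots, \chi_{n-\dim T}$ is the desired one. Indeed, for each $k \leq n - \dim T$, the subgroup $N_k := \langle \chi_1, \ldots, \chi_k \rangle \subset X^*(C)$ is generated by $k$ members of a basis of $X^*(C)$, hence it is itself a direct summand (in particular saturated), and so $X^*(C)/N_k$ is a free abelian group of rank $n-k$. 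Under the duality, this corresponds to $\bigcap_{i=1}^k \ker \chi_i$ being a closed connected subgroup of $C$ with character lattice of rank $n-k$, i.e., a subtorus of dimension $\dim C - k$.

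It remains to verify the equality \eqref{eq_T_int}. By construction $T \subseteq \ker \chi_i$ for all $i = 1, \ldots, n - \dim T$, so $T \subseteq \bigcap_i \ker \chi_i$. By the previous paragraph the right-hand side is a subtorus of $C$ of dimension $\dim C - (n - \dim T) = \dim T$, and it contains the subtorus $T$ of the same dimension, so the two coincide. There is no serious obstacle here; the only mild subtlety is to ensure that each partial intersection is a torus (i.e., connected) rather than merely a diagonalizable group, which is exactly why we need the saturation/direct-summand condition on the partial spans $N_k$, and this is automatic when the $\chi_i$ are chosen as part of an ambient basis of $X^*(C)$.
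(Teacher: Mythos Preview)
Your proof is correct and follows essentially the same route as the paper: pass to character lattices, use that the kernel $M$ of $\iota^*\colon X^*(C)\twoheadrightarrow X^*(T)$ is a saturated sublattice, take a basis of $M$ and observe it extends to a basis of $X^*(C)$, so that each partial span is a direct summand and the corresponding intersection of kernels is a subtorus of the right dimension. The only cosmetic difference is that the paper first picks a basis $\chi_1,\ldots,\chi_{\dim C-\dim T}$ of the kernel and then verifies torsion-freeness of the successive quotients by explicitly exhibiting the complementary basis (lifts of a basis of $X^*(T)$), whereas you front-load the ``extend to a basis of $X^*(C)$'' step.
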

\begin{proof}
Let $\Omega_C$, $\Omega_T$ be the character lattices of $C$, $T$. The embedding $\iota$ induces the surjection $\iota^*\colon \Omega_{C} \twoheadrightarrow \Omega_T$ (see, for example, \cite[Corollary 22.5.4 (iii)]{TaY}). The kernel of $\iota^*$ is free of rank $\on{dim}C-\on{dim}T$. Let $\chi_{1}, \ldots, \chi_{\on{dim}C-\on{dim}T}$ be the generators of this kernel. It follows from the definitions that the equality (\ref{eq_T_int}) holds. It remains to note that for every $k = 1, \ldots,\on{dim}C-\on{dim}T$, the quotient $\Omega_{C}/\on{Span}_{\BZ}(\chi_1,\ldots,\chi_k)$ is torsion-free of rank $\on{dim}C-k$. Indeed, let $\bar{\nu}_1,\ldots,\bar{\nu}_{\on{dim}T}$ be any generators of the lattice $\Omega_{T}$. Let $\nu_1,\ldots,\nu_{\on{dim}T} \in \Omega_C$ be such that $\iota^*(\nu_i) = \bar{\nu}_i$. It follows from the definitions that classes of  $\nu_1,\ldots,\nu_{\on{dim}T},\chi_{k+1},\ldots,\chi_{\on{dim}C-\on{dim}T}$ freely generate $\Omega_{C}/\on{Span}_{\BZ}(\chi_1,\ldots,\chi_k)$.
\end{proof}

Let $\iota\colon H \hookrightarrow G$ be an embedding of connected reductive algebraic groups with simply connected derived subgroups. 




We would like to discuss the following proposition. (The similar statement was proved in \cite{Merk} under different assumptions.)

\begin{prop}\label{merka}\label{merk}
Assume that $G$ acts on a smooth variety $X$. Then, the natural restriction morphism provides an isomorphism:
\begin{equation}\label{merk1} \K_G(X) \otimes_{\K_G(\operatorname{pt})} \K_H(\operatorname{pt}) \iso \K_H(X).
\end{equation}
\end{prop}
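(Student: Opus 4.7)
The plan is a two-stage reduction: first pass from the embedding $\iota\colon H \hookrightarrow G$ to an embedding of maximal tori $T_H \hookrightarrow T_G$ (using Pittie's theorem and the simply-connected-derived-subgroup hypothesis on both $H$ and $G$), and then reduce an arbitrary torus embedding to a chain of codimension-one embeddings via Lemma~\ref{lem_every_emb_is_good}.

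\textbf{Reduction to tori.} Choose maximal tori $T_H \subset H$ and $T_G \subset G$ so that $\iota(T_H) \subset T_G$; this is possible because any torus in $G$ lies in some maximal torus. Under the simply-connected-derived-subgroup assumption, Pittie's theorem asserts that for $K\in\{H,G\}$ the ring $\K_{T_K}(\on{pt})$ is free of rank $|W_K|$ over $\K_K(\on{pt})=\K_{T_K}(\on{pt})^{W_K}$. Combined with the standard descent argument (using the Bruhat cell filtration of $G/T_G$), this yields an isomorphism
\begin{equation*}
\K_K(Y)\otimes_{\K_K(\on{pt})}\K_{T_K}(\on{pt})\iso \K_{T_K}(Y)
\end{equation*}
for every smooth $K$-variety $Y$. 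Granting the torus case for $T_H\subset T_G$, tensor the desired identity with $\K_{T_H}(\on{pt})$ over $\K_H(\on{pt})$: both sides become naturally identified with $\K_{T_H}(X)$ via two applications of the above. Faithful flatness of $\K_{T_H}(\on{pt})$ over $\K_H(\on{pt})$ then descends this to the claimed isomorphism.

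\textbf{The torus case.} For an embedding of tori $T'\subset T$, Lemma~\ref{lem_every_emb_is_good} produces characters $\chi_1,\dots,\chi_n\in\La_T$ (with $n=\dim T-\dim T'$) whose successive kernels form a chain $T=T^{(0)}\supset T^{(1)}\supset\cdots\supset T^{(n)}=T'$ of codimension-one subtori. By transitivity of tensor products we may assume $T'=\ker\chi$ with $\chi\in\La_T$ primitive, so that $\K_{T'}(\on{pt})=\K_T(\on{pt})/(e^{\chi}-1)$. The $T$-equivariant isomorphism $T\times_{T'}X\cong \BC^*\times X$ (with $T$ acting on $\BC^*$ through $\chi$) gives $\K_{T'}(X)\simeq \K_T(\BC^*\times X)$ by induction. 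The localization sequence for the $T$-equivariant closed embedding $\{0\}\times X\hookrightarrow \mathbb A^1\times X$, combined with homotopy invariance $\K_T(\mathbb A^1\times X)\simeq \K_T(X)$ and the Koszul description of the pushforward of $\mathcal O_{\{0\}\times X}$ as multiplication by $1-e^{-\chi}$, yields the right exact sequence
\begin{equation*}
\K_T(X)\xrightarrow{\cdot(1-e^{-\chi})}\K_T(X)\to \K_T(\BC^*\times X)\to 0.
\end{equation*}
Hence $\K_{T'}(X)\simeq \K_T(X)/(e^{\chi}-1)\K_T(X)\simeq \K_T(X)\otimes_{\K_T(\on{pt})}\K_{T'}(\on{pt})$, and one verifies that the composite isomorphism agrees with the restriction map by tracking the image of the class of a generating line bundle.

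\textbf{Main obstacle.} The technical heart is the descent statement in the first step: it is there that the simply-connected-derived-subgroup hypothesis is essential, since otherwise $\K_{T_K}(\on{pt})$ need not be free over $\K_K(\on{pt})$ and the passage between $K$- and $T_K$-equivariant $K$-theories fails. Steps 2 and 3 are standard equivariant $K$-theory manipulations, and the only slightly delicate point there is checking that the induction and localization isomorphisms fit together so that the resulting map really is the natural restriction.
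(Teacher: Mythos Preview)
Your proposal is correct and follows essentially the same route as the paper: reduce to maximal tori via the isomorphism $\K_K(Y)\otimes_{\K_K(\on{pt})}\K_{T_K}(\on{pt})\simeq \K_{T_K}(Y)$ (the paper cites this as \cite[Proposition~31]{Merk} rather than Pittie plus Bruhat cells), then use Lemma~\ref{lem_every_emb_is_good} to reduce to a codimension-one subtorus, and finish with the localization sequence for $\{0\}\times X\hookrightarrow \mathbb{A}^1\times X$ identifying the boundary map with multiplication by $1-e^{\chi}$. The only cosmetic difference is in how you pass back from tori to $H$: you descend by faithful flatness of $\K_{T_H}(\on{pt})$ over $\K_H(\on{pt})$, whereas the paper takes $W_H$-invariants directly using $\K_H(X)=\K_{T_H}(X)^{W_H}$; both arguments rest on the same freeness input.
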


\begin{remark}
    Note that we do \textbf{not} make any assumptions concerning properness of $X$, nor existence of an equivariant affine paving.
\end{remark}

\begin{proof}
 \textbf{Step 1.} Let us  reduce the situation to the case of $H$ and $G$ being tori. 

Namely, let $T \hookrightarrow C$  be a   pair of compatible maximal tori of $H$, $G$.
 
 Note that, by \cite[Proposition 31]{Merk},  one has  the natural restriction isomorphism \begin{equation}\label{torus}\K_C(X) \simeq \K_G(X) \otimes_{\K_G(\operatorname{pt})} \K_C(\operatorname{pt}).\end{equation}
 In particular, for $W_G$ being a Weyl group of $G$, $\K_G(X) = \K_C(X)^{W_G}$ (here we use that $\K_G(\on{pt})=\mathbb{C}[G]^G \iso \mathbb{C}[T]^W$, see, for example, \cite[Theorem 4]{Serr}) and similarly  $\K_H(X) = \K_T(X)^{W_H}$. 

 The assertion of the Proposition in the toric case reads:

 $$
 \K_T(X) \simeq \K_C(X) \otimes_{\K_C(\operatorname{pt})} \K_T(\operatorname{pt});
 $$
 now we deduce:
 \begin{multline*}
 \K_H(X) \simeq \K_T(X)^{W_H} \simeq (\K_C(X) \otimes_{\K_C(\operatorname{pt})} \K_T(\operatorname{pt}))^{W_H} \simeq 
  (\K_G(X) \otimes_{\K_G(\operatorname{pt})} \K_C(\operatorname{pt}) \otimes_{\K_C(\operatorname{pt})} \K_T(\operatorname{pt}))^{W_H} = \\
  = (\K_G(X)\otimes_{\K_G(\operatorname{pt})} \K_T(\operatorname{pt}))^{W_H} \simeq \K_G(X)\otimes_{\K_G(\operatorname{pt})} \K_H(\operatorname{pt}).
 \end{multline*}

 \textbf{Step 2.} Now we can assume that $H$ is a subtorus inside some torus $G$. 
 
 Let us change the notation.  $T:=H$, $C:=G$. Using Lemma \ref{lem_every_emb_is_good} and the induction on $\on{dim}C - \on{dim}T$, we reduce to the case $T = \on{ker}\chi$ for some (primitive) nonzero character $\chi\colon C \rightarrow \mathbb{C}^*$. 

\begin{lemma} The natural restriction map $\operatorname{res}\colon \K_C(X) \otimes_{\K_C(\operatorname{pt})} \K_T(\operatorname{pt}) \to \K_T(X)$ is surjective.
\end{lemma}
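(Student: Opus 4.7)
The plan is to prove the stronger statement that the restriction map $\mathrm{res}\colon \K_C(X) \to \K_T(X)$ is already surjective, from which the tensor-product form of surjectivity in the lemma follows a fortiori. Using the induction (Morita) equivalence $\mathrm{Coh}^T(X) \simeq \mathrm{Coh}^C(C \times_T X)$ together with the isomorphism $C \times_T X \iso (C/T) \times X$ sending $[c,x]$ to $(cT, cx)$, and the identification $C/T \simeq \mathbb{C}^*$ via $\chi$ (which is possible because $\chi$ was chosen primitive with kernel $T$), I would first rewrite
\begin{equation*}
\K_T(X) \simeq \K_C(\mathbb{C}^* \times X),
\end{equation*}
where $C$ acts on $\mathbb{C}^*$ through $\chi$ and on $X$ in the given way. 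Under this identification, a direct unwinding of the Morita iso shows that $\mathrm{res}\colon \K_C(X) \to \K_T(X)$ corresponds precisely to the pullback $\pi^*$ along the projection $\pi\colon \mathbb{C}^* \times X \to X$.

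Next I would invoke the localization long exact sequence in equivariant $G$-theory for the $C$-equivariant open-closed decomposition $\{0\} \times X \hookrightarrow \mathbb{C} \times X \hookleftarrow \mathbb{C}^* \times X$ (with $C$ acting on $\mathbb{C}$ via $\chi$). Its final terms give a surjection
\begin{equation*}
j^*\colon \K_C(\mathbb{C} \times X) \twoheadrightarrow \K_C(\mathbb{C}^* \times X).
\end{equation*}
Combined with the homotopy/Thom isomorphism $p^*\colon \K_C(X) \iso \K_C(\mathbb{C} \times X)$ for the trivial $C$-equivariant line bundle $p\colon \mathbb{C} \times X \to X$, we obtain $\pi^* = j^* \circ p^*$, hence $\pi^*$ (and therefore $\mathrm{res}$) is surjective.

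Since $X$ is smooth, $G$-theory agrees with $\K^0$-theory, so both the localization sequence and homotopy invariance are entirely standard and there is no serious obstacle. The only point that requires care is checking that restriction of equivariance really corresponds to $\pi^*$ under the induction equivalence; this reduces to verifying that the diagram matches on representable generators, which is a routine computation with the explicit formula for the descent isomorphism $(C \times X)/T \iso (C/T) \times X$ recorded above.
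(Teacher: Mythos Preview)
Your proposal is correct and follows essentially the same approach as the paper: both prove the stronger statement that $\K_C(X)\to \K_T(X)$ is already surjective by identifying $\K_T(X)\simeq \K_C((C/T)\times X)\simeq \K_C(\mathbb{C}^*\times X)$ via induction, then factoring the restriction through the Thom/homotopy isomorphism $\K_C(X)\iso \K_C(\mathbb{A}^1\times X)$ followed by the surjective open restriction from the localization sequence. The paper phrases these steps by citing the relevant results of Merkurjev, whereas you spell out the induction equivalence and the identification of restriction with $\pi^*$ more explicitly, but the argument is the same.
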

 
 \begin{proof} The more powerful statement is proven in \cite[Proposition 26]{Merk}. The proof is as follows.

 In fact, we have to prove that the restriction map $r\colon \K_C(X) \to \K_T(X)$ is surjective.

 Let us rewrite it. Note that the character $\chi$, above turns $\mathbb A^1$ and $\mathbb{C}^*$ into $C$-varieties. Thus, by \cite[Corollaries 5, 12 and Theorem 8]{Merk}, we deduce that $r$ can be rewritten as:

 $$
 \K_C(X) \simeq \K_C(X \times \mathbb A^1) \twoheadrightarrow \K_C(X \times \mathbb{C}^*) = \K_C(X \times C/T) \simeq \K_T(X).
 $$
 The second arrow is a surjective restriction to an open subset (cf. \cite[Corollary 27]{Merk}), hence the lemma follows.
 \end{proof}

\textbf{Step 3.} What remains is to prove injectivity of $\operatorname{res}$. This is contained in \cite[Corollary 27]{Merk}.

More precisely, we can write down the following piece of the long exact sequence in $K$-theory:

$$
\K_C(X) \to \K_C(X \times \mathbb A^1) \to \K_C(X \times \mathbb{C}^*) \to 0.
$$
One has to prove that the leftmost  map can be identified with the multiplication by $1-\chi \in K_T(\operatorname{pt})$.

But the latter is evident from the projection formula: cf.~\textit{loc.~cit.}
\end{proof}




\subsection{Traces in $K$-theory}\label{traces}
Suppose that $X$ is a smooth proper variety with the same assumptions as in Section~\ref{conv}.

Then, by the Kunneth formula and Poincar\'e duality, \begin{equation}\label{iso}
\K(X \times X) \simeq \K(X) \otimes \K(X) \simeq \K(X) \otimes \K(X)^* \simeq \operatorname{End} \K(X).
\end{equation}

This gives a natural action of $\K(X \times X)$ on $\K(X)$. Moreover, it is well-known that the resulting algebra structure on $\K(X \times X)$ coincides with the one defined via the convolution product.

Let $\operatorname{Eu}$ stand for the Euler characteristic.

\begin{prop}\label{tr} For any $\mathcal F \in \K(X \times X)$, $\operatorname{Tr}_{\mathcal F}(\K(X)) = \operatorname{Eu}(\mathcal F \otimes^{\mathbb L} \mathcal O_{\Delta})$ where $\mathcal O_{\Delta}$ stands for the structure sheaf of  diagonal.
\end{prop}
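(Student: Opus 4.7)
The strategy is to use the Künneth/Poincaré duality isomorphism \eqref{iso} to reduce the computation to the case of pure tensor classes $\mathcal{A}\boxtimes\mathcal{B}$, where both sides can be explicitly identified with $\on{Eu}(\mathcal{A}\otimes^{\mathbb L}\mathcal{B})$. Concretely, I would begin by recalling that under the convolution product, the action of $\mathcal F\in\K(X\times X)$ on $\alpha\in \K(X)$ is given by $\alpha\mapsto p_{2*}\bigl(p_1^*\alpha\otimes^{\mathbb L}\mathcal F\bigr)$, where $p_1,p_2$ are the two projections. By our hypothesis (Chow$=$homology, properness, smoothness), the Künneth map $\K(X)\otimes\K(X)\to\K(X\times X)$ is an isomorphism, so by linearity it suffices to prove the identity for $\mathcal F=\mathcal{A}\boxtimes\mathcal{B}$.

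For such a pure tensor, projection formula gives
\[
p_{2*}\bigl(p_1^*\alpha\otimes^{\mathbb L}(\mathcal{A}\boxtimes\mathcal{B})\bigr)=p_{2*}\bigl(p_1^*(\alpha\otimes^{\mathbb L}\mathcal A)\bigr)\otimes^{\mathbb L}\mathcal B=\on{Eu}(\alpha\otimes^{\mathbb L}\mathcal A)\cdot\mathcal B,
\]
so the corresponding endomorphism of $\K(X)$ is the rank one operator $\alpha\mapsto\langle\alpha,\mathcal A\rangle\mathcal B$, where $\langle-,-\rangle$ denotes the Poincaré pairing $(\beta,\gamma)\mapsto\on{Eu}(\beta\otimes^{\mathbb L}\gamma)$. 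Choosing a basis $\{e_i\}$ of $\K(X)$ with Poincaré-dual basis $\{e_i^*\}$ and expanding $\mathcal B=\sum_i\langle e_i^*,\mathcal B\rangle e_i$, a direct computation shows that the trace of this rank one operator equals $\langle\mathcal B,\mathcal A\rangle=\on{Eu}(\mathcal A\otimes^{\mathbb L}\mathcal B)$.

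On the other hand, $\Delta^*(\mathcal A\boxtimes\mathcal B)=\mathcal A\otimes^{\mathbb L}\mathcal B$, and by the projection formula applied to the closed embedding $\Delta\colon X\hookrightarrow X\times X$ one has
\[
\on{Eu}_{X\times X}\bigl((\mathcal A\boxtimes\mathcal B)\otimes^{\mathbb L}\mathcal O_{\Delta}\bigr)=\on{Eu}_{X\times X}\bigl(\Delta_*\Delta^*(\mathcal A\boxtimes\mathcal B)\bigr)=\on{Eu}_X(\mathcal A\otimes^{\mathbb L}\mathcal B),
\]
matching the trace computed above. Extending by linearity over the Künneth decomposition completes the proof.

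The main technical point — and the only place where one has to be careful — is ensuring that the Künneth decomposition is genuinely compatible with the convolution/endomorphism identification \eqref{iso}, so that the trace on the right hand side of \eqref{iso} corresponds to the bilinear pairing coming from Poincaré duality. This is the only non-formal input and relies essentially on the assumption that $A_*(X\times X)\otimes\BC\iso H_*(X\times X,\BC)$, which gives the Künneth isomorphism in $K$-theory and makes the pairing $\langle-,-\rangle$ perfect; everything else is a standard diagram chase using base change and projection formula.
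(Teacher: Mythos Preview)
Your argument is correct, and it takes a genuinely different route from the paper. The paper characterizes the trace functional abstractly: since $\K(X\times X)\simeq \operatorname{End}\K(X)$ is a matrix algebra, the trace is (up to scalar) the unique linear functional satisfying $\tau(\mathcal F*\mathcal G)=\tau(\mathcal G*\mathcal F)$, so it suffices to verify cyclicity of $\tau(\mathcal F)=\on{Eu}(\mathcal F\otimes^{\mathbb L}\mathcal O_\Delta)$ (a short diagram chase yielding $\tau(\mathcal F*\mathcal G)=p_*(\mathcal F\otimes^{\mathbb L}\sigma^*\mathcal G)$) together with the normalization $\tau(\mathcal O_\Delta)=\dim\K(X)$. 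This last step is the only nontrivial input in the paper's proof and uses $\operatorname{Tor}^i(\mathcal O_\Delta,\mathcal O_\Delta)=i_{\Delta*}\Omega_X^i$ plus Hodge theory.

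Your approach bypasses this entirely: by reducing to pure tensors via K\"unneth you compute both sides explicitly and find they agree with $\on{Eu}(\mathcal A\otimes^{\mathbb L}\mathcal B)$, so no self-intersection computation or Hodge-theoretic input is needed. This is more elementary and arguably cleaner, though it leans harder on the K\"unneth isomorphism (already implicit in \eqref{iso}). The paper's approach, by contrast, is more conceptual and would transport to settings where one has a matrix-algebra structure but no obvious K\"unneth splitting. Your closing caveat about compatibility of the K\"unneth decomposition with \eqref{iso} is overly cautious: your own computation of the action of $\mathcal A\boxtimes\mathcal B$ as a rank-one operator \emph{is} the verification of that compatibility.
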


\begin{proof}
Let us denote the linear functional $\mathcal F \mapsto \operatorname{Eu}(\mathcal F \otimes^{\mathbb L} \mathcal O_{\Delta})$ by $\tau$.

Since $\mathcal O_{\Delta}$ corresponds to the identity operator under (\ref{iso}), to show that $\tau = \operatorname{Tr}_{-}(\K(X))$, one has to establish that (cf. \cite{Il}):

1) $\tau(\mathcal F * \mathcal G) = \tau(\mathcal G * \mathcal F)$ where $*$ stands for the convolution;

2) $\tau(\mathcal O_{\Delta}) = \operatorname{dim} \K(X)$.

\textbf{Step 1.} 1) is checked by diagram chase as follows. 

Let $p_{12}$, $p_{23}$, and $p_{13}$ be the projections to the corresponding pairs of the arguments from $X \times X \times X$. Let $i_{\Delta}$ be the embedding of the diagonal into $X \times X$. Let $p$ denote a projection to a point.

Then one has:
\begin{equation}\label{tau}\tau(\mathcal F * \mathcal G) = p_*i_{\Delta}^*p_{13*}(p_{12}^*\mathcal F \otimes^{\mathbb L} p_{23}^* \mathcal G) = p_*(\mathcal F \otimes^{\mathbb L} \sigma^*\mathcal G),\end{equation}
where all of the functors are derived and $\sigma$ is the involution permuting the factors in $X \times X$ (the second equality is explained in more detail in the analogous calculation (\ref{basec})).

Equality 1) is now clear.

\textbf{Step 2.} We have to show that the Euler characteristic of the derived self-intersection of the diagonal copy of $X$ inside $X \times X$ is equal to the dimension of $H^*(X)$.

This follows easily from the Hodge theorem and the fact that $\operatorname{Tor}^i(\mathcal O_{\Delta}, \mathcal O_{\Delta}) = i_{\Delta*}\Omega_X^i$, since $X$ is smooth (for more details see \cite{Bh} and references therein).
\end{proof}

\subsubsection{The map $T$}\label{mapt} Here we  introduce a trace functor $T$ that plays a key role in the last section of the main text, and prove that it is actually 
a trace (commutator) functor.


We consider an action of an arbitrary algebraic group $G$ on a smooth 
variety $X$.

\begin{defn} Consider the diagram $X \times X \leftarrow G \times X \twoheadrightarrow G$, where the first map is the action map, the second one is the projection.

The functor $T\colon D^b(\on{QCoh}^G(X \times X)) \to D^b(\on{QCoh}^G(G))$ is defined to be the map given by this correspondence.
\end{defn}

\begin{lemma}\label{lemma:mapt}
For any ${\mathcal A}, {\mathcal B} \in D^b(\on{Coh}_G(X \times X))$ we have a canonical isomorphism 
$T(\mathcal A * \mathcal B)\simeq T(\mathcal B * \mathcal A)$
where $*$ stands for the convolution product.
\end{lemma}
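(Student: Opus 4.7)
The plan is to establish the cyclic property of $T$ by a diagrammatic base-change argument, using the $G$-equivariance of the input sheaves to absorb the asymmetry introduced by the action map. Morally, $T$ is the Hochschild trace of the convolution endofunctor of $D^b(X)$ associated to a kernel on $X\times X$, so the isomorphism $T(\mathcal A*\mathcal B)\simeq T(\mathcal B*\mathcal A)$ is expected on general grounds; the content of the proof is to realize it geometrically.

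First, I would apply (proper) base change to the square formed by $\mathbf{a}\times\mathrm{id}\colon G\times X\to X\times X$ and $p_{13}\colon X^3\to X\times X$. The fiber product is canonically $Y:=G\times X\times X$, with the two structure maps $\tilde p_{13}\colon (g,x,y)\mapsto(g,x)$ and $\tilde q\colon (g,x,y)\mapsto(gx,y,x)$. Composing with $p_{12},p_{23}$ on $X^3$ and unwinding gives
\[
T(\mathcal A*\mathcal B)\simeq R\pi_{G*}\bigl(L\alpha^*\mathcal A\otimes^L L\beta^*\mathcal B\bigr),
\]
where $\pi_G\colon Y\to G$ is the projection, $\alpha(g,x,y)=(gx,y)$, and $\beta(g,x,y)=(y,x)$. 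Swapping $\mathcal A$ and $\mathcal B$ gives the corresponding formula for $T(\mathcal B*\mathcal A)$.

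Next I would introduce the automorphism $\sigma\colon Y\to Y$, $(g,x,y)\mapsto(g,y,gx)$, with inverse $(g,u,v)\mapsto(g,g^{-1}v,u)$; clearly $\pi_G\circ\sigma=\pi_G$. A direct check gives $\beta\circ\sigma=\alpha$ and $\alpha\circ\sigma(g,x,y)=(gy,gx)$, i.e.\ $\alpha\circ\sigma$ and $\beta$ both factor through the coordinate swap $\tau\colon(g,x,y)\mapsto(g,y,x)$, with $\alpha\circ\sigma=\mathrm{act}\circ\tau$ and $\beta=\mathrm{pr}\circ\tau$, where $\mathrm{act},\mathrm{pr}\colon G\times X\times X\rightrightarrows X\times X$ are the diagonal $G$-action and the projection. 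The $G$-equivariance of $\mathcal B$ is precisely the canonical isomorphism $\mathrm{act}^*\mathcal B\simeq \mathrm{pr}^*\mathcal B$, hence $L(\alpha\sigma)^*\mathcal B\simeq L\beta^*\mathcal B$. Combining this with $\beta\sigma=\alpha$ yields a canonical isomorphism
\[
L\sigma^*\bigl(L\alpha^*\mathcal B\otimes^L L\beta^*\mathcal A\bigr)\;\simeq\;L\beta^*\mathcal B\otimes^L L\alpha^*\mathcal A\;=\;L\alpha^*\mathcal A\otimes^L L\beta^*\mathcal B,
\]
and since $\sigma$ is an isomorphism with $\pi_G\sigma=\pi_G$ we have $R\pi_{G*}\circ L\sigma^*\simeq R\pi_{G*}$, delivering the desired isomorphism.

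The only point requiring genuine care is checking that $\sigma$ is $G$-equivariant for the conjugation-times-diagonal action on $Y=G\times X\times X$ (i.e.\ $h\cdot(g,x,y)=(hgh^{-1},hx,hy)$), so that the whole chain of isomorphisms lives in $D^b(\mathrm{Coh}_G(Y))$ and pushes forward to an isomorphism of $G$-equivariant complexes on $G$ (with its conjugation action); a short computation confirms this. Everything else is routine base change and projection-formula bookkeeping; the only real input is the explicit $\sigma$ together with the $G$-equivariance of the kernels, so the resulting isomorphism is canonical and functorial in $\mathcal A,\mathcal B$.
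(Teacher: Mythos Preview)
Your argument is correct and follows essentially the same strategy as the paper: reduce via base change to an expression $R\pi_{G*}(L\alpha^*\mathcal A\otimes^L L\beta^*\mathcal B)$ on $Y=G\times X^2$, then use a $G$-equivariant automorphism of $Y$ over $G$ together with the equivariant structure on one of the kernels to swap $\mathcal A$ and $\mathcal B$. Your presentation is in fact a bit more streamlined than the paper's: you work directly with the two maps $\alpha,\beta\colon Y\to X^2$ and the single automorphism $\sigma(g,x,y)=(g,y,gx)$, whereas the paper passes through embeddings $\Delta_{G,4},\widetilde{\Delta_{G,4}},\delta_{G,4}\colon G\times X^2\to G\times X^4$ and an auxiliary automorphism $\theta$ of $G\times X^4$; unwinding the paper's composite $\theta\circ\delta_{G,4}=\Delta_{G,4}\circ(\text{swap})$ one recovers exactly your $\sigma$. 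The verification that $\sigma$ is $G$-equivariant for the conjugation-times-diagonal action is the right point to flag, and your check is correct.
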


\begin{proof}
    To fix ideas we first produce an isomorphism of fibers.

The fiber of $T(\mathcal A * \mathcal B)$ at $g \in G$ is precisely $R\Gamma(R\iota_g^*(\pi_{12}^*\mathcal A \otimes \pi_{23}^*\mathcal B))$ for $\iota_g$ being the embedding of the \textit{twisted diagonal} $\{(gx, y, x) \ | \ g \in G\}$ into $X^3$. One has:

$$T(\mathcal A*\mathcal B)_g = R\Gamma(\{(gx, y, x)\},\mathcal A * \mathcal B) = R\Gamma(\{(gx, y, y, x)\}, \mathcal A \boxtimes \mathcal B) = R\Gamma(\{(y, x, gx, y)\}, \mathcal B \boxtimes \mathcal A) = $$
$$=R\Gamma(\{(gy, gx, gx, y)\},\mathcal B \boxtimes \mathcal A) = R\Gamma(\{(gy, t, t, y)\},\mathcal B \boxtimes \mathcal A) =  T(\mathcal B * \mathcal A)_g.
$$

Here we have performed a change of the variable, and have used the $G$-equivariance of $\mathcal B$. It is easy to see that the identification of fibers $\nu_g\colon T(\mathcal A*\mathcal B)_g \iso T(\mathcal B * \mathcal A)_g$ as above is $G$-equivariant.



We now present a modification of the above argument that works in families and yields the isomorphism of objects in  $D^b(\on{QCoh}^G(G))$.

\textbf{Step 1.} Consider the universal twisted diagonal
$$\Delta_G\colon G \times X \times X \to G \times X^3, (g, x, y) \mapsto (g, gx, y, x).$$
Variety $G \times X^2$ is equipped with a natural morphism $\pi_G\colon G \times X^2 \twoheadrightarrow G$. By the definition, 
$$T(\mathcal A * \mathcal B) = \pi_{G*}\Delta_G^*(\pi_{12}^*\mathcal A \otimes \pi_{23}^*\mathcal B) = \pi_{G*}\Delta_G^*(\gamma^*\pi_{12}^*\mathcal A \otimes \sigma^*\pi_{23}^*\mathcal B),$$
where, as usual, all of the functors are derived, $\mathcal A$ and $\mathcal B$ stand (by a slight abuse of notation)  for the corresponding pull-backs under the second projection $G \times X^2 \to X^2$, $\pi_{ij}\colon G \times X \times \ldots \times X \to X \times X$ is the $ij$-th projection, $\gamma\colon (g, x, y) \to (g, gx, y)$, $\sigma\colon (g, x, y) \to (g, y, x)$. (Note the similarity between this calculation and the formula~(\ref{tau}).)

We can rewrite the last isomorphism using that $\otimes$ is the restriction of $\boxtimes$ to the diagonal: for $\Delta\colon G \times X \times X \hookrightarrow G \times X^4$ being the diagonal embedding,

$$T(\mathcal A * \mathcal B) = \pi_{G*}\Delta^*(\gamma^*\pi_{12}^*\mathcal A \boxtimes \sigma^*\pi_{23}^*\mathcal B) = \pi_{G*}\Delta^*(\sigma^*\pi_{23}^*\mathcal B \boxtimes \gamma^*\pi_{12}^*\mathcal A).$$

\textbf{Step 2.} In other words, for $\Delta_{G, 4}: G \times X^2 \to X^4,$ $(g, x, y) \mapsto (g, gx, y, y, x)$, $$T(\mathcal A * \mathcal B) = \pi_{G*}\Delta_{G, 4}^*(\mathcal A \boxtimes \mathcal B) = \pi_{G*}\widetilde{\Delta_{G, 4}}^*(\mathcal B \boxtimes \mathcal A).$$

for $\widetilde{\Delta_{G, 4}}$ being $\{(g, y, x, gx, y)\}$ as in the Step 1.

(We one more time abuse notation by writing $\mathcal A \boxtimes \mathcal B$ for the corresponding pullback from $X^4$ to $G \times X^4$.)


We now proceed as in the fiberwise argument above.

Namely,  let $\delta_{G, 4}$ be the embedding $G \times X^2 \to X^4$, $(g, x, y) \mapsto (g, gy, gx, gx, y)$. By the $G$-equivariance of $\mathcal{B}$, we get that $T(\mathcal A * \mathcal B) = \pi_{G*}\delta_{G, 4}^*(\mathcal B \boxtimes \mathcal A)$.

Moreover, let $\theta$ be the automorphism $(g, a, b, c, d) \to (g, a, g^{-1}b, g^{-1}c, d)$. Since $\theta$ is the $G$-equivariant automorphism ($G$ acts on itself via conjugation), and $\pi_G\theta = \pi_G$, 

$$T(\mathcal{A} * \mathcal{B}) = \pi_{G*}\theta_*\delta_{G, 4}^*(\mathcal B \boxtimes \mathcal A) = \pi_{G*}(\theta^{-1})^*\delta_{G,4}^*(\mathcal B \boxtimes \mathcal A).$$

Since the composition $\theta^{-1}\delta_{G, 4}: (g, x, y) \to (g, gy, x, x, y)$ is just $\Delta_{G, 4}\sigma$, and $\sigma$ is a $G$-equivariant automorphism, so that $\pi\sigma = \pi$, we are done.
\end{proof}


\begin{thebibliography}{50} 

\bibitem{ACR} A.-M. Aubert, D. Ciubotaru, B. Romano,
{\em 
A nonabelian Fourier transform for tempered unipotent representations,}
 arXiv:2106.13969, (2021).

\bibitem{Bar} V.~Baranovsky, {\em{Orbifold Cohomology as Periodic Cyclic Homology}},  International Journal of Mathematics, {\bf{14}} (2003), no. 8, 791-812.

\bibitem{BNP} D.~Ben-Zvi,  D.~Nadler, A.~Preygel, {\em{A spectral incarnation of affine character sheaves}},  Compositio Math., {\bf{153}} (2017), no. 9, 1908-1944.


\bibitem{Bern} J.~Bernstein, \href{https://personal.math.ubc.ca/~cass/research/pdf/bernstein.pdf}{Draft of: {\em{Representations of $p$-adic groups}}} (written by K. Rummelhart), (1992).

\bibitem{Berncen} J. Bernstein, P. Deligne {\em{Le ``centre'' de Bernstein},} 
 ``Representations des groups reductifs sur un corps local,
Traveaux en cours'' (P. Deligne ed.), Hermann, Paris, (1984), 1-32.





\bibitem{B} R.~Bezrukavnikov,  {\em{On tensor categories attached to cells in affine Weyl groups}}, Representation theory of algebraic groups and quantum groups, Adv. Stud. Pure Math., Math. Soc. Japan, Tokyo, {\bf{40}} (2004), 69-90. 



\bibitem{BICM} R.~Bezrukavnikov, {\em Noncommutative counterparts of the Springer
resolution,} Proceeding of the International Congress of Mathematicians,
 Madrid, Spain, {\bf{2}} (2006),  1119-1144.

 
\bibitem{roman_tilting}  R.~Bezrukavnikov, {\em Cohomology of tilting modules over quantum groups and t-structures on derived categories of coherent sheaves.} Inventiones mathematicae,  {\bf{166}} (2006), no. 2, 327--357.

\bibitem{B2} R.~Bezrukavnikov, {\em{Perverse sheaves on affine flags and nilpotent cone of the Langlands dual group}}, Israel Journal of Mathematics, {\bf{170}} (2009), 185-206.

\bibitem{B3} R.~Bezrukavnikov, {\em{ On two geometric realizations of an affine Hecke algebra}},
  Publ. IHES, {\bf 123} (2016), no. 1, 1--67.

\bibitem{BCKV} R.~Bezrukavnikov, D.~Ciubotaru, D.~Kazhdan, Y.~Varshavsky, in preparation.

\bibitem{BDD}  R.~Bezrukavnikov, G.~Dobrovolska, S.~Dawydiak, {\em{On the structure of the affine asymptotic Hecke algebras (with an Appendix by Roman Bezrukavnikov, Alexander Braverman, and David Kazhdan)}}, Transformation Groups, (2022), 1-21.

       
\bibitem{BL} R.~Bezrukavnikov, I.~Losev, {\em{Dimensions of modular irreducible representations of semisimple Lie algebras}},  Journal of American Math. Soc., (2023).

\bibitem{BM} R.~Bezrukavnikov, I.~Mirkovic, {\em{Representations of semisimple Lie algebras in prime characteristic and the noncommutative Springer resolution (with an Appendix by Eric Sommers)}}, Ann. of math., {\bf{178}} (2013), 835-919. 

\bibitem{BO} R.~Bezrukavnikov, V. Ostrik, {\em{On tensor categories attached to cells in affine Weyl groups II}}, Adv. Stud. Pure Math., (2004), 101-119.


\bibitem{BFM} P. Baum, W. Fulton, R. MacPherson, {\em{Riemann-Roch for singular varieties}},   Publ. Math. IHES., {\bf{45}} (1975), 101-145.

\bibitem{Bh} B.~Bhatt et al., \href{https://mathoverflow.net/questions/696/euler-characteristic-of-a-manifold-and-self-intersection}{Answer at Mathoverfow}.


\bibitem{BB} A. Bialynicki-Birula, 
{\em{Some theorems on actions of algebraic groups}}, Ann. of Math.,
 {\bf{98}} (1973), no. 3, 480-497.

\bibitem{BK} A.~Braverman, D.~Kazhdan, {\em{Remarks on the asymptotic Hecke algebra}}, Lie Groups, Geometry, and Representation Theory: A Tribute to the Life and Work of Bertram Kostant, (2018), 91-108.



\bibitem{CG} N.~Chriss, V.~Ginzburg, {\em{Introduction to the geometric representation theory}}, Birkh{\"a}user Boston, Inc., Boston, MA, (1997).

\bibitem{CH14} D. Ciubotaru, X. He, {\em{Cocenters and representations of affine Hecke algebras}}, J. Eur. Math. Soc. (JEMS), 
{\bf{19}}
(2014), no. 10.

\bibitem{D} S.~Dawydiak,  {\em{A coherent categorification of the based ring of the lowest two-sided cell}}, arXiv:2111.15648, (2021).


\bibitem{D1} S.~Dawydiak, {\em{Three pictures of Lusztig’s asymptotic Hecke algebra}}, PhD thesis at
University of Toronto, 
(2022).

\bibitem{rigid} S.~Dawydiak, {\em{The asymptotic Hecke algebra and rigidity,}} arXiv:2312.11092, (2023).

\bibitem{DG} V.~Drinfeld, D.~Gaitsgory, {\em{On a theorem of Braden},
Transform. Groups 19, No. 2, 313-358 (2014).}

\bibitem{DLP} C.~De Concini, G.~Lusztig, C.~Procesi,  {\em{Homology of the zero set of a nilpotent vector field on a flag manifold}}, Journal of the American Mathematical Society,  {\bf{1}} (1988), no. 1, pp. 15-34.

\bibitem{Fu}
W. Fulton, {\em{Intersection Theory}}, Springer-Verlag, (1984).


\bibitem{GE1} D.~Edidin, W.~Graham, {\em{Algebraic cycles and completions of equivariant $K$-theory}}, Duke Math. J., {\bf{144}} (2008), no. 3, 489-524.


\bibitem{G}
W. Graham. {\em{The forgetful map in rational $K$-theory.}} Pacific Journal of Mathematics, {\bf{236}} (2008), no. 1, 45-55.


\bibitem{halpern} D.~Halpern-Leinster, {\em{The derived category of GIT quotient}}, Journal of the American Mathematical Society,
 {\bf{28}} (2015), no. 3, 871-912.

\bibitem{Hat} A.~Hattori, {\em{Rank element of a projective module}}, Nagoya Math. J., {\bf{25}} (1965), 113-120.

\bibitem{Il} J. Ilmavirta et al., \href{https://math.stackexchange.com/questions/1359118/show-that-trace-is-a-unique-linear-functional}{Answer at MathStackexchange}.

\bibitem{Kn} F.~Knop, \href{https://mathoverflow.net/questions/280874/are-the-semi-simple-elements-in-a-non-connected-reductive-algebraic-group-dense}{Answer at Mathoverflow}.

\bibitem{K_cusp} D. Kazhdan, {\em{Cuspidal geometry of $p$-adic groups}},
Journal d’Analyse Math\' ematique, {\bf{47}} (1986), 1-36 

\bibitem{KL0} D. Kazhdan, G. Lusztig, 
{\em{Representations of Coxeter groups and Hecke
algebras}}, Invent. Math., {\bf{53}} (1979), 165-184.

\bibitem{KL} D.~Kazhdan, G.~Lusztig, {\em{Proof of the Deligne-Langlands conjectures for affine Hecke algebras}}, Invent. Math., {\bf{87}} (1987), 153-215.


\bibitem{Lcells1}
G.~Lusztig, {\em Cells in affine Weyl groups}, Algebraic groups and related topics, {\bf{6}} (1985), 255-287.

\bibitem{Lcells2}
G.~Lusztig, {\em Cells in affine Weyl groups,  II}, Journal of Algebra, {\bf{109}} (1987), no. 2, 536-548.



\bibitem{Lcells3}
G.~Lusztig, {\em Cells in affine Weyl groups, III}, J. Fac. Sci. Univ. Tokyo Sect. IA Math., {\bf{34}} (1987), no. 2, 223-243.



\bibitem{Lcells4}
G.~Lusztig, {\em Cells in affine Weyl groups, IV}, J. Fac. Sci. Univ. Tokyo Sect. IA Math., {\bf{36}} (1989), no. 2, 297-328.

\bibitem{L1} G.~Lusztig, {\em{Leading coefficients of character values of Hecke algebras}}, Proc. Symp. Pure Math. Amer. Math. Soc., {\bf{47}} (1987), no. 2, 235-262.

\bibitem{LJcat}  G.~Lusztig, {\em{Cells in affine Weyl groups and tensor categories}}, 
Adv. Math., {\bf 129} (1997), 85-98.

\bibitem{L2} G.~Lusztig, {\em{Bases in equivariant $K$-theory}}, Representation Theory of the American Mathematical Society, {\bf{2}} (1998), no. 9, 298-369.

\bibitem{L3} G.~Lusztig, {\em {Bases in equivariant $K$-theory-II}}, Representation Theory of the American Mathematical Society, {\bf{3}} (1999), no. 11, 281-353.


\bibitem{lu_unipotent} G.~Lusztig, {\em {Unipotent elements in small characteristic}}, Transformation groups, {\bf{10}} (2005), 449-487.

\bibitem{LNY} P.~Li, D.~Nadler, Z.~Yun, {\em{Functions on the commuting stack via Langlands duality}}, 	arXiv:2301.02618, (2023).



\bibitem{Merk} A.~Merkurjev, {\em Equivariant $K$-theory},  Handbook of $K$-theory, (2005), 925-954.

\bibitem{M} S.~Mohrdieck, {\em{Conjugacy classes of non-connected semisimple algebraic groups}}, Transformation Groups,  {\bf{8}} (2003), 377–395.


\bibitem{Ni}
S.~Nie, 
{\em{The convolution algebra structure on $K^{G}(\CB \times \CB)$}}, arXiv:1111.1868, (2011).







\bibitem{Pr} O.~Propp, {\em{A Coherent Categorification of the Asymptotic Affine Hecke Algebra}}, PhD Thesis at MIT, 
(2023).



\bibitem{QX} Y.~Qiu, N.~Xi, {\em{The based rings of two-sided cells in an affine Weyl group of type $\tilde{B_3}$, I}}, Science China Mathematics, {\bf{66}} (2023), no. 2, 221-236.



\bibitem{Serr}
J. Serre, 
{\emph{Groupes de Grothendieck des sch\'emas en groupes r\'eductifs d\'eploy\'es}}, Inst. Hautes ´ Etudes Sci. Publ. Math., (1968), no. 34, 37-52.


\bibitem{Sta} J. Stallings, {\em{Centerless groups - an algebraic formulation of Gottlieb's theorem}}, Topology, {\bf{4}} (1965), no. 2, 129-134.

\bibitem{su}
H. Sumihiro, {\em{Equivariant completion}}, J. Math. Kyoto Univ., {\bf{14}} (1974), no. 1, 1-28



\bibitem{kenta}
K.~Suzuki,
{\emph{Trace Paley-Wiener theorem for Braverman-Kazhdan's asymptotic Hecke algebra}},  arXiv:2407.02752, (2024).




\bibitem{TX} T.~Tanisaki, N.~Xi, {\em{Kazhdan-Lusztig basis and a geometric filtration of an affine Hecke algebra}}, Nagoya Math. J., {\bf{182}} (2006), 285-311.


\bibitem{TaY} P. Tauvel and R. W. T. Yu,
{\em{Lie algebras and algebraic groups}}, 
Springer Monographs in Mathematics, (2005).


\bibitem{To} B.~ To{\"e}n, {{\em{Proper local complete intersection morphisms preserve perfect complexes}}}, 	arXiv:1210.2827, (2012).






\bibitem{Xi0} N.~Xi,
{\em{The based ring of the lowest two-sided cell of an affine Weyl group,}} J. Algebra, {\bf{134}} (1990), 356-368.

\bibitem{Xi} N.~Xi,  {\em{The Based Ring of Two-Sided Cells of Affine Weyl Groups of Type $\tilde{A}_{n-1}$}}, 
Amer. Math. Soc., (2002), no. 749.


\bibitem{Xi3} N.~Xi, {\em{Representations of affine Hecke algebras and based rings of affine Weyl groups}}, J. Amer. Math. Soc., {\bf{20}} (2007), no. 1, 211-217.

\bibitem{Xi1} N.~Xi, {\em{Kazhdan-Lusztig Basis and A Geometric Filtration of an affine Hecke Algebra, II}}, J. Eur. Math. Soc., {\bf{13}} (2011), 207-217.

\bibitem{Xi2} N.~Xi, {\em{The based ring of the lowest two-sided cell of an affine Weyl group, III}}, Algebras and Representation Theory, {\bf{19}} (2016), 1467-1475.





  
\end{thebibliography}
\end{document}